\documentclass[reqno]{amsart}
\UseRawInputEncoding

\usepackage{amsmath,amsfonts,amssymb,amsthm,amscd,latexsym,cite,comment}
\usepackage{mathrsfs}
\usepackage{color}
\usepackage{enumerate}
\usepackage{comment}
\newtheorem{thm}{Theorem}[section]
\newtheorem{theorem}[thm]{Theorem}

\newtheorem{lemma}[thm]{Lemma}

\newtheorem{corollary}[thm]{Corollary}
\newtheorem{proposition}[thm]{Proposition}
\newtheorem{definition}[thm]{Definition}
\newtheorem{remark}[thm]{Remark}

\newtheorem{question}{Question}

\newtheorem{conjecture}[thm]{Conjecture}
\newcommand{\cA}{{\mathcal A}}

\newcommand{\cE}{{\mathcal{E}}}
\newcommand{\cF}{{\mathcal F}}

\newcommand{\cH}{{\mathcal H}}

\newcommand{\cM}{{\mathcal M}}

\newcommand{\cP}{{\mathcal P}}

\newcommand{\cS}{{\mathcal S}}
\newcommand{\cT}{{\mathcal T}}

\usepackage{color}
 
\newcommand{\norm}[1]{\left\lVert#1\right\rVert}

\newcommand{\tnorm}[1]{\left\vert\kern-0.25ex\left\vert\kern-0.25ex\left\vert #1 
	\right\vert\kern-0.25ex\right\vert\kern-0.25ex\right\vert}

\usepackage{hyperref}					% Reference
\hypersetup{colorlinks,%				% Reference color setup	
	linkcolor=blue,%
	citecolor=blue}
\usepackage{float}
\begin{document}
	\title[The Daugavet property  in symmetric
	operator spaces]{The Daugavet property   in  	%symmetric spaces of measurable operators
		symmetric  operator spaces 
	}

	\author[Jinghao Huang]{J. Huang}\thanks{J. Huang was supported by the NNSF of China  (No. 12031004, 12301160, 12471134 and 12671159).}
    \address{Institute for  Advanced Study in  Mathematics, Harbin Institute of Technology, Harbin, 150001, China}
    \email{{\color{blue}jinghao.huang@hit.edu.cn}}\email{{\color{blue}xxurann@stu.hit.edu.cn}}

\author[Yerlan Nessipbayev]{Y. Nessipbayev} 
    
	\author[Fedor  Sukochev]{F.  Sukochev}\thanks{F.   Sukochev was supported by the ARC (DP230100434)}
	
	\author[Ran Xu]{R. Xu}

\address{School of Mathematics and Statistics, University of New South Wales, Sydney, Australia}\email{{\color{blue}y.nessipbayev@unsw.edu.au}} \email{{\color{blue}f.sukochev@unsw.edu.au}}

	\subjclass[2020]{46B04; 46E30; 46L10}
	%Isometric theory of Banach spaces;Spaces of measurable functions;  General theory of von Neumann algebras; 46L52 Noncommutative function spaces;   Version~: \today

	\keywords{Daugavet property; symmetric operator
		space of $\tau$-measurable operators; symmetric operator
		space of self-adjoint $\tau$-measurable  operators}

	\begin{abstract} 
		
		Under mild assumptions,  the main results of this paper characterize  a  (real or complex) symmetric operator space 
		affiliated with  a semi-finite atomless von Neumann algebra $(\cM, \tau)$   possessing the Daugavet property as  $L_1(\cM,\tau)$ or $\cM$  (up to equivalent norms, or even proportional norms). 
		Our results are new even  for complex symmetric function spaces, which extend  
		results of   \cite{AKM12, KMMW13, AKM15} concerning  
		the Daugavet property for real symmetric  function spaces.

	\end{abstract}
	\maketitle

\section{Introduction}

A Banach space $X$ has the \emph{Daugavet property} if the \emph{Daugavet equation}
\begin{align}\label{Daugavetequation}
\|I+T\|=1+\|T\|
\end{align}
holds for every rank-one bounded linear operator $T\colon X\to X$, where $I$ is the identity operator on $X$ (see, e.g., \cite[Definition 3.1.1]{KMZW25}). By \cite[Lemma 2.1]{AAB1991}, it suffices to verify \eqref{Daugavetequation} for rank-one operators of norm one. The equation arises in problems of best approximation \cite{S71} and was first studied by Daugavet \cite{D63}, who showed in 1963 that every compact operator $T\colon C[0,1]\to C[0,1]$ satisfies \eqref{Daugavetequation}. Classical examples of spaces with the Daugavet property include $C(K)$ for every compact Hausdorff space $K$ without isolated points \cite{FS65}, the spaces $L_1(\mu)$ \cite{Lo66} and $L_\infty(\mu)$ over a non-atomic $\sigma$-finite measure space (the latter being a special case of $C(K)$), and the $\ell_1$-, $\ell_\infty$-, and $c_0$-sums of arbitrary families of such spaces (cf. \cite[Theorem 1]{W92} and \cite[Subsection 4.6]{KMZW25}). Noncommutative examples are given by non-atomic $C^*$-algebras and by preduals of non-atomic von Neumann algebras \cite[Theorem 2.1]{O02}.

The Daugavet property enjoys several notable inheritance properties: (i) it passes to $M$-ideals \cite[Proposition 2.10]{KSSW00} and $L$-summands \cite[Lemma 2.15]{KSSW00}; (ii) if $X^*$ has the property then so does $X$, but the converse fails, for instance for $X=C[0,1]$ \cite[Theorem 3.3.3]{KMZW25}; (iii) it need not pass even to $1$-complemented subspaces, since $L_1$ has the Daugavet property while its $1$-complemented subspace $\ell_1$ does not \cite[page 16]{AKM15}, \cite[Example (c)]{Werner01}.

We distinguish the following notation. If $X$ is a complex normed space, then $X_{\mathbb{R}}$ denotes its realification: it consists of the same elements and has the same norm as $X$, but scalar multiplication is restricted to the field $\mathbb{R}$. By contrast, a superscript $\mathbb{R}$ will be used for genuinely real spaces, for
instance $L_1^{\mathbb{R}}(0,1)$ denotes the real $L_1$-space.

We shall use the standard identification
$$(X_{\mathbb{R}})^*\cong (X^*)_{\mathbb{R}},\qquad \Phi\mapsto \operatorname{Re}\Phi,$$
see \cite[page 61]{KMZW25}. Thus, every bounded real-linear functional on
$X_{\mathbb{R}}$ is the real part of a unique bounded complex-linear
functional on $X$.

In the commutative setting, the Daugavet property has been studied extensively for rearrangement-invariant (equivalently, symmetric) function spaces. The following conjecture is recorded in \cite[Section 4.4, page 122]{KMZW25}; see also \cite[page 4077]{AKM15} for the corresponding discussion on $(0,\infty)$.

\begin{conjecture}\label{conj}
The only real (respectively, complex) rearrangement-invariant function spaces on $(0,1)$ with the Daugavet property are, up to isometric lattice isomorphism, $L_1^{\mathbb{R}}(0,1)$ or $L_\infty^{\mathbb{R}}(0,1)$ (respectively, $L_1(0,1)$ or $L_\infty(0,1)$).
\end{conjecture}

The first substantial progress toward Conjecture \ref{conj} is due to Acosta, Kami\'nska, and Masty\l o \cite{AKM12}. Let $X$ be a real symmetric function space over a finite atomless measure space $(\Omega,\mathcal{S},\mu)$, and suppose that $X$ has the Daugavet property. They proved two complementary results. First, if $X$ is a KB-space, equivalently, if $X$ has the Fatou property and an order-continuous norm, then $X$ coincides with $L_1^{\mathbb{R}}$ as a set \cite[Proposition 2.5]{AKM12} (see also \cite[Corollary 4.4.7]{KMZW25}). Second, if $\mu$ is separable, $X$ has the Fatou property, and the K\"othe dual $X^\times$ is either strictly monotone or order continuous, then $X=L_\infty^{\mathbb{R}}$ up to equivalence of norms \cite[Proposition 2.6]{AKM12}.

Later, Kadets, Mart\'in, Mer\'i, and Werner obtained the corresponding isometric statement in the separable case: the only separable real symmetric function space on $(0,1)$ with the Daugavet property is $L_1^{\mathbb{R}}(0,1)$ with its canonical norm \cite[Corollary 4.9]{KMMW13} (recall that, for symmetric function spaces, separability is equivalent to order continuity of the norm \cite[page 102]{KPS}).  Inspired by \cite{KMMW13}, Acosta, Kami\'nska, and Masty\l o showed that a real symmetric function space $X$ over a finite atomless measure space which has a Fatou norm, has the Daugavet property, and satisfies
$$\lim_{t\to0+}\phi_{X^\times}(t)=0,$$
where $\phi_{X^\times}$ denotes the fundamental function of $X^\times$, is isometrically isomorphic to $L_\infty^{\mathbb{R}}$ \cite[Theorem 3.4]{AKM15}. Combining these results gives a complete description in the real case: a real symmetric function space over a finite atomless measure space with the Fatou norm and the Daugavet property is isometrically isomorphic to either $L_1^{\mathbb{R}}$ or $L_\infty^{\mathbb{R}}$ \cite[Theorem 3.6]{AKM15}. This provides strong evidence for Conjecture \ref{conj} in the real case.

The finiteness of the measure is essential in Conjecture \ref{conj}: on $(0,\infty)$ the analogous statement fails. Indeed, the space
$$L_\infty^0(0,\infty)=\{f\in L_\infty(0,\infty): f^*(t)\to0\text{ as }t\to\infty\},$$
equipped with the norm $\|f\|=\|f\|_\infty$ (here $f^*$ is the decreasing rearrangement of $f$), is a rearrangement-invariant Banach function space with the Daugavet property \cite[Theorem 2.1]{O02} that is neither $L_1(0,\infty)$ nor $L_\infty(0,\infty)$. Positive results nonetheless exist in the infinite setting; for example, a uniformly monotone real symmetric function space on $(0,\infty)$ with the Daugavet property must be isometrically isomorphic to $L_1^{\mathbb{R}}(0,\infty)$ \cite[Theorem 4.4]{AKM15}.

The present paper studies the noncommutative counterpart of these results. Let $(\mathcal{M},\tau)$ be a semi-finite von Neumann algebra with identity $\mathbf{1}$ and a faithful normal semi-finite trace $\tau$. We write $S(\mathcal{M},\tau)$ for the space of $\tau$-measurable operators affiliated with $\mathcal{M}$, and $S(\mathcal{M},\tau)_h$ for its self-adjoint part \cite[Section 2]{DPS}. To each symmetric function space $E$ on $(0,\tau(\mathbf{1}))$ one associates, through the generalized Calkin correspondence \cite{Kalton_S}, the symmetric operator space $E(\mathcal{M},\tau)$: an operator $x$ belongs to $E(\mathcal{M},\tau)$ precisely when its generalized singular value function $\mu(x)$ belongs to $E$. Further details are given in Section \ref{s2}. We are guided by the following question.

\begin{question}\label{Q}
Let $(\mathcal{M},\tau)$ be a semi-finite atomless von Neumann algebra and let $E$ be a non-zero symmetric function space. If $E(\mathcal{M},\tau)$ has the Daugavet property, must $E(\mathcal{M},\tau)$ be, in an appropriate sense, one of $L_1(\mathcal{M},\tau)$ or $\mathcal{M}$?
\end{question}

When $\mathcal{M}=L_\infty(0,\tau(\mathbf{1}))$, the space $E(\mathcal{M},\tau)$ reduces to the function space $E(0,\tau(\mathbf{1}))$, so any answer to Question \ref{Q} also bears on Conjecture \ref{conj}, including its complex version. The noncommutative setting, however, presents substantial difficulties; in particular, one no longer has a lattice structure available. %Moreover, symmetric function spaces and symmetric operator spaces may have very different subspace structures; for example, by a theorem of Arazy and Lindenstrauss \cite[Theorem 6]{AL75}, $L_p(0,1)$ does not embed into the Schatten class $C_p$ for $2<p<\infty$. Related non-embedding phenomena were studied in \cite{GL74,Lewis75,G97,AHS21,HSSZ25}.

We now describe the main results of the paper.

\subsection*{The complex case}

Our first main result is a noncommutative complex analogue of \cite[Propositions 2.5 and 2.6]{AKM12}. It is obtained by combining Theorems \ref{sigmafiniteL1subseteqE}, \ref{infiniteL1subseteqE}, and \ref{assumption=Linfty} below.

\begin{theorem}\label{Thm 1.2}
Let $(\mathcal{M},\tau)$ be a semi-finite atomless von Neumann algebra and let $E$ be a symmetric function space on $(0,\tau(\mathbf{1}))$. Assume that $E(\mathcal{M},\tau)$ has the Daugavet property.
\begin{enumerate}[\rm(i)]
\item If $(\mathcal{M},\tau)$ is $\sigma$-finite and $E$ is KB, then $L_1\subseteq E$ on $(0,\tau(\mathbf{1}))$.
\item If $(\mathcal{M},\tau)$ is non-$\sigma$-finite, $E$ is KB, and $E\cap L_\infty\neq L_1\cap L_\infty$, then $L_1\subseteq E$ on $(0,\infty)$.
\item If $(\mathcal{M},\tau)$ is non-$\sigma$-finite and $E$ is uniformly monotone, then $L_1\subseteq E$ on $(0,\infty)$.
\item If $\tau(\mathbf{1})<\infty$ and $E$ is KB, then $E=L_1$ on $(0,\tau(\mathbf{1}))$, up to equivalence of norms.
\item If $\tau(\mathbf{1})<\infty$, $E$ has a Fatou norm, and either $E^\times$ is strictly monotone or $E^\times$ is order continuous, then $E=L_\infty$ on $(0,\tau(\mathbf{1}))$, up to equivalence of norms.
\end{enumerate}
\end{theorem}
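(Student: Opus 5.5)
The plan is to reduce every item to the behaviour of the Daugavet property under slices and weak$^*$-slices of the unit ball. We use the standard characterization: $X$ has the Daugavet property if and only if for every $x\in S_X$, every $\varepsilon>0$ and every slice $S(B_X,f,\alpha)$ there is $y$ in the slice with $\|x+y\|\ge 2-\varepsilon$. For complex $X$ we apply this to the realification $X_{\mathbb R}$, which has the Daugavet property together with $X$. Its functionals are $\operatorname{Re}\Phi$ with $\Phi\in X^*$, so we can work with real-part slices throughout.

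For (i)--(iii) we argue by contradiction and assume $L_1\not\subseteq E$.

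\emph{Step 1 (non-inclusion).} Since $E$ is KB, it has the Fatou property and an order continuous norm. Hence $E^*=E^\times$, and the functionals on $E(\cM,\tau)$ are given by $x\mapsto\tau(xy)$ with $y\in E^\times(\cM,\tau)$. The failure $L_1\not\subseteq E$ is equivalent to $E^\times\not\subseteq L_\infty$. We therefore pick $y\in E^\times(\cM,\tau)$ with $\|y\|_{E^\times}=1$ and $\mu(y)$ unbounded near $0$. The atomless hypothesis lets us realize $\mu(y)$ through spectral projections of $|y|$.

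\emph{Step 2 (one slice).} We take the slice $S=\{x\in B_{E(\cM)}:\operatorname{Re}\tau(xy)>1-\alpha\}$. Following \cite[Proposition 2.5]{AKM12}, we show that elements of $S$ must concentrate their mass on spectral projections $e$ of $|y|$ with small trace, because $\tau(xy)$ close to $1$ forces $x$ to be nearly aligned with the polar part of $y^*$. We then pair $S$ with a fixed norm-one $x_0$ supported on a projection of large trace disjoint from those $e$, with $x_0=\mathbf 1$ scaled when $\tau(\mathbf 1)<\infty$. The goal is the bound $\|x_0+x\|_E\le 2-\delta$ for all $x\in S$. This uses order continuity together with the facts that $\mu(x_0+x)\le\mu(x_0)\oplus\mu(x)$ for disjoint supports and that symmetric norms are strictly less than additive on disjointly supported pieces of the kind forced above.

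I expect the main obstacle to be this step in the noncommutative setting. Elements $x$ of the slice need not commute with $y$, so lattice disjointness is unavailable. To get around this, I would first compress by the conditional expectation onto the abelian von Neumann algebra generated by the spectral projections of $y$. This expectation is contractive on $E(\cM,\tau)$ and preserves $\tau(\cdot\,y)$, which reduces the estimate to the commutative case up to an $\varepsilon$.

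In case (ii) the condition $E\cap L_\infty\ne L_1\cap L_\infty$ supplies the tail growth of $E^\times$ at infinity needed to place $x_0$ on the infinite part. In case (iii) uniform monotonicity instead directly gives the quantitative gap $\delta$, as in \cite[Theorem 4.4]{AKM15}. For $\sigma$-finite versus non-$\sigma$-finite $\cM$ we pass to a $\sigma$-finite reduced algebra $e\cM e$ carrying a suitable part of $y$, and we check that the relevant slice estimates are stable under this reduction.

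For (iv), item (i) gives $L_1\subseteq E$. Conversely, $E\subseteq L_1$ always holds on a finite interval, so $E=L_1$ as sets, and the closed graph theorem (or the continuity of the inclusions) gives equivalence of norms. For (v), we assume $E\ne L_\infty$. Then the fundamental function of $E$ tends to $0$, so $E$ contains a function unbounded near $0$, and the same slice method applies to the weak$^*$-slices determined by $E^\times$. Strict monotonicity or order continuity of $E^\times$ is exactly what produces the strict inequality $\|x_0+x\|<2-\delta$, as in \cite[Proposition 2.6]{AKM12}, after the same conditional-expectation reduction. This yields $E=L_\infty$ as sets, and the norms are equivalent by the closed graph theorem.
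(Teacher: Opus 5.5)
\textbf{There is a genuine gap.} Step 2 carries all the weight for (i)--(iii), and the estimate it aims for is false in natural examples, so this is not a matter of missing details. Take the Lorentz space $E=\Lambda_\psi(0,1)$ with $\psi(t)=\sqrt t$ and $\|f\|_E=\int_0^1 f^*\,d\psi$. It is KB (order continuous with the Fatou property), and $L_1\not\subseteq E$.

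\emph{No concentration.} The unbounded functional $y=\psi'$ has $\|y\|_{E^\times}=1$. For every $s\in(0,1]$ the function $x_s=\chi_{(0,s)}/\psi(s)$ satisfies $\|x_s\|_E=1=\int x_s y$. So your slice contains spread-out elements such as $x_1=\chi_{(0,1)}$, and elements of the slice need not concentrate on small-trace spectral projections.

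\emph{No strict subadditivity.} For the spread element $x_0=\chi_{(1/2,1)}/\psi(1/2)$ and $s<1/2$ one computes
\[
\|x_0+x_s\|_E=1+\frac{\psi(s+1/2)-\psi(s)}{\psi(1/2)}\longrightarrow 2\qquad (s\to 0).
\]
A norm-one spread piece and a disjoint concentrated piece are therefore asymptotically additive here, and no uniform gap $\delta$ exists.

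\emph{Conditional expectation.} Your reduction runs in the wrong direction. With $\mathbb{E}$ the expectation, $\|\mathbb{E}(x_0+x)\|\le\|x_0+x\|$ gives only lower bounds. You need an upper bound on $\|x_0+x\|$ for every $x$ in the slice, and replacing $x$ by $\mathbb{E}(x)$ cannot supply it.

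\emph{Item (v).} Strict monotonicity of $E^\times$ is a non-uniform property. It cannot produce a uniform $\delta$ for $\|x_0+x\|_E$.

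\textbf{The missing idea is that the Daugavet property excludes the Radon--Nikod\'ym property} \cite[Corollary 1]{W92}.
\begin{enumerate}
\item[(i)] If $\phi_{E^\times}(0+)=0$, then $(E^\times)_{oc}\neq\{0\}$, and $E(\cM,\tau)$ is isometrically the dual of $(E^\times)_{oc}(\cM,\tau)$. Since $E(\cM,\tau)$ is order continuous on a $\sigma$-finite algebra, it is WCG \cite{HNPS24Tran}. Corollary \ref{WCGRN} then gives it the RNP, a contradiction. Hence $\phi_{E^\times}(0+)>0$, which gives $E^\times\subseteq L_\infty$ and $L_1\subseteq E^{\times\times}=E$.
\item[(ii)] The same argument applies, with WCG now coming from $E\cap L_\infty\neq L_1\cap L_\infty$.
\item[(iii)] The paper reduces to (i) by passing the Daugavet property to a $\sigma$-finite corner via Theorem \ref{EpDPinherit}. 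That is where uniform monotonicity enters, and your ``stability of slice estimates under reduction'' skips it.
\item[(v)] The paper does not use a slice gap at all. Theorem \ref{inheriteDP} gives the Daugavet property for $E(\cM,\tau)_{oc}$, which therefore contains an asymptotically isometric copy of $\ell_1$. Theorem \ref{dsl1} turns this into a disjointly supported copy in $E_{oc}$. Then \cite{DGH00} embeds $L_1(0,1)$ isometrically into $E^\times=(E_{oc})^*$, after which the argument of \cite[Proposition 2.6]{AKM12} applies.
\end{enumerate}
In the paper's route the obstruction comes from the order-continuous part $(E^\times)_{oc}$, through the RNP, not from unbounded elements of $E^\times$. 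Your derivation of (iv) from (i) is fine.
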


Theorem \ref{Thm 1.2} is a structural result concerning the underlying function space: under natural assumptions, the Daugavet property of $E(\mathcal{M},\tau)$ forces the commutative space $E$ to lie at one of the two expected endpoints. In the finite case it recovers  $E=L_1$ and $E=L_\infty$. Since the commutative case is contained in the noncommutative framework, this already yields new information for complex rearrangement-invariant function spaces.

The proof combines several ingredients. The first is an inheritance principle for the Daugavet property in reduced symmetric operator spaces, see Theorem \ref{EpDPinherit} below.  The second is noncommutative Kadec--Pe\l czy\'nski lemma \cite{Ran03}. A further key step is Theorem ~\ref{dsl1}, which relates asymptotically isometric copies of $\ell_1$ in $E(\mathcal{M},\tau)$ to disjointly supported asymptotically isometric copies of $\ell_1$ in $E$; this lets us pass from noncommutative geometry back to the K\"othe dual $E^\times$. Finally, we use the fact that a Banach space with the Daugavet property cannot have the Radon--Nikod\'ym property \cite[Corollary 1]{W92}, together with recent characterizations of the WCG property in noncommutative symmetric spaces \cite{HNPS24Tran}.

\subsection*{The real case}

The real theory is sharper. The results of \cite{KMMW13,AKM15} identify real symmetric function spaces with the Daugavet property not merely as sets, but up to proportional norms, i.e.
$$\norm{\cdot}_E = \lambda\norm{\cdot}_{L_1} \quad\text{or}\quad \norm{\cdot}_E = \lambda\norm{\cdot}_{L_\infty}$$
for some constant $\lambda>0$; note that passing to a proportional norm preserves the Daugavet property.

Crucially, the proofs in \cite{KMMW13,AKM15} are essentially real. As observed in \cite[Remark 4.11]{KMMW13} and \cite[page 124, Question (4.6)]{KMZW25}, the corresponding complex argument would require an analogue of the real-valued estimate with $\operatorname{Re} f$ in place of $f$, and no such substitute is known. Motivated by this, Sections \ref{sec5} and \ref{sec6} study real symmetric operator spaces consisting of self-adjoint $\tau$-measurable operators.

In the finite case,  we obtain a noncommutative analogue of \cite[Theorem 3.6]{AKM15}.

\begin{theorem}\label{mainreal}
Let $(\mathcal{M},\tau)$ be a finite atomless von Neumann algebra with a finite faithful normal trace $\tau$. If a symmetric operator space $\mathcal{E}\subseteq S(\mathcal{M},\tau)_h$ has a Fatou norm %$\norm{\cdot}_{\mathcal{E}}$ 
and the Daugavet property, then $\mathcal{E}$ is isometrically equal to either $L_1(\mathcal{M},\tau)_h$ or $\mathcal{M}_h$. In particular, the corresponding real symmetric function space coincides with $L_1^{\mathbb{R}}$ or $L_{\infty}^{\mathbb{R}}$ on $(0,\tau(\mathbf{1}))$ up to proportional norms.
\end{theorem}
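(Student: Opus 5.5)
Since the trace is finite, we may normalise $\tau(\mathbf 1)=1$. Write $\mathcal E=E(\mathcal M,\tau)_h$ for a real symmetric function space $E$ on $(0,1)$ with the Fatou norm. The plan is to reduce to the commutative real theorem \cite[Theorem 3.6]{AKM15}, and then transfer the resulting description of $E$ back to $\mathcal E$.

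\textbf{Step 1: pass to a commutative subalgebra.} Because $\mathcal M$ is atomless and $\tau$ is finite, there is a maximal abelian, atomless von Neumann subalgebra $\mathcal A\subseteq\mathcal M$. Moreover $(\mathcal A,\tau|_{\mathcal A})$ is $*$-isomorphic, in a trace-preserving way, to $L_\infty$ of a finite atomless measure space $(\Omega,\mu)$ with $\mu(\Omega)=1$. The space $E(\mathcal A,\tau)_h$ is then isometrically the real symmetric function space $E^{\mathbb R}(\Omega,\mu)$. Note that $\mu$ need not be separable, but \cite[Theorem 3.6]{AKM15} applies to finite atomless measure spaces in general.

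The key point is to show that $E(\mathcal A)_h$ inherits the Daugavet property from $\mathcal E$. For this we use the conditional expectation $\mathbb E\colon\mathcal E\to E(\mathcal A)_h$. It is a contractive projection, since $\mu(\mathbb E x)\prec\mu(x)$ and $E$ is fully symmetric under the Fatou norm.

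Contractive projections alone do not transmit the Daugavet property. Instead we would use the slice characterisation. Fix $f\in E(\mathcal A)_h$ of norm one, a functional $\varphi$ on $E(\mathcal A)_h$ and $\varepsilon>0$. We need $g$ in the slice $S(\varphi,\varepsilon)$ with $\|f+g\|\approx 2$. To get it, extend $\varphi$ to $\varphi\circ\mathbb E$ on $\mathcal E$ and apply the Daugavet property of $\mathcal E$, obtaining $x$ in the corresponding slice with $\|f+x\|>2-\varepsilon$. Then $\mathbb E x$ lies in the original slice, but we only know $\|f+\mathbb Ex\|\le\|f+x\|$; the inequality goes the wrong way.

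This is the main obstacle. To overcome it, I would instead invoke the inheritance principle of Theorem \ref{EpDPinherit}, and more directly the dichotomy machinery behind Theorem \ref{Thm 1.2}. These results were proved in the complex setting, but their real self-adjoint analogues should follow by the same arguments. The aim is to show that the Daugavet property of $\mathcal E$ forces the same function-space conditions used in \cite{AKM15}. Concretely, we split into two cases according to $\lim_{t\to0}\phi_{E^\times}(t)$.

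\textbf{Case A: $\lim_{t\to0}\phi_{E^\times}(t)=0$.} Here we adapt the proof of \cite[Theorem 3.4]{AKM15}. The slices and functionals used there are built from characteristic functions $\chi_A$ and $\pm$ signs. In our setting we realise them with projections $p\in\mathcal M$ and self-adjoint unitaries, using atomlessness of $\mathcal M$ to obtain projections of any trace. The real estimates then go through because we only use self-adjoint elements, for which $\operatorname{Re}$ is not needed; this is precisely why we work in $S(\mathcal M,\tau)_h$. The conclusion is $\|\cdot\|_E=\lambda\|\cdot\|_\infty$.

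\textbf{Case B: the limit is positive.} Then $E^\times\subseteq L_\infty$ fails to be small, so $E$ is order continuous with Fatou norm, i.e.\ KB. Following \cite[Corollary 4.9]{KMMW13}, we get $E=L_1$ as sets; in the operator setting this is the real version of Theorem \ref{Thm 1.2}(iv). The isometric upgrade to $\lambda\|\cdot\|_1$ again uses projection-built test elements in place of indicator functions.

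\textbf{Step 2: transfer back to $\mathcal E$.} Once $\|\cdot\|_E$ is proportional to $\|\cdot\|_{L_1}$ or $\|\cdot\|_{L_\infty}$ on $(0,1)$, the definition $\|x\|_{\mathcal E}=\|\mu(x)\|_E$ gives $\mathcal E=L_1(\mathcal M,\tau)_h$ or $\mathcal M_h$, with proportional norms. Rescaling the norm then yields the isometric statement.
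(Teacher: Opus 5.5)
Your case split on $\phi_{E^\times}(0+)$ is a sound skeleton; it is the natural way to combine \cite[Theorem 3.4]{AKM15} with \cite{KMMW13}. The gap is that neither case is actually proved. Both are handed to an unspecified ``adaptation'', and the reason you give for why that adaptation works misplaces the difficulty.

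Restricting to self-adjoint elements explains why $\operatorname{Re}$ can be avoided, but that is not what blocks the transfer. The commutative proofs run on lattice estimates. One is a pointwise bound for $|1+y|$, obtained by splitting along level sets of the Daugavet witness $y$. The other is the fact that replacing a function $g$ on a set $P$ by its average does not increase the norm. In $S(\cM,\tau)_h$ there is no lattice and $|x+y|\le|x|+|y|$ fails, so these steps need replacements, and supplying them is the real content of the theorem.

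The paper does this in Theorem \ref{finiteordercontinuousL1}, using three ingredients:
the submajorization facts of Proposition \ref{selfproperties};
the averaging inequality $\frac{\tau(xp)}{\tau(p)}p\prec\prec pxp$ of Lemma \ref{fullyxp1+p_2prec};
strong symmetry of $\cE$.
For the $L_\infty$ case (Theorem \ref{finiteLinfty}) it also uses the Fatou norm to make $\cE^\times$ $1$-norming. Then \cite[Lemma 3.1]{AKM15} yields a \emph{normal} self-adjoint witness $y_\varepsilon\in\cE^\times$, a point you do not address.

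One way to make your intuition rigorous is the following. Once the Daugavet property of $\cE$ has produced $y_\varepsilon$, every operator used afterwards lies in the abelian von Neumann algebra generated by $y_\varepsilon$. This covers $\pm\mathbf 1$, $y_\varepsilon$, its spectral projections and the averaged element. The commutative estimates hold there, and strong symmetry of $\cE$ turns the averaging majorization into a norm inequality. This also removes the obstacle you hit in Step 1: you never need $E(\mathcal A)_h$ to have the Daugavet property, only the witness to live in some abelian subalgebra.

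Several auxiliary steps are wrong or unnecessary.

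Theorem \ref{EpDPinherit} requires uniform monotonicity. That is not assumed here and fails, e.g., for $\cM_h$. The theorem also concerns corners $p\cE p$, not abelian subalgebras.

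A ``real version of Theorem \ref{Thm 1.2}(iv)'' is neither available nor needed.

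\cite[Corollary 4.9]{KMMW13} requires the Daugavet property of the function space $E$ itself, which is exactly what you could not transfer in Step 1.

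In Case B, your sentence about $E^\times\subseteq L_\infty$ does not justify order continuity. The correct argument uses no Daugavet property. Since $\phi_{E^\times}(0+)>0$ means $\sup_t\phi_E(t)/t<\infty$, you get $\norm{x}_{\cE}\le C\norm{x}_{L_1(\cM,\tau)}$, first for step functions and then for all $x$ by the Fatou norm. Hence, by completeness, $\cE=L_1(\cM,\tau)_h$ with equivalent norms, and $\cE$ is order continuous.

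With that repair your route is slightly leaner than the paper's. Case A is precisely the hypothesis of Theorem \ref{finiteLinfty}. Case B places $\cE$ itself under Theorem \ref{finiteordercontinuousL1}. So you avoid the paper's detour through $\cE_{oc}$ and Theorem \ref{inheriteDP}, which the paper needs because it splits on $\cE_{oc}\neq\{0\}$ versus $\cE_{oc}=\{0\}$. But those two noncommutative theorems are what has to be proved, and your proposal only asserts them.
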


The infinite case is treated in Section \ref{sec6}, where we prove that a uniformly monotone real symmetric operator space with the Daugavet property is isometrically isomorphic to $L_1(\mathcal{M},\tau)_h$; consequently, the corresponding real symmetric function space is $L_1^{\mathbb{R}}$ on $(0,\infty)$ up to a proportional norm. The proofs in the real self-adjoint setting rest on the geometric slice characterization of the Daugavet property for real Banach spaces, the ideas of \cite{KMMW13,AKM15}, and several estimates based on Hardy--Littlewood--P\'olya submajorization in the noncommutative setting.

The final section (Section~\ref{sec7}) is devoted to establishing 
that  the predual of an arbitrary atomless (not necessarily semi-finite) von Neumann algebra,  as a complex Banach space, has the operator Daugavet property (see Theorem \ref{thm:ODP} below).
This extends \cite[Proposition 4.4]{RTV},  where     the operator Daugavet property of $L_1(0,1)$ was proved. 
It should be noted that the operator Daugavet property implies the Daugavet property; however, there does not  exist any    example   satisfying the Daugavet property  but not the operator Daugavet property \cite[Remark 4.2]{RTV}. 
In particular, we do not know whether all atomless  von Neumann algebras possess the operator Daugavet property. 
Note   that the abelian von Neumann algebra $L_\infty(0,1)$ has the operator Daugavet property\cite[Remark 4.5(2)]{RTV}.

	\section{Preliminaries}
	\label{s2}
	
	General information on von Neumann
	algebras, Banach function lattices  and symmetric operator spaces  can be found in
	\cite{LSZ,KPS,LT2,Kalton_S,DPS}.  
	We now collect some of the basic facts and notation that will be used in this
	paper.

	If $(X, \norm{\cdot}_{X})$ is a
	Banach space, then by $B_X$ and $S_X$ we denote the closed unit ball and the unit sphere  of $X$,
	respectively.
	As usual by $\mathbb{R}$, $\mathbb{R}_{+}$
	and $\mathbb{N}$ we denote the set of real, non-negative real and natural numbers, respectively.
	
	%	Let $(X, \leq)$ be a partially ordered set. If $U$ is a non-empty subset of $X$	for which the least upper bound (or, supremum) exists, then this least upper bound	is denoted by $\vee U$. 	

	Let \((I, \Sigma, m)\) denote the measure space  \( I = (0, a) \) for some \( a \in (0, \infty] \), endowed with the Lebesgue measure \( m \).
	Denote by \( L(I, m) \) the space of   all (equivalence classes of) $\Sigma$-measurable complex valued 
	functions on \( I \). 
	%where functions that agree almost everywhere are identified. 
	Denote by $L_p^{\mathbb{R}} 
	$ (respectively, $L_p
	$)
	the space of all $p$-integrable real  (respectively, complex) valued 
	$\Sigma$-measurable functions on $I
	$ ($1\leq p<\infty$)
	and by  $L_{\infty}^{\mathbb{R}}
	$  (respectively, $L_\infty
	$)
	the space
	of all $m$-essentially bounded real (respectively, complex) valued  $\Sigma$-measurable functions on $I%(0, \infty)
	$, 
	equipped with the usual norms
	$$\norm{f}_{L_p}=\left( \int_{I} |f|^{p} dm \right)^{\frac{1}{p}}, \quad 1 \le p<\infty, \quad   \norm{f}_{L_{\infty}}= \operatorname*{ess\,sup}_{t\in I}|f(t)|.   
	$$
	
	Throughout this paper, we denote by $\cM$ a semi-finite von Neumann algebra  with identity ${\bf 1}$, the operator norm on the underlying Hilbert space $\cH$ by $\norm{\cdot}_{\infty}$ and a semi-finite  faithful normal trace on $\mathcal{M}$ by $\tau$.  
	Let  $\cP(\mathcal{M})$ 
	be the complete lattice of all  (orthogonal) projections in $\mathcal{M}$. 		
	
	\subsection{$\tau$-measurable operators}

	A closed and densely defined operator $x$, affiliated with    $\mathcal{M}$, is said to be {\it $\tau$-measurable},  if
	$\tau(e ^{|x|}(s,\infty))<\infty$ for sufficiently large $s$, where $e ^{|x|}(\cdot)$ denotes  the {\it spectral measure} of $|x|$.
	Denote by 	$S(\mathcal{M},\tau)$  the set of all $\tau$-measurable operators \cite{LSZ,DPS}.
	
	If $U$ is a subset of 	$S(\mathcal{M},\tau)$, then we write $U_{h} := \{x\in U: x = x^*\} $ and $U^{+} := \{x\in U: x \geq 0 \}.$
	If $\{x_{\alpha}\} $ is an increasing net in $S(\cM, \tau)_h$ and $x = \sup{x_{\alpha}}\in S(\cM, \tau)_h$,
	we write $x_{\alpha}\uparrow x$. In the case of a decreasing net $\{x_{\alpha}\} $ with infimum 0,  we write
	$x_{\alpha}\downarrow 0$.

	For every $x\in S(\mathcal{M},\tau),$  the {\it singular value function} $\mu(x)$ is defined by setting 
	$$
	\mu(t; x)
	=\inf\{s\geq0:\ \tau(e ^{|x|}(s,\infty))\leq t \},\quad t\ge0.$$
	The extended trace $\tau : S(\cM, \tau)^+ \to [0, \infty]$ is given by \cite[Proposition 3.3.3]{DPS}
	\[
	\tau(x) = \int_0^\infty \mu(t; x)  dt, \quad x \in S(\cM, \tau)^+. 
	\]

	For $x \in S(\cM, \tau)$, the right and left support projections of $x$ are denoted by $r(x)$ and $l(x)$,  respectively \cite[Subsection 1.4]{DPS}.
	The two-sided ideal $\cF(\cM, \tau)$  in $\cM$ is defined by setting \cite[page 74]{DPS} 
	\[
	\cF(\cM, \tau) := \{ x \in \cM : \tau(r(x)) < \infty \}= \{ x \in \cM : \tau(l(x)) < \infty \}. 
	\]

	For all $x, y\in S(\mathcal{M}, \tau)$,  $x$ is said to be {\it submajorized} by $y$ (in the sense of Hardy, Littlewood and P\'olya) and written  
	$x\prec\prec y$  if
	$$\int_{0}^s \mu(t;x)dt\le \int_{0}^s \mu(t;y)dt$$ for all  $s>0$. In particular, $x\prec\prec y$ is   
	equivalent to $\mu(x)\prec \prec \mu(y)$ \cite[Section 3.3]{LSZ}.

	The algebra 
	\( 
	L_\infty (I) 
	\) 
	can 
	be 
	seen 
	as 
	an abelian von Neumann algebra acting via multiplication on the Hilbert space \(  L_2 (I) \), 
	with the trace given by integration with respect to Lebesgue measure \( m \).  
	The algebra of all \( \tau \)-measurable operators affiliated with \(L_\infty (I) \) can be identified with the   linear subspace \( S(I) \)  of \( L(I) \) consisting of all complex valued measurable  functions \( f \) for which there exists some \( s > 0 \) such that
	$
	m(\{ t \in I : |f(t)| > s \}) < \infty
	$~\cite{KPS,LSZ,LT2}.
	%The  real part of   \( S(I) \)   is denoted by $S^{\mathbb{R}}(I)$.  
	
	%We now recall two important topologies  on $S(\cM,\tau)$.
	
	For $0 < \varepsilon, \delta \in \mathbb{R}$, 
	define 	\[
	V(\varepsilon,\delta):=\{x\in S(\cM,\tau):\exists   \, p\in \cP(\cM) \,\text{such that} \, \norm{xp}_{\infty}\leq \varepsilon, \tau(\mathbf{1}-p)\leq \delta\}.
	\] 
	The collection $\{V(\varepsilon,\delta):\varepsilon,\delta>0\}$ is a neighborhood base at $0$  for a complete metrizable Hausdorff vector space topology $\cT_m$ on $S(\cM,\tau)$,  called the {\it  measure topology}, see    e.g., \cite[Proposition 2.5.3]{DPS} and   \cite[Theorem 2]{N74}.    
	A sequence \(\{x_n\}_{n=1}^{\infty}\) in \(S(\cM, \tau)\) converges to $0$ in $\cT_m$, denoted by 
	$x_{n}\stackrel{\cT_m}{\to}0$, if and only if \(\mu(t; x_n) \to 0\) as \(n \to \infty\) for all \(t > 0\) \cite[Proposition 3.2.1]{DPS}.

	For $0 < \varepsilon, \delta \in \mathbb{R}$ and $e \in \cP(\mathcal{M})$ with $\tau(e) < \infty$, set 
	\[
	V(\varepsilon, \delta, e) := \{x \in S(\mathcal{M}, \tau) : exe \in V(\varepsilon, \delta)\}. 
	\]
	The collection
	$
	\bigl\{ V(\varepsilon, \delta, e) : \varepsilon, \delta > 0,\; e \in \cP(\mathcal{M}),\; \tau(e) < \infty \bigr\}
	$ 
	forms a neighbourhood base at $0$ for a Hausdorff vector space topology $\mathcal{T}_{lm}$ in $S(\mathcal{M},\tau)$,   
	called the \emph{local measure topology}. 
	If	a net $\{x_\alpha\}\subseteq S(\mathcal{M}, \tau)$ converges  to $x \in S(\mathcal{M}, \tau)$ in  $\mathcal{T}_{lm}$, we write  
	$
	x_\alpha \stackrel{\mathcal{T}_{lm}}{\to} x 
	$
	\cite[Section 2.7]{DPS}.

	\subsection{Symmetric  function spaces and  fundamental functions}\label{section:symmetric}

	Symmetric function spaces and their fundamental functions   are classical topics   in the theory of  Banach function lattices;   see \cite{BS88, LT2,KPS} for more details.

	A Banach  function space $(E,\norm{\cdot}_E) \subseteq S(I)$ is said to be 
	{\it symmetric} if 
	$x\in E$,  $y\in S(I)$  and  $\mu(y)\leq\mu(x)$ imply that  $y\in E$ and $\left\|y\right\|_E\leq \left\|x\right\|_E.$ 
	If, in addition,   
	\(\| y \|_E \leq \| x \|_E\) whenever  \( x, y \in E \) satisfy  $y\prec \prec x$,  	then 
	$(E, \norm{\cdot}_{E})$ is said to be 	{\it  strongly symmetric}. 
	A symmetric  function space $(E, \norm{\cdot}_{E})$  is said to be 	{\it  fully symmetric}  if  \(\norm{\cdot}_E\) has the additional property that    for any $x\in E$ and $y\in S(I)$,  $y\prec \prec x$  implies that $y\in E$ and $\norm{y}_E\le\norm{x}_E$.

	Suppose that %$(I, m)$ is atomless and  
	\( E\subseteq S(I) \) is a symmetric
	function space.
	The {\it K\"othe dual space} \(E^{\times}\) of \(E\) is the collection of all elements \(y \in S(I)\) such that
	\[
	\|y\|_{E^{\times}} = \sup\left\{\int_I  |xy|\,dm : \|x\|_{E} \le 1\right\} < \infty.
	\]
	The {\it fundamental function $\phi_E$ of  \(E\)} is defined by 
	$\phi_E(t) = \| \chi_A \|_E,$  where \(0 \leq m(A)=t  \leq m(I)\). 
	Some basic properties of $\phi$ are well known (cf. \cite[page 66, Theorem 5.2 and Corollary 5.3]{BS88}, \cite[page 107, Eq(4.39)]{KPS} and \cite{KMMW13}):
	\begin{enumerate}[\rm(i)]
		
		\item $\phi_E(t)=0$ if and only if $t=0$;
		\item  
		$\phi_E$ is  non-decreasing;
		\item  
		$\phi_E(t)/t$ is decreasing;
		\item 	$\phi_E$ is continuous, except perhaps at the origin; 
		\item 
		%\begin{align}	\label{propertiesfundamental}
		$	\phi_E(t+s) \leq \phi_E(t)+\phi_E(s).
		$ 
		\item  The  fundamental function of   \(E^{\times}\) satisfies \begin{align}\label{defEtimes}
			\phi_{E^{\times}}(t) =\frac{t}{\phi_E(t)}, \quad 0 < t \leq m(I).
		\end{align} 
	\end{enumerate}  Define  \(\phi_E(0+) := \mathop{\lim}\limits_{t \to 0^+} \phi_E(t)\).

	\subsection{Construction of symmetric operator spaces}
	
	A Banach operator space \( (\cE, \norm{\cdot}_{\cE})\subseteq S(\cM, \tau) \)  will be  called
	\begin{enumerate}[\rm(i)]
		\item  {\it symmetric} 
		if \( x \in S(\cM, \tau) \), \( y \in \cE \) and \( \mu(x) \leq \mu(y) \) imply that \( x \in \cE \) and \(\| x \|_{\cE} \leq \| y \|_{\cE}\). 
		\item  {\it strongly symmetric}   if  \(\norm{\cdot}_{\cE}\) has the additional property that \(\| x \|_{\cE}\leq \| y \|_{\cE}\) whenever \( x, y \in \cE \) satisfy \( x \prec \prec y \).
		\item  {\it fully symmetric} if    \(\norm{\cdot}_{\cE}\) has the additional property that \( x \in S(\cM, \tau) \), \( y \in \cE \) and \( x \prec \prec y \) imply that \( x \in \cE \) and \(\| x \|_{\cE} \leq \| y \|_{\cE} \).
	\end{enumerate}
	
	If  $\cM$ is  atomless  and  a symmetric operator  space   $\cE\neq \{0\}$, then  the  carrier projection $c_{\cE}:=\vee\{ p : p \in \mathcal{P}(\cM)\cap \cE \} \in \cP(\cM)$      of   $\cE$  \cite[Definition 4.1.5]{DPS}  is ${\bf 1}$, where $\vee$ denotes the supremum \cite[Lemma 4.4.5 (ii)]{DPS}.

	%Numerous symmetric operator spaces associated with	$\cM$ can be derived from concrete symmetric function spaces, see e.g. \cite{LSZ}. 
	Let
	$(E, \norm{\cdot}_{E})$ be a symmetric  function   space on $(0, \tau({\bf 1}) )$.
	Then, we obtain a symmetric operator space (operator counterpart) by the following  construction: 
	\begin{align*}
		E(\mathcal{M},\tau):=\Big\{x\in S(\mathcal{M},\tau):\ \mu(x)\in E \Big\}
		, \quad  
		\left\|x\right\|_{E (\mathcal{M},\tau)}:= \left\|\mu(x)\right\|_{E}.
	\end{align*}
	Many properties 
	carry over from   \( E \) to its operator   counterpart \( E(\mathcal{M}, \tau) \),   see e.g. \cite[Theorem 6.1.7]{DPS}, \cite[Theorems 53  and 54]{DD14} and \cite{DDST05}. 
	When $E=L_p(0, \infty)$ for  $1 \leq p \leq \infty$, the associated space $E(\cM, \tau)$ coincides with the noncommutative $L_p$-space  $L_p(\cM, \tau)$. In particular,  the von Neumann algebra
	$\cM$ is also denoted by 
	$L_\infty (\cM, \tau)$ \cite[Remark 3.4.15]{DPS}. 
	%When  $\mathcal{M}$ is $B(\mathcal{H})$, $L_p(\cM, \tau)$  is the   Schatten-$p$ class \cite{LSZ}. 

	Let \( \cM \) be  atomless and  let \( E(\cM, \tau) \) be a symmetric operator space  associated with a symmetric function space $E$. The {\it fundamental function}  of \(E(\cM, \tau)\) is defined by $$\phi_{E(\cM, \tau)}(t) = \|p\|_{E(\cM, \tau)},$$
	where \(p \in \mathcal{P}(\cM)\), \(\tau(p) = t\) for $0\le t\le \tau(\mathbf{1})$ ~\cite[page 43]{CS94}. Observe that,  \(\phi_{E(\cM, \tau)}(t) = \phi_{E}(t)\). 
	If \( E(\mathcal{M}, \tau)\neq \{0\} \) and \( y \in S(\mathcal{M}, \tau) \), then
	\begin{align}\label{twodefytimes}
		&~\quad \nonumber 
		\sup \left\{ 
		\tau(|xy|): x \in E(\mathcal{M}, \tau),  \|x\|_{E(\mathcal{M}, \tau)} \leq 1 
		\right\}
		\\ &\nonumber= \sup \left\{ 
		|\tau (xy)| : x \in E(\mathcal{M}, \tau),  \|x\|_{E(\mathcal{M}, \tau)} \leq 1 
		\right\} 
		\\ & =
		\sup \left\{ \int_{0}^{\infty} 
		\mu(t; x) \mu(t; y) dt: 
		x \in 
		E(\mathcal{M}, \tau), \|x\|_{E(\mathcal{M}, \tau)} \leq 1 
		\right\}.
	\end{align}
	In particular, \( y  \) belongs to the {\it  K\"{o}the dual} 
	\( E(\mathcal{M}, \tau)^\times \) 
	of 
	\( 
	E(\mathcal{M}, \tau) \)   if and only if the quantity defined in  \eqref{twodefytimes} is finite. Moreover,   $ \| y \|_{E(\mathcal{M}, \tau)^\times}$ is given by that same quantity, 
	%\[\sup \left\{ \int_{0}^{\infty} \mu(t; x) \mu(t; y) dt : x \in E(\mathcal{M}, \tau), \| x \|_{E(\mathcal{M}, \tau)} \leq 1 \right\} < \infty,\]in which case\[\| y \|_{E^\times} = \sup \left\{ \int_{0}^{\infty} \mu(t; x) \mu(t; y) \, dt : x \in E(\mathcal{M}, \tau), \| x \|_{E(\mathcal{M}, \tau)} \leq 1 \right\},\]
	see e.g. \cite[Section 5.2]{DD14} and  \cite{LSZ}.  
	%It is well-known that \( \norm{\cdot}_{E(\mathcal{M}, \tau)^\times} \) is a norm on \( E(\mathcal{M}, \tau)^\times \) if and only if  \( c_{E(\cM, \tau)} = \mathbf{1} \) \cite[Lemma 4.3.3]{DPS}. 
	%Let $\cM$ be a von Neumann algebra which is either non-atomic or atomic with all atoms having equal trace. 
	If $E(\cM, \tau)\neq {0}$ ($\mathcal{M}$ is atomless), then $E(\cM, \tau)^{\times}$ is a fully symmetric space with $c_{E(\cM, \tau)^{\times}}={\bf 1}$ and the Fatou property  \cite[Theorem 4.5.5]{DPS}.  
	%The closure $E(\cM, \tau)^b := \overline{\mathcal{F}(\cM, 	\tau)}^{\norm{\cdot}_{E(\cM, \tau)}}  	$ of   \( \mathcal{F}(\cM, \tau) \) in  \( E(\cM, \tau) \) is a fully symmetric space \cite[Theorem 4.5.8 and  Proposition 5.1.13]{DPS}.

	Observe that, 
	if $\cM$ is atomless and \( \tau(\mathbf{1}) <\infty \),  for a  symmetric operator space $E(\cM, \tau)$, 
	we have   
	$
	\cM \subseteq   E(\cM, \tau) \subseteq L_1(\cM, \tau)  
	$
	and 
	\begin{align}\label{L1tau1phi1}
		\|x\|_{E(\cM, \tau)}  \notag & \leq \|\mathbf{1}\|_{E(\cM, \tau)}  \|x\|_\infty, \quad x \in \cM, 
		\\ \quad 
		\|x\|_{L_1(\cM, \tau)}  & \leq \frac{\tau(\mathbf{1})}{\|\mathbf{1}\|_{E(\cM, \tau)}} \|x\|_{E(\cM, \tau)},\quad
		x\in E(\cM, \tau), 
	\end{align}
	see e.g. \cite[Example 2.6.7]{LSZ} and  \cite[Theorem 4.4.6]{DPS}.

	Assume that  \( \cE \) is a strongly symmetric  operator space.   Then the space \( \cE_n^* \)  of all normal  linear functionals on  \( \cE \) may be identified with the K\"othe dual \( \cE^\times \) by trace duality, see e.g.  \cite[Theorem 5.9]{DD12} and \cite[Theorem 5.2.9]{DPS}. 
	More precisely, 
	if $\varphi \in \cE_n^*$,  there exists $y \in \cE^\times$ with  
	\begin{align}\label{normalKotheduality}
		\|y\|_{\cE^\times} = \|\varphi\|_{\cE^*}  \quad 
		\text{such that} \quad 	\varphi(x) = \tau(xy), \quad \forall x \in \cE. \end{align}
	%Note that, if $c_{\cE_{oc}}= {\bf 1}$, then a functional $\phi$ is singular if and only if $\phi$ vanishes on $\cE_{oc}$ \cite[page 328]{DPS}.

	Suppose that  \( (X, \norm{\cdot}_{X})  \subseteq S(\cM, \tau) \) is a normed   \( \cM \)-bimodule and  \( p \in \cP(\cM) \). 
	%, where $(\cM, \tau)$ is a semi-finite       von Neumann algebra. 
	The reduced von Neumann algebra \( \cM_p \) is equipped with the trace \( \tau_p \) given by \( \tau_p(a_p) = \tau(pap) \),  
	\( a \in \cM^{+} \). Define  the reduced spaces: 
	\[
	p Xp = \{pxp : x \in  X\}, \quad  X_p = \{x_p \in S(\cM_p, \tau_p) : x \in  X\}.
	\]
	Then,   
	the mapping  \( x \mapsto x_p \) is a \(*\)-preserving linear isomorphism from \( p Xp \) onto the   \( \cM_p \)-bimodule \(  X_p \) of \( \tau_p \)-measurable operators   \cite[Lemma 3.7.1]{DPS}. 
	Moreover,  endowing 
	\(  X_p \) 
	with the norm
	\[
	\|x_p\|_{ X_p} := \|pxp\|_ X, \quad x \in  X,   
	\]
	the space 
	\(  X_p \) 
	becomes a normed \( \cM_p \)-bimodule and 
	the mapping  \( x \mapsto x_p \) is an isometry from \( p Xp \) onto \(  X_p \) \cite[Subsection 4.1]{DPS}.

	Suppose that 	\((\cE, \norm{ \cdot }_{\cE}) \) is 
	a  symmetric function/operator space. 
	Then,   \(\norm{ \cdot }_{\cE}\) is    called a \emph{Fatou norm} if for any upward directed net \(\{x_\alpha\}\) in \(\cE^{+}\) and \( x \in \cE^{+}\), it follows from \(0 \leq x_\alpha \uparrow x\) that \(\|x_\alpha\|_{\cE}\uparrow \|x\|_{\cE}\) \cite[Definition 4.4.14]{DPS}. 
	\((\cE, \norm{ \cdot }_{\cE}) \) is  said to have the {\it Fatou property}  if for every upward directed net  \( \{x_\alpha\} \) in \( \cE^{+} \), satisfying \( \sup_\alpha \|x_\alpha\|_{\cE} < \infty \), there exists an element \( x \in \cE^{+} \) such that \( 0\leq x_\alpha \uparrow x \) in \( \cE \) and \( \|x\|_{\cE} = \sup_\alpha \|x_\alpha\|_{\cE} \) \cite[Definition 4.1.21]{DPS}.
	%Examples such as Schatten-von Neumann operator ideals, Lorentz operator ideals, Orlicz operator ideals, etc. all have symmetric norms which have the  Fatou property.
	%Suppose that the von Neumann algebra $\cM$ is either atomless or atomic and all minimal projections have equal trace. 

	\section{Inheritance of the Daugavet property in  symmetric operator spaces}\label{sec3}

	This section is devoted to the inheritance of the Daugavet property by natural subspaces of noncommutative symmetric operator spaces,  thereby extending  \cite[Theorem 2.1]{AKM12} and 	\cite[Theorem 4.9]{AKM15}.

	The first  result (see Theorem \ref{inheriteDP})  serves as  a noncommutative version   of \cite[Theorem 2.1]{AKM12}, which established that for a real Banach function lattice $X$ on  a $\sigma$-finite atomless measure space with  a  Fatou norm and the Daugavet property, its subspace of all elements of order continuous norm (provided it is non-zero) inherits the Daugavet property.

	To formulate our results, let us recall some notations.
	Let $(\cM, \tau)$ be a semi-finite von Neumann algebra and  \( (\cE, \norm{\cdot}_{\cE}) \subseteq S(\cM, \tau) \) be a  symmetric operator space.

	The norm $\norm{\cdot}_{\cE}$  is said to be \textit{order continuous} if \( \|x_\alpha\|_{\cE} \downarrow_\alpha 0 \) whenever a  net  \( x_\alpha \downarrow_\alpha 0 \subseteq \cE \), or equivalently,  \( \|x_n\|_{\cE} \downarrow 0 \) for every decreasing sequence \( \{x_n\}_{n=1}^\infty\subseteq \cE \)  satisfying  \(x_n\downarrow 0 \)  \cite[Definition 5.3.1 and Lemma 5.3.3]{DPS}.

	The subset $\cE_{oc} \subseteq \cE$ is defined by setting
	\[
	\cE_{oc} := \{ x \in \cE : |x| \ge x_n \downarrow 0 \;\Rightarrow\; \|x_n\|_{\cE} \downarrow 0 \text{ as } n \to \infty \},  
	\]
	and the elements of $\cE_{oc}$ are called \emph{elements of order continuous norm} \cite[Definition 5.4.1  and Remark 5.4.2]{DPS}.  
	Recall that 
	$(\cE, \norm{\cdot}_{\cE})$ is  order continuous (i.e., $ \norm{\cdot}_{\cE}$ is an order continuous norm) if and only if $\cE_{oc} = \cE$ \cite[page 6448]{DD12}.  
	%every non-zero order continuous symmetric operator space is fully symmetric when $\cM$ is atomless \cite[Corollaries 5.3.4 and  5.3.6]{DPS}.

	Note that   
	\( \cE_{oc}\) is fully  symmetric  when $\cM$ is atomless and $c_{\cE_{oc}}={\bf 1}$  (i.e.,   $\cE_{oc}\neq \{0\}$)  \cite[Theorem 4.5.8 and Proposition  5.4.8]{DPS}. However, for general semi-finite $\mathcal{M}$ with atoms, $\cE_{oc}$ may fail to be symmetric even  if $\cE$ is  strongly symmetric and $c_{\cE_{oc}} = {\bf 1}$ \cite[Remark 5.4.7]{DPS}.

	\begin{theorem}
		\label{inheriteDP}
		Let  $(\cM, \tau)$  be a  semi-finite  atomless  von Neumann algebra   and  let 
		$ \norm{\cdot}_{E(\cM, \tau)}$ be a Fatou norm  on   a symmetric operator space $E(\cM, \tau)$. 
		Assume that $E(\mathcal{M},\tau)_{\mathrm{oc}}\neq\{0\}$ and  
		$E(\cM, \tau)$ has the Daugavet property,  then $E(\cM, \tau)_{oc}$
        %$E(\cM, \tau)_{oc}=E_{oc}(\cM, \tau):= \{x \in E (\cM, \tau) : \mu(x) \in E_{oc}\}$ 
        also has  the Daugavet property.  
		%	where $E_{oc}(\cM, \tau)=\{ x\in E(\cM, \tau): \mu(x)\in E_{oc}\}$.   

	\end{theorem}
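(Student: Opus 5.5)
Write $\cE := E(\cM,\tau)$. The plan follows the commutative proof of \cite[Theorem 2.1]{AKM12}. The main tool is the slice characterization of the Daugavet property for the realification $\cE_{\mathbb R}$, recalled via $(X_{\mathbb R})^*\cong (X^*)_{\mathbb R}$. A space $X$ has the Daugavet property if and only if for every $x\in S_X$, every $\Phi\in S_{X^*}$ and every $\varepsilon>0$ there is $y\in B_X$ with
\[
\operatorname{Re}\Phi(y)>1-\varepsilon \quad\text{and}\quad \|x+y\|>2-\varepsilon .
\]
We apply this to $X=\cE_{oc}$. Fix $x\in S_{\cE_{oc}}$, $\Phi\in S_{(\cE_{oc})^*}$ and $\varepsilon>0$. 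We must produce a suitable $y\in B_{\cE_{oc}}$, and the strategy is to transfer the problem to $\cE$, use the Daugavet property of $\cE$ there, and then pull the resulting element back into $\cE_{oc}$.

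\textbf{Step 1 (identify the dual).} Since $\cM$ is atomless and $\cE_{oc}\neq\{0\}$, the space $\cE_{oc}$ is fully symmetric with carrier $\mathbf 1$ and has order continuous norm. Hence every bounded functional on $\cE_{oc}$ is normal. By \eqref{normalKotheduality} there is $z\in(\cE_{oc})^\times$ with $\|z\|_{(\cE_{oc})^\times}=1$ and $\Phi(u)=\tau(uz)$ for $u\in\cE_{oc}$.

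Next we check that $(\cE_{oc})^\times=\cE^\times$ isometrically. Because the norm of $\cE$ is Fatou, for $w\in\cE$ one can approximate $w$ by truncations $w_n=we^{|w|}(1/n,n)$ restricted to finite-trace projections, so that $|w_n|\uparrow|w|$. These $w_n$ belong to $\cE_{oc}$ whenever $\cE_{oc}$ contains $\mathcal F(\cM,\tau)\cap\cE$, which is the case when $\cE_{oc}\neq\{0\}$ by the fully symmetric structure. The duality sup in \eqref{twodefytimes} is therefore unchanged.

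Consequently $\tilde\Phi(u):=\tau(uz)$ defines a norm-one normal functional on $\cE$ extending $\Phi$. Normality holds by Fatou; alternatively, note that $\|\tilde\Phi\|\le\|z\|_{\cE^\times}=1$ and $\tilde\Phi$ restricts to $\Phi$.

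\textbf{Step 2 (use the Daugavet property of $\cE$).} Applying the slice criterion in $\cE$ to $x$ and $\tilde\Phi$ gives $y\in B_{\cE}$ with
\[
\operatorname{Re}\tau(yz)>1-\varepsilon \quad\text{and}\quad \|x+y\|_{\cE}>2-\varepsilon .
\]

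\textbf{Step 3 (pull $y$ back into $\cE_{oc}$; main obstacle).} In the commutative case one multiplies $y$ by the indicator of a set where $|y|\le n$ with finite measure and uses monotone convergence. Here the lattice structure is missing, and cutting $y$ by spectral projections of $|y|$ acts on one side only.

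The plan is to take the polar decomposition $y=v|y|$ and set $y_n:=v|y|q_n$, where
\[
q_n:=e^{|y|}(1/n,n)\wedge p_n ,
\]
with $p_n$ chosen in $\cP(\cM)$ of finite trace and commuting with $|y|$. The existence of such $p_n$ uses semi-finiteness together with the fact that $\mu(y)\to 0$ on the relevant part, since $\mu(y)\in E$ with $E\neq L_\infty$ when $\cE_{oc}\neq\{0\}$; in the case $E=L_\infty$ one has $\cE_{oc}=\{0\}$ unless $\tau(\mathbf 1)<\infty$, and then $\cE_{oc}=\cE$ anyway. Arranging $q_n\uparrow r(y)$ is the delicate point. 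With this choice $y_n\in\cM\cap\mathcal F(\cM,\tau)\subseteq\cE_{oc}$ and $\mu(y_n)\le\mu(y)$, so $\|y_n\|\le 1$.

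Two limits remain to be checked.

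First, $\tau(y_nz)=\tau(|y|q_nzv)\to\tau(yz)$. This follows from the normality of $\tilde\Phi$ in the form $\tau(|y|q_n w)\to\tau(|y|w)$ for $w=zv\in\cE^\times$; it can be justified by applying Fatou/normal duality to the positive parts after decomposing $w$ into a linear combination of positives.

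Second, $\|x+y_n\|\to\|x+y\|$, at least along a subsequence and from below. Since $x+y_n\to x+y$ in the local measure topology with $\mu(x+y_n)$ bounded by $\mu(x)+\mu(y)$ in the majorization sense, the Fatou norm gives
\[
\|x+y\|\le\liminf_n\|x+y_n\| .
\]
The latter is the noncommutative Fatou lemma for $\mathcal T_{lm}$-convergent bounded sequences with a Fatou norm, which we would invoke from \cite{DPS}, Section 4.4.

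Choosing $n$ large then gives
\[
\operatorname{Re}\Phi(y_n)>1-\varepsilon \quad\text{and}\quad \|x+y_n\|>2-2\varepsilon ,
\]
which proves the Daugavet property of $\cE_{oc}$.
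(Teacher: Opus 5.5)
Your overall route is the paper's: identify $(\cE_{oc})^*$ with $\cE^\times$, extend $\Phi$ to $\tilde\Phi(u)=\tau(uz)$ on $\cE$, use the Daugavet property of $\cE$, then pull the witness back into $B_{\cE_{oc}}$ using normality and Fatou lower semicontinuity in $\cT_{lm}$. Your slice formulation is equivalent to the paper's inequality $\|I+S\|_{\cE\to\cE}\le\|I+T\|_{\cE_{oc}\to\cE_{oc}}$.

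The gap is in Step 3, exactly where you flag the "delicate point". Finite-trace projections $p_n$ commuting with $|y|$ and giving $q_n\uparrow r(y)$ need not exist, and your justification is wrong. The hypothesis $\cE_{oc}\neq\{0\}$ is equivalent to $\phi_E(0+)=0$; it does not force $\mu(y)\to 0$. For example, $E=L_1+L_\infty$ on $(0,\infty)$ has $E_{oc}\neq\{0\}$ but contains $\mathbf 1$.

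It is worse than that. Take $\cM=L_\infty(0,1)\overline{\otimes}B(L_2(0,1))$ with $\tau=\int\otimes\mathrm{Tr}$, and $y=\mathbf 1\otimes M_t$, where $M_t$ is multiplication by $t$. Then $\mu(y)\equiv 1$, so $y\in S_{\cE}$ for $E=L_1+L_\infty$. But $\{y\}'\cap\cM=L_\infty(0,1)\overline{\otimes}L_\infty(0,1)$ contains no non-zero projection of finite trace. Hence all your $q_n$ are $0$ and $y_n=0$. Your construction only works when $y$ is $\tau$-compact, since then $q_n=e^{|y|}(1/n,n)$ already has finite trace; this covers $\tau(\mathbf 1)<\infty$ or $\cE\subseteq S_0(\cM,\tau)$. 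You have no control over which $y\in B_\cE$ Step 2 produces, so this does not suffice. Separately, your aside that $\cE_{oc}=\cE$ when $E=L_\infty$ and $\tau(\mathbf 1)<\infty$ is false: for atomless $\cM$ one has $\cM_{oc}=\{0\}$. That case is simply excluded by hypothesis.

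The missing idea is to drop commutation and use that $\cE_{oc}$ is an order dense order ideal in $\cE$ \cite[Lemma 4.2.15]{DPS}.
\begin{itemize}
\item Choose $0\le x_n\uparrow|y|$ with $x_n\in\cE_{oc}$ (a net if necessary). In the example above, $x_k=y^{1/2}e_ky^{1/2}$ with finite-trace $e_k\uparrow\mathbf 1$ works.
\item Set $y_n:=vx_n$. Then $\|y_n\|_\cE\le\|x_n\|_\cE\le\|y\|_\cE\le 1$, and $y_n\to y$ in $\cT_{lm}$.
\item Write $zv=a_1-a_2+i(a_3-a_4)$ with $0\le a_j\in\cE^\times$. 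Normality of $\tau$ applied to $a_j^{1/2}x_na_j^{1/2}\uparrow a_j^{1/2}|y|a_j^{1/2}$ gives $\tau(y_nz)=\tau(x_nzv)\to\tau(|y|zv)=\tau(yz)$.
\item The Fatou norm then yields $\|x+y\|_\cE\le\liminf\|x+y_n\|_\cE$.
\end{itemize}
With this replacement your Steps 2--3 go through, and the argument coincides with the paper's.

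The same commuting truncation appears in your Step 1, but it is harmless there. The quantity in \eqref{twodefytimes} depends only on $\mu(w)$, so you may replace $w$ by an element of $\mathcal F(\cM,\tau)$ with singular value function $\min\{\mu(w),n\}\chi_{(0,n)}$.
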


	\begin{proof}
		Since  $\norm{\cdot}_{E(\cM, \tau)}$ is a Fatou norm,  
		it follows from \cite[Corollary  4.5.7]{DPS} that 
		$E(\cM, \tau)$   is  strongly symmetric, and so,  $E(\cM, \tau)_{oc}=E_{oc}(\cM, \tau)$ \cite[Proposition 6.1.9]{DPS}. 
		Since  
		$ \norm{\cdot}_{E(\cM, \tau)}$ is  order continuous on  \( E(\cM, \tau)_{oc} \) and $c_{E(\cM, \tau)_{oc}}={\bf 1}$ \cite[Proposition 5.4.8]{DPS},    
		it follows  that  
		\begin{align}\label{M-bimodule*=times}
			E(\cM, \tau)^{\times}
			& \,\,\,\,\, \stackrel{\tiny \mbox{\cite[Thm.6.1.6]{DPS}
				}
			}{=}	E^{\times}(\cM, \tau) 		\stackrel{\tiny \mbox{\cite[Prop.6.1.9]{DPS}
				}
			}{\cong}
			(E(\cM, \tau)_{oc})^{*} 
			\notag \\
			&	
			 \stackrel{\text{\cite[
		Prop.5.3.2]{DPS}}, \eqref{normalKotheduality}}{\cong}  
	 (E(\cM, \tau)_{oc})^\times 
		\end{align}  
		with equality of  norms.

		Let $T = F \otimes x_0$ be a rank-one operator on     $E(\cM, \tau)_{oc}$, that is, 
		$$T(z) = F (z)x_0, \quad z \in E(\cM, \tau)_{oc}, $$  
		where $F\in (E(\cM, \tau)_{oc})^{*} $ and $x_0\in E(\cM, \tau)_{oc}$.    
		There exists 
		$y\in  	 (E(\cM, \tau)_{oc})^\times = E(\cM, \tau)^{\times}  $  such that
		\begin{align}\label{F'=Ftimes}
			F(z)\stackrel{\eqref{normalKotheduality}}{=}
			\tau(zy), \,  \forall z\in E(\cM, \tau)_{oc}
			\quad  \text{and}\quad	\norm{F}_{(E(\cM, \tau)_{oc})^{*}}\stackrel{  \eqref{M-bimodule*=times}}{=}\norm{y}_{E(\cM,  \tau)^{\times}}.
		\end{align}

		%Since there exists  a linear %injective \cite[page 269]{DPS},isometry from $E(\cM, \tau)_{n}^{*}
		%= E(\cM, \tau)_^{\times}$  onto $(E(\cM, \tau)_{oc})^{*}$ , 

        Define a normal	functional $\widetilde{F} \in E(\cM, \tau)^*$ by $\widetilde{F}(z)=\tau(zy) $ for any $z\in E(\cM, \tau)$. Then
        $$\norm{\widetilde{F}}_{E(\cM, \tau)^{*}}=\norm{F}_{(E(\cM, \tau)_{oc})^{*}}
		\stackrel{\eqref{F'=Ftimes}}{=} \norm{y}_{E(\cM,  \tau)^{\times}}.$$
        
		% By \cite[Proposition 5.4.6]{DPS},  
		% there exists a 
		% normal 
		% linear functional $\widetilde{F}$ on $E(\cM, \tau)$ such that 
		% $\widetilde{F}|_{E(\cM, \tau)_{oc}}=F$ and $\norm{\widetilde{F}}_{E(\cM, \tau)^{*}}
		% = 
		% \norm{F}_{(E(\cM, \tau)_{oc})^{*}}
		% \stackrel{\eqref{F'=Ftimes}}{=} \norm{y}_{E(\cM,  \tau)^{\times}}$. Moreover,  $\widetilde{F}(z)=\tau(zy) $ for arbitrary $z\in E(\cM, \tau)$.

		Consider the  rank-one operator 
		\begin{align*}
			S: E(\cM, \tau) &\to  E(\cM, \tau) 
			\\  
			z & \mapsto S(z) := \widetilde{F} (z)x_0.
		\end{align*}
        Note that 
        		\begin{align}\label{Snorm=Tnorm}
			\norm{S}_{E(\cM,  \tau) \to  E(\cM,  \tau)}=\norm{T}_{E(\cM, \tau)_{oc} \to  E(\cM, \tau)_{oc}}.
		\end{align}
		We show that $\|I+S\|_{E(\mathcal{M},\tau) \to E(\mathcal{M},\tau)} \le \|I+T\|_{E(\mathcal{M},\tau)_{oc}\to E(\mathcal{M},\tau)_{oc}}.$ %In fact, we have equality.
        
  %       Then,
		% \begin{align}\label{Snorm=Tnorm}
		% 	\norm{S}_{E(\cM,  \tau) \to  E(\cM,  \tau)} &\nonumber  \stackrel{\text{\cite[p.2388]{KMM07}}}{=}
		% 	\norm{\widetilde{F}}_{E(\cM,  \tau)^{*}} \norm{x_0}_{E(\cM, \tau)}
		% 	\\ &\notag \quad\,\,\,\,\, 	=
		% 	\norm{F}_{(E(\cM, \tau)_{oc})^{*}} \norm{x_0}_{E(\cM, \tau)}
		% 	\\ & \quad\,\,\,\,\,	=
		% 	\norm{T}_{E(\cM, \tau)_{oc} \to  E(\cM, \tau)_{oc}}.
		% \end{align}

		Let $x\in E(\cM, \tau)$  with $\norm{x}_{E(\cM, \tau)}\leq 1$  and  let $x=v|x|$ be its polar decomposition,  where the partial isometry $v\in\cM$. 
		Note that  $E(\cM, \tau)_{oc}$ is an  order dense order ideal of $E(\cM, \tau)$ \cite[Lemma 4.2.15]{DPS}. 
		Hence, there exists 
		$\{x_n\}_{n=1}^\infty  \subseteq  E(\cM, \tau)_{oc}$  
		such that 
		$0\leq x_n \uparrow |x|\in E(\cM, \tau)$ \cite[Lemma 4.2.15]{DPS},
		%In particular,we
		%have   $\mu(x_n) \uparrow \mu(x)$ \cite[Proposition 3.2.14]{DPS},
		and thus, \begin{align}\label{vxnnorm}
			\norm{vx_n}_{E(\cM, \tau)}
			\leq  
			\norm{v}_{\infty}  \norm{x_n}_{E(\cM, \tau)}
			\leq 
			\norm{x_n}_{E(\cM, \tau)} 
			\stackrel{\text{\cite[Prop.4.1.3]{DPS}}}{\leq}
			\norm{x}_{E(\cM, \tau)}\leq 1. 
		\end{align}
		Furthermore,     
		$	x_n \stackrel{\cT_{lm}}{\to}	|x| $ \cite[Proposition 2.7.6(v)]{DPS} 
		and  $	vx_n \stackrel{\cT_{lm}}{\to}	v|x|=x $  as $ n\to\infty$ \cite[Proposition 2.7.5]{DPS}.
		
		Since $y\in E(\cM, \tau)^{\times}$, it follows 
		that 
		$ yv \in E(\cM, \tau)^{\times}$. 
		Let $yv= \text{Re} (yv) + i \text{Im} (yv)= a_1-a_2+i (a_3-a_4)$, where $0 \leq a_i\in E(\cM, \tau)^{\times}$  satisfy $a_1a_2=a_3a_4=0$ \cite[p.241]{DPS}. 
		In particular, 
		$|x|a_i, x_na_i \in L_1(\cM, \tau)$ for all $i$ and $n$ \cite[Proposition 4.3.12]{DPS}.
		
		Since 
		$a_i^{\frac{1}{2}} x_n a_i^{\frac{1}{2}} \uparrow a_i^{\frac{1}{2}}|x| a_i^{\frac{1}{2}}$ \cite[Proposition 2.2.24(iv)]{DPS}, 
		it follows from   the  normality  of the trace $\tau$  that for every $i \in \{1,2,3,4\}$
		\begin{align*}
			\tau\left( x_n a_i\right) 
			\stackrel{\text{\cite[Prop.3.4.32]{DPS}}}{=}
			\tau\left( a_i^{\frac{1}{2}} x_n a_i^{\frac{1}{2}}\right)
			\to
			\tau\left( a_i^{\frac{1}{2}} |x| a_i^{\frac{1}{2}}\right) 
			\stackrel{\text{\cite[Prop.3.4.32]{DPS}}}{=}
			\tau\left(  |x| a_i
			\right) 
		\end{align*}
        as
		$n\to\infty$. 
		The  linearity of  $\tau$ yields that 
		\begin{align}\label{F=wideF}
			F(vx_n)=	\widetilde{F}(vx_n)= 	\tau\left( vx_n y\right) =
			\tau\left( x_n yv\right) 
			\to 
			\tau\left(  |x|yv \right) 
			=\tau\left(  x y
			\right)
			=\widetilde{F}(x)  
		\end{align}  as
		$n\to\infty, $ 
		where we used the fact that $\{ vx_n\}_{n=1}^{\infty}\subseteq  E(\cM, \tau)_{oc}$. 
		Therefore, 
		\begin{align*}
			vx_n+S(vx_n)        \stackrel{\eqref{F=wideF}}{=} 
			vx_n+F(vx_n) x_0 \stackrel{\cT_{lm}}{\to}
			x+ \widetilde{F}(x) x_0=x+	S(x), \quad n\to\infty.  
		\end{align*}
		Since  $\norm{\cdot}_{E(\cM, \tau)}$ is a Fatou norm and $\cM$ is atomless,  it follows from \cite[Theorem 4.4.16  and Remark 4.4.17]{DPS} that  
		\begin{align*}
			\|x+S(x)\|_{E(\cM, \tau)} &	\,\, \leq \mathop{\lim\inf}\limits_{n\to\infty}
			\norm{
				vx_n+\widetilde{F}(vx_n) x_0}_{E(\cM, \tau)}  
			\\ & 
			\, 	\stackrel{\eqref{vxnnorm}}{\leq} 
			\sup_{z\in E(\cM, \tau)_{oc}, \norm{z}_{E(\cM, \tau)_{oc}} \leq 1} \|z+F(z) x_0\|_{E(\cM, \tau)_{oc}}
			\\ &
			\,\,=\norm{I+T}_{E(\cM, \tau)_{oc}\to E(\cM, \tau)_{oc}}, 
		\end{align*}
		which shows that 
		$\norm{I+S}_{E( \cM, \tau)\to E( \cM, \tau)} \leq \norm{I+T}_{E(\cM, \tau)_{oc}\to E(\cM, \tau)_{oc}}$. 
		Since $E(\cM, \tau)$ has the Daugavet property, 
		it follows   that 
		\begin{eqnarray*}
			1+\norm{T}_{E(\cM, \tau)_{oc}\to E(\cM, \tau)_{oc}}   &\stackrel{\eqref{Snorm=Tnorm}}{=}&
			1+\norm{S}_{E( \cM, \tau)\to E( \cM, \tau)}\\
			&\stackrel{\rm \tiny Daugavet~property}{=}&
			\norm{I+S}_{E( \cM, \tau)\to E( \cM, \tau)}
			\\ & \leq& \norm{I+T}_{E(\cM, \tau)_{oc}\to E(\cM, \tau)_{oc}}
			\\ & \leq& 
			1+\norm{T}_{E(\cM, \tau)_{oc}\to E(\cM, \tau)_{oc}},  
		\end{eqnarray*}
		which  completes the proof. 
	\end{proof}
	\begin{remark}
It may happen that $E(\cM, \tau)_{oc}=\{0\}$, e.g., when $E(\cM, \tau)=L_\infty(\mu)$ for a non‑atomic measure $\mu$. 
The condition that $E(\cM, \tau)_{oc}\ne \{0\}$ is equivalent to  $c_{E(\cM, \tau)_{oc}}=\mathbf 1$ when $\cM$ is non-atomic\cite[Proposition 5.4.8]{DPS}. 
	\end{remark}

	\begin{remark}
		Let  $(\cM, \tau)$  be a  semi-finite  atomless  von Neumann algebra   and  let 
		$ \norm{\cdot}_{\cE}$ be a Fatou norm  on   a symmetric operator space $\cE\subseteq  S(\cM, \tau)_h$ of self-adjoint operators. 
		The argument used in the proof of
		Theorem \ref{inheriteDP} yields that 
		if  
		$\cE$ has the Daugavet property and  satisfies 
		$c_{\cE_{oc}}={\bf 1}$,  then $\cE_{oc}$ also has  the Daugavet property.

	\end{remark}

	Recall   
	that for a uniformly monotone real Banach function  lattice $X$ over an atomless  measure space $(\Omega,  \cS, \mu)$\cite[Theorem 4.9]{AKM15}, the Daugavet property is inherited by compressions of the form \( X(A): = \{f \chi_A : f \in X\} \), where $0<\mu(A)\leq \mu(\Omega)$. 
	Our second result (see Theorem \ref{EpDPinherit} below)  provides  a natural noncommutative analogue of  \cite[Theorem 4.9]{AKM15}.  
	We first recall the definition of uniformly monotone norms and some fundamental properties.

    \begin{definition}\label{def:uniformly_monotone}
    \upshape
	Let \((X,\leq, \norm{\cdot}_X)\) be a partially ordered normed linear space.  
	$(X, \norm{\cdot}_X)$  is   \emph{uniformly monotone} whenever for any \(\varepsilon>0\),  there exists \(\delta(\varepsilon)>0\) such that for all \(0\le x,y\in X\),  we have  
	\[
	\|x+y\|_X > 1+\delta(\varepsilon),
	\]
	whenever \(\|y\|_X\ge\varepsilon\) and \(\|x\|_X=1\). 
    \end{definition}
	  Equivalently, \((X,\leq, \norm{\cdot}_X)\)  is uniformly monotone, if for the sequences $\{x_n\}$, $\{y_n\} \subseteq X$ satisfying $0 \le x_n \le y_n$ for all $n \in \mathbb{N}$, the condition $	\mathop{\lim}\limits_n \|x_n\|_X = 	\mathop{\lim}\limits_n \|y_n\|_X = 1$ implies that $\|y_n - x_n\|_X \to 0$ (cf.\cite[page 503]{C12} and \cite{CK17}). %see equivalent definition \cite[above Theorem 4.4]{AKM15}
    This notion (motivated by \cite[page 371]{B67}) seems to appear in \cite[page 344]{S95} for the first time, where it was stated in another equivalent form: 
 for all \(\varepsilon > 0\) there exists \(\delta(\varepsilon) > 0\) such that if \(f, g \in X\), \(f \geq 0\), \(g \geq 0\), \(\|f\|_X = 1\) and \(\|f + g\|_X \leq 1 + \delta(\varepsilon)\),  then \(\|g\|_X \leq \varepsilon\).

	Recall  that for an atomless semi-finite von Neumann algebra $(\cM,\tau)$, 
	a symmetric function  space \((E, \norm{\cdot}_E)\) is uniformly monotone
	if and only if  its noncommutative counterpart  \((E(\cM,\tau), \norm{\cdot}_{E(\cM,\tau)} ) \) is uniformly monotone, see e.g.  \cite[Lemma 2.5 and Corollary 2.11]{C12},  
	\cite[Corollary 17.3]{CK17} and  \cite[Theorem 3.2]{DDS97}. Moreover, if \((E(\cM,\tau), \norm{\cdot}_{E(\cM,\tau)} ) \) is uniformly monotone, then it  is order continuous \cite[Lemma 2.5]{C12}.

	%For uniformly monotone  real Banach function lattice, 	 	equivalent  characterizations and properties    are given in  \cite[Theorem 6]{HKM00} and 	\cite[Proposition 2.2]{L25}.   

	Before proceeding to the proof of Theorem \ref{EpDPinherit}, we need the following estimate, which 
	follows from  \cite[Theorem 1]{Sukochev16} (see also \cite[Lemma 2.4]{DDS97}).
    % Indeed, set $p=q=2$ in \cite[Theorem 1]{Sukochev16} and set $A=eu|x|^{1/2}, B=|x|^{1/2}f$ (both in $E^{1/2}(\mathcal{M},\tau)$), where $x=u|x|.$ So $AB=exf$ and by Holders inequality (thm 1)
    % $$\|exf\|_E \le \|A\|_{E^{1/2}} \|B\|_{E^{1/2}}=(\||A|^2\|_E)^{1/2} (\||B|^2\|_E)^{1/2}=$$
    % $$=(\||x|^{1/2}u^*eu|x|^{1/2}\|_E)^{1/2} (\|f|x|^{1/2}|x|^{1/2}f\|_E)^{1/2} \le \|x\|_E^{1/2} \|f|x|f\|_E^{1/2}.$$
	
	\begin{lemma}\label{Lem2.4}
		Let $(\cM, \tau)$ be a semi-finite von Neumann algebra, and let $(\cE, \norm{\cdot}_{\cE})\subseteq S(\cM, \tau)$ be a symmetric operator  space. 
		If \( x \in \cE \) and  \( p,q \in \cP(\cM) \), 
		then
		\[
		\|p x q\|_{\cE} \leq \|x\|_{\cE}^{\frac{1}{2}} \, \|q |x| q\|_{\cE}^{\frac{1}{2}}.
		\]
	\end{lemma}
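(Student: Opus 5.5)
The plan is to factor $pxq$ as a product of two operators, each built from $|x|^{1/2}$, and then apply a Cauchy--Schwarz/H\"older-type inequality for symmetric operator spaces. This is the route suggested by \cite[Theorem 1]{Sukochev16} (see also \cite[Lemma 2.4]{DDS97}).

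First, let $x=u|x|$ be the polar decomposition, with $u\in\cM$ a partial isometry. Write
\[
pxq = A B,\qquad A := p u |x|^{1/2},\quad B := |x|^{1/2} q .
\]
Then $|A|^2 = A^*A = |x|^{1/2}u^*pu|x|^{1/2}$ and $|B^*|^2 = BB^*$. More usefully, $|B|^2 = B^*B = q|x|q$.

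The key inequality to invoke is the H\"older-type estimate from \cite[Theorem 1]{Sukochev16}, taken with exponents $p=q=2$. In singular-value form, for $\tau$-measurable $A,B$ we have the submajorization
\[
\mu(AB)\prec\prec \mu(A)\mu(B),
\]
and the symmetric-space version gives
\[
\|AB\|_{\cE}\le \||A|^2\|_{\cE}^{1/2}\,\||B|^2\|_{\cE}^{1/2}.
\]
This second form is what \cite{Sukochev16} provides for an arbitrary symmetric (not necessarily fully symmetric) operator space. The point is that one only needs $\mu(AB)^{2}$ to be controlled in a suitable sense by $\mu(|A|^2)$ and $\mu(|B|^2)$, and this avoids using submajorization directly. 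Submajorization alone would not suffice, since $\cE$ need not be strongly symmetric. I take this result as given, exactly as the paper indicates.

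It then remains to estimate the two factors.
\begin{itemize}
\item For the first factor, $|A|^2=|x|^{1/2}u^*pu|x|^{1/2}\le |x|^{1/2}u^*u|x|^{1/2}=|x|$, because $u^*pu\le u^*u$. Hence $\mu(|A|^2)\le\mu(x)$, and symmetry gives $\||A|^2\|_{\cE}\le\|x\|_{\cE}$.
\item For the second factor, $|B|^2=q|x|q$ directly.
\end{itemize}
Combining these yields $\|pxq\|_{\cE}\le \|x\|_{\cE}^{1/2}\|q|x|q\|_{\cE}^{1/2}$.

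The main obstacle is justifying the H\"older step in a merely symmetric space $\cE$; everything else is operator-inequality bookkeeping.

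If a direct citation were unavailable, I would argue as follows. First, $\mu(t_1+t_2;AB)\le\mu(t_1;A)\mu(t_2;B)$. This gives $\mu(2t;AB)^2\le \mu(t;|A|^2)\mu(t;|B|^2)$. Next, I would pass from $\mu(2t;\cdot)$ to $\mu(t;\cdot)$ and from the geometric mean to the norm product. That step requires a Calder\'on--Lozanovski\u{\i} type argument for the $1/2$-convexification $E^{(1/2)}$ of $E$, and this is exactly the content of \cite[Theorem 1]{Sukochev16}. So I would rely on that theorem rather than reprove it.
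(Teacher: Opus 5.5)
Your proof is correct and is essentially the paper's argument: the paper cites \cite[Theorem 1]{Sukochev16} (H\"older with exponents $2,2$), and its suppressed derivation uses exactly your factorization $pxq=(pu|x|^{1/2})(|x|^{1/2}q)$ together with the bounds $|x|^{1/2}u^*pu|x|^{1/2}\le|x|$ and $(|x|^{1/2}q)^*(|x|^{1/2}q)=q|x|q$. The one inaccuracy is in your fallback remark: going through $\mu(2t;AB)$ costs the norm of the dilation $\sigma_2$ (which equals $2$ on $L_1$), and this would not suffice because the lemma is needed with constant exactly $1$ (e.g.\ in Theorem~\ref{EpDPinherit}, to pass from $\|px_np\|_{\cE}\to1$ to $\|p|x_n|p\|_{\cE}\to1$); a lossless self-contained route is instead the pointwise bound $\mu(AB)\le\frac12\mu(A^*A+BB^*)$, followed by the triangle inequality and the rescaling $A\mapsto tA$, $B\mapsto t^{-1}B$.
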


	\begin{theorem} \label{EpDPinherit} 
		Let \( (\cE, \norm{\cdot}_{\cE} )\) be a uniformly monotone symmetric operator space  affiliated with a semi-finite atomless von Neumann algebra $(\cM, \tau)$. 
		If \( \cE \) has the Daugavet property,  then \( \cE_p:= \{px p : x \in \cE\} \) also has the Daugavet property for every \( p \in \cP(\cM)\) with \( 0 < \tau(p) \leq \infty \).
	\end{theorem}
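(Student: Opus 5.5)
The plan is to use the slice characterization of the Daugavet property. A Banach space $X$ (real, or complex after passing to $X_{\mathbb{R}}$ and real parts of functionals) has the Daugavet property if and only if the following holds. For every $x\in S_X$, every $G\in S_{X^*}$ and every $\varepsilon>0$ there is $y\in B_X$ with
\[
\operatorname{Re}G(y)>1-\varepsilon \quad\text{and}\quad \|x+y\|_X>2-\varepsilon .
\]
Fix $p\in\cP(\cM)$ with $\tau(p)>0$, and let $x\in S_{\cE_p}$, $G\in S_{(\cE_p)^*}$ and $\varepsilon>0$. The compression $P(z)=pzp$ satisfies $\mu(pzp)\le \mu(z)$, so it is a norm-one projection of $\cE$ onto $\cE_p$. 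Hence $\widetilde G:=G\circ P$ lies in $S_{\cE^*}$. Applying the Daugavet property of $\cE$ to $x\in S_{\cE}$ and the slice of $B_{\cE}$ determined by $\widetilde G$ and a small parameter $\eta>0$ gives $z\in B_{\cE}$ with
\[
\operatorname{Re}G(pzp)>1-\eta, \qquad \|x+z\|_{\cE}>2-\eta .
\]
The candidate for $\cE_p$ is $y:=pzp\in B_{\cE_p}$. It already lies in the slice, so everything reduces to showing that $\|z-pzp\|_{\cE}\le \omega(\eta)$ with $\omega(\eta)\to0$ as $\eta\to0$. Then $\|x+pzp\|\ge 2-\eta-\omega(\eta)$, and choosing $\eta$ small finishes the proof.

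To control $z-pzp=pz(\mathbf 1-p)+(\mathbf 1-p)zp+(\mathbf 1-p)z(\mathbf 1-p)$ I will use Lemma \ref{Lem2.4} together with uniform monotonicity. From $\|pzp\|\ge \operatorname{Re}G(pzp)>1-\eta$ and Lemma \ref{Lem2.4} (with $q=p$) we get $\|p|z|p\|_{\cE}\ge (1-\eta)^2$. Next consider the pinching $p|z|p+(\mathbf 1-p)|z|(\mathbf 1-p)$, which is submajorized by $|z|$. Uniform monotonicity implies order continuity \cite[Lemma 2.5]{C12}, and in the atomless case this makes $\cE$ fully symmetric (cf. \cite[Corollaries 5.3.4 and 5.3.6]{DPS}). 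Therefore
\[
\bigl\|p|z|p+(\mathbf 1-p)|z|(\mathbf 1-p)\bigr\|_{\cE}\le 1 .
\]
Here we have a sum of two positive operators: the first has norm at least $(1-\eta)^2$ and the total has norm at most $1$. Rescaling the first summand to norm one and applying the definition of uniform monotonicity forces $\|(\mathbf 1-p)|z|(\mathbf 1-p)\|_{\cE}\le\omega_1(\eta)\to0$. Lemma \ref{Lem2.4} with $q=\mathbf 1-p$ then bounds both $\|pz(\mathbf 1-p)\|$ and $\|(\mathbf 1-p)z(\mathbf 1-p)\|$ by $\omega_1(\eta)^{1/2}$.

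For the remaining corner $(\mathbf 1-p)zp$, I repeat the argument with $z^*$ in place of $z$. Since $\|pz^*p\|=\|pzp\|$, the same reasoning gives $\|p|z^*|p\|$ close to $1$, then $\|(\mathbf 1-p)|z^*|(\mathbf 1-p)\|$ small, and finally $\|(\mathbf 1-p)zp\|=\|pz^*(\mathbf 1-p)\|$ small. This uses that $\mu(w^*)=\mu(w)$, so adjoints are isometric on $\cE$. Combining the three corner estimates gives $\|z-pzp\|\le 3\,\omega_1(\eta)^{1/2}$.

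The step I expect to be the main obstacle is justifying the pinching inequality $\|p|z|p+(\mathbf 1-p)|z|(\mathbf 1-p)\|\le\||z|\|$. This needs strong (full) symmetry of $\cE$, which is where order continuity coming from uniform monotonicity and atomlessness is used. A second delicate point is making the uniform monotonicity step quantitative when the first summand has norm only close to $1$ rather than exactly $1$. For that I would use the sequential form of uniform monotonicity stated after Definition \ref{def:uniformly_monotone}, applied to $x_n=p|z_n|p$ and $y_n$ the pinching, after normalization. The case $\tau(p)=\infty$ needs no separate treatment, since nothing above uses finiteness of $\tau(p)$.
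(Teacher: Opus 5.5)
Your proof is correct and is essentially the paper's argument in dual form: the paper extends a finite-rank $U$ on $\cE_p$ to $T=U(p\,\cdot\,p)$ on $\cE$, whereas you extend the functional $G$ to $G\circ P$; both then show the witness is close to its compression by Lemma~\ref{Lem2.4}, uniform monotonicity applied to $|z|$ and to $|z^*|$, and Lemma~\ref{Lem2.4} again on the off-diagonal corners. The one place you deviate is the pinching bound you flagged as the main obstacle, and it needs no strong symmetry: $p|z|p+(\mathbf 1-p)|z|(\mathbf 1-p)=\tfrac12\bigl(|z|+u|z|u^*\bigr)$ with the unitary $u=2p-\mathbf 1$, so the triangle inequality already bounds its norm by $\|z\|_{\cE}$ (the paper instead uses the operator inequality $0\le |z|^{1/2}p|z|^{1/2}\le |z|$ together with $\|(\mathbf 1-p)|z|(\mathbf 1-p)\|_{\cE}=\||z|^{1/2}(\mathbf 1-p)|z|^{1/2}\|_{\cE}$).
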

	
	\begin{proof}
		Let \( U: \cE_p \to \cE_p \) be a non-zero finite rank operator. 
		% Let  \( V: \cE \to \cE_p \) be the norm-one linear projection defined by 
		% \[
		% V(z) =p z p, \quad z \in \cE.
		% \]
		Define non-zero finite rank operator \( T: \cE \to \cE \) by 
		\begin{align}\label{defT}
			T(z):=U(pzp), \quad z \in \cE.
		\end{align}
		% Observe that, for all $z\in \cE_p$, we have  $T(z)=U(pzp)=U(z) $  and $\norm{z}_{\cE}=\norm{z}_{\cE_{p}}$. 
		% It follows that
		% \begin{align*}
		% 	\norm{U}_{\cE_p \to \cE_p}
		% 	=\sup_{z\in \cE_p,  \norm{z}_{\cE_p}\leq 1 }\norm{U(z)}_{\cE_p}
		% 	=\sup_{z\in \cE_p,  \norm{z}_{\cE}\leq 1 }\norm{T(z)}_{\cE} 
		% 	\leq \|T\|_{\cE \to \cE}.
		% \end{align*}
		% On the other hand, 
		% $\|T\|_{\cE \to \cE} = \|U\circ V\|_{\cE \to \cE}  \leq \|U\|_{\cE_p \to \cE_p}\|V\|_{\cE \to \cE_p}
		% =\|U\|_{\cE_p \to \cE_p}$.
		% Hence,  \( \|T\|_{\cE \to \cE} = \|U\|_{\cE_p \to \cE_p} \).

Since $pzp\in \mathcal{E}_p$, $\|pzp\|_{\mathcal{E}_p}\leq \|z\|_{\mathcal{E}}$, and $pzp=z$ for $z\in\mathcal{E}_p$, we have
$$\begin{aligned}
\|T\|_{\mathcal{E}\to\mathcal{E}} &= \sup_{\substack{z\in \mathcal{E}\\ 
\|z\|_{\mathcal{E}}\le 1}} \|T(z)\|_{\mathcal{E}} = \sup_{\substack{z\in \mathcal{E}\\ \|z\|_{\mathcal{E}}\le 1}} \|U(pzp)\|_{\mathcal{E}_p}\\
&=\sup_{\substack{z\in \mathcal{E}_p\\ \|z\|_{\mathcal{E}_p}\le 1}}
\|U(z)\|_{\mathcal{E}_p}=\|U\|_{\mathcal{E}_p\to\mathcal{E}_p}.
\end{aligned}
$$

		Since  \( \cE  \) has the Daugavet property, it follows that 
		\begin{align}\label{1+T=1+u}
			\|I + T\|_{\cE \to \cE} = 1 + \|T\|_{\cE \to \cE} = 1 + \|U\|_{\cE_p \to \cE_p}, 
		\end{align}
		which implies that  there is a sequence 
		\( \{x_n\}_{n=1}^{\infty} \) in \( S_{\cE} \)
		such that
		\begin{align}\label{xn+txn=1+u} 
			\lim_{n \to \infty} \|x_n + U(p x_n p)\|_{\cE}
			\stackrel{\eqref{defT}}{=} 	\lim_{n \to \infty} \|x_n + T(x_n)\|_{\cE}
			\stackrel{\eqref{1+T=1+u}}{=}
			1 + \|U\|_{\cE_p \to \cE_p}.
		\end{align}
		Since 
		$\norm{x_n}_\cE=1$ for all $n\geq 1$,   
		it follows from 
		the triangle   
		inequality 
		that  
		\begin{align*}
			1 + \|U\|_{\cE_p \to \cE_p}  &\stackrel{\eqref{xn+txn=1+u}}{=} 
			\lim_{n \to \infty} \|x_n + U(p x_n p)\|_{\cE} 
			\le \liminf_{n \to \infty} (1 + \|U\|_{\cE_p \to \cE_p} \|p x_n p \|_{\cE}) \\ &\,\, \le \limsup_{n \to \infty} 
			(1 + \|U\|_{\cE_p \to \cE_p} \| p x_n p\|_{\cE}) \le 
			1 + \|U\|_{\cE_p \to \cE_p}.
		\end{align*}
		It follows from  \( U \neq 0 \)  that $$ \lim_{n \to \infty} \|p x_n^* p\|_{\cE} \stackrel{\text{\cite[Prop.3.2.7]{DPS}}}{=}  
		\lim_{n \to \infty} \|p x_n p\|_{\cE} = 1
		.  
		$$

		By virtue of 
		\[ 	\|px_n p\|_{\cE}^2 
		\stackrel{\text{Lem.\ref{Lem2.4}}}{\leq }
		\|x_n\|_{\cE}   \|p |x_n| p\|_{\cE}
		=\|p |x_n| p\|_{\cE} 
		\leq  \| x_n\|_{\cE}=1, \quad n\in \mathbb{N}, 
		\]
		we immediately infer	
		$$ \lim_{n \to \infty} \|p |x_n| p\|_{\cE} 
		\stackrel{\text{\cite[Prop.3.2.10(ii)]{DPS}}}{=}
		\lim_{n \to \infty} \| |x_n|^{\frac{1}{2}} p |x_n|^{\frac{1}{2}}\|_{\cE}
		= 1.$$ 
		Since for each $n\geq 1$,  $\| x_n \|_{\cE} = 1$ and 
		\[
		0 \le |x_n|^{\frac{1}{2}} p |x_n|^{\frac{1}{2}} \stackrel{\text{\cite[Prop.2.2.24]{DPS}}}{\leq} |x_n|, 
		\] we deduce by \( (\cE, \norm{\cdot}_{\cE} )  \) is  uniformly monotone that \begin{align}\label{xn-xnpxnto0} 
			\||x_n|-|x_n|^{\frac{1}{2}} p |x_n|^{\frac{1}{2}} \|_{\cE}
			=\||x_n|^{\frac{1}{2}} ({\bf 1}-p) |x_n|^{\frac{1}{2}} \|_{\cE} \to 0,  \quad n \to \infty.
		\end{align}
		
		By the 	similar argument, we deduce that   
		% \begin{align*}
		% 	\lim_{n \to \infty} \|p |x_n^*| p\|_{\cE} 
		% 	\stackrel{\text{\cite[Prop.3.2.10(ii)]{DPS}}}{=}\lim_{n \to \infty} \| |x_n^*|^{\frac{1}{2}} p |x_n^*|^{\frac{1}{2}}\|_{\cE}   
		% 	= 1.
		% \end{align*}
		% and 
		\begin{align}\label{xn*-xn*pxn*to0} 
			\||x_n^*|-|x_n^*|^{\frac{1}{2}} p |x_n^*|^{\frac{1}{2}} \|_{\cE}
			=\||x_n^*|^{\frac{1}{2}} ({\bf 1}-p) |x_n^*|^{\frac{1}{2}} \|_{\cE} \to 0,  \quad n \to \infty.
		\end{align}

		We now have 
		\begin{eqnarray*} 
			\| ({\bf 1}-p)
			x_n ({\bf 1}-p)
			\|_{\cE} & 
			\stackrel{\text{Lem.\ref{Lem2.4}}}{\leq }  &
			\| ({\bf 1}-p)
			|x_n| ({\bf 1}-p)
			\|_{\cE}^{\frac{1}{2}}
			\\ & 
			=&	 \| |x_n|^{\frac{1}{2}} ({\bf 1}-p)
			|x_n|^{\frac{1}{2}} 
			\|_{\cE}^{\frac{1}{2}}
			\stackrel{\eqref{xn-xnpxnto0}}{ \to} 0, \quad n \to \infty, 
		\end{eqnarray*}
		\begin{eqnarray*}
			\| p
			x_n ({\bf 1}-p)
			\|_{\cE} & 
			\stackrel{\text{Lem.\ref{Lem2.4}}}{\leq }  &
			\| ({\bf 1}-p)
			|x_n| ({\bf 1}-p)
			\|_{\cE}^{\frac{1}{2}}
			\\ & 
			=&	 \| |x_n|^{\frac{1}{2}} ({\bf 1}-p)
			|x_n|^{\frac{1}{2}} 
			\|_{\cE}^{\frac{1}{2}}
			\stackrel{\eqref{xn-xnpxnto0}}{ \to} 0, \quad n \to \infty,  	
		\end{eqnarray*} 
		and 
		\begin{eqnarray*}
			\| ({\bf 1}-p)
			x_n  p 
			\|_{\cE}
			& \stackrel{\text{\cite[Prop.3.2.7]{DPS}}}{=}   &
			\| p
			x_n^*  ({\bf 1}-p) 
			\|_{\cE} 	\\
			&  \stackrel{\text{Lem.\ref{Lem2.4}}}{\leq } &
			\| |x_n^*|^{\frac{1}{2}} ({\bf 1}-p)
			|x_n^*|^{\frac{1}{2}} 
			\|_{\cE}^{\frac{1}{2}}  \stackrel{\eqref{xn*-xn*pxn*to0}}{ \to}  0,   \quad  n \to \infty.  	
		\end{eqnarray*} 	
		By \begin{align*}
			\|x_n-
			p x_n p \|_{\cE}
			& \leq 
			\|p x_n ({\bf 1}-p)
			\|_{\cE}
			+ 
			\| ({\bf 1}-p)
			x_n  p  \|_{\cE}
			+
			\| ({\bf 1}-p)
			x_n ({\bf 1}-p)
			\|_{\cE},
		\end{align*}
		we have  
		\begin{align}\label{xn-pxnpnormto0}
			\lim\limits_{n\to\infty}\norm{ x_n-
				p x_n p 	}_{\cE}=0.
		\end{align} 
		Thus, 
		\begin{align*}
			1 + \|U\|_{\cE_p \to \cE_p} &\stackrel{\eqref{xn+txn=1+u}}{=}  \lim_{n \to \infty} \|x_n + U(p x_n p )\|_{\cE} 
			\stackrel{\eqref{xn-pxnpnormto0}}{=}   
			\lim_{n \to \infty} \|p x_n p  + U(p x_n p )\|_{\cE}\\ & \,\,  =\lim_{n \to \infty} \|(I+U)(p x_n p ) \|_{\cE}.
		\end{align*}
		Since \( p x_n p\in B_{\cE_p} \) for each \( n \), we have
		\[
		\|I + U\|_{\cE_p \to \cE_p} = 1 + \|U\|_{\cE_p \to \cE_p},
		\]
		which implies that $\cE_p$ has the Daugavet property.
	\end{proof}
	
It is noted in \cite{AKM15} that Conjecture \ref{conj} would be positively resolved if Theorem \ref{EpDPinherit} is valid without the assumption of uniform monotonicity.

	\section{ 
		The Daugavet property on complex symmetric operator spaces}
	\label{s4}

	In this section, we present the full proof of Theorem \ref{Thm 1.2}.

	Recall that \cite[Corollary 2.2]{AKM12}  states that, under additional assumptions,
    %(Fatou norm and $\operatorname{supp}X_a=\Omega$)
    for a real Banach function lattice \(X\) with  a  Fatou norm and the Daugavet property its K\"{o}the dual \(X^{\times}\) contains a (lattice) isomorphic or (lattice)  isometric copy of \(L_1^{\mathbb{R}}(0,1)\). %{\color{blue}(I changed the statement as Corollary 2.2 gives conclusions only about Kothe dual space $X^\times.$)}
	We first   obtain a  noncommutative  version (see Corollary ~\ref{commutativecontainL1} below) of  \cite[Corollary 2.2]{AKM12}.  
	The proof for \cite[Corollary 2.2]{AKM12} relies on the  study of  asymptotically isometric copies of $\ell_1$ in Banach lattices, which is much less explored in the noncommutative context. 
	In what follows, we need to establish  several auxiliary  lemmas. 
	For 
	definitions for  related terminology, see the Appendix \ref{AppendixA}.

	Let $(\cM, \tau)$ be  a  semi-finite  von Neumann algebra and let $(\cE, \norm{\cdot}_{\cE})\subseteq S(\cM, \tau)$ be a symmetric  operator  space.
	We say that a $\norm{\cdot}_{\cE}$-norm closed subspace $X$ of $\cE$ is {\it strongly embedded}  into $\cE$ if the $\norm{\cdot}_{\cE}$-norm topology and the measure topology on $X$ coincide \cite[Definition  2.1]{Ran03}. 
	A sequence $\{x_n\}_{n=1}^\infty$ in $\cE$ is said to be \textit{both left and right disjointly supported} 
	if 
	there exist 
	two sequences  $\{p_n\}_{n=1}^\infty, \{q_n\}_{n=1}^\infty$ of mutually orthogonal projections in $\mathcal{P}(\mathcal{M})$ such that $x_n = p_n x_n q_n$ for all $n$.
	In particular, if $p_n=q_n$ for each $n\geq 1$, 
	then we call  $\{x_n\}_{n=1}^\infty$   \textit{disjointly supported} (see e.g. \cite[page 213]{CDS97} and \cite{Ran03}).

	We first establish the following proposition, which is an extension of \cite[Proposition 3.3]{Ran03}. 
	An important tool used in the proof of the following proposition is 
	\cite[Theorem 2.5]{CDS97} (see also \cite[Lemma 7.1]{DPS16} and \cite[Lemma 7.2]{HNPS24Tran}):  in an order continuous 
	symmetric operator space, any sequence  
	converging to zero in measure  contains an approximately right disjointly supported subsequence, which can be viewed  as a    noncommutative extension  of the classical Kadec--Pe\l czy\'{n}ski lemma concerning  unconditional basic sequences in $L_p(0,  1)$, $ p > 2$\cite{KP62}.
	
A sequence $\{y_n\}_n \subseteq X$ is called a \emph{block basis} of $\{x_n\}_n$ if there exist a strictly increasing sequence of integers $0 = k_0 < k_1 < k_2 < \cdots$ and scalars $\{a_j^{(n)}\}$ such that \cite[Definition 1.3.4]{AK06}
$$y_n = \sum_{j=k_{n-1}+1}^{k_n} a_j^{(n)} x_j, \quad n \ge 1.$$
If in addition $\|y_n\| = 1$ for all $n \ge 1$, then $\{y_n\}_n$ is called a \emph{normalized block basis}.

	\begin{proposition}\label{eitherorProp}
		Let $(\cM, \tau)$  be   a semi-finite von Neumann algebra  
		and let \( E
		\) be a 
		symmetric   function space on $(0, \tau({\bf 1}))$ with an order continuous norm $\norm{\cdot}_{E}$.  
		If \(X\) is a $\norm{\cdot}_{E(\cM, \tau)}$-closed subspace of   \(E(\mathcal{M}, \tau)\), then one of the following holds:   
		\begin{enumerate}[\rm(a)]
			\item \(X\) is strongly embedded into \(E(\mathcal{M}, \tau)\); 
			\item  there exist a normalized basic 
			sequence \(\{y_n\}_{n=1}^{\infty}\) in \(X\) and two mutually orthogonal  sequences  \(\{p_n\}_{n=1}^{\infty}, \{q_n\}_{n=1}^{\infty}
			\subseteq \cM
			\)   
			of $\tau$-finite projections  such that
			\[
			\lim_{n \to \infty} \| y_n - p_n y_n q_n \|_{E(\cM, \tau)}= 0.
			\] 	
				In particular, \(\{y_n\}_{n=1}^{\infty}\) has a subsequence 
                that is  equivalent to   a disjointly supported basic sequence in \(E(\mathcal{M},\tau)
	\). 
	Moreover, if \(X\) has a basis,  then the sequence \(\{y_n\}_{n=1}^{\infty}\) can be chosen to be a block basis of the basis of \(X\).
		\end{enumerate}
	\end{proposition}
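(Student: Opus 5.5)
The plan is to follow the dichotomy in \cite[Proposition 3.3]{Ran03}. Suppose (a) fails. The inclusion $X\hookrightarrow S(\cM,\tau)$ is continuous from the norm topology to the measure topology, since symmetric spaces embed continuously into $S(\cM,\tau)$. Hence failure of strong embedding means the identity from $(X,\cT_m)$ to $(X,\norm{\cdot}_{E(\cM,\tau)})$ is not continuous at $0$. Because both topologies are metrizable, we obtain a sequence $\{x_n\}\subseteq X$ with $\norm{x_n}_{E(\cM,\tau)}=1$ and $x_n\stackrel{\cT_m}{\to}0$.

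If $X$ has a basis $\{e_k\}$, we first replace $\{x_n\}$ by a normalized block sequence. Each basis coordinate functional is norm continuous. Measure convergence alone does not force the coordinates to vanish, however, so we argue by a gliding hump on the tails. Fix $N$; the projection onto the first $N$ coordinates has finite rank. On the finite-dimensional space $\mathrm{span}\{e_1,\dots,e_N\}$ the two topologies coincide, so vectors in it that are small in measure are small in norm. Choosing $x_n$ with $x_n\to0$ in measure, we show $P_Nx_n\to0$ in measure as well, and hence in norm. Granting this, $x_n-P_Nx_n$ is still close to a normalized vector tending to $0$ in measure. Truncating the tails (the basis expansion converges in norm) yields normalized blocks $\{z_n\}$ with $\norm{z_n-x_{k_n}}\to0$ along a subsequence. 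Perturbed sequences still converge to $0$ in measure.

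The step I expect to be the main obstacle is exactly the claim $P_Nx_n\to0$ in measure, since $P_N$ need not be continuous for $\cT_m$. I would try first to avoid it by a different choice. In each tail space $X_N=\overline{\mathrm{span}}\{e_k:k>N\}$, which has finite codimension, strong embedding must also fail. Otherwise $X=X_N\oplus F$ with $F$ finite-dimensional would be strongly embedded, because a finite-dimensional extension of a strongly embedded closed subspace stays strongly embedded by the closed-graph and finite-dimensional arguments. So we may pick $x_n\in X_N$ directly, and then approximate by finitely supported blocks.

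Next, apply the noncommutative Kadec--Pe\l czy\'nski lemma \cite[Theorem 2.5]{CDS97}, which is valid because $\norm{\cdot}_E$ is order continuous. It yields a subsequence, still denoted $\{x_n\}$, and mutually orthogonal projections $\{p_n\}$, $\{q_n\}$ with $\norm{x_n-p_nx_nq_n}_{E(\cM,\tau)}\to0$. The cited statement is one-sided ("right disjointly supported"). To obtain both sides, apply it once to get $\{q_n\}$ with $\norm{x_n-x_nq_n}\to0$. Then apply it to $\{(x_nq_n)^*\}$, which also tends to $0$ in measure and is normalized up to $o(1)$, to get $\{p_n\}$ on the left after passing to a further subsequence. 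This uses $\norm{z^*}=\norm{z}$ \cite[Prop.3.2.7]{DPS}.

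To make the projections $\tau$-finite, replace $q_n$ by spectral projections $q_n e^{|x_nq_n|}(\varepsilon_n,\infty)$ with $\tau$-finite trace. This is justified since $\tau(e^{|x|}(s,\infty))<\infty$ for $\tau$-measurable $x$ and large $s$. The error is controlled by order continuity, because the tail $|x|e^{|x|}(0,\varepsilon]$ decreases to $0$; the same truncation handles $p_n$. Subprojections of orthogonal families remain orthogonal.

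Finally, $\{p_nx_nq_n\}$ is both left and right disjointly supported and seminormalized. Such a sequence is a basic sequence, indeed $1$-unconditional by symmetry of the norm: multiplying by $\sum\pm p_n$ on the left preserves $\mu$. We pass to a subsequence with $\sum\norm{x_n-p_nx_nq_n}$ times the basis constant less than $1$. The small perturbation principle then shows that $\{x_n\}$ is basic and equivalent to $\{p_nx_nq_n\}$. Set $y_n=x_n$ (renormalized if needed).

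For the disjointly supported claim, the projections $p_n\vee q_n$ need not be orthogonal. Instead, use the standard unitary (partial isometry) trick. Since $\mu(p_nx_nq_n)$ is what determines the norm, and the family is left and right disjoint, we have $\norm{\sum a_np_nx_nq_n}=\norm{\sum a_n|p_nx_nq_n|}$. This follows from polar decomposition: $\sum a_n p_nx_nq_n=v\sum a_n|p_nx_nq_n|$, with $v=\sum v_n$ a partial isometry. The elements $|p_nx_nq_n|$ are disjointly supported by the $q_n$'s. So $\{y_n\}$ is equivalent to a disjointly supported sequence.
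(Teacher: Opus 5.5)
Your argument is correct and follows essentially the same route as the paper. You take a normalized measure-null sequence in $X$, apply the noncommutative Kadec--Pe\l czy\'nski lemma once on the right and once to the adjoints for the left, and finish with the small perturbation principle; you also prove directly (via the tail subspaces and the partial isometry $v=\sum v_n$) the block-basis and disjointification assertions that the paper delegates to \cite{Ran03} and \cite{S96}. One small repair: the truncation $e^{|x_nq_n|}(\varepsilon_n,\infty)$ with small $\varepsilon_n$ is $\tau$-finite not because of the ``large $s$'' property of $\tau$-measurable operators, but because order continuity of $\norm{\cdot}_E$ forces $E(\cM,\tau)\subseteq S_0(\cM,\tau)$. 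Alternatively, you can skip the truncation entirely by citing \cite[Lemma 7.2]{HNPS24Tran}, which already yields $\tau$-finite projections, as the paper does.
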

	
	\begin{proof}
		Assume that \( X \) is not strongly embedded into \( E(\mathcal{M}, \tau) \).
		Let 
		$$ j : \left( E(\mathcal{M}, \tau), \norm{\cdot}_{E(\cM, \tau)} \right) \rightarrow \left( S(\cM, \tau), \cT_m \right) $$
		be  the natural inclusion. 
		By \cite[Proposition 4.4.4]{DPS}, the 
		mapping $j$  is
		continuous.
		Since \( X \) is not strongly embedded into \( E(\mathcal{M}, \tau) \), the restriction \( j|_X \) fails to be an  isomorphism. Hence, there exists a  sequence \(\{y_n\}_{n=1}^{\infty}\subseteq S_X \) such that  $y_n \stackrel{\cT_m}{\longrightarrow} 0$   as $n\to\infty$.  
		In particular, we have  $y_n^* \stackrel{\cT_m}{\longrightarrow} 0$ \cite[Proposition 2.5.2(viii)]{DPS}.

	% To use TAMS lemma we need strong symmetry, which is available as we already mentioned: 	Note that   
	% \( \cE_{oc}\) is fully  symmetric  when $\cM$ is atomless and $c_{\cE_{oc}}={\bf 1}$  (i.e.,   $\cE_{oc}\neq \{0\}$)  \cite[Theorem 4.5.8 and Proposition  5.4.8]{DPS}.	
		By \cite[Lemma 7.2]{HNPS24Tran}, there exist a subsequence of \(\{y_n\}_{n=1}^{\infty}\) (still denoted  by \(\{y_n\}_{n=1}^{\infty}\) for simplicity) and a sequence  \(\{q_n\}_{n=1}^{\infty}\) of  mutually orthogonal $\tau$-finite projections   %in \( \mathcal{M} \) 
		such that 
		\begin{align}\label{yn-ynqn=0}
			\mathop{\lim}\limits_{n \to \infty} 
			\Big\|y_n -                           y_n q_n
			\Big\|_{E(\cM, \tau)}
			&\notag  =\mathop{\lim}\limits_{n \to \infty} 
			\Big\|\left(y_n -                           y_n q_n \right)^*
			\Big\|_{E(\cM, \tau)}
			\\ &  =
			\mathop{\lim}\limits_{n \to \infty} 
			\Big\| y_n^* -                           q_ny_n^*  
			\Big\|_{E(\cM, \tau)}
			= 0.
		\end{align}  
		It follows from  $y_n^* \stackrel{\cT_m}{\longrightarrow} 0$ 
		%or Propositions  2.4.4 and  2.5.13
		that $q_ny_n^* \stackrel{\cT_m}{\longrightarrow} 0$ 
		as $n\to\infty$ \cite[Proposition 2.5.2(xi)]{DPS}.  
		Applying \cite[Lemma 7.2]{HNPS24Tran} again, there exist a subsequence of \(\{ q_ny_n^*\}_{n=1}^{\infty}\) (still denoted  by \(\{q_n y_n^*\}_{n=1}^{\infty}\)) and a sequence  \(\{p_n\}_{n=1}^{\infty}\) of  mutually orthogonal $\tau$-finite projections   
		such that 
		\begin{align}\label{qnynpn=0}
			\mathop{\lim}\limits_{n \to \infty} 
			\Big\| q_n y_n^* -                           q_ny_n^* p_n 
			\Big\|_{E(\cM, \tau)} 
			= 0.
		\end{align}
		The sequence 
		\(\{p_n y_n q_n\}_{n=1}^{\infty}\)   is   both
		left and right disjointly supported and   
		\begin{align*}%\label{yn-pnynqnnorm0}
			\mathop{\lim}\limits_{n \to \infty} 
			\Big\|  y_n -                           p_ny_n  q_n
			\Big\|_{E(\cM, \tau)}=	\mathop{\lim}\limits_{n \to \infty} 
			\Big\|  y_n^*  -                           q_ny_n^*   p_n
			\Big\|_{E(\cM, \tau)}
			\stackrel{\eqref{yn-ynqn=0}, \eqref{qnynpn=0}}{=}0. 
		\end{align*}
		
By the principle of small perturbations \cite[Theorem 1.3.9]{AK06}, there is a subsequence $\{y_{n_k}\}_{k=1}^{\infty}$  equivalent to $\{p_{n_k}y_{n_k}q_{n_k}\}_{k=1}^{\infty}$ in $E(\cM,\tau).$
Moreover,    	\(\{p_{n_k} y_{n_k} q_{n_k}\}_{k=1}^{\infty}\)  is  isometrically equivalent to  a
	disjointly supported  basic sequence  
	in \(E\) (see \cite[Proposition  2.4]{Ran03} and  
	\cite[Proposition 2.3]{S96}). 
	Thus, \(\{y_n\}_{n=1}^{\infty}\) has a subsequence that is  equivalent to   a disjointly supported basic sequence in \(E
	%(0, \tau({\bf 1}) )
	\).

	The proof of the last assertion follows the same lines as that of \cite[Proposition 3.3]{Ran03}.  \end{proof}  

	As a corollary of Proposition \ref{eitherorProp}, we obtain the following result. First, let us recall the following definition (equivalent definitions are given in the Appendix, see Lemma \ref{equivalentconditionsail1}).
A sequence $\{x_n\}_{n=1}^\infty$ in a  Banach space $(X, \norm{\cdot})$ is said to be an \emph{asymptotically isometric copy of} $\ell_1$ if there is a decreasing to zero sequence $\{\varepsilon_n\}_{n=1}^{\infty}$ of positive numbers in $(0,1)$ such that
$$\sum_{n=1}^{m}(1-\varepsilon_n)|a_n|\leq\left\|\sum_{n=1}^{m}a_nx_n\right\|\leq\sum_{n=1}^{m}|a_n|$$
for all $\{a_n\}_{n=1}^{m}\in \ell_1$, where $m\in \mathbb{N}\cup \{\infty\}$.
In particular, $\{x_n\}_{n=1}^\infty$ is equivalent to the canonical basis of $\ell_1:$
$$(1-\varepsilon_1)\sum_{n=1}^{m}|a_n|\leq\left\|\sum_{n=1}^{m}a_nx_n\right\|\leq\sum_{n=1}^{m}|a_n|,$$
and hence, is a basic sequence for its closed linear span.
	
	\begin{corollary}\label{cor:finite l1} Let   \( (\cE, \norm{\cdot}_{\cE})
		\) be  an order continuous  
		symmetric operator space affiliated with a  semi-finite   von  Neumann algebra $(\cM, \tau)$.
        If 
	    there exists a $\tau$-finite  projection $p\in \cM$   such that  \(\cE p \)  (or $p \cE$) contains   an  asymptotically isometric copy $\{x_k\}_{k=1}^\infty$  of $\ell_1$, 
        then  there
			exist a   normalized block basic sequence 
			$\{y_n\}_{n=1}^\infty$  taken with respect             to $\{x_k\}_{k=1}^\infty$ and two mutually orthogonal  sequences  \(\{g_n\}_{n=1}^{\infty}, \{h_n\}_{n=1}^{\infty}
			\subseteq \cM
			\)   
			of $\tau$-finite projections  such that  
			$\lim\limits_{n\to\infty} \norm{ y_n-g_n y_n h_n }_{\cE}=0$.   In particular, 
       there exists a  subsequence of $\Big\{
		g_n y_n h_n 
		\Big\}_{n=1}^\infty \subseteq \cE p$ (or $p \cE$),  still denoted by $\Big\{
		g_n y_n h_n \Big\}_{n=1}^\infty $,  such that 
        $\Big\{
		\frac{g_n y_n h_n}{\norm{g_n y_n h_n}_{\cE}}
		\Big\}_{n=1}^\infty $   is    a   both
		left and right disjointly supported,   asymptotically isometric  copy of
		\( \ell_1 \).    
	\end{corollary}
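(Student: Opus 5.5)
We apply Proposition \ref{eitherorProp} to the closed subspace $X:=\overline{\operatorname{span}}\{x_k\}_{k=1}^\infty$ of $\cE$. (If $\cE$ is given abstractly rather than as $E(\cM,\tau)$, we first identify it with $E(\cM,\tau)$ for the symmetric function space $E$ with $\mu$-norms, via the Calkin correspondence; order continuity passes to $E$.) Since $\{x_k\}$ is an asymptotically isometric copy of $\ell_1$, it is a basis of $X$ and $X\cong\ell_1$. The task is to rule out alternative (a), i.e. to show that $X$ is not strongly embedded.

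\textbf{Ruling out (a).} Suppose the norm and measure topologies coincide on $X$. We use $x_k=x_kp$ with $\tau(p)<\infty$. Order continuity gives $\cE\subseteq L_1+L_\infty$ and makes the norm on $\cE p$ comparable to something reflexive-like. A cleaner route is the following. The space $X\subseteq \cE p$ is isomorphic to $\ell_1$. Because $\tau(p)<\infty$, the measure topology on $\cE p$ is finer than the topology induced by $L_{1/2}$-type quasi-norms; indeed it coincides with the $\tau$-measure topology on the finite algebra generated by $p$. So $X$ would be a closed copy of $\ell_1$ on which convergence in measure is equivalent to norm convergence.

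The key step is to produce a normalized sequence in $X$ that tends to $0$ in measure. We take averages of blocks of $x_k$ with alternating or oscillating phases, e.g. Rademacher-like combinations $z_n=\frac1{N}\sum_{j\in B_n}\varepsilon_j x_j$. Their norm is at least $1-\varepsilon_{\min B_n}$ by the asymptotic isometry, so they stay normalized. Alternatively, and more robustly, we use that the unit ball of $\cE p$ is bounded in $L_1(\cM,\tau)p$ by \eqref{L1tau1phi1}-type estimates restricted to $p$. By a noncommutative Komlós/Kadec--Pełczyński argument, the bounded sequence $\{x_k\}$ in $L_1$ has a subsequence whose differences $(x_{k_{2n}}-x_{k_{2n+1}})/\|\cdot\|$ are either equi-integrable, and then tend weakly to $0$ in $L_1$, or split off disjoint pieces.

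I expect this to be the main obstacle. The cleanest choice is to invoke that a strongly embedded subspace of an order continuous $E(\cM,\tau)$ restricted to a $\tau$-finite corner embeds into $L_0$ isomorphically. We then use the known fact that a strongly embedded subspace of $\cE p$ has equi-integrable unit ball in $L_1(p\cM p)$, hence its unit ball is relatively weakly compact there. Applied to $\ell_1$, this forces $x_{k_{2n}}-x_{k_{2n+1}}\to0$ weakly in $L_1$ and then in measure along a further subsequence of convex blocks (Mazur), contradicting strong embedding since these blocks have norm $\ge 1-\varepsilon$.

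\textbf{Applying (b) and finishing.} Once (a) is excluded, Proposition \ref{eitherorProp}(b), with its last assertion since $X$ has the basis $\{x_k\}$, gives a normalized block basis $\{y_n\}$ of $\{x_k\}$ and orthogonal $\tau$-finite projections $g_n,h_n$ with $\|y_n-g_ny_nh_n\|_\cE\to0$. Since $y_n=y_np$, we may replace $h_n$ by the support of $h_n p$-type cut-downs, or simply note $g_ny_nh_n$ can be multiplied on the right by $p$ without increasing the error, so that it lies in $\cE p$.

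Normalized blocks of an asymptotically isometric $\ell_1$ sequence, after passing to a subsequence, are again asymptotically isometric $\ell_1$ copies. This holds because the lower constants $1-\varepsilon_{k_{n-1}+1}$ decrease to $1$ and $\|y_n\|=1$. Passing to a subsequence with $\sum_n\|y_n-g_ny_nh_n\|<\infty$ and with tails tending to $0$, the perturbations $u_n=g_ny_nh_n/\|g_ny_nh_n\|$ satisfy
\[
\sum(1-\varepsilon'_n)|a_n|\le\Big\|\sum a_nu_n\Big\|\le\sum|a_n|,
\]
with new $\varepsilon'_n\downarrow0$; the upper bound is trivial. This is the required left and right disjointly supported asymptotically isometric copy of $\ell_1$. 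The case $p\cE$ follows by taking adjoints.
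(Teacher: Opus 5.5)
Your overall architecture is the paper's. You apply Proposition~\ref{eitherorProp} to $X=\overline{\operatorname{span}}\{x_k\}$, exclude alternative (a), and finish with Lemmas~\ref{blockbasisasyl1} and~\ref{perturbationnormlized}; that final perturbation step matches the paper.

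\textbf{Excluding (a).} This is where your text is not yet a proof. The Rademacher averages, the ``$L_{1/2}$-type'' remark and the Koml\'os sketch never produce a normalized element of $X$ tending to $0$ in measure. The route you finally choose rests on an uncited ``known fact'' that is the entire content of the step. State it as the paper does.
\begin{itemize}
\item For $z\in\cE p$, $\mu(z)$ vanishes beyond $\tau(p)<\infty$, so $\cE p\hookrightarrow L_1(\cM,\tau)$ continuously. Also $L_1(\cM,\tau)\hookrightarrow(S(\cM,\tau),\cT_m)$ continuously.
\item If $\cT_m$ and the $\norm{\cdot}_{\cE}$-topology coincide on $X$, the $L_1$-topology is squeezed between them. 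The two norms are then equivalent on $X$, so $X$ is a closed subspace strongly embedded in $L_1(\cM,\tau)$.
\item By \cite[Theorem 4.3]{Ran03} such an $X$ contains no copy of $\ell_1$, a contradiction.
\end{itemize}
Your Eberlein--\v{S}mulian/Mazur ending is a legitimate substitute for that last citation. It takes convex blocks of $x_{k_{2n}}-x_{k_{2n+1}}$ tending to $0$ in $L_1$, hence in measure, hence in $\cE$, although their $\cE$-norms are at least $2(1-\varepsilon)$. But it works only once relative weak compactness of $B_X$ in $L_1(\cM,\tau)p$ is available. Note the space is $L_1(\cM,\tau)p$, not $L_1(p\cM p)$, since left supports of elements of $\cE p$ are not confined to $p$. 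That compactness is exactly the noncommutative Kadec--Pe\l czy\'nski theorem for $L_1$, so it must be cited, not assumed.

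\textbf{Membership in $\cE p$.} Your proposed repair fails, although the paper itself only asserts $g_ny_nh_n\in\cE p$.
\begin{itemize}
\item The right support of $g_ny_nh_np$ lies under the range projection of $ph_n$.
\item These range projections need not be mutually orthogonal even though the $h_n$ are. Already in $\mathbb{M}_2$, take $h_1,h_2$ the coordinate projections and $p$ the projection onto $e_1+e_2$: both $ph_1$ and $ph_2$ have range $\mathbb{C}(e_1+e_2)$.
\item So right multiplication by $p$ can destroy the right disjointness the conclusion requires. Cutting down by supports of $ph_n$ or $h_np$ has the same defect.
\end{itemize}
The clean fix is to choose $h_n\le p$ from the start. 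Since $y_n=y_np$, $|y_n|$ is affiliated with $p\cM p$ and tends to $0$ in measure. Running the right-hand step \cite[Lemma 7.2]{HNPS24Tran} on $\{|y_n|\}$ inside the order continuous reduced space $\cE_p$ gives mutually orthogonal $\tau$-finite $h_n\le p$ with
$\norm{y_n-y_nh_n}_{\cE}=\norm{v_n(|y_n|-|y_n|h_n)}_{\cE}\le\norm{|y_n|-|y_n|h_n}_{\cE}\to0$, where $y_n=v_n|y_n|$. The left step then proceeds unchanged, and $g_ny_nh_n\in\cE p$ automatically. Symmetrically, take $g_n\le p$ in the $p\cE$ case.
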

	    
	\begin{proof}
		Note that the  sequence   \(\{x_{k}\}_{k=1}^\infty  \)  is  a basis for its $\norm{\cdot}_{\cE}$-norm closed linear span $X\subseteq \cE p $. 
		By	Proposition \ref{eitherorProp}, one of the following holds:   
		\begin{enumerate}
			\item    $X$  is strongly embedded into $\cE
			$. 
			\item   There
			exist a   normalized block basic sequence 
			$\{y_n\}_{n=1}^\infty$  taken with respect             to $\{x_k\}_{k=1}^\infty$ and two mutually orthogonal  sequences  \(\{g_n\}_{n=1}^{\infty}, \{h_n\}_{n=1}^{\infty}
			\subseteq \cM
			\)   
			of $\tau$-finite projections  such that  
			$\lim\limits_{n\to\infty} \norm{ y_n-g_n y_n h_n  }_{\cE}=0$.   
		\end{enumerate}

		Assume that  $X$   is strongly embedded into $\cE
		$.  That is, for any sequence $\{z_n\}_{n=1}^{\infty}$  in $X$, $z_n \stackrel{\cT_m}{\longrightarrow} 0$
		is equivalent to  $z_n \stackrel{\norm{\cdot}_{\cE}}{\longrightarrow} 0$.     %as $n\to\infty$.
		By  ~\eqref{L1tau1phi1}, we know that   $z_n \stackrel{\norm{\cdot}_{\cE}}{\longrightarrow} 0$  implies 
		$z_n \stackrel{\norm{\cdot}_{L_1(\cM, \tau)}}{\longrightarrow} 0$\footnote{Note that for any sequence $\{z_n\}_{n=1}^{\infty}$  in $\cE p$,	$z_n  \stackrel{\norm{\cdot}_{E(\cM, \tau)}}{\longrightarrow} 0$  implies   $\mu(z_n  ) \stackrel{\norm{\cdot}_{E(0,\tau(p)) }}{\longrightarrow} 0$. Therefore,  $\mu(z_n  ) \stackrel{\norm{\cdot}_{L_1(0,\tau(p)) }}{\longrightarrow} 0$ (since $\tau(p)<\infty$), which implies that 
			$z_n \stackrel{\norm{\cdot}_{L_1(\cM, \tau)}}{\longrightarrow} 0$.}.  	%as $n\to\infty$.
		This together with  the continuity of the embedding  $(L_1 (\cM, \tau), \norm{\cdot}_{L_1(\cM, \tau)}) $ into $ (S (\cM, \tau), \cT_m)$ \cite[Proposition 3.4.11]{DPS} yields  
		that 
		the $\norm{\cdot}_{L_1(\cM, \tau)}$-norm topology and the measure topology coincide  on $X$. This means that 
		$X$  is strongly embedded into $L_1(\cM, \tau)
		$, which   
		is equivalent to  
		$X$ not containing   $\ell_1$ \cite[Theorem  4.3]{Ran03}. This directly contradicts the fact  that  $X$ contains  $\ell_1$, since  $X$ is    a   copy of  $\ell_1$. 
		Thus, it suffices to consider 
		case (2).

		%It suffices to consider the case of  asymptotically isometric copies.
		Since   \(\{x_{k}\}_{k=1}^\infty \subseteq \cE p \) is an asymptotically isometric copy of \(\ell_1\), it follows from 
		Lemma \ref{blockbasisasyl1} that  
		the normalized  block basic sequence  
		$\{
		y_n
		\}_{n=1}^\infty$  taken with respect             to $\{x_k\}_{k=1}^\infty$  is an asymptotically isometric  copy of
		\( \ell_1 \).   	    
		This together  Lemma  \ref{perturbationnormlized}    
		implies that    there exists a  subsequence of $\Big\{
		g_n y_n h_n 
		\Big\}_{n=1}^\infty \subseteq \cE p$ 
		(still denoted by $\Big\{
		g_n y_n h_n \Big\}_{n=1}^\infty $ )  such that 
        $\Big\{
		\frac{g_n y_n h_n}{\norm{g_n y_n h_n}_{\cE}}
		\Big\}_{n=1}^\infty \subseteq \cE p$  is    a normalized,   both
		left and right disjointly supported,   asymptotically isometric  copy of
		\( \ell_1 \).
	\end{proof}

\begin{remark}\label{threecopies}
		The asymptotically
		isometric copy of $\ell_1$ is, in particular, an almost isometric copy of $\ell_1$,  which in turn is a copy of $\ell_1$.  
		Thus,	all results in this paper concerning an asymptotically isometric copy of $\ell_1$ (in particular, those contained in the Appendix \ref{AppendixA}) 
        remain valid 
		in the latter two (weaker) settings.
	\end{remark}

	\begin{lemma}\label{doesnot}
		%Let $(\cM, \tau)$ be a semi-finite   von  Neumann algebra and let  \( E\) be  a symmetric function space on  \((0, \tau({\bf 1}) )\) with  an order continuous norm   \(  \norm{\cdot}_{E}\). 
        Let   \( (\cE, \norm{\cdot}_{\cE})
		\) be  an order continuous  
		symmetric operator space affiliated with a  semi-finite   von  Neumann algebra $(\cM, \tau)$.
		Assume that 	$\cE
		$ contains   an  asymptotically isometric copy  \( \{x_{k}\}_{k=1}^\infty    \) of \(\ell_1\) and  let    $p\in \cP(\cM)$ satisfy  $\tau(p)<\infty$.   
	If   
		$\{x_k p \}_{k=1}^{\infty}$  has  no  subsequence  that is  a copy of $\ell_1$,     
		then    there exists a block sequence 
		$\{y_n\}_{n= 1}^{\infty}:=
		\left\{  \mathop{\sum}\limits_{k= r_n}^{s_n} a_k x_k 
		 \right\}_{n= 1}^{\infty}$
		such that $\mathop{\sum}\limits_{k= r_n}
		^{s_n}   \left|a_k\right| =1$ for all $n\geq 1$ and 
			\begin{align*} \lim\limits_{n\to \infty} \norm{y_n p }_{\cE} =0.		\end{align*}  
	  
	\end{lemma}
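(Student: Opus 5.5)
The plan is to use Rosenthal's $\ell_1$ theorem together with Mazur's lemma; order continuity and the asymptotically isometric structure play no real role beyond boundedness. Since $\|x_k\|_{\cE}\le 1$ and $\|x_kp\|_{\cE}\le \|x_k\|_{\cE}\|p\|_\infty\le 1$, the sequence $\{x_kp\}_{k=1}^\infty$ is bounded in $\cE$. Rosenthal's $\ell_1$ theorem (valid for real and complex Banach spaces) says that every bounded sequence has either a subsequence equivalent to the unit vector basis of $\ell_1$ or a weakly Cauchy subsequence. By hypothesis the first alternative fails, so there is a subsequence $\{x_{k_i}p\}_{i=1}^\infty$ that is weakly Cauchy in $\cE$.

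Next, pass to differences. Put
\[
z_j:=x_{k_{2j}}p-x_{k_{2j-1}}p, \qquad j\ge 1 .
\]
For every $\varphi\in\cE^*$ the scalar sequence $\varphi(x_{k_i}p)$ converges, so $\varphi(z_j)\to 0$. Hence $\{z_j\}$ is weakly null. Consequently, for every $N$, the point $0$ lies in the weak closure of $\operatorname{conv}\{z_j:j\ge N\}$. By Mazur's theorem this weak closure equals the norm closure, so for every $N$ and $\varepsilon>0$ there is a finite convex combination $\sum_{j=N}^{M}c_jz_j$ (with $c_j\ge 0$ and $\sum c_j=1$) of norm less than $\varepsilon$.

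The blocks are then chosen inductively. Set $N_1=1$ and pick a convex combination with $c^{(1)}_j$ supported on $[N_1,M_1]$ and $\|\sum_j c^{(1)}_jz_j\|_{\cE}<1$. Having chosen $M_{n-1}$, set $N_n=M_{n-1}+1$ and pick $c^{(n)}_j$ supported on $[N_n,M_n]$ with $\|\sum_j c^{(n)}_jz_j\|_{\cE}<1/n$. Define
\[
y_n:=\sum_{j=N_n}^{M_n} c^{(n)}_j\bigl(x_{k_{2j}}-x_{k_{2j-1}}\bigr).
\]
Writing $y_n=\sum_{k=r_n}^{s_n}a_kx_k$ with $r_n=k_{2N_n-1}$ and $s_n=k_{2M_n}$, the coefficients are $a_{k_{2j}}=c^{(n)}_j$, $a_{k_{2j-1}}=-c^{(n)}_j$, and $a_k=0$ otherwise. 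The index ranges $[r_n,s_n]$ are successive and disjoint because the $k_i$ increase. This gives $\sum_{k=r_n}^{s_n}|a_k|=2\sum_jc^{(n)}_j=2$. To get sum exactly $1$, replace $y_n$ by $\tfrac12 y_n$; this is still a block sequence, and since $(\tfrac12y_n)p=\tfrac12\sum_jc^{(n)}_jz_j$, we get $\|(\tfrac12 y_n)p\|_{\cE}<\tfrac1{2n}\to0$.

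The only substantive step is obtaining the weakly Cauchy subsequence, i.e.\ applying Rosenthal's dichotomy to $\{x_kp\}$. Here one should check that ``has no subsequence that is a copy of $\ell_1$'' means no subsequence is equivalent to the unit vector basis of $\ell_1$, which is exactly the negation of Rosenthal's first alternative. Everything after that is bookkeeping of the $\ell_1$-normalization.

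It may be worth noting that each $y_n$ is nonzero. Indeed, since $\{x_k\}$ is an asymptotically isometric copy of $\ell_1$, $\|\tfrac12y_n\|_{\cE}\ge 1-\varepsilon_{r_n}$, so the block sequence is seminormalized. This is not required by the statement but is presumably what makes the lemma useful later.
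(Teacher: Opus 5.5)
Your argument is correct and is essentially the paper's proof: Rosenthal's $\ell_1$ dichotomy applied to the bounded sequence $\{x_kp\}$ gives a weakly Cauchy subsequence, consecutive differences give a weakly null sequence, Mazur's theorem produces successive convex blocks of vanishing norm, and halving normalizes $\sum_{k=r_n}^{s_n}|a_k|=1$. The interpretive point you flag is harmless: a subsequence equivalent to the $\ell_1$-basis spans a copy of $\ell_1$, so either reading of the hypothesis excludes Rosenthal's $\ell_1$ alternative. You are also right that order continuity is not needed here, and the paper's preliminary remarks about $S_0(\cM,\tau)$ play no role in the argument.
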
	
    
	\begin{proof}

		Since $(\cE, \norm{\cdot}_{\cE})$   is 
		order continuous,   it follows from  \cite[Theorem 6.1.7  and Lemma  5.5.2]{DPS} that 
		$
		\cE \stackrel{\mbox{\tiny \cite[Lem.5.5.2]{DPS}}}{\subseteq} 
		S_0(\cM, \tau)$,   
		where $S_0(\cM, \tau)$ is the collection  of all 
		$\tau$-compact operators in $S(\cM, \tau)$.  
		Note that   the spectral projection  	 $ p  := e^{\left|x_{1}\right|}
		\left(N^{-1}, N
		\right] \in \cP(\cM ) $  
		satisfy 
		$ \tau(p)\stackrel{
        {\tiny 
        \mbox{
        \cite[Definition 2.4.1]{DPS}
        }
        }
        }{<}\infty.$	 
		
		By the assumption that      the $\norm{\cdot}_{\cE }$-norm bounded sequence   $\left\{x_k p\right\}_{k=2}^{\infty}$   is not  a  copy of $\ell_1$ and   
		Rosenthal's $\ell_1$-theorem \cite[Theorem 10.2.1]{AK06},  there exists a subsequence of 
		$\left\{x_k p\right\}_{k=2}^{\infty}$   (still denoted by $\left\{x_k p\right\}_{k=2}^{\infty}$  for simplicity) such that  $\left\{x_k p\right\}_{k=2}^{\infty}$  is   a  weak Cauchy sequence. 
		Then, 
		$$\left\{z_l\right\}_{l=1}^{\infty}:=\left\{x_{_{2l}} p-
		x_{_{2l+1}} p \right\}_{l=1}^{\infty}$$  is   a  weak null sequence. 
		By Mazur's  theorem \cite[page 344, F.1]{AK06},  
		there exists a block sequence 
		\begin{align*}
			\left\{
			u_n
			\right\}_{n= 1}^{\infty} & :=
			\left\{  \mathop{\sum}\limits_{l= b_n}
			^{c_n} 
			\lambda_l		z_l
			\right\}_{n= 1}^{\infty} 
			=\left\{  \mathop{\sum}\limits_{l= b_n}
			^{c_n} 
			\lambda_l		x_{2l} p-
			\mathop{\sum}\limits_{l= b_n}
			^{c_n} 
			\lambda_l
			x_{2l+1} p 
			\right\}_{n= 1}^{\infty}  
		\end{align*} 	
		such that   $\lambda_l\geq 0$ for each $l\geq 1$, 
		$\mathop{\sum}\limits_{l= b_n}
		^{c_n}  \lambda_l =1$ for each $n\geq 1$	 and \begin{align}\label{5-1-1} \lim\limits_{n\to \infty} \norm{u_n }_{\cE} =0.
		\end{align}  
		Then, 
		$
		\left\{
		w_n
		\right\}_{n= 1}^{\infty}:=
		\left\{  
		\frac{1}{2}  u_n	\right\}_{n= 1}^{\infty} 		
	 $    is a block sequence   taken with respect           to $\left\{  x_k p \right\}_{k=2}^\infty$. 
		For simplicity, assume that 		\begin{align*} 
			\left\{
			w_n
			\right\}_{n= 1}^{\infty}: =		\left\{ \left(  \mathop{\sum}\limits_{k= r_n}^{s_n} a_k x_k \right)p \right\}_{n= 1}^{\infty}.     		\end{align*} 
            Define
            \begin{align*} 
			\left\{
			y_n
			\right\}_{n= 1}^{\infty}: =		\left\{  \mathop{\sum}\limits_{k= r_n}^{s_n} a_k x_k  \right\}_{n= 1}^{\infty}.     		\end{align*}
		Indeed, $r_n:=2b_n$ and $s_n:=2c_n+1$ for every $n\ge 1.$ 
		In particular, 
		$\left\{
	y_n
		\right\}_{n= 1}^{\infty}$ satisfies 
		\begin{align*}
			\mathop{\sum}\limits_{k= r_n}
			^{s_n}   \left|a_k\right|  &= \frac{1}{2} \left(  \lambda _{b_n} + | - \lambda _{b_n} | + \lambda _{b_{n}+1} + | - \lambda _{b_{n}+1} |+  \cdots + \lambda _{c_n} + | - \lambda _{c_n} | \right)\\ & =\sum_{l=b_n}^{c_n} \lambda_l = 1
		\end{align*}
		for all $n\geq 1$ and   
		\begin{align}\label{5-1} \lim\limits_{n\to \infty} \norm{y_n  p }_{\cE} 
        =
        \lim\limits_{n\to \infty} \norm{w_n  }_{\cE} \stackrel{\eqref{5-1-1}}{=}0,		\end{align} 
		%(passing to a subsequence, we may assume 
%\begin{align*}%\label{wnnormvarepsilon}$	\norm{  w_n^{(1)}}_{E(\cM, \tau)} \leq \frac{1}{2},$  $\forall n\geq 1$).
		%\end{align*}
		% {\color{red}
		% 	(Ok, I finally understood this argument (this is the same argument as you provided): we have
		% 	$$w_n:=\frac12 u_n= \sum_{l=b_n}^{c_n} \frac{\lambda_l}{2}(x_{2l}-x_{2l+1})p_1,$$
		% 	where $b_1<c_1<b_2<c_2< \ldots$ and $\sum_{l=b_n}^{c_n}\lambda_l=1, \lambda_l \ge0$ for every $n \ge 1.$ Set $r_n:=2b_n$ and $s_n:=2c_n+1$ for every $n\ge 1.$ Note that $r_1<s_1<r_2<s_2<\ldots.$
		% 	We claim that one can write 
		% 	$$w_n=\left(\sum_{k=r_n}^{s_n} a_kx_k\right)p_1,\quad \sum_{k=r_n}^{s_n} |a_k|=1, \quad n\ge 1.$$
		% 	Indeed, set
		% 	$$a_{r_n+2m}:=\frac{\lambda_{b_n+m}}{2}, \quad a_{r_n+(2m+1)}:=-\frac{\lambda_{b_n+m}}{2}, \quad \text{for any }  m\ge 0 \ \text{with } b_n+m\le c_n, \ n\ge 1.$$
		% 	Then 
		% 	$$\sum_{k=r_n}^{s_n}|a_k|=\sum_{k=b_n}^{c_n}\lambda_k=1, \quad n\ge 1.$$
		% 	)
		% }
		% {\color{green}From Jinghao: To Yerlan: Please, incorporate whatever details you think will help the presentation. I fully trust your ability to explain. YN: Thanks, I will try to do it later this week.}
        which completes the proof
        \end{proof}

	The following theorem is crucial in the proof of Theorem \ref{assumption=Linfty}. 
	Using 	Corollary ~\ref{cor:finite l1},  Remark ~\ref{threecopies} and  Lemma \ref{doesnot}, it  characterizes, 	from the noncommutative  perspective,   when
	a symmetric function space  contains a disjointly supported  asymptotically isometric copy of \(\ell_1\). 
	
			\begin{theorem}\label{dsl1}
		Let $(\cM, \tau)$ be a semi-finite   von  Neumann algebra 
		and let  \( E
		\) be  a 
		symmetric function space on  \((0, \tau({\bf 1}) )\) with  an 
		order continuous norm   \(  \norm{\cdot}_{E}\).
		Then,    
		$E(\cM, \tau)
		$ contains an asymptotically isometric copy of \(\ell_1\) 
		if and
		only if  
		$E(0, \tau({\bf 1}))$ contains a  disjointly supported asymptotically isometric  copy of
		\( \ell_1 \).  
	\end{theorem}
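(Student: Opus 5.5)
The ``if'' direction is the easy one. Suppose $\{f_n\}$ is a disjointly supported asymptotically isometric copy of $\ell_1$ in $E(0,\tau(\mathbf 1))$. Since symmetric norms depend only on $\mu$, I would first replace each $f_n$ by $\mu(f_n)$ placed on a disjoint interval of the same measure as its support. By symmetry of $E$, the $E$-norm of any combination $\sum a_nf_n$ depends only on the distribution of $\sum |a_n||f_n|$, so nothing changes. Next I transfer this sequence into $E(\cM,\tau)$.
\begin{itemize}
\item If $\cM$ is atomless, choose mutually orthogonal projections $e_n$ with $\tau(e_n)=m(\operatorname{supp} f_n)$. Build a trace-preserving $*$-isomorphism from $L_\infty(0,\tau(\mathbf 1))$ (restricted to the union of the supports) into $\cM$ sending the supports to the $e_n$. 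Disjointness then gives $\mu(\sum a_n x_n)=\mu(\sum a_n f_n)$ for the images $x_n$, so the norms agree.
\item If $\cM$ has atoms, rely on the convention in \cite[Proposition 2.4]{Ran03}, or restrict to the standing atomless hypothesis used in this section.
\end{itemize}
Either way the images form an asymptotically isometric copy of $\ell_1$ in $E(\cM,\tau)$.

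For the ``only if'' direction, let $\{x_k\}$ be an asymptotically isometric copy of $\ell_1$ in $E(\cM,\tau)$. The goal is to push it into $\cE p$ for some $\tau$-finite projection $p$, and then invoke Corollary~\ref{cor:finite l1}. I would first choose a $\tau$-finite $p$, for instance a spectral projection of $|x_1|$ as in Lemma~\ref{doesnot}, and split into two cases.
\begin{itemize}
\item If some subsequence of $\{x_kp\}$ is a copy of $\ell_1$, I want it (after blocking) to be an asymptotically isometric copy inside $\cE p$. Then Corollary~\ref{cor:finite l1}, together with Remark~\ref{threecopies}, yields a both left and right disjointly supported asymptotically isometric copy $\{g_ny_nh_n/\|g_ny_nh_n\|\}$. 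By \cite[Proposition 2.4]{Ran03} and \cite[Proposition 2.3]{S96}, such a sequence is isometrically equivalent to a disjointly supported sequence in $E$, namely the rearrangements $\mu(g_ny_nh_n)$ placed on disjoint intervals. This gives the desired copy in $E$.
\item Otherwise, Lemma~\ref{doesnot} gives blocks $y_n=\sum_{k=r_n}^{s_n}a_kx_k$ with $\sum|a_k|=1$ and $\|y_np\|_\cE\to0$. By Lemma~\ref{blockbasisasyl1} the normalized blocks are again asymptotically isometric, and $y_n\approx y_n(\mathbf 1-p)$. In this case I would not stop at a single $p$. Instead I would iterate with an exhausting sequence of $\tau$-finite projections $p_1\le p_2\le\cdots$ chosen from the supports of the preceding blocks, and by a diagonal argument produce blocks $z_n$ that are nearly right-supported on mutually orthogonal $\tau$-finite pieces $p_{n}-p_{n-1}$. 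Here order continuity is essential: it places $\cE\subseteq S_0(\cM,\tau)$, so each block is norm-approximable by $z(p_{n}-p_{n-1})$ on the right for a suitably large finite $p_n$.
\end{itemize}
Applying the same procedure to the adjoints then makes the blocks nearly left-supported on orthogonal projections too. Lemma~\ref{perturbationnormlized} preserves the asymptotically isometric property under these small perturbations, after which the reduction to $E$ via \cite[Proposition 2.4]{Ran03} applies as in the first case.

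The main obstacle is the first case. Lemma~\ref{doesnot} only tells us that $\{x_kp\}$ contains a copy of $\ell_1$, not an asymptotically isometric one, yet Corollary~\ref{cor:finite l1} needs the asymptotic isometric property. My first attempt would be to avoid the dichotomy altogether and work throughout with the diagonal construction from the second case. That construction produces blocks whose right and left supports are almost disjoint $\tau$-finite projections directly, so asymptotic isometry is inherited from $\{x_k\}$ through Lemma~\ref{blockbasisasyl1} and perturbation, rather than recovered from the compressed sequence $\{x_kp\}$. The role of Corollary~\ref{cor:finite l1} would then be only to upgrade approximate disjointness to exact disjointness, $g_ny_nh_n$, inside each finite corner.
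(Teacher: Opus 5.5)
Your overall architecture matches the paper's. You use a dichotomy at each $\tau$-finite projection between Lemma~\ref{doesnot} and Corollary~\ref{cor:finite l1}, iterate on complements, inherit asymptotic isometry from $\ell_1$-normalized blocks of $\{x_k\}$, repeat for left supports, and transfer to $E$ via \cite[Proposition 2.4]{Ran03}. The gap is exactly at the obstacle you identify, and your proposed way around it does not work.

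The diagonal construction of your second case needs, at \emph{every} stage $j$, blocks $z$ of the current sequence with $\norm{zp_j}_{\cE}$ small. The only tool you have for producing them is Lemma~\ref{doesnot}, whose hypothesis is precisely that the current compressed sequence has no subsequence which is a copy of $\ell_1$. You cannot ``avoid the dichotomy''. If, say, $\{x_kp\}$ is equivalent to the $\ell_1$-basis with constant $c>0$, then every block $y=\sum_{k=r}^{s}a_kx_k$ with $\sum|a_k|=1$ satisfies $\norm{yp}_{\cE}\geq c$, so no block can be pushed off $p$. The same can happen at any later stage. So in this branch Corollary~\ref{cor:finite l1} is not a cosmetic step upgrading approximate to exact disjointness; it is what \emph{creates} approximate disjointness inside $p$.

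The missing idea, which the paper uses in its Steps 1, 3 and 4, is that the first half of Corollary~\ref{cor:finite l1} needs only a copy of $\ell_1$ in a $\tau$-finite corner, not an asymptotically isometric one (Remark~\ref{threecopies}). That half rests on Proposition~\ref{eitherorProp} together with \cite[Theorem 4.3]{Ran03}. Applied to $\{x_kp\}$, it gives normalized blocks $\gamma_n=\bigl(\sum_{k=r_n}^{s_n}a_kx_k\bigr)p$ and mutually orthogonal projections $f_n\leq p$ with $\norm{\gamma_n-\gamma_nf_n}_{\cE}\to0$.

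One then discards $\gamma_n$ and keeps the \emph{uncompressed} blocks $\zeta_n=\sum_{k=r_n}^{s_n}a_kx_k$, rescaled so that $\sum_k|a_k|=1$. Since $1=\norm{\gamma_n}_{\cE}\leq\norm{\zeta_n}_{\cE}\leq(1+\varepsilon_{r_n})\sum_k|a_k|$, the rescaling factor is below $2$. Hence still $\zeta_np\approx\zeta_nf_n$, while $\{\zeta_n\}$ is asymptotically isometric by Lemma~\ref{blocksum=1asyl1}.

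The $f_n$ then serve as disjoint right-support pieces inside $p$ for later terms. The construction continues on $\mathbf{1}-p$, alternating with Lemma~\ref{doesnot} steps whenever a compression contains no $\ell_1$. The final sequence is assembled from these pieces (the paper's Step 5) and corrected by Lemma~\ref{perturbationnormlized}. The case where some $\cE h$ with $\tau(h)<\infty$ already contains an asymptotically isometric copy is split off first and handled directly by Corollary~\ref{cor:finite l1}; that is your first case when the compression happens to be asymptotically isometric.

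Separately, your hedge for atoms in the ``if'' direction cannot be removed. Take $\cM=\ell_\infty$ with counting trace and $E=L_1+L_2$ on $(0,\infty)$. Then $E(\cM,\tau)$ is isomorphic to $\ell_2$, while $E$ contains the isometric, disjointly supported copy $\{2^n\chi_{(2^{-n},2^{-n+1})}\}$ of $\ell_1$ inside $(0,1)$. So that direction genuinely requires $\cM$ atomless, although the paper simply declares it immediate.
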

	\begin{proof}
		
		It suffices to prove the   `only if' part.  Assume that 
			\( \{x_{k}\}_{k=1}^\infty  \subseteq E(\cM, \tau)   \)  is  an  asymptotically isometric copy of \(\ell_1\).
		In view of \cite[Proposition  2.4]{Ran03} and  
		\cite[Proposition 2.3]{S96},  
		the problem reduces to showing that  \(E(\cM, \tau)\) contains   an asymptotically isometric  copy of
		\( \ell_1 \), which is  disjointly supported from both the left and the right.

		We first construct  an asymptotically isometric copy $\{z_n\}_{n= 1}^{\infty}$  of $\ell_1$ disjointly supported from the right.  There are  two possible cases.
		
		{\bf Case 1: 
			There exists a $\tau$-finite  projection $h\in \cM$   such that   \( E(\cM, \tau) h \) %(or $p E(\cM, \tau)  $)  
			contains  an asymptotically isometric copy   of \(\ell_1\). 
		}  
		
		The construction of $\{z_n\}_{n= 1}^{\infty}$  is an immediate consequence of  Corollary \ref{cor:finite l1}.

		{\bf Case 2:  For any   $\tau$-finite  projection $h\in \cM$,  %neither  
			$ E(\cM, \tau) h $ 
			%nor  $p E(\cM, \tau)  $ 
			does not   contain    an  asymptotically isometric copy of \(\ell_1\).}

		%For an arbitrary $p\in \cP(\cM)$ with $\tau(p)<\infty$ and  a   copy $\{z_k\}_{k=1}^{\infty}$ of \( \ell_1 \) in \(E(\cM, \tau)\),  	define $$W  := \overline{\mathrm{span}\left\{ 	z_k  	\right\}_{k=  1}^{\infty} }^{\|\cdot\|_{E(\cM, \tau)}}  p  \subseteq  E(\cM, \tau) p.$$	
		%{\color{red} Then, $\left\{ 	z_k p 	\right\}_{k=  1}^{\infty} $  is a basis for $W$. }
		%	There are two possibilities: 
		%      \begin{itemize}
			%          \item $W $  contains  a  copy  of $\ell_1$;
			%          \item $W $ does not  contain any copy of $\ell_1$.  
			%       \end{itemize}
		
		Before the construction under the assumption that Case 2, it should be noted that 	for a  given copy $\{\xi_k\}_{k=1}^{\infty}$ of
		\( \ell_1 \) in \(E(\cM, \tau)\) and a given projection   $h\in \cP(\cM)$ with $\tau(h)<\infty$,   there are two possible cases: 
		\begin{enumerate}
			\item  There exists a subsequence  of  $\{\xi_k  h \}_{k=1}^{\infty}$ which is a copy of $\ell_1$, still denoted by  $\{\xi_k  h\}_{k=1}^{\infty}$.  
			\item There exists no  subsequence  of  $\{\xi_k h \}_{k=1}^{\infty}$ which is a copy of $\ell_1$. 
		\end{enumerate}

			For simplicity, we present the construction for a specific case
			to illustrate the general inductive construction.

			By \cite[Theorem 5.5.12]{DPS},  we can choose \( N_1 \)   large   enough  and define 	 $ p_1  := e^{\left|x_{1} \right|}
			\left(N_1^{-1}, N_1
			\right]  $  
			so that 
			\begin{align} \label{0-51}
				\tau(p_1)\stackrel{\tiny \mbox{\cite[Definition 2.4.1]{DPS}}}{<}\infty \quad 
				\text{and}\quad \left\|
				x_{1}- x_{1}  p_1  \right\|_{E(\cM, \tau)} 
				\leq  \frac{1}{2^2}.  \end{align}

			{\bf Step $1$.}  Assume that  there exists a subsequence  of  $\{x_k p_1\}_{k=2}^{\infty}=\left\{ x_k  p_1 \right\}_{k=1}^{\infty} \setminus \{x_1 p_1\}$ which is a copy of $\ell_1$, still denoted by  $\{x_k p_1\}_{k=2}^{\infty}$.

			By   Corollary ~\ref{cor:finite l1} 
            and Remark ~\ref{threecopies},   
			there exist  a   sequence  \( \left\{f_n^{(1)}\right\}_{n=1}^{\infty}
			\subseteq \cM
			\)   
			of  mutually orthogonal  $\tau$-finite projections 
			and  a normalized block basis $ \left\{ \gamma_n^{(1)}  \right\}_{n=1}^{\infty}
			:= 	\left\{ \left( 
			\sum\limits_{k= r_n^{(1)}}^{s_n^{(1)}}  a_{k}^{(1)} x_k \right) 
			p_1   
			\right\}_{n=1}^{\infty}$
			of $\left\{ x_k p_1 \right\}_{k=2}^{\infty}
			$    such that    
			$\left\{ \gamma_n^{(1)}  f_n^{(1)}   \right\}_{n=1}^{\infty}	
			$ 
			is  a copy of $\ell_1$  disjointly supported from the right 
			and 
			\begin{align}\label{0-1}\Big\|\zeta_n^{(1)} p_1 -                           \zeta_n^{(1)} f_n^{(1)}
				\Big\|_{E(\cM, \tau)} =	
				\Big\|\gamma_n^{(1)} -                           \gamma_n^{(1)} f_n^{(1)}
				\Big\|_{E(\cM, \tau)} 
			\leq 
				\frac{1}{2^{n+2}},  
			\end{align}  
			where 	$\left\{\zeta_n^{(1)}
			\right\}_{n=1}^{\infty}:= \left\{ 	\sum\limits_{k=r_n^{(1)} \ge 2 }^{s_n^{(1)}}  a_{k}^{(1)} 
			x_k 
			\right\}_{n= 1}^{\infty}  
			$ is a block sequence taken with respect to $\left\{ x_k \right\}_{k=2}^{\infty} = \left\{ x_k \right\}_{k=1}^{\infty} \setminus \{x_1\}
			$   and $f_n^{(1)} \leq p_1$ for all $n\geq 1$.

			Since $\tau(p_1)<\infty$, 
			it follows that $\tau(f_n^{(1)}) \to 0$ as $n\to \infty$.
			%, and thus, $\norm{ f_n^{(1)}}_{E(\cM, \tau)}\to 0$ as $n\to  \infty$ \cite[Lemma 7.1]{HNPS24Tran}. 
			Without loss of generality, we may assume that 
			\begin{align}\label{0-0} \sum\limits_{k=r_n^{(1)}}^{s_n^{(1)}}  |a_{k}^{(1)}| =1 	\end{align}  
			and 
			\begin{align}\label{0-11}\Big\|\zeta_n^{(1)} p_1 -                           \zeta_n^{(1)} f_n^{(1)}
				\Big\|_{E(\cM, \tau)} 
                %=	\Big\|\gamma_n^{(1)} -                           \gamma_n^{(1)} f_n^{(1)}	\Big\|_{E(\cM, \tau)} 
				\leq 
			\frac{1}{2^{n+1}}, \quad \forall n\geq 1.    
			\end{align}      
			Indeed, 
			since 	\( \left\{x_{k}\right\}_{k=1}^\infty  \subseteq E(\cM, \tau)   \)  is  an  asymptotically isometric copy of \(\ell_1\), 
			it follows that  there exists a null sequence \(\{\varepsilon_k\}_{k=1}^{\infty}\) of positive numbers in \((0,1)\)   such that for each  $n\geq 1$,  
			\begin{align*}
				1 =	\norm{\gamma_n^{(1)}}_{E(\cM, \tau) }
				\leq \norm{ \zeta_n^{(1)}}_{E(\cM, \tau) }& \, \,  \quad=
				\norm{	\sum\limits_{k= r_n^{(1)}}^{s_n^{(1)}}  a_{k}^{(1)} x_k }_{E(\cM, \tau) } \\ & \stackrel{\text{Lem.\ref{equivalentconditionsail1}(iii)}}{\leq} 
				\left(
				1+\varepsilon_{r_n^{(1)}}
				\right) \sum\limits_{k= r_n^{(1)}}^{s_n^{(1)}}  |a_{k}^{(1)}|. 
			\end{align*} 
			Define	$ t_n:= \sum\limits_{k= r_n^{(1)}}^{s_n^{(1)}}  |a_{k}^{(1)}|$. 
		Passing to a subsequence of $\left\{ \zeta_n^{(1)} \right\}_{n=1}^{\infty} $ if necessary (still denoted by $\left\{ \zeta_n^{(1)} \right\}_{n=1}^{\infty}  $), we may assume that  $t_n>0$ \footnote{ Note that $t_n\not\to 0$ as $ n\to \infty$.
		Because
		$\norm{\zeta_n^{(1)}}_{E(\cM,\tau)}\le \norm{\sum\limits_{k=r_n^{(1)}  }^{s_n^{(1)}}  a_{k}^{(1)} 
			x_k 
		}_{E(\cM,\tau)} \le t_n \cdot  \max\limits_{r_n^{(1)}  \leq k\leq s_n^{(1)}  }\norm{x_k}_{E(\cM,\tau)} $.     }  for all $n\geq 1$.  
		  Then, \begin{align}\label{0-012}
			 \frac{1}{t_n} \leq  1+\varepsilon_{r_n^{(1)}}< 2, \quad \forall n\geq 1.
			\end{align}   
			Moreover, the right-disjointly supported  sequence  	$\left\{\frac{1}{t_n}   \gamma_n^{(1)} f_n^{(1)}\right\}_{n=1}^{\infty}	
			$ 
			satisfies   
			\begin{align*}	
				\Big\| \frac{1}{t_n}\zeta_n^{(1)} p_1 -                          \frac{1}{t_n} \zeta_n^{(1)} f_n^{(1)}
				\Big\|_{E(\cM, \tau)} 
				% =  \frac{1}{t_n} 	\Big\| \gamma_n^{(1)}  -                           \gamma_n^{(1)} f_n^{(1)}\Big\|_{E(\cM, \tau)}
		\stackrel{\eqref{0-1}}{\leq}  	\frac{1}{2^{n+2} t_n} 
			\stackrel{\eqref{0-012}}{<}
				  \frac{1}{2^{n+1}}    
			\end{align*} 
	and 
			$\sum\limits_{k=r_n^{(1)}}^{s_n^{(1)}}  |\frac{a_{k}^{(1)}}{t_n} |=1 $	  for all $n\geq 1$. 
			
			Without loss of generality,  by \eqref{0-0} and Lemma \ref{blocksum=1asyl1}, we may assume that   $ \left\{\zeta_n^{(1)}\right\}_{n=1}^{\infty}
			$   is an  
			asymptotically isometric  copy of
			\( \ell_1 \). 
			Since $\left\{ f_n^{(1)}\right\}_{n=1}^{\infty}$  is a sequence of  mutually orthogonal projections,  it follows from  \cite[Theorem 5.5.12 or 5.5.13]{DPS} that  \begin{align*}
				\Big\|
				x_1  	f_n^{(1)}   
				\Big\|_{E(\cM, \tau)}   \leq \frac{1}{2^{n+1}}, \quad \forall n\geq 1 . 
			\end{align*} 
	Then,  
		\begin{align}\label{0-002}
			 \sum_{n =2 } ^{\infty} \Big\|
			x_1 	f_n^{(1)}   
			\Big\|_{E(\cM, \tau)} \leq   \sum_{n =2 } ^{\infty}  
            \frac{1}{2^{n+1}}=\frac{1}{2^2}.  
		\end{align}

			Choose 
			$C$  large enough such that  $q :=  e^{|\zeta_2^{(1)}({\bf 1}-p_1)|} (C^{-1}, C] \leq {\bf 1}-p_1 $ satisfies  \begin{align}\label{0-03}
				\tau(q)<\infty \quad 
				\text{and}\quad  	\left\|
				\zeta_2^{(1)}  ({\bf 1}-p_1) - \zeta_2^{(1)} q \right\|_{E(\cM, \tau)} \leq  \frac{1}{2^3}.
			\end{align}
			
			{\bf Step 2}. 
			Assume that there exists no  subsequence  of  $\left\{\zeta_k^{(1)} q  \right\}_{k=3}^{\infty}$ which is a copy of $\ell_1$.

			By 	Lemma ~\ref{doesnot},  
			there exists a block sequence 
			$\{	v_n \}_{n= 1}^{\infty}:=
			\left\{  \mathop{\sum}\limits_{k= t_n\geq 3}^{h_n} b_{k} \zeta_k^{(1)} 
			\right\}_{n= 1}^{\infty}$  
			such that $\mathop{\sum}\limits_{k= t_n}
			^{h_n}   \left|b_{k} \right| =1$ for all $n\geq 1$ and (setting $w_n := v_n q $)
			\begin{align}\label{5-1} \lim\limits_{n\to \infty} \norm{w_n  }_{E(\cM, \tau)}=
				\lim\limits_{n\to \infty} \norm{v_n  q }_{E(\cM, \tau)} =0.		\end{align} 
			Moreover, $\{	v_n \}_{n= 1}^{\infty}$ 
			is  an asymptotically isometric  copy of
			\( \ell_1 \), see  Lemma ~\ref{blocksum=1asyl1}. 
			
			Since 
			$$
			\lim\limits_{n\to \infty} \norm{v_n   -
				\left( 
				v_n  -w_n 
				\right)
			}_{E(\cM, \tau)} 
			=	\lim\limits_{n\to \infty} \norm{w_n }_{E(\cM, \tau)}	\stackrel{\eqref{5-1}}{=}0
			, 
			$$
			it follows from   Lemma  \ref{perturbationnormlized}
			that  there exist
			a  subsequence of $\left\{v_n-w_n\right\}_{n=1}^\infty$, still denoted by $\left\{v_n-w_n\right\}_{n=1}^\infty$,  	 and 
			a sequence $ \left\{ 
			\lambda_n \right\}_{n=1}^\infty $  of  positive  scalars with  $\lim\limits_{n\to \infty} 	\lambda_n=1$  such that  
			\begin{align}\label{1-01}
				\left\{\zeta_n^{(2)}\right\}_{n=1}^\infty
				& \notag :=\left\{ 
				\lambda_n
				\left(   v_n -w_n \right) \right\}_{n=1}^\infty
				\\ & 	\stackrel{\eqref{5-1}}{=} \left\{\lambda_n
				\left( \mathop{\sum}\limits_{k= t_n}^{h_n} b_{k} \zeta_k^{(1)}  
				\right)
				({\bf 1}-q)    \right\}_{n=1}^\infty 
			\end{align}
			is   an 
			asymptotically isometric  copy of
			\( \ell_1 \).  
			%Passing to a subsequence if necessary, we may  assume that   $ \frac{1}{2} = 1-\frac{1}{2}\leq \lambda_n^{(1)} \leq  1, $ for all $ n\geq 1.$ 

			Choose 
			$N_2$  large enough such that  $p_2 :=  e^{|\zeta_2^{(2)}({\bf 1}-p_1-q)|} (N_2^{-1},N_2] \leq {\bf 1}-p_1-q $ satisfies  \begin{align}\label{0-032}
				\tau(p_2)<\infty \quad 
				\text{and}\quad  	\left\|
				\zeta_2^{(2)}  ({\bf 1}-p_1-q) - \zeta_2^{(2)} p_2  \right\|_{E(\cM, \tau)} \leq  \frac{1}{2^4}.
			\end{align}
			
			{\bf Step 3.} 
			Assume that there exists a subsequence  of  $\left\{\zeta_k^{(2)} p_2\right\}_{k=3}^{\infty}$ which is a copy of $\ell_1$, still denoted by  $\left\{\zeta_k^{(2)} p_2\right\}_{k=3}^{\infty}$.

			There exist  a mutually orthogonal  sequence  \( \left\{f_n^{(2)}\right\}_{n=1}^{\infty}
			\subseteq \cM
			\)   
			of $\tau$-finite projections 
			and a    block sequence  $\left\{\gamma_n^{(2)}\right\}_{n=1}^{\infty} 
			:= 	\left\{ \left( 
			\sum\limits_{k= r_n^{(2)}}^{s_n^{(2)}  }  a_{k}^{(2)} \zeta_k^{(2)}  \right) 
			p_2   
			\right\}_{n=1}^{\infty} 
			$  
			such that 
			$\left\{\gamma_n^{(2)}f_n^{(2)}\right\}_{n=1}^{\infty} 
			$  is a copy   of $\ell_1$  disjointly supported from the  right and 
			\begin{align*}%\label{0-00}
				\Big\|\zeta_n^{(3)} p_2 -                           \zeta_n^{(3)} f_n^{(2)}
				\Big\|_{E(\cM, \tau)} 
				=	\Big\|\gamma_n^{(2)} -                           \gamma_n^{(2)} f_n^{(2)}
				\Big\|_{E(\cM, \tau)} 
				\to 0  \,\,\text{as}\,\,\, n\to \infty,  
			\end{align*} where   $\left\{\zeta_n^{(3)}\right\}_{n=1}^{\infty}:= \left\{ 	\sum\limits_{k=r_n^{(2)}}^{s_n^{(2)}}  a_{k}^{(2)} 
			\zeta_k^{(2)}  
			\right\}_{n= 1}^{\infty}   
			$ 	and     $f_n^{(2)} \leq p_2 \leq {\bf 1}-p_1-q $  for all $n\geq 1$.  
			Moreover,   $	\sum\limits_{k= r_n^{(2)}}^{s_n^{(2)}}  |a_{k}^{(2)}|=1$ for all $n\geq 1$  
			and $\tau(f_n^{(2)}) \to 0$ as $n\to \infty$. 
			Note that  \begin{align*}
				\Big\|
				\zeta_2^{(2)}  	f_n^{(2)}   
				\Big\|_{E(\cM, \tau)}  \leq \frac{1}{2^{n+3}}, \quad \forall n\geq 1 , 
			\end{align*}     see \cite[Theorem 5.5.12 or 5.5.13]{DPS}.  
			It follows that \begin{align}\label{0-003}
				\sum_{n =2 } ^{\infty} \Big\|
			\zeta_2^{(2)}  	f_n^{(2)}    
				\Big\|_{E(\cM, \tau)}  \leq \frac{1}{2^4}.  
			\end{align}

			By  Lemma \ref{blocksum=1asyl1}, $ \left\{\zeta_n^{(3)}\right\}_{n=1}^{\infty} 
			$   is an  
			asymptotically isometric  copy of
			\( \ell_1 \). 	
			Choose 
			$N_3$  large enough such that  $p_3 :=  e^{|\zeta_2^{(3)}({\bf 1}-p_1-q-p_2)|} (N_3^{-1},N_3] \leq {\bf 1}-p_1-q-p_2 $ satisfies  \begin{align*}%\label{0-01}
				\tau(p_3)<\infty \quad 
				\text{and}\quad  	\left\|
				\zeta_2^{(3)}  ({\bf 1}-p_1-q-p_2) - \zeta_2^{(3)} p_3  \right\|_{E(\cM, \tau)} \leq  \frac{1}{2^4}.
			\end{align*}

			{\bf Step 4.}
			Assume that there exists a subsequence  of  $\left\{\zeta_k^{(3)}  p_3\right\}_{k=3}^{\infty}$ which is a copy of $\ell_1$, still denoted by  $\left\{\zeta_k^{(3)}  p_3\right\}_{k=3}^{\infty}$ .

			There exist a   mutually orthogonal  sequence  \( \left\{f_n^{(3)}\right\}_{n=1}^{\infty}
			\subseteq \cM
			\)   
			of $\tau$-finite projections 
			and a    block sequence  $\left\{\gamma_n^{(3)}\right\}_{n=1}^{\infty} 
			:= 	\left\{ \left( 
			\sum\limits_{k= r_n^{(3)}}^{s_n^{(3)}}  a_{k}^{(3)} \zeta_k^{(3)}  \right) 
			p_3   
			\right\}_{n=1}^{\infty} 
			$  
			such that 
			$\left\{\gamma_n^{(3)}f_n^{(3)}\right\}_{n=1}^{\infty} 
			$  is a copy   of $\ell_1$  disjointly supported from the  right and 
			\begin{align*}
				\Big\|\zeta_n^{(4)} p_3 -                           \zeta_n^{(4)} f_n^{(3)}
				\Big\|_{E(\cM, \tau)} 
				=	\Big\|\gamma_n^{(3)} -                           \gamma_n^{(3)} f_n^{(3)}
				\Big\|_{E(\cM, \tau)} 
				\to 0  \,\,\text{as}\,\,\,\, n\to \infty,  
			\end{align*}
			where   $\left\{\zeta_n^{(4)}\right\}_{n=1}^{\infty}:= \left\{ 	\sum\limits_{k=r_n^{(3)}}^{s_n^{(3)}}  a_{k}^{(3)} 
			\zeta_k^{(3)}  
			\right\}_{n= 1}^{\infty}   
			$	and     $f_n^{(3)} \leq p_3$  for all $n\geq 1$.  
			Moreover,    $	\sum\limits_{k= r_n^{(3)}}^{s_n^{(3)}}  |a_{k}^{(3)}|=1$ for all $n\geq 1$  and 
			$\tau(f_n^{(3)}) \to 0$  as $n\to \infty$.

			{\bf Step 5.}
			Define \begin{align*}
				y_1 &  :=x_1 
				\left( p_1 - 
				\mathop{\sum}\limits_{k= 2}^{\infty }  f_k^{(1)} 
				\right), \,   
				\\ 	y_2 & :=\zeta_2^{(1)}  f_2^{(1)} + \zeta_2^{(1)} q, \quad 
				\\ 	y_3 & := \lambda_2 	 \mathop{\sum}\limits_{k= t_2}^{h_2} b_{k} \left( 
				\zeta_k^{(1)} f_k^{(1)}   
				\right)+
				\zeta_2^{(2)} 
				\left( 	p_2   	- 
				\sum_{k=2}^\infty  f_k^{(2)} \right)
				,  
				\\ y_4 & := 
				\mathop{\sum}\limits_{k= r_2^{(2)}}^{s_2^{(2)}} a_{k}^{(2)}
				\lambda_k 	 \mathop{\sum}\limits_{m= t_k}^{h_k} b_{m} \left( 
				\zeta_m^{(1)} f_m^{(1)}   
				\right)
				+
				\zeta_2^{(3)} f_2^{(2)}
				+
				\zeta_2^{(3)} \left(
				p_3- 
				\sum_{k=2}^\infty  f_k^{(3)}  \right).   
			\end{align*} 	Observe  that, 
			\begin{align*}%\label{0-07}
	\norm{x_1-y_1}_{E(\cM,\tau)}& \quad 	=	 \notag \left\|x_1 - 
		x_1 \left( 
		p_1 -	 \mathop{\sum}\limits_{k= 2}^{\infty }  f_k^{(1)} 
		\right)
		\right\|_{E(\cM, \tau)} 
		\\  \notag  &  \quad  \leq 
		\Big\|x_1 -
				x_1  p_1 \Big\|_{E(\cM, \tau)} +
				\mathop{\sum}\limits_{k=2}^{\infty} 
				\Big\|
				x_1   f_k^{(1)}  
				\Big\|_{E(\cM, \tau)} 
		\\  \notag  &  	\stackrel{\eqref{0-51}, \eqref{0-002}}{\leq } \frac{1}{2^2} +\frac{1}{2^2}=\frac{1}{2}
				,  
			\end{align*} 
			\begin{align*}
		\norm{\zeta_2^{(1)} - y_2}_{E(\cM,\tau)} 	& \notag  		\quad \,   =\left\|\zeta_2^{(1)} - 
		\left( \zeta_2^{(1)} 	f_2^{(1)}   
				+
				\zeta_2^{(1)}   
				q 
				\right)
				\right\|_{E(\cM, \tau)}
				\\ & \notag
				\quad \, 	\leq  \left\|\zeta_2^{(1)}p_1 - 
				\zeta_2^{(1)} 	f_2^{(1)}   
				\right\|_{E(\cM, \tau)}
				+
				\left\|\zeta_2^{(1)} ({\bf 1}-p_1)- 
				\zeta_2^{(1)} q 
				\right\|_{E(\cM, \tau)}
				\\ & \notag  
				\stackrel{\eqref{0-11},  \eqref{0-03}}{\leq} 	\frac{1}{2^{3}} 	+ 
				\frac{1}{2^{3}}  =
					\frac{1}{2^{2}},  
			\end{align*}	
            and 
			\begin{align*}
	& \notag  	\qquad \quad \norm{\zeta_2^{(2)} - y_3}_{E(\cM,\tau)} 	 
				\\ & \qquad 	  = \left\|\zeta_2^{(2)} - 
				\left( 
				\lambda_2 	 \mathop{\sum}\limits_{k= t_2}^{h_2} b_{k} \left( 
				\zeta_k^{(1)} f_k^{(1)}   
				\right) 
				+
				\zeta_2^{(2)}   
				\left( 	p_2   	- 
			\sum_{k=2}^\infty  f_k^{(2)} \right) 
				\right)
				\right\|_{E(\cM, \tau)} 
				\\ & 	\qquad \notag  
		\leq
				\left\|\zeta_2^{(2)} 
				p_1 - 
				\lambda_2 	 \mathop{\sum}\limits_{k= t_2}^{h_2} b_{k} \left( 
				\zeta_k^{(1)} f_k^{(1)}   
				\right) 
				\right\|_{E(\cM, \tau)} 
				+	\left\|\zeta_2^{(2)} 
				q
				\right\|_{E(\cM, \tau)}
				\\ &  \quad   \qquad 	+
				\left\|
				\zeta_2^{(2)} 
				({\bf 1}-p_1-q) - 
				\zeta_2^{(2)} 
		 	p_2   	
				\right\|_{E(\cM, \tau)}
				+
					\sum_{k=2}^\infty
			\left\|   
			 	\zeta_2^{(2)}  f_k^{(2)}  
			 	\right\|_{E(\cM, \tau)}
				\\ &  
		\stackrel{
		\eqref{1-01}, \eqref{0-032} , \eqref{0-003}}{\leq }
		\lambda_2 
		\mathop{\sum}\limits_{k= t_2}^{h_2}| b_{k}|
		\left\|\zeta_k^{(1)} 	p_1 - 
		\zeta_k^{(1)} f_k^{(1)}   
	\right\|_{E(\cM, \tau)} 
			+	
		\frac{1}{2^4}+	
		\frac{1}{2^4}
		\\ & 
		\quad \,\,\,\, 
		\stackrel{	\eqref{0-11}}{\leq }	\lambda_2 
	\mathop{\sum}\limits_{k= t_2}^{h_2}
		\frac{| b_{k}|}{2^{ k+1}} 
		+	
		\frac{1}{2^3}
			\leq 
				\frac{\lambda_2}{2^{ t_2+1}} 
			+	
			\frac{1}{2^3}	
			\\ &  	\quad \,\,\,\, \,  	\leq  
			\frac{\lambda_2}{2^{4}} 
		+	
		\frac{1}{2^4}+	
		\frac{1}{2^3}	 \quad 
		(\text{since} 
		\,\,\,\,  t_2> t_1\geq 3).   
			\end{align*}

			Arguing inductively, 
			we can construct a sequence $\{y_n\}_{n= 1}^{\infty}$,  which is disjointly supported from the right.	
			By   Lemmas  \ref{perturbationnormlized} and  \ref{blocksum=1asyl1}, %$ \left\{ x_1, \zeta_2^{(1)}, \zeta_2^{(2)},  \cdots, \zeta_2^{(n)}, \cdots\right\} $   is an  asymptotically isometric  copy of \( \ell_1 \). 
			there exists 
			a sequence $ \left\{ 
			\lambda_n \right\}_{n=1}^\infty $  of  positive  scalars with  $\lim\limits_{n\to \infty} 	\lambda_n=1$  such that  $\{z_n\}_{n= 1}^{\infty}:=\{\lambda_n y_n\}_{n= 1}^{\infty}$ is an  
			asymptotically isometric  copy of
			\( \ell_1 \),  which is disjointly supported from the right.	
	Thus, the construction under the Case 2 assumption is finished.

	Applying the same procedure to the left supports of the  right-disjointly supported  asymptotically isometric  copy   $\{z_n\}_{n= 1}^{\infty}$   of
	\( \ell_1 \) in $E(\cM,\tau)$,  one can construct  an asymptotically isometric  copy    of
	\( \ell_1 \) in  $E(\cM,\tau)$,    which is disjointly supported from  the left and the right. 
	\end{proof}

	We say that  a Banach function lattice  \( (X,  \norm{\cdot}) \)  is {\it strictly monotone} if for any $x_1, x_2 \in X$ satisfying 
	$ 
	0 \leq x_1 \leq  x_2 
	$ and $x_1\neq  x_2$, 
	we have 
	$
	\|x_1\| < \|x_2\|
	$\cite{W03}. 
	%The space $L_1$ is a typical example of such a  space\cite{W03}. 
	% Other examples, in the setting of function spaces, are given in other works by   Kami\'{n}ska.

	An operator $T: X\to Y$ between two lattices is said to be a \textit{lattice homomorphism} (\textit{lattice isomorphism}, respectively) whenever it preserves the lattice operations (and, additionally, $T$ is an injection, respectively), i.e.,   
	$
	T(x_1 \lor x_2) = T(x_1) \lor T(x_2)
	$  and $T(x_1\wedge x_2)=T(x_1)\wedge T(x_2)$ 
	for all $x_1, x_2 \in X$;
	if, additionally, $T$ is an isometry then we say that $Y$ contains a \textit{lattice isometric  copy} of $X$ (via $T$) \cite{W03}.

	\begin{corollary}\label{commutativecontainL1}
		Let $(\cM,  \tau)$ be a semi-finite atomless von  Neumann algebra and let   \(E 
		\) be a  
		symmetric function space on  \((0, \tau({\bf 1}) )\) with a	 Fatou norm  $\norm{\cdot}_{E}$. 
		If 	$c_{E_{oc}}
		%=c_{E(\cM, \tau)_{oc}}
		={\bf 1}$ and  
		$E(\cM, \tau)$ has the Daugavet property,  
		then  the following holds:
		\begin{enumerate}[\rm(i)]
			\item   $  E
			^{\times}$  contains an  isometric copy of \(L_1(0,1)\).  
			\item 
			If $ ( E
			^{\times} )_h:=\{  \phi\in E
			^{\times}: \phi=\overline{\phi} \}
			$ is strictly monotone, then $ ( E
			^{\times} )_h$  contains a lattice isometric copy of $L_1^{\mathbb{R}}(0,1)$.  
			
			\item If $ ( E
			^{\times} )_h$ is  KB, then $ ( E
			^{\times} )_h$ contains a lattice isomorphic copy of $L_1^{\mathbb{R}}(0, 1)$.  
		\end{enumerate} 
		
	\end{corollary}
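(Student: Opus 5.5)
We follow the scheme of \cite[Corollary 2.2]{AKM12}, passing through the order continuous part and Theorem \ref{dsl1}. Since $\norm{\cdot}_E$ is a Fatou norm and $c_{E_{oc}}=\mathbf 1$, we have $E(\cM,\tau)_{oc}=E_{oc}(\cM,\tau)\neq\{0\}$. Theorem \ref{inheriteDP} then shows that the order continuous space $E_{oc}(\cM,\tau)$ has the Daugavet property. By \eqref{M-bimodule*=times}, its dual is isometric to $E^\times(\cM,\tau)$.

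\textbf{Step 1 (an $\ell_1$-copy in the predual side).} A Banach space with the Daugavet property contains an asymptotically isometric copy of $\ell_1$. Indeed, by the slice characterization, every slice of the unit ball contains points almost at distance $2$ from any given point. An inductive choice of norm-one $x_n$ and functionals then gives $\norm{\sum a_n x_n}\ge\sum(1-\varepsilon_n)|a_n|$; this is the standard argument of \cite{KSSW00}. Applying this to $E_{oc}(\cM,\tau)$ and then Theorem \ref{dsl1} (with $E_{oc}$ order continuous, also on $(0,\tau(\mathbf 1))$), we obtain a disjointly supported asymptotically isometric copy $\{f_n\}$ of $\ell_1$ in $E_{oc}\subseteq E$.

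\textbf{Step 2 (dualize).} Following \cite{AKM12}, we use the known equivalence that $X$ contains an asymptotically isometric copy of $\ell_1$ if and only if $X^*$ contains an isometric copy of $L_1(0,1)$ \cite{DJLT}-type result, as recorded in the Appendix. Since $(E_{oc})^*\cong E^\times$ isometrically (order continuity plus the Fatou property of $E^\times$), this gives (i).

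The lattice statements come from the disjointness in Step 1. Pick norming functionals $g_n\in E^\times$ with $\operatorname{supp} g_n\subseteq\operatorname{supp} f_n$, $\int f_n g_n\approx 1$ and $\norm{g_n}\le 1+\delta_n$. We may assume real $f_n\ge 0$ by replacing $f_n$ with $|f_n|$, which is legitimate for disjoint sequences since $\norm{\sum a_n|f_n|}=\norm{\sum |a_n||f_n|}$. Then $(E^\times)_h$ restricted to $\bigcup\operatorname{supp}f_n$ behaves like an $\ell_\infty$-type structure in the way needed to run the Dilworth--Girardi--Hagler / \cite[Corollary 2.2]{AKM12} construction of a lattice copy of $L_1^{\mathbb R}$ via Rademacher-type averages of the $g_n$. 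Under strict monotonicity, the isometric lower estimates transfer to the lattice embedding and give (ii). Under the KB hypothesis, the norm is equivalent on the constructed positive elements, which gives (iii). Here we may invoke \cite[Corollary 2.2]{AKM12} directly for $(E^\times)_h$, since its hypotheses concern only a real Banach function lattice with the Fatou norm whose order continuous part has full support and contains a disjoint asymptotically isometric $\ell_1$.

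\textbf{Main obstacle.} The main obstacle is Step 1 together with the complex setting. We must check that the Daugavet property of a complex space yields an asymptotically isometric $\ell_1$ (pass to the realification, which preserves the Daugavet property). We must also check that the commutative real arguments of \cite{AKM12} apply to $(E^\times)_h$ after extracting the disjoint sequence through Theorem \ref{dsl1}; the noncommutative difficulty is absorbed entirely by that theorem.
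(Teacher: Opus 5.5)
Your proof of (i) is the paper's argument. Theorem~\ref{inheriteDP} gives the Daugavet property of $E(\cM,\tau)_{oc}=E_{oc}(\cM,\tau)$. Then \cite[Theorem 2.9]{KSSW00} gives an asymptotically isometric copy of $\ell_1$ there, and Theorem~\ref{dsl1} converts it into a disjointly supported one in $E_{oc}$. Finally \cite[Theorem 2]{DGH00} dualizes this into $(E_{oc})^*=E^\times$; that is the relevant reference, and the Appendix does not contain this equivalence.

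The gap is in (ii) and (iii). Your sketch with norming functionals $g_n$ and ``Rademacher-type averages'' never actually produces a lattice embedding. The sentences ``under strict monotonicity the isometric lower estimates transfer to the lattice embedding'' and ``under the KB hypothesis the norm is equivalent on the constructed positive elements'' restate the conclusions rather than prove them. The fallback of invoking \cite[Corollary 2.2]{AKM12} directly also fails. That corollary assumes that the real Banach function lattice $X$ itself has the Daugavet property. Here $X$ would have to be $E_h=\{f\in E: f=\overline{f}\}$, whose K\"othe dual is $(E^\times)_h$, and nothing in the hypotheses gives this. Only the complex operator space $E(\cM,\tau)$ is assumed to have the Daugavet property, and avoiding any descent of that property to $E$ is exactly why Theorem~\ref{dsl1} is needed. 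What you list as the hypotheses of that corollary is what its proof uses after the first step, not what it assumes.

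The missing ingredients are two lattice-theoretic theorems, applied once you know that $(E^\times)_h$ contains an isometric copy of $L_1^{\mathbb{R}}(0,1)$.
For (ii), use W\'ojtowicz \cite[Corollary 2]{W03}: a strictly monotone Banach lattice containing an isometric copy of $L_1^{\mathbb{R}}(0,1)$ contains a lattice isometric copy.
For (iii), a KB-space contains no copy of $c_0$ \cite[Theorem 2.4.12]{MN91}. Then by Kalton \cite[Theorem 3.1]{K79}, a Banach lattice without copies of $c_0$ that has a subspace isomorphic to $L_1^{\mathbb{R}}(0,1)$ has a sublattice order isomorphic, hence lattice isomorphic, to it.
This is exactly how the paper concludes.

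Your observation that $\|\sum a_n|f_n|\|_E=\|\sum |a_n| f_n\|_E$ for disjoint $f_n$ is worth keeping. It shows that $\{|f_n|\}$ is an asymptotically isometric copy of real $\ell_1$ in the real order continuous space $(E_{oc})_h$, whose dual is $(E^\times)_h$. The real form of \cite{DGH00} then gives the isometric copy of $L_1^{\mathbb{R}}(0,1)$ in $(E^\times)_h$ cleanly. This is the real-versus-complex passage the paper compresses into ``By (i)''.

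Note also that realification by itself only yields an asymptotically isometric copy of real $\ell_1$, which need not be one of complex $\ell_1$. The paper instead uses that \cite[Theorem 2.9]{KSSW00} extends to complex scalars.
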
 
	
	\begin{proof}

		(i) 
		By Theorem \ref{inheriteDP},  the Banach space \( E(\cM, \tau)_{oc} \) has the Daugavet property. 
		If 
		a real Banach space has the Daugavet property, then it contains an asymptotically isometric copy of real \(\ell_1\)  \cite[Theorem 2.9]{KSSW00}  (this  result can be  extended to the complex case with minor modifications \cite[page 3]{KSSW00}). 
		Thus, \( E(\cM, \tau)_{oc}\) contains an asymptotically isometric copy of complex \(\ell_1\). 
		By Theorem ~\ref{dsl1},  the complex Banach space \( E_{oc}\) contains a disjointly supported  asymptotically isometric copy  of complex \(\ell_1\). 
		It is shown in
		\cite[Theorem 2]{DGH00} 
		that 
		a  complex  Banach space  contains an asymptotically isometric copy of complex  \(\ell_1\)  if and only if its Banach dual space contains an isometric copy of \( L_1(0, 1) \). 	 
		It follows that $ (E_{oc})^{*}\stackrel{\eqref{M-bimodule*=times}}{=}  E
		^{\times} $ contains an isometric copy of \( L_1(0, 1) \).
		
		(ii) 
		By (i), the real  Banach lattice  \(  ( E
		^{\times} )_h \)  contains an isometric copy of \( L_1^{\mathbb{R}}(0,1) \).
		Since	$ ( E
		^{\times} )_h$ is strictly monotone, it follows from 
		\cite[Corollary 2]{W03} that    \( ( E
		^{\times} )_h\) contains  a lattice isometric copy of
		\( L_1^{\mathbb{R}}(0,1) \).

		(iii) 
		Since  
		$ ( E
		^{\times} )_h $ is KB, it follows from  	\cite[Theorem 2.4.12]{MN91} that $ ( E
		^{\times} )_h $
		contains no isomorphic copy of $c_0$. 
		It was shown in \cite[Theorem 3.1]{K79} that if a real Banach lattice \( X \) contains no isomorphic copy of \( c_0 \) and has a subspace isomorphic to \( L_1^{\mathbb{R}}(0,1) \), then \( X \) also has a sublattice which is order isomorphic to \( L_1^{\mathbb{R}}(0,1) \). 
		By (i),  we obtain the required result (an order isomorphism between Banach lattices  is a lattice isomorphism  \cite[Theorem 2.15]{AB06}).
	\end{proof}

Prior to proving the main results of this section, we  first demonstrate two consequences  that follow from the above corollary. 

  Recall from \cite[page 187, Corollary 2.e.4]{LT2}  that 
  if $Y$ is a real symmetric function space on $(0,1)$ which does not contain uniformly isomorphic copies of $\ell_\infty^{(n)}$ for all $n$, and if $Y$ contains a subspace isomorphic to $L_1^{\mathbb{R}}(0,1)$, then $Y$ itself coincides, up to an equivalent norm, with $L_1^{\mathbb{R}}(0,1)$.
 This together with  
  Corollary \ref{commutativecontainL1}  implies  the following. 
  \begin{corollary}
		Let $(\cM,  \tau)$ be a semi-finite atomless von  Neumann algebra and let   \(E 
		\) be a  
		symmetric function space on  \((0, 1) \) with a	 Fatou norm  $\norm{\cdot}_{E}$. 
		Assume that  	$c_{E_{oc}}
		%=c_{E(\cM, \tau)_{oc}}
		={\bf 1}$ and  
		$E(\cM, \tau)$ has the Daugavet property. 
        If  $ ( E
			^{\times} )_h$  does not contain uniformly isomorphic copies of $\ell_\infty^{(n)}$ for all $n$, 
		then  $( E
			^{\times} )_h$  coincides, up to an equivalent norm, with $L_1^{\mathbb{R}}(0,1)$. 
        
 %       the following holds:
%		\begin{enumerate}
%			\item  $( E
%			^{\times} )_h$  coincides, up to an equivalent norm, with $L_1^{\mathbb{R}}(0,1)$.   
%			\item 
%			If $ ( E
%			^{\times} )_h
%			$ is strictly monotone, then $( E
%			^{\times} )_h$  coincides, up to an equivalent norm, with $L_1^{\mathbb{R}}(0,1)$.  
%			
%			\item If $ ( E
%			^{\times} )_h$ is  KB, then $( E
%			^{\times} )_h$  coincides, up to an equivalent norm, with $L_1^{\mathbb{R}}(0,1)$. 
%		\end{enumerate} 
		
	\end{corollary}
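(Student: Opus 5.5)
Under the stated hypotheses, Corollary \ref{commutativecontainL1}(i) shows that $E^\times$ contains an isometric copy of the complex space $L_1(0,1)$. The plan is to transfer this to the real self-adjoint part $(E^\times)_h$ and then apply \cite[Corollary 2.e.4]{LT2} in $Y=(E^\times)_h$. Since $E$ lives on $(0,1)$ and $E^\times$ is a symmetric function space on $(0,1)$ with the Fatou property, $Y$ is a real symmetric function space on $(0,1)$. The only hypothesis of \cite[Corollary 2.e.4]{LT2} not already given is that $Y$ contains a subspace isomorphic to $L_1^{\mathbb{R}}(0,1)$.

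To obtain that subspace, I will use, as in the proof of Corollary \ref{commutativecontainL1}(ii), that the complex space $E^\times$ is the complexification of $Y$. Every $\phi\in E^\times$ decomposes as $\phi=\operatorname{Re}\phi+i\operatorname{Im}\phi$ with $\operatorname{Re}\phi,\operatorname{Im}\phi\in Y$. Since the norm is a lattice norm and $|\operatorname{Re}\phi|,|\operatorname{Im}\phi|\le|\phi|\le|\operatorname{Re}\phi|+|\operatorname{Im}\phi|$, we get
\begin{align*}
\max\{\|\operatorname{Re}\phi\|,\|\operatorname{Im}\phi\|\}\le\|\phi\|\le\|\operatorname{Re}\phi\|+\|\operatorname{Im}\phi\|.
\end{align*}
Hence $(E^\times)_{\mathbb{R}}$ is isomorphic to $Y\oplus Y$. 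An isometric copy of $L_1(0,1)$ in $E^\times$ gives, after realification, a copy of $L_1(0,1)_{\mathbb{R}}\cong L_1^{\mathbb{R}}(0,1)\oplus L_1^{\mathbb{R}}(0,1)\cong L_1^{\mathbb{R}}(0,1)$ inside $Y\oplus Y$. So $Y\oplus Y$ contains $L_1^{\mathbb{R}}(0,1)$.

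The main obstacle is passing from $Y\oplus Y$ back to $Y$. I would handle it by noting that $Y\oplus Y$ is itself lattice isomorphic to a real symmetric function space on $(0,1)$: dilate the two copies of $Y$ onto $(0,\tfrac12)$ and $(\tfrac12,1)$, using that dilations are bounded on symmetric spaces. The result is isomorphic to $Y$, since the dilation operator $D_2$ and its inverse are bounded on $Y$. Thus $Y$ contains a subspace isomorphic to $L_1^{\mathbb{R}}(0,1)$. If one prefers to avoid this, an alternative is to use that $Y$ is a Fatou space containing no $c_0$-copy only under an extra assumption, so the dilation route is the one I would commit to.

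It remains to check that $Y$ contains no uniformly isomorphic copies of $\ell_\infty^{(n)}$; this is exactly the stated hypothesis. Invoking \cite[Corollary 2.e.4]{LT2} for $Y$ then gives $Y=L_1^{\mathbb{R}}(0,1)$ with equivalent norms, which is the assertion.
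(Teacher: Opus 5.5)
Your argument is correct, and its skeleton matches the paper's: the paper simply combines Corollary~\ref{commutativecontainL1} with \cite[Corollary 2.e.4]{LT2}. The difference lies in how $L_1^{\mathbb{R}}(0,1)$ is placed inside $Y=(E^\times)_h$.

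The paper takes this from Corollary~\ref{commutativecontainL1} itself. Its proof of part (ii) asserts in one line (``By (i)'') that $(E^\times)_h$ contains even an \emph{isometric} copy of $L_1^{\mathbb{R}}(0,1)$. Justifying that stronger claim would mean going back to the disjointly supported copy from Theorem~\ref{dsl1} and a real-scalar duality argument, which the paper does not write out.

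You use only the complex statement (i) and transfer it by soft arguments:
\begin{itemize}
\item $(E^\times)_{\mathbb{R}}\simeq Y\oplus Y$ via $\phi\mapsto(\operatorname{Re}\phi,\operatorname{Im}\phi)$;
\item $Y\oplus Y\simeq Y$, by compressing the two copies onto $(0,\tfrac12)$ and $(\tfrac12,1)$.
\end{itemize}
This loses isometry, which is harmless here: \cite[Corollary 2.e.4]{LT2} only needs an isomorphic copy. It also supplies an explicit justification for exactly the step this corollary needs. The paper's isometric version is what part (ii) requires, via W\'ojtowicz's lattice-isometric result, but it is more than this statement needs.

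Two small clean-ups:
\begin{itemize}
\item On $(0,1)$ the dilation is not invertible on all of $Y$. What you actually use is $\sigma_{1/2}\colon Y\to\{f\in Y:\operatorname{supp}f\subseteq(0,\tfrac12)\}$, with inverse $\sigma_2$ on that subspace. The norms are at most $1$ and $2$ respectively.
\item The sentence about a ``Fatou space containing no $c_0$-copy only under an extra assumption'' is garbled and should be deleted.
\end{itemize}
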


	\begin{proposition}
		%\label{carrier=1EL1}
		Let $(\cM, \tau)$ be a semi-finite atomless von  Neumann algebra and let   \( E 
		\) be a 
		symmetric function space on  \((0, \tau({\bf 1}) )\) with the Fatou property.  
		If  
		$E(\cM, \tau)$ has the Daugavet property  and   
		$c_{( E ^{\times}
			) 
			_{oc}}
		=\bf 1$, 
		then   $ E
		$ 
		contains an isometric copy of $L_1(0, 1)$.
		
	\end{proposition}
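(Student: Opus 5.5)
The plan is to dualise. The Fatou property lets us realise $E(\cM,\tau)$ isometrically as the Banach dual of the order continuous space $(E^{\times})_{oc}(\cM,\tau)$. The Daugavet property then descends to this predual, producing an asymptotically isometric copy of $\ell_1$ there. Theorem \ref{dsl1} transfers this copy to the function level, and \cite[Theorem 2]{DGH00} turns it into an isometric copy of $L_1(0,1)$ in the dual, which is $E$.

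\textbf{Step 1 (duality).} Set $G:=(E^{\times})_{oc}$. Since $E^{\times}$ has the Fatou property and is fully symmetric, and $c_{G}=\mathbf 1$, the space $G$ is a non-zero fully symmetric space with order continuous norm (\cite[Theorem 4.5.8 and Proposition 5.4.8]{DPS}). Order continuity makes every functional on $G(\cM,\tau)$ normal, so by \eqref{normalKotheduality} and \cite[Proposition 6.1.9]{DPS}, exactly as in \eqref{M-bimodule*=times}, we get isometrically
\[
(G(\cM,\tau))^{*}\cong G(\cM,\tau)^{\times}=(E^{\times})^{\times}(\cM,\tau).
\]
The Fatou property of $E$ gives $E^{\times\times}=E$ with equal norms (Lorentz--Luxemburg, cf. \cite[Theorem 6.1.6]{DPS}). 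Hence $E(\cM,\tau)\cong (G(\cM,\tau))^{*}$ isometrically, and likewise $E\cong G^{*}$ at the function level on $(0,\tau(\mathbf 1))$.

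\textbf{Step 2 (descent of the Daugavet property).} By the inheritance fact recalled in the Introduction, if $X^{*}$ has the Daugavet property then so does $X$. Applying this with $X=G(\cM,\tau)$, Step 1 shows that $G(\cM,\tau)$ has the Daugavet property. By \cite[Theorem 2.9]{KSSW00}, in its complex version, $G(\cM,\tau)$ then contains an asymptotically isometric copy of $\ell_1$.

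\textbf{Step 3 (back to $E$).} Since $\norm{\cdot}_G$ is order continuous, Theorem \ref{dsl1} yields a disjointly supported asymptotically isometric copy of $\ell_1$ in $G$, in particular an asymptotically isometric copy of $\ell_1$. By \cite[Theorem 2]{DGH00}, $G^{*}$ contains an isometric copy of $L_1(0,1)$, and $G^{*}=E$ by Step 1.

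The main obstacle is Step 1, namely justifying the isometric identification $E\cong((E^{\times})_{oc})^{*}$ and its operator analogue. Two points need care. First, we need $(E^{\times}_{oc})^{\times}=E^{\times\times}$, which follows from $c_{(E^{\times})_{oc}}=\mathbf 1$ together with order density of $(E^{\times})_{oc}$ in $E^{\times}$ and the Fatou property, via monotone convergence in the sup defining the Köthe norm. Second, we must check that the hypothesis of the Fatou property, rather than merely a Fatou norm, is exactly what gives $E^{\times\times}=E$ as sets with equal norms. Everything else is a direct citation of earlier results.
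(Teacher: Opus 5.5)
Your proposal is correct and is essentially the paper's proof. The paper also uses $((E^{\times})_{oc})^{\times}=E^{\times\times}=E$ to identify $E(\cM,\tau)$ isometrically with the dual of $(E(\cM,\tau)^{\times})_{oc}=(E^{\times})_{oc}(\cM,\tau)$. It then passes the Daugavet property down to this predual and finishes with the same chain as yours: an asymptotically isometric copy of $\ell_1$, then Theorem \ref{dsl1}, then \cite[Theorem 2]{DGH00} applied to $((E^{\times})_{oc})^{*}=E$.

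Calling Theorem \ref{dsl1} and \cite[Theorem 2]{DGH00} directly, as you do, actually fits better than the paper's appeal to Corollary \ref{commutativecontainL1}(i). Read literally, that corollary would require $E^{\times}(\cM,\tau)$, not $(E^{\times})_{oc}(\cM,\tau)$, to have the Daugavet property. One small correction: cite the Lorentz--Luxemburg identity $E^{\times\times}=E$ as \cite[Theorem 2.7]{BS88} rather than \cite[Theorem 6.1.6]{DPS}.
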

	\begin{proof}
		Since 	$c_{( E ^{\times}
		) 
		_{oc}}
	=\bf 1$,  it follows that 
	\begin{align}\label{symmetricdua}
		( (E 
		^{\times})_{oc}	 )
		^{\times}
		\stackrel{\text{\cite[page 20, Thm.4.1]{BS88}}}{=} 
		E 
		^{\times\times}
		\stackrel{\text{\cite[page 10, Thm.2.7]{BS88}}}{=} 
		E
	\end{align}
	with equality of norms. 			
	Since 	$ E(\cM, \tau) 
	$ and 	$ E(\cM, \tau) ^{\times}
	 $ are  fully  symmetric   \cite[Corollary 5.1.12 and  Theorem 5.1.5]{DPS}, it follows that  
		\begin{align}\label{timesoc}
			( E(\cM, \tau) ^{\times}
			) 
			_{oc}
			\stackrel{\text{\cite[Thm.6.1.6]{DPS}}}{=} 
			( E^{\times} (\cM, \tau) 
			) 
			_{oc}
			\stackrel{\text{\cite[Prop.6.1.9]{DPS}}}{=} 
			(E^{\times})_{oc}	 
			(\cM, \tau)   
		\end{align}
		with equality of norms, 
		which implies that 
		$c_{ ( E(\cM, \tau) ^{\times}
		) 
		_{oc}}={\bf 1}$.  
	Thus, $( E(\cM, \tau) ^{\times}
	) 
	_{oc}$ is  fully  symmetric   \cite[Proposition  5.4.8]{DPS}
		and
		\begin{align}\label{dualidentities}
			\left( 
			( E(\cM, \tau) ^{\times}
			) 
			_{oc} 
			\right)^{*}
			& \nonumber \stackrel{\text{\cite[%Thm.5.2.9 and 
					Prop.5.3.2]{DPS}}}{=} 
			\left( 
			( E(\cM, \tau) ^{\times}
			) 
			_{oc} 
			\right)^{\times} 
			\stackrel{\eqref{timesoc}}{=} (
			(E^{\times})_{oc}	 
			(\cM, \tau)
			)
			^{\times}\\ &   \stackrel{\text{\cite[Thm.6.1.6]{DPS}}}{=} 
			(
			(E^{\times})_{oc}	 
			)
			^{\times}
			(\cM, \tau)
			\stackrel{\eqref{symmetricdua}}{=}
			E(\cM, \tau)
		\end{align}
		with equality of norms.  
	This implies that $(	( E(\cM, \tau) ^{\times}
		) 
		_{oc} )
		^{*}$ has the Daugavet property. 
		Since a Banach space inherits the Daugavet property from its Banach  dual \cite[Theorem 3.3.3]{KMZW25}, it follows that 
		$	( E(\cM, \tau) ^{\times}
		) 
		_{oc} 	\stackrel{\eqref{timesoc}}{=}
		( E^{\times})_{oc}(\cM, \tau)  
		$ has the Daugavet property.

		Since $\left(  E 
		^{\times},  \norm{\cdot}_{E^{\times}} \right)$ has the Fatou property \cite[Theorem 4.5.5]{DPS}, and so,  $\norm{\cdot}_{E^{\times}}$ is a Fatou norm.  
        %see e.g.  \cite[Lemma 5.2]{Lu63}.  
		Since  $\norm{\cdot}_{E^{\times}}$ is 	order continuous on $(E 
		^{\times})_{oc}$ and  	$c_{( E ^{\times}
			) 
			_{oc}}
		%= c_{( E(\cM, \tau) ^{\times}	) _{oc}}
		=\bf 1$, 
		by Corollary ~\ref{commutativecontainL1}(i), 
		$
		((E 
		^{\times})_{oc})
		^{\times}
		\stackrel{\eqref{symmetricdua}}{=}E
		$   contains an  isometric copy of \(L_1(0,1)\). 	\end{proof}

	We now proceed to establish 
	 main results of this section.  
     Prior to that, we recall some basic results on WCG-spaces.

	Recall that 
	a Banach
	space $(X, \norm{\cdot})$ is said to be a {\it weakly
		compactly generated space},  (briefly, {\it WCG-space}),  if $X$ contains a linearly dense weakly compact
	subset $K$, i.e., $X = \overline{\mathrm{span}}^{\norm{\cdot}}(K)$.
	Any reflexive and any separable Banach space is weakly compactly generated, see e.g.,  \cite[Chapter 13]{CMSBook11} and  \cite[Chapter XIII]{Dis84}.  
	The WCG property is not necessarily inherited by arbitrary closed subspaces--a counterexample was constructed by Rosenthal in 1974 \cite{Ro74}--whereas it is inherited by complemented subspaces.

	Recall that a  symmetric function space  is separable if
	and only if it is  order continuous \cite{KPS,LT2}. 
	This together with \cite[Theorem 4.2]{W99}
	implies that a symmetric function space  is a WCG-space
	if and only if it is order continuous. However, the associated  symmetric operator 
	space $E(\cM, \tau)$ is not necessarily separable when  $\norm{\cdot}_{E(\cM, \tau)}$ is order continuous 
	(even  $L_p(\cM, \tau)$  are not necessarily separable), see e.g. \cite[Theorem 5.6.22]{DPS} or \cite{CK17, S00}.

	The results of \cite{HNPS24Tran} provide a complete characterization of noncommutative symmetric operator spaces possessing the WCG-property. 
	Recall that a von Neumann algebra $(\mathcal{M}, \tau)$ is said to be {\it $\sigma$-finite}  if there exists a sequence
	$\{p_n\}_{n\in\mathbb{N}}$ of $\tau$-finite projections in $\cP(\mathcal{M})$ such that $p_n \uparrow \mathbf{1}$.

	\begin{theorem}\cite[Theorem 4.10]{HNPS24Tran}
		\label{sigmafiniteWCG}
		Let \( \cM \) be a \(\sigma\)-finite von Neumann algebra equipped with a semi-finite faithful normal trace \( \tau \).  Then, a strongly symmetric space \( E(\cM, \tau) \) has order continuous norm if and only if it is a WCG-space.
	\end{theorem}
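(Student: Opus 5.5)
The plan is to prove the two implications separately. The direction ``WCG $\Rightarrow$ order continuous'' goes through an isomorphic copy of $\ell_\infty$. The direction ``order continuous $\Rightarrow$ WCG'' goes through an explicit weakly compact generating set built from the $\sigma$-finite exhaustion $p_n\uparrow\mathbf{1}$.

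\emph{WCG implies order continuity.} Suppose $\norm{\cdot}_{E(\cM,\tau)}$ is not order continuous. Then there are $0\le x\in E(\cM,\tau)$, a sequence $x\ge x_n\downarrow 0$ and $\varepsilon>0$ with $\norm{x_n}_{E(\cM,\tau)}\ge\varepsilon$ for all $n$. I would pass to the abelian von Neumann algebra generated by the spectral projections of $x$, where the problem becomes a question about a function space. There one extracts mutually orthogonal spectral projections $e_k$ of $x$ with $\norm{xe_k}_{E(\cM,\tau)}\ge\varepsilon/2$; this is the standard commutative argument for a non-order-continuous element. For $a=(a_k)\in\ell_\infty$, the series $\sum_k a_kxe_k$ converges in measure to an operator $z_a$ with $\mu(z_a)\le\norm{a}_\infty\mu(x)$. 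By symmetry, $z_a\in E(\cM,\tau)$ and $\norm{z_a}\le\norm{a}_\infty\norm{x}$. Since multiplication by $e_k$ is contractive, we also get $\norm{z_a}\ge\sup_k|a_k|\,\varepsilon/2$. Hence $a\mapsto z_a$ is an isomorphic embedding of $\ell_\infty$. Because $\ell_\infty$ is injective, its image is complemented. Complemented subspaces of WCG spaces are WCG, while $\ell_\infty$ is not WCG, which gives the contradiction.

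\emph{Order continuity implies WCG.} Fix $\tau$-finite projections $p_n\uparrow\mathbf{1}$. By order continuity and strong symmetry, $E(\cM,\tau)^*=E(\cM,\tau)^\times$ via $y\mapsto\tau(\cdot\,y)$, by \eqref{normalKotheduality}. Set
\[
K_n:=\{x\in\cM:\ x=p_nxp_n,\ \norm{x}_\infty\le1\},
\]
which is $\sigma(\cM,\cM_*)$-compact. The inclusion $K_n\hookrightarrow E(\cM,\tau)$ is well defined because $\tau(p_n)<\infty$. It is weak$^*$-to-weak continuous: for $y\in E(\cM,\tau)^\times$, the element $p_nyp_n$ lies in $E^\times$ restricted to a finite projection, hence in $L_1(\cM,\tau)$, and $\tau(xy)=\tau(x\,p_nyp_n)$ is a normal functional of $x$. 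So each $K_n$ is weakly compact in $E(\cM,\tau)$, and so is
\[
K:=\{0\}\cup\bigcup_n 2^{-n}\norm{p_n}_{E(\cM,\tau)}^{-1}K_n,
\]
because any net in $K$ either eventually stays in finitely many pieces or accumulates at $0$ in norm.

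It remains to show that $\operatorname{span}K$ is norm dense, which I expect to be the main obstacle. First, $\cF(\cM,\tau)\cap\cM$ is dense in an order continuous symmetric space, via spectral truncations $x\,e^{|x|}(N^{-1},N]$ as in \cite[Theorem 5.5.12]{DPS}. It therefore suffices to show $\norm{x-p_nxp_n}_{E(\cM,\tau)}\to0$ for each $x\in E(\cM,\tau)$. I would split
\[
x-p_nxp_n=(\mathbf{1}-p_n)x+p_nx(\mathbf{1}-p_n)
\]
and apply Lemma~\ref{Lem2.4}, together with the adjoint identity \cite[Prop.~3.2.7]{DPS}, to bound each term by
\[
\norm{x}^{1/2}\,\norm{|x|^{1/2}(\mathbf{1}-p_n)|x|^{1/2}}^{1/2}
\quad\text{or}\quad
\norm{x}^{1/2}\,\norm{|x^*|^{1/2}(\mathbf{1}-p_n)|x^*|^{1/2}}^{1/2}.
\]
Since $|x|^{1/2}(\mathbf{1}-p_n)|x|^{1/2}\downarrow0$ (and likewise for $x^*$), order continuity forces these norms to $0$. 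This is exactly where $\sigma$-finiteness is used.
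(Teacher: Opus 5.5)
The paper gives no proof of this statement; it is quoted from \cite{HNPS24Tran}, so your argument has to stand on its own.

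\textbf{Order continuous $\Rightarrow$ WCG.} This direction is sound. The $K_n$ are weak$^*$-compact. The inclusion is weak$^*$-to-weak continuous, because every functional has the form $\tau(\cdot\,y)$ with $y\in E(\cM,\tau)^\times$ and $p_nyp_n\in L_1(\cM,\tau)$. Density follows from bounded truncations together with Lemma~\ref{Lem2.4}. Apply it to the truncated $x$, so that $p_nxp_n$ really lies in $p_n\cM p_n$.

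\textbf{WCG $\Rightarrow$ order continuous.} Here there is a genuine gap, at the step where you extract mutually orthogonal \emph{spectral projections of $x$} with $\norm{xe_k}\ge\varepsilon/2$ inside $\{x\}''$.
The $x_n$ witnessing the failure of order continuity need not commute with $x$. So they do not lie in $\{x\}''$, and the norm may well be order continuous on $\{x\}''\cap E(\cM,\tau)$.
This fails even commutatively. Take $\cM=L_\infty(0,1)$, $E=L_\infty$, $x=\mathbf 1$ and $x_n=\chi_{(0,1/n)}$; the only spectral projections of $x$ are $0$ and $\mathbf 1$.
The lattice argument you are invoking uses band projections $\chi_A$ with arbitrary $A$, not level sets of $x$.

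\textbf{What the fix needs.} Your embedding does not actually require the $e_k$ to commute with $x$. For any mutually orthogonal projections $e_k\in\cM$, the element $z_a:=x\sum_k a_ke_k$ still satisfies $\mu(z_a)\le\norm{a}_\infty\mu(x)$ and $z_ae_k=a_kxe_k$.
So the missing ingredient is exactly this: from the failure of order continuity, produce some $x\in E(\cM,\tau)$ and mutually orthogonal $e_k\in\cM$ with $\inf_k\norm{xe_k}_{E(\cM,\tau)}>0$. This is the noncommutative counterpart of the fact that a Dedekind $\sigma$-complete Banach lattice without order continuous norm contains $\ell_\infty$, and it needs an actual argument:
\begin{itemize}
\item Transfer the failure to $E$ through $\mu$, using the description of $E(\cM,\tau)_{oc}$ via $E_{oc}$ quoted in the proof of Theorem~\ref{inheriteDP}.
\item Separate failure near $0$ from failure at infinity. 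If $\mu(\infty;x)>0$, then $\mathbf 1\in E(\cM,\tau)$, and orthogonal projections with traces bounded below do the job.
\item Realize a disjoint, order-bounded, non-null sequence of $E$ inside an abelian subalgebra of $\cM$ on which $\tau$ is semi-finite, chosen for this purpose rather than $\{x\}''$. Atoms of $\cM$ need separate care.
\end{itemize}
Once that step is supplied, the rest of your argument goes through: injectivity of $\ell_\infty$, WCG passing to complemented subspaces, and $\ell_\infty$ not being WCG.
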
 

Recall that a Banach space $X$ is said to have the \emph{Radon--Nikod\'{y}m property} \cite[p.61, Definition 3]{DisU77} if for every finite measure space $(\Omega,\Sigma,\mu)$ and every countably additive vector measure
$$G:\Sigma\to X$$
of bounded variation which is absolutely continuous with respect to $\mu$, there exists a Bochner integrable function
$$ g\in L_1(\mu,X)$$
such that
$$G(E)=\int_E g\,d\mu \qquad \text{for all } E\in\Sigma.$$

%{\color{red}  For \( \varepsilon > 0 \), \( \phi \in S_{X^*} \),  a slice \( S(\phi, \varepsilon) \) of the unit ball \( B_X \) is defined by  \[ S(\phi, \varepsilon) = \{z \in B_X : \operatorname{Re} \, \phi(z)  > 1 - \varepsilon\}. \]  A point \( x \in S_X \) is said to be a \textit{denting point} if there is a slice of \( B_X \) containing \( x \) with an arbitrary small diameter. Recall that a Banach space $X$ has the \emph{Radon--Nikod\'{y}m property}  if every nonempty bounded closed convex set is the closed convex hull of its denting points \cite[Theorem 2.3.6]{B83}.

%Recall that a Banach space $X$ has the \emph{Radon--Nikod\'{y}m property} if every non-empty bounded subset $V \subseteq X$ is dentable, that is, $V$ contains a slice  with an arbitrary small diameter. }

	Several 
	equivalent characterisations of the Radon--Nikod\'{y}m property, together with illustrative examples are discussed in \cite[page  217-219]{DisU77}. 
	\begin{corollary}\cite[page  83, Corollary 7]{DisU77} \label{WCGRN}
		If \( X \) is a Banach space whose  Banach 
		dual \( X^{*} \) is a subspace of a WCG-space \( Y \), then \( X^{*} \) has the Radon--Nikod\'{y}m property.	 
	\end{corollary}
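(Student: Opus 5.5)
The plan is to reduce the Radon--Nikod\'ym property of $X^{*}$ to a separability statement, via the Stegall--Uhl characterization of dual spaces with the RNP. A dual space $X^{*}$ has the Radon--Nikod\'ym property if and only if every separable closed subspace $Z\subseteq X$ has a separable dual $Z^{*}$. So I fix a separable subspace $Z\subseteq X$ and aim to show that $(B_{Z^{**}},w^{*})$ is metrizable. This suffices, because weak$^{*}$-metrizability of the bidual ball of $Z$ is equivalent to separability of $Z^{*}$.

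\textbf{Step 1.} I use the Amir--Lindenstrauss theorem. Since $Y$ is WCG, the dual ball $(B_{Y^{*}},w^{*})$ is an Eberlein compact. Let $X^{*}\subseteq Y$ be the given embedding, which I may take isometric after renorming. The restriction map
\[
R\colon B_{Y^{*}}\to B_{X^{**}},\qquad \psi\mapsto \psi|_{X^{*}},
\]
is weak$^{*}$-to-weak$^{*}$ continuous, and it is surjective by the Hahn--Banach theorem. Composing $R$ with the restriction $B_{X^{**}}\to B_{Z^{**}}$, which is again continuous and surjective by Hahn--Banach, exhibits $(B_{Z^{**}},w^{*})$ as a continuous image of an Eberlein compact. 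By the Benyamini--Rudin--Wage theorem, continuous images of Eberlein compacts are Eberlein. Hence $(B_{Z^{**}},w^{*})$ is Eberlein, and in particular it is Corson compact.

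\textbf{Step 2.} By Goldstine's theorem, $B_{Z}$ is weak$^{*}$-dense in $B_{Z^{**}}$. Since $Z$ is separable, this gives a countable weak$^{*}$-dense subset of $B_{Z^{**}}$, so $B_{Z^{**}}$ is a separable Corson compact. A separable Corson compact is metrizable: a countable dense set has countable total support inside $\Sigma(\Gamma)$, so the whole compact lives in a copy of $[0,1]^{\mathbb N}$. Therefore $(B_{Z^{**}},w^{*})$ is metrizable, which forces $Z^{*}$ to be separable. By the Stegall--Uhl characterization, $X^{*}$ has the Radon--Nikod\'ym property.

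The main obstacle is Step 1. It rests on two deep external results: the Amir--Lindenstrauss theorem and the stability of Eberlein compacts under continuous images. I would simply cite both rather than reprove them. Everything else in the argument is routine Hahn--Banach and Goldstine reasoning.
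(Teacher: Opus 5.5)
Your reduction (Uhl's half of the Stegall--Uhl theorem, then Amir--Lindenstrauss, then ``separable Corson $\Rightarrow$ metrizable'') is sound. However, one step of Step~1 fails as written: there is no restriction map $B_{X^{**}}\to B_{Z^{**}}$.

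\begin{itemize}
\item Restriction runs from $X^{*}$ to $Z^{*}$, and that is the map Hahn--Banach makes surjective. So $Z^{*}\cong X^{*}/Z^{\perp}$ is a \emph{quotient} of $X^{*}$, not a subspace.
\item Dualizing reverses the arrow. The natural map is the adjoint $i^{**}\colon Z^{**}\to X^{**}$ of the inclusion $i\colon Z\to X$. It is an isometric, weak$^{*}$-to-weak$^{*}$ continuous embedding onto $Z^{\perp\perp}$.
\item A linear restriction $X^{**}\to Z^{**}$ would amount to a bounded linear extension operator $Z^{*}\to X^{*}$, which need not exist. Hahn--Banach gives no surjection from $X^{**}$ onto $Z^{**}$.
\end{itemize}

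The repair is short. The map $i^{**}$ carries $B_{Z^{**}}$ homeomorphically, for the weak$^{*}$ topologies, onto $B_{X^{**}}\cap Z^{\perp\perp}$. This is a weak$^{*}$-closed subset of $(B_{X^{**}},w^{*})=R(B_{Y^{*}})$, which is Eberlein by Benyamini--Rudin--Wage. Closed subsets of Eberlein compacts are Eberlein. Since $i^{**}$ is compatible with the canonical embeddings of $Z$ and $X$, it sends $B_Z$ onto $B_Z\subseteq X^{**}$. Your Step~2 then applies verbatim.

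With that fix your argument is a complete proof, and it differs from the paper. The paper gives no argument and simply quotes Diestel--Uhl, whereas you derive the corollary from standard structure theory. You only need the sufficiency direction of Stegall--Uhl, which is due to Uhl.

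You can also drop Benyamini--Rudin--Wage, which you flagged as the deepest input:
\begin{itemize}
\item Put $M:=B_{X^{**}}\cap Z^{\perp\perp}$.
\item Take a norm-dense sequence $(z_n)$ in $B_Z$; by Goldstine it is weak$^{*}$-dense in $M$.
\item Pick $\psi_n\in B_{Y^{*}}$ with $R\psi_n=z_n$, and let $L$ be the weak$^{*}$-closure of $\{\psi_n\}$ in $B_{Y^{*}}$.
\item Then $L$ is a closed subset of an Eberlein compact, hence a separable Eberlein compact, hence metrizable by your own Step~2 argument.
\item By continuity and compactness of $R$, $R(L)=M$. So $M$ is metrizable, being a Hausdorff continuous image of a compact metric space.
\end{itemize}
This version uses only Amir--Lindenstrauss and elementary topology.
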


	We are now in a position to show that Theorems \ref{sigmafiniteL1subseteqE} and \ref{infiniteL1subseteqE} constitute noncommutative analogues of \cite[Proposition 2.5]{AKM12}.
	\begin{theorem}\label{sigmafiniteL1subseteqE}
		Let $(\cM, \tau)$ be  a semi-finite  atomless von Neumann algebra with a faithful, normal, \(\sigma\)-finite trace \(\tau\)  and  let  
		$E$ be  a 
		symmetric  function  space on $(0, \tau({\bf 1}))$.   
		If $E$ is a KB-space and %the operator counterpart 
		$E(\mathcal{M}, \tau)$  
		has the Daugavet property,  
		then  $L_1  \subseteq  E$ with continuous embedding. In particular, if $\tau({\bf 1})<\infty$, then $E = L_1$ (up to equivalence of norms).  
	\end{theorem}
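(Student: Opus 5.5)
The argument goes by contradiction via the Radon--Nikod\'ym property. Since $E$ is KB, it has the Fatou property and an order continuous norm. Hence $E(\cM,\tau)$ is strongly symmetric with order continuous norm, and so $E(\cM,\tau)_{oc}=E(\cM,\tau)$ with $c_{E(\cM,\tau)_{oc}}={\bf 1}$. By \eqref{M-bimodule*=times} we have $E(\cM,\tau)^*=E^\times(\cM,\tau)$ isometrically. Moreover, $E(\cM,\tau)$ is itself a dual space: order continuity of $E$ together with the Fatou property gives $E=(E^\times)^\times=((E^\times)_{oc})^*$, provided $c_{(E^\times)_{oc}}={\bf 1}$. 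This proviso is the point to check.

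\textbf{Case $E^\times\ne L_\infty$ up to equivalent norms.} I first show $c_{(E^\times)_{oc}}={\bf 1}$. In the finite case this means $\phi_{E^\times}(0+)=0$, which by \eqref{defEtimes} is $t/\phi_E(t)\to 0$. In the infinite case one also needs a statement at infinity. If instead $\phi_{E^\times}(0+)>0$, then $E^\times\subseteq L_\infty$ continuously, and hence $L_1\subseteq E^{\times\times}=E$ locally. On a finite measure space that is already the conclusion $L_1\subseteq E$. So the interesting case is $(E^\times)_{oc}\ne\{0\}$.

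In that case, the computation \eqref{dualidentities} from the proof of the preceding Proposition gives
\[
E(\cM,\tau)=\bigl((E^\times)_{oc}(\cM,\tau)\bigr)^*.
\]
Now $(E^\times)_{oc}(\cM,\tau)$ is a strongly symmetric space with order continuous norm over a $\sigma$-finite algebra, so Theorem \ref{sigmafiniteWCG} says it is WCG. But that gives WCG of the predual, not of the dual. Corollary \ref{WCGRN} needs the dual $X^*=E(\cM,\tau)$ itself to sit inside a WCG space. That holds here because $E(\cM,\tau)$ is strongly symmetric and order continuous on a $\sigma$-finite algebra, so it is WCG by Theorem \ref{sigmafiniteWCG} applied directly to $E$. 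Hence $E(\cM,\tau)=X^*$ with $X=(E^\times)_{oc}(\cM,\tau)$ has the Radon--Nikod\'ym property. This contradicts \cite[Corollary 1]{W92}, since $E(\cM,\tau)$ has the Daugavet property. So this branch is impossible, and we must have $\phi_{E^\times}(0+)>0$, that is, $E^\times\hookrightarrow L_\infty$.

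\textbf{Conclusion of the embedding.} Using $\phi_{E^\times}(t)\ge c>0$, I deduce $\|f\|_{L_\infty}\le C\|f\|_{E^\times}$ for $f\in E^\times$. For simple functions this follows from symmetry: $\|f\|_{E^\times}\ge \mu(s;f)\phi_{E^\times}(s)$, and letting $s\to0$ gives it. By duality, for $g\in L_1\cap E$,
\[
\|g\|_E=\sup_{\|f\|_{E^\times}\le1}\int|fg|\le C\|g\|_{L_1}.
\]
Then the Fatou property extends this to all of $L_1$, giving $L_1\subseteq E$ continuously. When $\tau({\bf 1})<\infty$, \eqref{L1tau1phi1} gives $E\subseteq L_1$ continuously, so $E=L_1$ with equivalent norms.

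\textbf{Main obstacle.} The delicate step is justifying $E(\cM,\tau)=((E^\times)_{oc}(\cM,\tau))^*$ on an infinite $\sigma$-finite algebra when $(E^\times)_{oc}$ may be nonzero yet fail to have full carrier. The alternative is to show that its carrier is either $0$ or ${\bf 1}$, which holds in the atomless case by \cite[Proposition 5.4.8]{DPS}. If that is unavailable, I would instead reduce to $\tau$-finite corners $p$. Uniform monotonicity is not available here, so Theorem \ref{EpDPinherit} cannot be used. Instead I would work with RNP of complemented pieces: if $E(\cM,\tau)$ has RNP, so does every subspace, and by \cite[Corollary 1]{W92} a space with RNP cannot have the Daugavet property.
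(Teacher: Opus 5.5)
Your proposal is correct and follows essentially the paper's argument. Assuming $\phi_{E^\times}(0+)=0$, you identify $E(\cM,\tau)$ with the dual of $(E^\times)_{oc}(\cM,\tau)$ via \eqref{dualidentities}, note that $E(\cM,\tau)$ is itself WCG by Theorem \ref{sigmafiniteWCG} to get the Radon--Nikod\'ym property, contradict \cite[Corollary 1]{W92}, and then pass from $E^\times\hookrightarrow L_\infty$ to $L_1\subseteq E^{\times\times}=E$. The worry in your last paragraph is moot: on the atomless interval, $(E^\times)_{oc}\neq\{0\}$ is equivalent to $\phi_{E^\times}(0+)=0$ \cite[page 68, Theorem 5.5]{BS88} and already forces full carrier, so you need neither a condition at infinity nor a reduction to $\tau$-finite corners.
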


	\begin{proof}

		First, we claim that   the fundamental function satisfies 
		\[
		\phi_{E(\cM, \tau)^{\times}}(0+)
		\stackrel{\text{\cite[Thm.6.1.6]{DPS}}}{=}
		\phi_{E^{\times}}(0+)
		> 0 . 
		\] 
		Assume by  contradiction that  	 \(\phi_{E
			^{\times}}(0+) = 0\), i.e., 	 $ (E^{\times})_{oc}\neq\{0\}$ \cite[page 68, Theorem 5.5]{BS88}. 
		Observe that, 		 
		\begin{align}
			\left( 
			( E(\cM, \tau) ^{\times}
			) 
			_{oc} 
			\right)^{*}
			\nonumber \stackrel{\eqref{dualidentities}  }{=} 
			E(\cM, \tau)
		\end{align}
		with equality of norms  and  
		$E(\cM, \tau)$ 	 is a WCG-space, 
        %{\color{blue}(do we need that $E(\mathcal{M},\tau)$ has order continuous norm? and strongly symmetric?)}
        %{\color{red} Yes, we need, since we used Theorem  \ref{sigmafiniteWCG}. It is natural that $E(\mathcal{M},\tau)$ is strongly symmetric, since $E$ is KB.}, 
        see Theorem  \ref{sigmafiniteWCG} above. 
		By  Corollary \ref{WCGRN}, 
		we deduce that 
		$E(\cM, \tau)$ has the Radon--Nikod\'{y}m property. 
		However,  \cite[Corollary 1]{W92} 
		states that if a Banach space  $X$ has the Radon--Nikod\'{y}m property, then $X$ does not have the Daugavet property. 
		This proves the claim.
		In particular, by the similar argument in \cite[Propositions 2.4 and 2.5]{AKM12},   we obtain that $E^{\times} \subseteq L_\infty
		$, and so, $E^{\times} \subseteq L_\infty
		$   with continuous embedding (see e.g. \cite[Theorem 1.8]{BS88} or \cite{KPS}).     
		%Assume, for contradiction, that    there exists $f \in E^{\times} \subseteq S  
		%(0, \tau({\bf 1}) )$
		%which is unbounded, we can find a sequence $\{I_n\}_{n\geq 1} $ in \((0,  \tau({\bf 1}) )\)  such that $m(I_n) > 0$, $|f(t)| \ge n$ for $t \in I_n$ and $n \in \mathbb{N}$. Hence, we obtain a contradiction: 
		%\begin{align*}%\label{postivelowerbound}
		%\infty >\|f\|_{E^{\times}} \ge \|f\chi_{_{ I_n }}\|_{E^{\times}} \ge n\phi_{E^{\times}}(0+) \to \infty, 
		%\end{align*} 
		%which shows that 
		%$E^{\times} \subseteq L_\infty$. 
		%By the closed graph theorem,  the embedding is continuous, i.e.,  $E^{\times} \hookrightarrow L_\infty$. 
		This implies that 
		$L_1 \subseteq  E^{\times\times}$ with continuous embedding (see e.g. \cite[Remark 4.3.11]{DPS}).
		Since  $E\neq \{0\}$, 
		it follows from \cite[Theorem 5.1.10]{DPS} that  
		$E = E^{\times\times}$ with equality of norms. 
		Hence, $L_1 \subseteq   E$  with continuous embedding.

		% The assumption   $\tau({\bf 1})<\infty$ yields  that  $(\cM, \tau)$ is a $\sigma$-finite von Neumann algebra. 
		% Hence,   $L_1 \subseteq   E$ with continuous embedding. 
		Recall that  
		$E \subseteq  L_1  $ with continuous embedding  whenever  $\tau({\bf 1})<\infty$ 
		\cite[Theorem 4.4.6]{DPS}. The proof is complete.  	
	\end{proof}

	%A complex Banach lattice has the Fatou property if and only if the same is true for its real part.

	It is noted  that 
	if 
	$X$ is a     
	Banach  lattice with  a uniformly monotone norm, then $X$ is a   
	KB-space \cite[page 371, Chap.XV.14]{B67}. 
	
	%A real Banach lattice $X$ is uniformly monotone  if and only if its  complexification  is uniformly complex convex \cite[Theorem 3.5]{L25}, and so,  its complexification  is  finite cotype \cite{D84} (by Maurey-Pisier Theorem, it does not include $c_0$ ), it follows from \cite[Theorem 5.9.6]{DPS} that   its complexification  is KB. 

	Using  Theorem \ref{EpDPinherit}, we obtain  the following result, which   complements  Theorem~\ref{sigmafiniteL1subseteqE} by addressing the case of  non-$\sigma$-finite, semi-finite   atomless von Neumann algebras.  
	%{\color{red}Due to the limitation of the condition of WCG property, we can deal with the non-$\sigma$-finite case under the condition (1) below only. It will be welcome to remove this conditinon $E\cap L_{\infty} \neq  L_1 \cap  L_{\infty}$. Let us leave this condition.}

	\begin{theorem}\label{infiniteL1subseteqE}
		
		Let $(\cM, \tau)$ be a   non-$\sigma$-finite, semi-finite  atomless von Neumann algebra.   %with a     normal faithful  trace $\tau$. 
		If a      symmetric  operator  space  $E(\mathcal{M}, \tau)$  
		has the Daugavet property and the associated      symmetric  function  space  $E$ on $(0, \tau({\bf 1}))$ satisfies one of the following  conditions: 
		\begin{enumerate}[\rm(a)]
			\item  $E$  is a KB-space and $E\cap L_{\infty} \neq  L_1 \cap  L_{\infty}$,  
			\item $E$ is uniformly monotone,   
		\end{enumerate}
		then  $L_1  \subseteq  E$ with continuous embedding.  
	\end{theorem}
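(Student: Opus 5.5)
The plan is to reduce to the $\sigma$-finite case, Theorem \ref{sigmafiniteL1subseteqE}, by passing to a reduced algebra $\cM_p$ with $p$ a $\sigma$-finite projection of infinite trace. Since $\cM$ is atomless, semi-finite and non-$\sigma$-finite, we have $\tau(\mathbf 1)=\infty$. We can therefore choose mutually orthogonal $\tau$-finite projections $e_k$ with $\tau(e_k)=1$ and set $p:=\sum_k e_k$. Then $\tau(p)=\infty$, $(\cM_p,\tau_p)$ is $\sigma$-finite and atomless, and $E(\cM,\tau)_p=E(\cM_p,\tau_p)$ isometrically, with the same function space $E$ on $(0,\infty)$. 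Once we know that $E(\cM_p,\tau_p)$ has the Daugavet property, Theorem \ref{sigmafiniteL1subseteqE} applied to $(\cM_p,\tau_p)$ and the KB-space $E$ gives $L_1(0,\infty)\subseteq E$ with continuous embedding. So everything hinges on transferring the Daugavet property to the corner $pE(\cM,\tau)p$.

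\emph{Case (b).} If $E$ is uniformly monotone, then $E(\cM,\tau)$ is uniformly monotone by the results quoted before Theorem \ref{EpDPinherit}. Theorem \ref{EpDPinherit} then gives the Daugavet property of $\cE_p$ directly. A uniformly monotone lattice is KB (\cite{B67}), so Theorem \ref{sigmafiniteL1subseteqE} applies and this case is done.

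\emph{Case (a).} Here no monotonicity is available, and I would instead run the argument of Theorem \ref{sigmafiniteL1subseteqE} directly. The aim is $\phi_{E^\times}(0+)>0$; the remainder of that proof (namely $E^\times\subseteq L_\infty$, then $L_1\subseteq E^{\times\times}=E$) is purely commutative and uses only the Fatou property of the KB-space $E$.

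Suppose instead that $\phi_{E^\times}(0+)=0$, so that $(E^\times)_{oc}\neq\{0\}$. By \eqref{dualidentities}, $E(\cM,\tau)$ is then the dual of $(E^\times)_{oc}(\cM,\tau)$. If $E(\cM,\tau)$ were WCG, Corollary \ref{WCGRN} would give it the Radon--Nikod\'ym property, contradicting \cite[Corollary 1]{W92}. But $E(\cM,\tau)$ is not $\sigma$-finite, so Theorem \ref{sigmafiniteWCG} is unavailable. This is exactly where I expect the condition $E\cap L_\infty\neq L_1\cap L_\infty$ to enter. By \cite{HNPS24Tran}, a KB symmetric operator space over a non-$\sigma$-finite algebra is WCG precisely when it is order continuous and it is not the case that $E\cap L_\infty=L_1\cap L_\infty$; roughly, failure of WCG forces $E$ to behave like $L_1$ on large sets. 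With that characterization in hand, $E(\cM,\tau)$ is WCG, and the contradiction follows as in the $\sigma$-finite case.

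The main obstacle is identifying and correctly invoking this non-$\sigma$-finite WCG criterion from \cite{HNPS24Tran}. If it is not available in that form, my fallback is to find a complemented WCG subspace to work in. For instance, consider the corner $pE(\cM,\tau)p$ with $p$ $\sigma$-finite as above. It is WCG by Theorem \ref{sigmafiniteWCG}, and it is itself the dual of $p(E^\times)_{oc}(\cM,\tau)p$. One would then need to show that the Daugavet property, or at least the failure of the Radon--Nikod\'ym property, passes to this $1$-complemented corner. In general this is delicate and is precisely what the extra hypothesis is presumably meant to circumvent. Equivalently, one could pass to an RNP subspace and use that the Daugavet property forbids slices of small diameter.
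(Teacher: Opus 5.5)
Your proposal is correct and is essentially the paper's proof. In case (b) the paper likewise passes to a $\sigma$-finite corner of infinite trace, using the support $s(x)$ of a positive $x\in E(\cM,\tau)$ with $\tau(s(x))=\tau(\mathbf{1})$ where you use $\sum_k e_k$, and then combines Theorem~\ref{EpDPinherit} with Theorem~\ref{sigmafiniteL1subseteqE}. In case (a) it cites \cite[Corollary 4.8]{HNPS24Tran}, which gives exactly the direction you need (order continuity of $E$ plus $E\cap L_\infty\neq L_1\cap L_\infty$ forces $E(\cM,\tau)$ to be WCG), and then reruns the $\sigma$-finite argument verbatim.

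Your fallback via a $1$-complemented corner is therefore unnecessary. It would also not work as stated: transferring the Daugavet property to corners without uniform monotonicity is precisely the open issue the paper flags after Theorem~\ref{EpDPinherit}.
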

	
	\begin{proof} 
		(a) 	Recall from \cite[Corollary 4.8]{HNPS24Tran} that  for a non-$\sigma$-finite, semi-finite 
		von Neumann algebra $(\cM, \tau)$, if     an order continuous    symmetric function space $E$ on $(0,\tau({\bf 1}))$ satisfies $E\cap L_{\infty} \neq  L_1 \cap  L_{\infty}$,  then    $E(\mathcal{M}, \tau)$  is necessarily a
		WCG-space.
		Hence,  by the assumption that 	$E$  is a KB-space and  the argument used in the proof of Theorem \ref{sigmafiniteL1subseteqE}, we have  $L_1  \subseteq  E$ with continuous embedding.

		(b) By the  discussion before the statement of Theorem \ref{infiniteL1subseteqE}, $E$ is a KB-space. 
		Choose  a positive
		operator $x \in E(\cM, \tau)\stackrel{\mbox{\tiny \cite[Lem.5.5.2]{DPS}}}{\subseteq} S_0 (\cM, \tau)$  such that $\tau(s(x))  = \tau({\bf 1})$.
		In particular, 
		the reduced von Neumann algebra $\cM_{s(x)}$  has a 
		$\sigma$-finite trace $\tau_{s(x)}=\tau|_{\cM_{s(x)}}$ \cite[Lemma 5.5.8]{DPS}. 
		It follows
		from Theorem \ref{EpDPinherit} that $E(\cM_{s(x)}, \tau_{s(x)})$ has the Daugavet property. 
		Since $(\cM_{s(x)}, \tau_{s(x)})$ is $\sigma$-finite and $E(0, \tau(s(x)))=E$ is KB, it follows from  
		Theorem ~\ref{sigmafiniteL1subseteqE} that     $L_1(0, \tau({\bf 1}))  = L_1(0, \tau(s(x)))\subseteq  E(0, \tau(s(x)))=E(0, \tau({\bf 1}))$ with continuous embedding.  
	\end{proof}
	
	 It should be noted that $L_1$ is a typical example of a space with a uniformly monotone norm, but $L_1$ does not satisfy the condition (a) above.

	By the same argument used in  \cite[Proposition  2.6]{AKM12},   substituting \cite[Corollary  2.2]{AKM12} with  Corollary ~\ref{commutativecontainL1} above, we obtain the following noncommutative analogue of \cite[Proposition 2.6]{AKM12}. 
	%{\color{red}To Yerlan and Fedor: In \cite{AKM12}, the condition was required to be the Fatou property. However, we check it many times but still can not see why they need the Fatou property, and we believe that the Fatou norm (the weak Fatou property, I believe that they simply missed the word 'weak') is enough. If you would like to make sure with it, then you may ask Kaminska. }
	
	\begin{theorem}\label{assumption=Linfty}
		Let $(\cM, \tau)$ be a finite 
		atomless von  Neumann algebra  with a finite faithful normal trace $\tau$ 
         and let   \(E\) be a 
		symmetric function space on \(  (0, \tau({\bf 1 }) )\)  with a Fatou property. 
		Assume that 	the operator  counterpart  $E(\cM, \tau)$ has the Daugavet property.
		If $E
		^{\times} $
		is strictly monotone or 
		$  E
		^{\times} $
		is order continuous,   
		then  $ E
		=L_\infty $  (up to equivalent norms).  	 
	\end{theorem}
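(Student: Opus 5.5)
The plan follows the strategy of \cite[Proposition 2.6]{AKM12}. Corollary \ref{commutativecontainL1} replaces the commutative input \cite[Corollary 2.2]{AKM12}, and the remaining steps are purely about the function space $E$.

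\emph{Reduction.} Since $\tau(\mathbf{1})<\infty$, we have $L_\infty\subseteq E\subseteq L_1$ with continuous embeddings by \eqref{L1tau1phi1}. It therefore suffices to show $E\subseteq L_\infty$; the closed graph theorem then gives equivalence of norms. Dually, since $E$ has the Fatou property, $E=E^{\times\times}$ isometrically. So it suffices to prove that $E^\times = L_1$ as sets, because then $E=(E^\times)^\times=L_1^\times=L_\infty$.

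\emph{Producing a copy of $L_1$ in $E^\times$.} First we need $c_{E_{oc}}=\mathbf{1}$, i.e. $E_{oc}\neq\{0\}$.

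If $E_{oc}=\{0\}$, then $\phi_E(0+)>0$, so $\|f\|_E\ge \phi_E(0+)\|f\|_\infty$ on simple functions. By the Fatou property this extends to all $f\in E$, which already yields $E=L_\infty$ up to equivalent norms.

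So assume $E_{oc}\neq\{0\}$. Then Corollary \ref{commutativecontainL1}, applied via Theorem \ref{inheriteDP}, gives the following:
\begin{enumerate}[\rm(i)]
\item if $(E^\times)_h$ is strictly monotone, then $(E^\times)_h$ contains a lattice isometric copy of $L_1^{\mathbb R}(0,1)$;
\item if $E^\times$ is order continuous, then $(E^\times)_h$ is a KB-space (order continuous with the Fatou property, since $E^\times$ always has the Fatou property by \cite[Theorem 4.5.5]{DPS}), so $(E^\times)_h$ contains a lattice isomorphic copy of $L_1^{\mathbb R}(0,1)$.
\end{enumerate}
Strict monotonicity and order continuity of $E^\times$ pass to its real part.

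\emph{From a lattice copy of $L_1$ to $E^\times=L_1$.} This is the step I expect to be the main obstacle; it is where the argument of \cite[Proposition 2.6]{AKM12} is reused verbatim. Let $T:L_1^{\mathbb R}(0,1)\to (E^\times)_h$ be a lattice isomorphism (into), and set $g_A:=T\chi_A$. These are pairwise disjoint positive functions with $\|g_A\|_{E^\times}\approx m(A)$. Each $g_A\in E^\times\subseteq L_1$, so the norm $\|g_A\|_{E^\times}$ is finite.

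The idea is to derive a contradiction from $\phi_{E^\times}(t)/t\to\infty$ as $t\to0^+$. By \eqref{defEtimes}, $\phi_{E^\times}(t)/t=1/\phi_E(t)$, so this condition holds exactly when $\phi_E(0+)=0$, which is our case $E_{oc}\neq\{0\}$.

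Split $[0,1]$ dyadically into $2^n$ intervals $A_{n,k}$ and consider the disjoint positive functions $g_{n,k}$. Since $\sum_k g_{n,k}=g_{[0,1]}$ is a fixed element of $E^\times$, the measures of the supports of the $g_{n,k}$ sum to at most $\tau(\mathbf 1)$. Using the lattice isomorphism and symmetry, I would compare $\|g_{n,k}\|_{E^\times}\sim 2^{-n}$ with lower bounds coming from the distribution of $g_{n,k}$. The aim is to show that the map $T$ forces the norm of $E^\times$ to be equivalent to the $L_1$ norm on the ideal generated by $g_{[0,1]}$. Rearrangement invariance (atomlessness) then upgrades this to $\|f\|_{E^\times}\approx\|f\|_{L_1}$ for all $f$. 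Because $E^\times\subseteq L_1$ always holds, we get $E^\times=L_1$ up to equivalence, hence $E=L_\infty$ as explained in the reduction.

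In the strictly monotone case the isometric lattice copy makes this comparison exact, as in \cite{AKM12}. In the order continuous case we only have an isomorphic copy, and I would rely on order continuity to prevent the norm from concentrating on small sets, ruling out fundamental-function behaviour away from $L_1$.
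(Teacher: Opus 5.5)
\textbf{Verdict: the route matches the paper's, but the decisive last step is missing.} The paper's proof simply reruns \cite[Proposition 2.6]{AKM12} with Corollary~\ref{commutativecontainL1} in place of \cite[Corollary 2.2]{AKM12}, and you do the same. Your reduction, the case $E_{oc}=\{0\}$, and the use of Corollary~\ref{commutativecontainL1}(ii)/(iii) to get a lattice copy $T\colon L_1^{\mathbb R}(0,1)\to(E^\times)_h$ are all fine.

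The last step, however, is only described, and the intermediate goal you announce does not follow. A lattice embedding controls $\norm{\cdot}_{E^\times}$ only on the sublattice $T(L_1^{\mathbb R}(0,1))$. That sublattice is in general much smaller than the ideal generated by $g_{[0,1]}$. So ``$\norm{\cdot}_{E^\times}\approx\norm{\cdot}_{L_1}$ on that ideal'' is not available unless you already know the conclusion. Your appeals to an ``exact comparison'' in the strictly monotone case, and to order continuity ``preventing concentration'', are not arguments, and they turn out to be unnecessary.

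\textbf{The missing estimate.} Suppose $h\in E^\times$ vanishes off a set $S$ with $m(S)=s$. By the definition of the K\"othe dual norm,
\[
\norm{h}_{L_1}=\int|h\chi_S|\,dm\le\norm{h}_{E^\times}\norm{\chi_S}_E=\phi_E(s)\norm{h}_{E^\times}.
\]
This is Lemma~\ref{EsubseteqL1normineq} applied to $E^\times$, together with \eqref{defEtimes}.

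\textbf{Closing the argument.} Assume $\phi_E(0+)=0$ and set
\[
g:=T\chi_{(0,1)}\neq0,\qquad h_{n,k}:=T\chi_{A_{n,k}},\qquad s_{n,k}:=m(\mathrm{supp}\,h_{n,k}).
\]
The following facts are immediate.
\begin{enumerate}[\rm(1)]
\item By disjointness, $\sum_k\norm{h_{n,k}}_{L_1}=\norm{g}_{L_1}>0$ and $\sum_k s_{n,k}\le\tau(\mathbf 1)$.
\item Since $\norm{h_{n,k}}_{E^\times}\le\norm{T}2^{-n}$, we get $\sum_k\norm{h_{n,k}}_{E^\times}\le\norm{T}$.
\item Taking $S=(0,\tau(\mathbf 1))$ in the estimate gives $\norm{h_{n,k}}_{L_1}\le\phi_E(\tau(\mathbf 1))\norm{T}2^{-n}$.
\end{enumerate}
At most $\tau(\mathbf 1)/\delta$ indices can have $s_{n,k}\ge\delta$. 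Choose
\[
\delta_n:=2^{1-n}\tau(\mathbf 1)\phi_E(\tau(\mathbf 1))\norm{T}/\norm{g}_{L_1}.
\]
Then the indices with $0<s_{n,k}<\delta_n$ carry at least half of $\norm{g}_{L_1}$. Since $\phi_E$ is non-decreasing,
\[
\norm{T}\ge\sum_{k:\,0<s_{n,k}<\delta_n}\norm{h_{n,k}}_{E^\times}\ge\sum_{k:\,0<s_{n,k}<\delta_n}\frac{\norm{h_{n,k}}_{L_1}}{\phi_E(s_{n,k})}\ge\frac{\norm{g}_{L_1}}{2\,\phi_E(\delta_n)}\longrightarrow\infty .
\]
This contradiction forces $\phi_E(0+)>0$, and your first case then gives $E=L_\infty$.

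This argument uses only that $T$ is bounded, disjointness preserving, and satisfies $T\chi_{(0,1)}\neq0$. So the difference between the two hypotheses on $E^\times$ matters only for producing the lattice copy, not afterwards.
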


	\section{The Daugavet property on (real) symmetric operator spaces of self-adjoint operators affiliated with a   finite   atomless von Neumann algebra  
	}
	\label{sec5}

	In \cite[Theorem 3.6]{AKM15}, a complete characterization of real symmetric function spaces on a finite atomless measure space with  a  Fatou norm and the Daugavet property was established, relying on the equivalent geometric characterization of the Daugavet property (see Lemma \ref{DPequivalent}), \cite[Theorem 4.8]{KMMW13}. However, the techniques employed there do not carry over directly to the complex case.

	In this section, we aim to prove Theorem \ref{mainreal}, a noncommutative counterpart of \cite[Theorem 3.6]{AKM15}, using Lemma \ref{DPequivalent} below. Theorem \ref{mainreal} characterizes symmetric operator spaces with the Fatou norm and the Daugavet property, which are subspaces of self-adjoint $\tau$-measurable operators affiliated with a   finite   atomless von Neumann algebra $(\cM, \tau)$ as $L_1(\mathcal{M},\tau)_h$ or $\mathcal{M}_h$ isometrically.
	Before that, we establish noncommutative versions of \cite[Theorem 4.8]{KMMW13} and \cite[Theorem 3.4]{AKM15} (see Theorems ~\ref{finiteordercontinuousL1} and ~\ref{finiteLinfty} below).  
	
	The key obstacle in lifting results from the commutative framework of function spaces to the noncommutative  operator spaces is the failure of usual 
	triangle inequality for the modulus of operators, that is, in general,
	$$|x+y|\not\leq|x|+|y|, \quad x, y\in S(\cM, \tau),$$
	see \cite[Lemma 2.3.15]{LSZ}. 
    Instead, we use several inequalities for self-adjoint operators with respect to 
	  the Hardy--Littlewood--P\'olya submajorization are needed.

	Recall  that the Daugavet property on a real Banach space  is equivalent to the following slice conditions. The complex version can be seen in the monograph \cite[Theorem 3.1.5]{KMZW25}. 
	
	\begin{lemma}\cite[Lemma 2.2]{KSSW00} %\cite[Lemma 3.1]{AKM15} 
		\label{DPequivalent}
		For a  Banach space \( X \) on $\mathbb{R}$, the following conditions are equivalent:
		\begin{enumerate}[\rm(i)]
			\item \( X \) has the Daugavet property.
			\item For every \( x \in S_X \), every \( y^* \in S_{X^*} \), and every \( \varepsilon > 0 \), there exists \( x^* \in S_{X^*} \) such that
			$
			x^*(x) > 1 - \varepsilon $ 
			and $\|x^* + y^*\|_{X^*} > 2 - \varepsilon.
			$
			\item For every \( x \in S_X \), every \( x^* \in S_{X^*} \), and every \( \varepsilon > 0 \), there exists \( y \in S_X \) such that
			$
			x^*(y) > 1 - \varepsilon $ 
			and 
			$\|x + y\|_{X} > 2 - \varepsilon.
			$
		\end{enumerate}
	\end{lemma}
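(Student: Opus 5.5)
The plan is to prove the three equivalences of Lemma \ref{DPequivalent} in the standard way, through the slice characterization. The central geometric statement is (ii'): for every slice $S(y^*,\varepsilon)=\{z\in B_X: y^*(z)>1-\varepsilon\}$ of $B_X$ with $y^*\in S_{X^*}$, and every $x\in S_X$, there is $y\in S(y^*,\varepsilon)$ with $\|x+y\|>2-\varepsilon$. Condition (iii) is exactly this statement, read with $x^*$ as the functional defining the slice. Condition (ii) is the dual statement, with weak$^*$-slices of $B_{X^*}$ defined by $x\in S_X$. So I will prove (i)$\Rightarrow$(iii), then (iii)$\Rightarrow$(i), and treat (ii) by a symmetric argument.

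For (i)$\Rightarrow$(iii), fix $x\in S_X$, $x^*\in S_{X^*}$ and $\varepsilon>0$, and apply the Daugavet equation to the rank-one operator $T=x^*\otimes x$. Then $\|I+T\|=2$, so there is $z\in S_X$ with $\|z+x^*(z)x\|>2-\delta$. From $\|z\|\le1$ and $|x^*(z)|\le1$ we get $|x^*(z)|>1-\delta$. Replacing $z$ by $-z$ if necessary, we may take $x^*(z)>1-\delta$. Hence
$$\|z+x\|\ge \|z+x^*(z)x\|-(1-x^*(z))>2-2\delta.$$
Normalizing $z$ changes nothing essential, so $y=z/\|z\|$ works once $\delta$ is small compared with $\varepsilon$.

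For (iii)$\Rightarrow$(i), by \cite[Lemma 2.1]{AAB1991} it suffices to consider norm-one rank-one operators $T=x^*\otimes x$ with $\|x\|=\|x^*\|=1$. Apply (iii) to $x$ and $x^*$ to obtain $y\in S_X$ with $x^*(y)>1-\varepsilon$ and $\|x+y\|>2-\varepsilon$. Then
$$\|(I+T)y\|=\|y+x^*(y)x\|\ge\|y+x\|-(1-x^*(y))>2-2\varepsilon,$$
so $\|I+T\|=2$.

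For (ii), I will run the same computation with the adjoint $T^*=x\otimes x^*$ acting on $X^*$. The only subtlety is the direction (i)$\Rightarrow$(ii). The norm $\|I+T^*\|=\|I+T\|=2$ is attained only approximately, on some $y^*$ in the unit ball of $X^*$. One then argues as above: $|y^*(x)|$ is close to $1$, the sign is fixed by replacing $y^*$ with $-y^*$, and after normalizing we get the required $x^*$. Conversely, (ii)$\Rightarrow$(i) gives $\|I+T^*\|=2$, and then $\|I+T\|=\|(I+T)^*\|=2$.

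I expect the main obstacle to be bookkeeping the $\delta$ versus $\varepsilon$ dependencies and the sign/normalization step. In the real case that step is straightforward. It is precisely where the complex version needs rotation by unimodular scalars and $\operatorname{Re}$.
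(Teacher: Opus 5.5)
Your argument is correct and is essentially the standard proof of \cite[Lemma 2.2]{KSSW00}, which the paper cites without reproducing: apply the Daugavet equation to $T=x^*\otimes x$, fix the sign of $x^*(z)$ by replacing $z$ with $-z$, use the triangle inequality, and handle the dual condition (ii) by passing to adjoints via $\|I+T^*\|=\|I+T\|$. The only blemish is notational: in your paragraph on (ii) the names $x^*$ and $y^*$ are swapped relative to the statement (the operator should be built from the given $y^*$, i.e.\ $w^*\mapsto w^*(x)\,y^*$, and the approximately norming functional is the sought $x^*$), and this does not affect the argument.
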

	
	The following results will be fundamental to our analysis. 
	The first one is 
	a noncommutative generalization of \cite[Lemma 2.1]{AKM15} and \cite[Corollary 4.6]{KMMW13}.

	\begin{lemma}\cite[Theorem 4.4.6]{DPS}\label{EsubseteqL1normineq}
		Let $(\cM, \tau)$ be a  semi-finite  atomless von Neumann algebra  
		and let $E(\cM, \tau)$ be a  strongly 
		symmetric operator space.   
		For every $p\in \cP(\cM)$ with $0<\tau(p)<  \infty$, we have    
		\begin{align*}
			\norm{xp}_{L_1(\cM, \tau)}	\leq \frac{\tau(p)}{\phi_{E(\cM, \tau)}(\tau (p))} 
			\norm{xp}_{E(\cM, \tau)}, \quad x\in E(\cM, \tau). 
		\end{align*}
	\end{lemma}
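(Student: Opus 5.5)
Since $xp\in E(\cM,\tau)$ whenever $x\in E(\cM,\tau)$, I will work directly with $z:=xp$. The point to exploit is that $z=zp$, so $r(z)\le p$ and therefore $\tau(r(z))\le\tau(p)=:t$. As a consequence, $\mu(s;z)=0$ for every $s\ge t$, meaning $\mu(z)$ lives on $(0,t)$. The goal then reduces to the commutative estimate
$$\int_0^t\mu(s;z)\,ds\le \frac{t}{\phi_E(t)}\,\|\mu(z)\|_E ,$$
together with $\|z\|_{L_1(\cM,\tau)}=\int_0^\infty\mu(s;z)\,ds$ and $\|z\|_{E(\cM,\tau)}=\|\mu(z)\|_E$ (here I use that $\phi_{E(\cM,\tau)}=\phi_E$, as noted in Section 2).

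For the commutative estimate I will average $\mu(z)$ over $(0,t)$. Put $f:=\mu(z)$ and $g:=\Big(\frac1t\int_0^t f\Big)\chi_{(0,t)}$. By construction $g\prec\prec f$. Indeed, for $s\le t$ the average of the decreasing function $f$ over $(0,s)$ is at least its average over $(0,t)$; for $s\ge t$ both integrals equal $\int_0^t f$, because $f$ vanishes beyond $t$. Since the space is strongly symmetric, this gives $\|g\|_E\le\|f\|_E$. On the other hand, $\|g\|_E=\frac1t\big(\int_0^tf\big)\phi_E(t)$. Rearranging these two facts yields exactly the inequality displayed above.

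The only step I regard as delicate is applying strong symmetry in the operator setting. Strong symmetry of $E(\cM,\tau)$ is formulated for operators. Since $\cM$ is atomless, I can realize $g$ as $\mu(y)$ with $y=\big(\frac1t\int_0^tf\big)q$ for a projection $q$ with $\tau(q)=t$. Then $y\prec\prec z$, and strong symmetry gives $\|y\|_{E(\cM,\tau)}\le\|z\|_{E(\cM,\tau)}$. Because $\|q\|_{E(\cM,\tau)}=\phi_{E(\cM,\tau)}(t)$, this is precisely the required bound, and the argument is complete.
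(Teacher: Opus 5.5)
Your argument is correct, but it takes a different route from the paper. The paper gives no proof of its own: it quotes \cite[Theorem 4.4.6]{DPS}, which amounts to the K\"othe duality estimate
\[
\|xp\|_{L_1(\cM,\tau)}=\tau\bigl(|(xp)p|\bigr)\le \|xp\|_{E(\cM,\tau)}\,\|p\|_{E(\cM,\tau)^\times},
\qquad
\|p\|_{E(\cM,\tau)^\times}=\phi_{E^\times}(\tau(p))=\frac{\tau(p)}{\phi_E(\tau(p))}.
\]
So the paper's version rests on two facts: $\tau$-finite projections lie in the K\"othe dual, and their norm there is given by \eqref{defEtimes}.

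You instead prove the bound directly. Averaging $\mu(xp)$ over $(0,\tau(p))$ gives a multiple of a projection that is submajorized by $xp$, and strong symmetry finishes the argument. The two proofs are the same estimate seen from opposite sides: your averaging step is exactly the nontrivial half $\phi_{E^\times}(t)\le t/\phi_E(t)$ of \eqref{defEtimes}. Your version is self-contained and shows precisely where the hypothesis on the norm is used. The paper's version is a one-liner that outsources this step to the duality identity. That identity is usually proved by averaging over measure-preserving rearrangements of $(0,t)$, which needs only symmetry. So the duality route shows strong symmetry is not essential for the lemma, whereas your submajorization argument genuinely uses it.

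There are two small points to tidy.

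\begin{enumerate}
\item You do not need atomlessness to produce $q$. Take $q=p$, since $\tau(p)=t$ and $p\in E(\cM,\tau)$ is implicit in the statement.
\item More substantively, before treating $y=cq$ as an element of $E(\cM,\tau)$ you must know that $c=\frac1t\int_0^t\mu(s;xp)\,ds<\infty$. For an arbitrary $\tau$-measurable operator supported under $p$ this integral can diverge, and its finiteness is part of the conclusion ($xp\in L_1(\cM,\tau)$). Either invoke $E\subseteq L_1+L_\infty$, or truncate. With $c_n:=\frac1t\int_0^t\min\{\mu(s;xp),n\}\,ds$ you still have $c_np\prec\prec xp$. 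Strong symmetry then gives $c_n\,\phi_{E(\cM,\tau)}(t)\le\|xp\|_{E(\cM,\tau)}$, and monotone convergence yields the lemma.
\end{enumerate}
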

	%\begin{proof} If $\tau(p)=\infty$, the proof is compete. We assume that $\tau(p)<\infty$. Since  $p\in E(\cM, \tau)^{\times}$ \cite[Theorem 4.4.6]{DPS}, it follows  that for every $x\in E(\cM, \tau)$, we have $xp\in L_1(\cM, \tau)$ and \begin{align*}\norm{xp}_{L_1(\cM, \tau)} &=	\norm{xp p}_{L_1(\cM, \tau)}\leq	\norm{p }_{E(\cM, \tau)^{\times}}\norm{xp }_{E(\cM, \tau)} \\ &  =  \frac{\tau(p)}{\phi_{E(\cM, \tau)}(\tau (p))} \norm{xp}_{E(\cM, \tau)}. \end{align*}	\end{proof}

The following lemma is well-known to experts. For the sake of completeness, we include a full proof below. 
	
	\begin{lemma}\label{fullyxp1+p_2prec}
		Let   
		$(\cM, \tau)$ be a	semi-finite   von Neumann algebra. 
		If  
		$p\in \cP(\cM)$  such that 
		$0<\tau(p)<  \infty$, then  
		for arbitrary $x\in L_1(\cM, \tau)+L_\infty(\cM, \tau)$, 
		we have   
		\begin{align}\label{pxpprecprec}
			\frac{	\tau( x p )}{\tau(p)}
			p \prec\prec 
			p x	p.
		\end{align}	
	\end{lemma}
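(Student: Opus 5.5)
The plan is to reduce \eqref{pxpprecprec} to a statement about the reduced algebra $\cM_p$ and then use the fact that the conditional expectation onto scalars is a contraction for submajorization. Set $y:=pxp$; since $x\in L_1(\cM,\tau)+L_\infty(\cM,\tau)$ and $\tau(p)<\infty$, we have $y\in L_1(\cM,\tau)$ with $y=pyp$. By the trace property, $\tau(xp)=\tau(pxp)=\tau(y)$. Writing $c:=\tau(y)/\tau(p)$, the left-hand side is $cp$, and $\mu(cp)=|c|\chi_{[0,\tau(p))}$. So we must show, for every $s>0$,
\[
\int_0^s |c|\chi_{[0,\tau(p))}(t)\,dt\le \int_0^s \mu(t;y)\,dt .
\]

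First I treat $s\ge \tau(p)$. The left side equals $|c|\tau(p)=|\tau(y)|$. The right side is at least $\int_0^{\tau(p)}\mu(t;y)\,dt$, and since $y=pyp$ is supported on $p$, $\mu(t;y)=0$ for $t\ge\tau(p)$; hence the right side equals $\|y\|_{L_1}$. The bound then follows from $|\tau(y)|\le\tau(|y|)=\|y\|_{L_1}$.

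Next I treat $0<s<\tau(p)$, where the left side is $s|c|$. The function $s\mapsto\int_0^s\mu(t;y)\,dt$ is concave on $[0,\tau(p)]$, because $\mu(y)$ is non-increasing. It vanishes at $0$ and equals $\|y\|_{L_1}\ge|\tau(y)|=|c|\tau(p)$ at $\tau(p)$. By concavity it lies above the chord, so
\[
\int_0^s\mu(t;y)\,dt\ge \frac{s}{\tau(p)}\|y\|_{L_1}\ge s|c| .
\]
This gives submajorization for all $s$.

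The only delicate point is justifying that $\mu(t;pxp)=0$ for $t\ge\tau(p)$, and that $pxp\in L_1$ when $x$ lies merely in $L_1+L_\infty$. The first follows because $\tau(e^{|y|}(0,\infty))\le\tau(r(y))\le\tau(p)$. The second follows from $\mu(pxp)\le\mu(x)\chi_{[0,\tau(p))}$ (see \cite[Prop.3.2.7]{DPS}), and $\mu(x)$ is integrable on finite intervals. If $x$ is complex-valued, nothing changes, since only $|\tau(y)|\le\|y\|_1$ is used.
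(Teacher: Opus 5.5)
Your proof is correct and follows essentially the same route as the paper. Both arguments rest on $|\tau(pxp)|\le\tau(|pxp|)=\int_0^{\tau(p)}\mu(t;pxp)\,dt$ and split into two cases. For $s\ge\tau(p)$, $\mu(pxp)$ vanishes beyond $\tau(p)$; for $s<\tau(p)$, your concavity/chord argument is exactly the paper's use of the fact that $m\mapsto\frac1m\int_0^m\mu(t;pxp)\,dt$ is non-increasing. You also make explicit two facts the paper uses implicitly: why $pxp\in L_1(\cM,\tau)$, and why $\mu(t;pxp)=0$ for $t\ge\tau(p)$.
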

	
	\begin{proof} 
		Note that 
		\begin{align*}
			\left| \tau( x p )	\right|
			&
          \,   \stackrel{\text{\cite[Prop.3.4.30]{DPS}}}{=} 
			\left| \tau(p x p )	\right| 
			\stackrel{\text{\cite[Coro.3.4.6]{DPS}}}{\leq} 
			\tau(|p x p| ) 	
			\\ & \stackrel{\text{\cite[Rem.3.2.6(i)]{DPS}}}{=} 
			\int_{0}^{\tau(p)}\mu( p x p)dt
			.
		\end{align*}
		Then, for arbitrary $s>0$, we have $ \int_{0}^{s}
		\mu(p) dt   \stackrel{\text{\cite[Exam.3.2.2]{DPS}}}{=} 	\min\{ s,\tau(p) \}  $,  and so, 
		\begin{align}\label{sleqtaup2}
			\frac{		\left| \tau( x p )	\right|
			}{\tau(p)}
			\int_{0}^{s}
			\mu(p) dt 
			\leq 	\frac{	\min\{ s,\tau(p) \}
			}{\tau(p)} 
			\int_{0}^{\tau(p)}\mu( p x p)dt
			.
		\end{align}

		For  $0<s\leq  \tau(p)$, 
		the fact that   the function $0<m \mapsto 	\frac{	1
		}{m}	\int_{0}^{m}\mu( p x p)dt$ is non-increasing \cite[page 298]{DPS} implies that 
		\begin{align}\label{p2intleqsint}
			\frac{1
			}{\tau(p)}	\int_{0}^{\tau(p)}\mu( p x p)dt
			&	\leq 
			\frac{1}{s}	\int_{0}^{s}
			\mu(pxp)dt,  
		\end{align}
		which shows that 
		\begin{align*}
			\frac{	\left| \tau( x p )	\right|}{\tau(p)}	\int_{0}^{s}
			\mu(p) dt 
			\stackrel{\eqref{sleqtaup2}}{\leq} 
			\frac{		s
			}{\tau(p)}	\int_{0}^{\tau(p)}\mu( p x p)dt
			\stackrel{\eqref{p2intleqsint}}{\leq}
			\int_{0}^{s}
			\mu(pxp)dt. 	\end{align*}

		If $s>\tau(p)$, 
		then  $	\int_{0}^{\tau(p)}\mu( p x p)dt 
		=
		\int_{0}^{s}\mu( p x p)dt$. 	
		In this case, 
		\eqref{pxpprecprec} follows directly from \eqref{sleqtaup2}.
	\end{proof}

	The following lemma is an immediate consequence of 
	 \cite[Lemma 4.7]{KMMW13}.

	\begin{lemma}\label{xEtauleqphiEtau}
		Let $(\cM, \tau)$ be an   atomless von Neumann algebra with  a faithful
		normal tracial state   $\tau$ and 
		let $E(\cM, \tau)$ be  a 	
		symmetric  operator space with   $\phi_{E(\cM, \tau)}(1)=1$.   
		Assume that $x\in\cM$. 
		Then for arbitrary $\alpha>0$,  we have    
		\begin{align*}
			\norm{x}_{E(\cM, \tau)}\leq 	\alpha +
			\norm{x}_{\infty}	\phi_{E(\cM, \tau)}
			\Big(
			\frac{\norm{x}_{L_1(\cM, \tau)}}{\alpha}
			\Big).
		\end{align*}	
		% In particular, if $ \phi_{E(\cM, \tau)}(0+)=0$, 
		% $\{f_n\}_{n=1}^{\infty} \subseteq  \cM$ satisfies 
		% $\mathop{\sup}\limits_n \norm{f_n}_{\infty} <\infty$ and $\lim\limits_{n\to \infty} \norm{f_n}_{L_1(\cM, \tau)} = 0$, then 
		% $\lim\limits_{n\to \infty} \norm{f_n}_{E(\cM, \tau)} = 0$.

In particular, if $\phi_{E(\cM,\tau)}(0+)=0$ and $\{f_\alpha\}_{\alpha\in A}\subseteq\cM$ is a family such that
$\sup\limits_{\alpha\in A}\|f_\alpha\|_\infty<\infty$ and $\|f_\alpha\|_{L_1(\cM,\tau)}\to0,$
then
$$\|f_\alpha\|_{E(\cM,\tau)}\to0.$$
	\end{lemma}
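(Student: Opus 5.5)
The plan is to reduce the estimate to the commutative function $\mu(x)$ on $(0,1)$ and then run the Chebyshev-type splitting behind \cite[Lemma 4.7]{KMMW13}. Since $\norm{x}_{E(\cM,\tau)}=\norm{\mu(x)}_E$ and $\phi_{E(\cM,\tau)}=\phi_E$, it suffices to bound $\norm{\mu(x)}_E$. Here $E$ is the symmetric function space on $(0,1)$ underlying $E(\cM,\tau)$. We also use the standard identities $\norm{\mu(x)}_{L_\infty}=\norm{x}_\infty$ and $\norm{\mu(x)}_{L_1}=\int_0^1\mu(t;x)\,dt=\norm{x}_{L_1(\cM,\tau)}$.

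\emph{Step 1: splitting.} Fix $\alpha>0$ and set $t_0:=m(\{t:\mu(t;x)>\alpha\})$. Because $\mu(x)$ is non-increasing, this set is the interval $(0,t_0)$. Chebyshev's inequality gives $t_0\le \norm{\mu(x)}_{L_1}/\alpha=\norm{x}_{L_1(\cM,\tau)}/\alpha$. Write
\[
\mu(x)=\mu(x)\chi_{(0,t_0)}+\mu(x)\chi_{[t_0,1)}
\]
and apply the triangle inequality in $E$.

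\emph{Step 2: the two pieces.} For the first piece, $\mu(x)\chi_{(0,t_0)}\le \norm{x}_\infty\chi_{(0,t_0)}$ pointwise. Both functions are non-increasing, so they are their own decreasing rearrangements, and symmetry of $E$ gives
\[
\norm{\mu(x)\chi_{(0,t_0)}}_E\le \norm{x}_\infty\phi_E(t_0).
\]
For the second piece, $\mu(x)\chi_{[t_0,1)}\le \alpha\chi_{(0,1)}$. Its decreasing rearrangement is therefore dominated by $\alpha\chi_{(0,1)}$, so
\[
\norm{\mu(x)\chi_{[t_0,1)}}_E\le \alpha\,\phi_E(1)=\alpha .
\]
Since $\phi_E$ is non-decreasing and $t_0\le \norm{x}_{L_1}/\alpha$, we get $\phi_E(t_0)\le\phi_E(\norm{x}_{L_1}/\alpha)$. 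If $\norm{x}_{L_1}/\alpha>1$, we read $\phi_E$ as $\phi_E(1)$, which is consistent because $t_0\le 1$ always. Combining the two bounds gives the stated inequality.

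\emph{Step 3: the consequence.} Let $C:=\sup_{\alpha}\norm{f_\alpha}_\infty$ and fix $\varepsilon>0$; we use $\beta$ for the free parameter to avoid a clash with the net index. Choose $\beta=\varepsilon/2$. Because $\phi_E(0+)=0$, there is $\delta>0$ with $C\phi_E(s)<\varepsilon/2$ for all $s<\delta$. Eventually $\norm{f_\alpha}_{L_1}/\beta<\delta$, and the inequality then yields $\norm{f_\alpha}_{E(\cM,\tau)}<\varepsilon$.

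There is no real obstacle here. The only points needing care are the bookkeeping at the endpoint $t>1$ in the argument of $\phi$, and justifying the domination in Step 2 via decreasing rearrangements, which is immediate for non-increasing functions.
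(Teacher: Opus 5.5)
Your proof is correct and takes essentially the paper's route. The paper transfers \cite[Lemma 4.7]{KMMW13} to $\mu(x)$ via $\norm{x}_{E(\cM,\tau)}=\norm{\mu(x)}_E$ and $\phi_{E(\cM,\tau)}=\phi_E$, and your Chebyshev splitting of $\mu(x)$ at level $\alpha$ is precisely the argument behind that commutative lemma (your convention for $\norm{x}_{L_1(\cM,\tau)}/\alpha>1$ and your net argument are both fine).
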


	The following   properties of self-adjoint $\tau$-measurable  operators are useful in the proof of Theorem \ref{finiteordercontinuousL1}.
	
	\begin{proposition}(see \label{selfproperties}\cite{SC88}, \cite[Proposition 1.2]{CS94}, see also \cite[Lemma A.1, Corollary A.1]{CPS00} for related results) Let $\cM$ be a semi-finite von Neumann algebra and let $x, y\in S(\cM, \tau)_h$. 
		\begin{enumerate}[\rm(i)]
			\item We have 
			$|x+y|\prec\prec |x|+|y|$;  
			\item  	If  $y\geq 0$ and $-y \leq x\leq y$, then 
			$x\prec\prec y$.  
			%\item   if  $x \in S(\cM, \tau)_h$ and $p\in \cP(\cM)$ satisfies $xp=px$, then   $|xp|=|x|p$. 
		\end{enumerate} 
		
	\end{proposition}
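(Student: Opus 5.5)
We prove (ii) first and then obtain (i) from it. The idea is to write $\int_0^s\mu(t;x)\,dt$ as a trace against a contraction that commutes with $x$. The order relation $-y\le x\le y$ then transfers this trace to $y$.

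\emph{Step 1 (variational formula).} For $z\in S(\cM,\tau)^+$ and $s>0$ we use
\[
\int_0^s\mu(t;z)\,dt=\sup\{\tau(a^{1/2}za^{1/2}) : a\in\cM,\ 0\le a\le \mathbf 1,\ \tau(a)\le s\}.
\]
Moreover, the supremum is already attained, or approximated, by elements $a$ that are Borel functions of $z$. When $\tau(e^{z}\{\lambda\})$ is large, $a$ is a spectral projection of $z$ above a level plus a fraction of the level projection $e^{z}\{\lambda\}$. This is the standard Ky Fan/Hardy--Littlewood description of $\int_0^s\mu$ (cf.\ \cite[Section 3.3]{LSZ}, \cite{DPS}). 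If $\int_0^s\mu(t;y)\,dt=\infty$ there is nothing to prove in (ii). Otherwise $y\in L_1+L_\infty$, and then $x\in L_1+L_\infty$ as well, since $\pm x\le y$ forces $x_\pm\le$ a compression of $y$. So all traces below are finite, after replacing $a$ by a $\tau$-finite truncation if necessary.

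\emph{Step 2 (proof of (ii)).} Let $x\in S(\cM,\tau)_h$ and put $p:=e^{x}[0,\infty)$ and $u:=2p-\mathbf 1$. Then $u$ is a self-adjoint unitary commuting with $x$, and $|x|=ux$. Fix $s>0$ and take $a$ from Step~1 for $z=|x|$. Since $a$ is a Borel function of $|x|$, it commutes with $x$ and $u$. Set $w:=ua$. Then $w$ is self-adjoint, commutes with $x$, and $|w|=a$ because $u^2=\mathbf 1$. Write $w=w_+-w_-$ with $w_\pm\ge0$ and $w_++w_-=a$. Then
\[
\tau(a^{1/2}|x|a^{1/2})=\tau(xw)=\tau(w_+^{1/2}xw_+^{1/2})-\tau(w_-^{1/2}xw_-^{1/2}).
\]
From $x\le y$ we get $w_+^{1/2}xw_+^{1/2}\le w_+^{1/2}yw_+^{1/2}$. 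From $-x\le y$ we get $-w_-^{1/2}xw_-^{1/2}\le w_-^{1/2}yw_-^{1/2}$. Taking traces and adding gives
\[
\tau(xw)\le \tau(yw_+)+\tau(yw_-)=\tau(a^{1/2}ya^{1/2})\le\int_0^s\mu(t;y)\,dt,
\]
where the last inequality is Step~1 applied to $y$ (the supremum there runs over all admissible $a$). Taking the supremum over $a$ gives $\int_0^s\mu(t;x)\,dt\le\int_0^s\mu(t;y)\,dt$, i.e.\ $x\prec\prec y$.

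\emph{Step 3 (proof of (i)).} For self-adjoint operators we have $x\le|x|$ and $-x\le|x|$, and likewise for $y$. Hence
\[
-(|x|+|y|)\le x+y\le |x|+|y|,
\]
with $|x|+|y|\ge0$. By (ii), $x+y\prec\prec|x|+|y|$. Since $\mu(x+y)=\mu(|x+y|)$, this is exactly $|x+y|\prec\prec|x|+|y|$.

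The main obstacle is Step~1. We need the variational formula with the maximizing contraction $a$ chosen to commute with $x$; this is what makes $w=ua$ self-adjoint with $|w|=a$. We also need to handle unbounded $\tau$-measurable operators, where finiteness of traces must be justified. We would settle this first with functional calculus of $|x|$ and a fractional multiple of a level projection, without assuming $\cM$ atomless. If that is awkward, we would approximate $x$ and $y$ by $x e_n$ and $y e_n$ with $\tau$-finite spectral projections $e_n$. We would then pass to the limit using normality of $\tau$ and continuity of $\int_0^s\mu$ under increasing nets.
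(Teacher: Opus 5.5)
The paper gives no proof of this proposition; it quotes it from \cite{SC88} and \cite[Proposition 1.2]{CS94}. Your Step 3 is correct. So is the first display of Step 1, the supremum over \emph{all} $a\in\cM$ with $0\le a\le\mathbf 1$ and $\tau(a)\le s$, which holds in every semi-finite algebra.

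The gap is the ``Moreover'' claim on which Step 2 rests. You assert that for $z=|x|$ the supremum is attained or approximated by Borel functions of $|x|$, hence by $a$ commuting with $x$. This can fail as soon as $\tau(\mathbf 1)=\infty$. A fraction $\theta e^{|x|}\{\lambda\}$ of a level projection of infinite trace is not $\tau$-finite. Worse, the relative commutant of $x$ may contain no nonzero $\tau$-finite element at all.

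For instance, take $\cM=B(L_2(0,1))$ with $\tau=\mathrm{Tr}$, and let $x$ be multiplication by $2t-1$. Then $\tau(e^{|x|}(r,\infty))=\infty$ for every $r<1$, so $\mu(t;x)=1$ for all $t$ and $\int_0^s\mu(t;x)\,dt=s$. On the other hand, $\{x\}'\cap\cM=L_\infty(0,1)$ contains no nonzero trace-class operator. So your supremum over commuting $a$ is $0$, and Step 2 yields nothing. The same happens for $\mathbf 1\otimes x$ in the II$_\infty$ factor $\mathcal{R}\,\overline{\otimes}\,B(L_2(0,1))$, where $\mathcal{R}$ is the hyperfinite II$_1$ factor, so atomlessness does not help. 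Your fallback fails on the same example: $x$ has no nonzero $\tau$-finite spectral projection, and $ye_n$ is not self-adjoint anyway.

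The repair is short, because $a$ need not commute with $x$, only with $p=e^{x}[0,\infty)$. Given any admissible $a$, put $b=pap+p^{\perp}ap^{\perp}$ with $p^{\perp}=\mathbf 1-p$. Then $0\le b\le\mathbf 1$ and $\tau(b)=\tau(a)$. Since $p$ commutes with $|x|$, also $\tau(|x|^{1/2}b|x|^{1/2})=\tau(|x|^{1/2}a|x|^{1/2})$. For $w=ub$ one gets $w_+=pb$ and $w_-=p^{\perp}b$, with
$w_+^{1/2}xw_+^{1/2}=b^{1/2}x_+b^{1/2}$ and $-w_-^{1/2}xw_-^{1/2}=b^{1/2}x_-b^{1/2}$. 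The inequalities of Step 2 then go through unchanged.

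With the test element removed, this is the standard argument. One has $|x|=pxp-p^{\perp}xp^{\perp}\le pyp+p^{\perp}yp^{\perp}=\tfrac12(y+uyu)$. Hence $\mu(x)\le\mu(\tfrac12(y+uyu))\prec\prec\mu(y)$, by the Ky Fan inequality $\int_0^s\mu(t;y_1+y_2)\,dt\le\int_0^s\mu(t;y_1)\,dt+\int_0^s\mu(t;y_2)\,dt$ together with $\mu(uyu)=\mu(y)$.

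If $\tau(\mathbf 1)<\infty$, or more generally if $x$ has $\tau$-finite support, level projections have finite trace and your Borel-function maximizer exists. This is the setting in which the paper applies the proposition, and there your argument is complete as written.
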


		The following lemmas provide  a useful criterion for a real/complex symmetric function  space to coincide with $L_1^{\mathbb{R}}$ (or $L_1$) with equal norms.

		\begin{lemma}\label{Lemma2.2}
			\begin{enumerate}[\rm(i)]
				\item \cite[Lemma 4.3]{KMMW13} Let \( E \) be an order continuous   real/complex  symmetric  function space on \((0,1)\).  If 
				$ 
				\liminf\limits_{\tau \to 0} \frac{\phi_{E}(\tau)}{\tau} = 1,
				$ 
				then 	$ E = L_1^{\mathbb{R}}$  or $L_1$  endowed with its canonical norm. 
				\item \cite[Corollary 5]{SS14}, \cite[Lemma 2.2]{AKM15} Let $E$ be a  real/complex symmetric function  space  on an atomless measure space $(\Omega, \cS, \mu)$. Then, 
				$ E = L_1^{\mathbb{R}}$  or $L_1$ with equality of norms if and only if $\phi_E(t) = t$ for all  $t \in \mathbb{R}_{+}$ with
				$t < \mu(\Omega)$. 
			\end{enumerate}
			
		\end{lemma}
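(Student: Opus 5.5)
The statement has two parts. Both reduce to comparing $\norm{\cdot}_E$ with $\norm{\cdot}_{L_1}$ from above on simple functions and from below through K\"othe duality, using the identity \eqref{defEtimes} $\phi_{E^\times}(t)=t/\phi_E(t)$. I would prove (ii) first and then adapt the argument to (i).

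For (ii), the ``only if'' direction is immediate, since $\norm{\chi_A}_{L_1}=\mu(A)$.

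For the ``if'' direction, assume $\phi_E(t)=t$ for $t<\mu(\Omega)$. \emph{Upper bound.} For a simple function $f=\sum_i a_i\chi_{A_i}$ with pairwise disjoint $A_i$ of finite measure, the triangle inequality gives
$$\norm{f}_E\le\sum_i|a_i|\phi_E(\mu(A_i))=\sum_i|a_i|\mu(A_i)=\norm{f}_{L_1}.$$
Let $f\in L_1$ and choose simple $f_n\to f$ in $L_1$. Then $\{f_n\}$ is Cauchy in $E$, so it has a limit $g\in E$ by completeness. Since $E$ embeds continuously into $S$ with the measure topology, and $f_n\to f$ in measure, we get $g=f$ a.e. Hence $L_1\subseteq E$ with $\norm{f}_E\le\norm{f}_{L_1}$. \emph{Lower bound.} Let $f\in E$ and let $A$ have finite measure. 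Put $h=\overline{\operatorname{sgn} f}\,\chi_A$. Then
$$\int_A|f|\,d\mu=\int fh\,d\mu\le\norm{f}_E\norm{h}_{E^\times}\le\norm{f}_E\,\phi_{E^\times}(\mu(A))=\norm{f}_E,$$
because $\phi_{E^\times}(t)=t/\phi_E(t)=1$. Since $\mu$ is $\sigma$-finite (in the cases of interest, $(0,a)$), letting $A\uparrow\Omega$ and using monotone convergence gives $\norm{f}_{L_1}\le\norm{f}_E$. Thus $E=L_1$ with equal norms. The real case is identical, with $\operatorname{sgn}f$ real.

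For (i), the plan is the same, with order continuity replacing the finite-support argument. Since $\phi_E(t)/t$ is non-increasing, the hypothesis $\liminf_{t\to0}\phi_E(t)/t=1$ says its supremum is $1$, so $\phi_E(t)\le t$ on $(0,1]$. The simple-function estimate then gives $\norm{f}_E\le\norm{f}_{L_1}$ on simple functions. Because $E$ is order continuous, simple functions are dense in $E$; this gives $E=L_1$ as sets and the inequality on all of $E$. The lower bound uses duality with $h=\overline{\operatorname{sgn}f}$ on $(0,1)$:
$$\norm{f}_{L_1}\le\norm{f}_E\,\phi_{E^\times}(1)=\norm{f}_E/\phi_E(1).$$

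The main obstacle is exactly here: the argument needs $\phi_E(1)=1$. In \cite{KMMW13} this normalization of the norm is implicit, $\norm{\chi_{(0,1)}}_E=1$. Without it the conclusion can fail. For example, $\norm{f}=\max\bigl(c\norm{f}_{L_1},\int_0^a f^*\bigr)$ with $c<1$ and $a\in(0,1)$ is an order continuous symmetric norm with $\phi(t)=t$ near $0$, but it is not the canonical $L_1$ norm. I would therefore carry out (i) under the normalization $\phi_E(1)=1$, as it is used later in the paper (compare Lemma \ref{xEtauleqphiEtau}). Once $\phi_E(1)=1$ is granted, monotonicity forces $\phi_E(t)/t\equiv1$, and (i) follows from (ii).
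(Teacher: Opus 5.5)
The paper gives no proof of Lemma \ref{Lemma2.2}: part (i) is quoted from \cite[Lemma 4.3]{KMMW13}, and part (ii) from \cite[Corollary 5]{SS14} and \cite[Lemma 2.2]{AKM15}. Your self-contained argument is correct.

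In (ii), the upper bound uses the triangle inequality on simple functions, then completeness of $E$ and uniqueness of limits in measure; this gives $L_1\subseteq E$ with $\norm{f}_E\le\norm{f}_{L_1}$. The lower bound tests against $\overline{\operatorname{sgn} f}\,\chi_A$ and uses $\phi_{E^\times}(t)=t/\phi_E(t)=1$. Both steps rely only on facts collected in Section \ref{section:symmetric}. Your restriction to $\sigma$-finite $\mu$ covers every use in the paper, where $\Omega=(0,\tau(\mathbf{1}))$.

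Your caveat in (i) is also right. The norm $\max\bigl(c\norm{f}_{L_1},\int_0^a f^*\bigr)$ with $0<c,a<1$ is symmetric and order continuous and has $\phi(t)=t$ on $(0,a]$. However, $\phi(1)=\max(c,a)<1$, so (i) as literally stated fails, even up to proportional norms. It holds under the standard normalization $\norm{\chi_{(0,1)}}_E=1$ (as in \cite{LT2}). The paper only invokes (i) in the proof of Theorem \ref{finiteordercontinuousL1}, after arranging $\phi_{\cE}(1)=1$, so nothing downstream is affected.

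Under that normalization, your final sentence already completes (i). For $t\in(0,1]$ one has $1=\phi_E(1)\le\phi_E(t)/t\le\lim_{s\to0}\phi_E(s)/s=1$, so $\phi_E(t)=t$ and (ii) applies. Your first route through density of simple functions is therefore unnecessary, and the order continuity hypothesis plays no role at all.

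Compared with citing \cite{KMMW13}, your approach gives a shorter proof of a slightly stronger statement. Every symmetric function space on $(0,1)$ with $\phi_E(1)=1$ and $\lim_{t\to0}\phi_E(t)/t=1$ is $L_1$ with its canonical norm, with no order continuity needed.
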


Let $E=E(0,1)$ be the symmetric function space associated with
$\cE=E(\cM,\tau)_h$. Since $\cM$ is atomless and finite, for every
$0<t<1$ there exists a projection $p\in\cP(\cM)$ with $\tau(p)=t$.
Hence
$$\phi_{\cE}(t)=\|p\|_{\cE}=\|\mu(p)\|_E=\|\chi_{(0,t)}\|_E=\phi_E(t).$$
Therefore, once we prove
$$\liminf_{t\to0}\frac{\phi_{\cE}(t)}{t}=1,$$
we also have
$$\liminf_{t\to0}\frac{\phi_E(t)}{t}=1.$$
By Lemma \ref{Lemma2.2}(i), it follows that $E=L_1(0,1)$ with equality of norms. Consequently,
$$\cE=E(\cM,\tau)_h=L_1(\cM,\tau)_h$$
with equality of norms.

	Below, we establish a    noncommutative version
	of \cite[Theorem 4.8]{KMMW13}.

	\begin{theorem}\label{finiteordercontinuousL1}
		Let $ \cM $ be a   finite atomless von Neumann algebra equipped with a finite faithful normal trace $\tau$
		and let \( (\cE, \norm{\cdot}_{\cE} )\subseteq S(\cM, \tau)_h  \)  be a
		symmetric 
		operator  space with order continuous norm.  If  \( \cE \)     has the Daugavet property,   then \( \cE  \)  is isometrically isomorphic to   \(  L_1(\cM, \tau)_h \). 
		More precisely, \( \cE = L_1(\cM, \tau)_h  \) and  $\norm{\cdot}_{\cE}=\lambda \norm{\cdot}_{L_1(\cM, \tau)}$ for some 
		positive number $\lambda >0$.
		In particular, 
		if \( \phi_\cE(1) = 1 \), then \( \cE = L_1(\cM, \tau)_h   \) with equality of norms.
		
	\end{theorem}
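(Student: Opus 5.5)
Normalize first. Proportional norms preserve the Daugavet property, so after replacing $\norm{\cdot}_{\cE}$ by $\norm{\cdot}_{\cE}/\phi_{\cE}(1)$ we may assume $\phi_{\cE}(1)=\norm{\mathbf 1}_{\cE}=1$ (and $\tau(\mathbf{1})=1$ after rescaling the trace). By the discussion preceding the theorem, it then suffices to show
$$\liminf_{t\to0}\frac{\phi_{\cE}(t)}{t}=1.$$
Lemma \ref{Lemma2.2}(i) then gives $E=L_1(0,1)$ isometrically, and hence $\cE=L_1(\cM,\tau)_h$ with equality of norms. Undoing the normalisation yields $\lambda=\phi_{\cE}(1)$. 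Two inequalities are available for free. Since $\phi_{\cE}(t)/t$ is decreasing, the liminf is at least $\phi_{\cE}(1)=1$. Lemma \ref{EsubseteqL1normineq} with $p=\mathbf 1$ gives $\norm{x}_{L_1}\le\norm{x}_{\cE}$. So we must rule out
$$\phi_{\cE}(t)\ge(1+\delta)t\quad\text{for all small } t \text{ and some } \delta>0.$$

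We argue by contradiction, following \cite[Theorem 4.8]{KMMW13} and using the slice criterion Lemma \ref{DPequivalent}(iii) in the real space $\cE$. Take $x=\mathbf 1\in S_{\cE}$ and the functional $x^*(z)=\tau(z)$. Its norm is $\norm{\mathbf 1}_{\cE^\times}=1/\phi_{\cE}(1)=1$ (using \eqref{defEtimes}), and since $\norm{\cdot}_{\cE}$ is order continuous it is a bounded normal functional. Lemma \ref{DPequivalent}(iii) produces $y\in S_{\cE}$ with
$$\tau(y)>1-\varepsilon\quad\text{and}\quad\norm{\mathbf 1+y}_{\cE}>2-\varepsilon.$$
Write $y=y_+-y_-$. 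Then $\tau(|y|)\le\norm{y}_{\cE}=1$ together with $\tau(y)>1-\varepsilon$ forces $\tau(y_-)<\varepsilon/2$ and $\tau(y_+)\approx1$. Next split $y_+$ with the spectral projection $p=e^{y_+}(M,\infty)$ for a large $M$, so that
$$y_+=y_+p+y_+(\mathbf 1-p),$$
where $y_+(\mathbf 1-p)\in\cM$ with $\norm{y_+(\mathbf 1-p)}_\infty\le M$, and $\tau(p)\le 1/M$. Everything here commutes with $y$, so the triangle inequality holds and Proposition \ref{selfproperties}(i) is not even needed.

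On the large part, Lemma \ref{EsubseteqL1normineq} with the hypothesis $\phi_{\cE}(\tau(p))\ge(1+\delta)\tau(p)$ gives
$$\norm{y_+p}_{L_1}\le\frac{\norm{y_+p}_{\cE}}{1+\delta}\le\frac{1}{1+\delta}.$$
Hence most of the mass $\tau(y_+)\approx1$ must lie in the bounded part $b=y_+(\mathbf 1-p)$, namely $\tau(b)\gtrsim\delta/(1+\delta)$. Meanwhile
$$\norm{\mathbf 1+y}_{\cE}\le\norm{\mathbf 1+b}_{\cE}+\norm{y_+p}_{\cE}+\norm{y_-}_{\cE},$$
and the aim is to show this is bounded away from $2$. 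The key estimate is to compare $\mathbf 1+b$ with $\mathbf 1+y_+p$ via Lemma \ref{xEtauleqphiEtau} and Lemma \ref{fullyxp1+p_2prec}, in the submajorization form $\frac{\tau(zq)}{\tau(q)}q\prec\prec qzq$, so as to exhibit a strict loss $\eta(\delta)>0$. This adapts the function-space computation of \cite{KMMW13}.

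The main obstacle is making this quantitative estimate work noncommutatively. In particular, $\norm{y_-}_{\cE}$ is only controlled in $L_1$, not in $\norm{\cdot}_{\cE}$. I would handle it by using order continuity to approximate $y$ by bounded elements, then applying Lemma \ref{xEtauleqphiEtau} with $\phi_{\cE}(0+)=0$. I would take care of the case $\phi_{\cE}(0+)>0$ separately: there $\cE$ is not order continuous unless $\cE=\cM$, which contradicts the Daugavet property via the RNP/WCG argument of Theorem \ref{sigmafiniteL1subseteqE}. If the direct estimate fails, the fallback is to reduce to the commutative case. Restrict to a maximal abelian subalgebra containing the spectral projections of $y$; it carries the trace-preserving conditional expectation and is isometric to $E(0,1)$. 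Then invoke \cite[Theorem 4.8]{KMMW13} directly on the real function space, since the slice data $x=\mathbf 1$ and $x^*=\tau$ are invariant under that expectation.
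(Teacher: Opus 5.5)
\textbf{There is a genuine gap: the slice data you choose carry no information.} Your normalisation and the reduction to $\liminf_{t\to0}\phi_{\cE}(t)/t=1$ via Lemma \ref{Lemma2.2}(i) match the paper. The core step fails, however. With $x=\mathbf 1$ and $x^*(z)=\tau(z)$ you have $x^*(x)=1$. So $y=\mathbf 1$ itself satisfies $\tau(y)>1-\varepsilon$ and $\|\mathbf 1+y\|_{\cE}=2$. The two conditions you extract from the Daugavet property therefore hold in every normalised symmetric space.

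Consequently no contradiction with $\phi_{\cE}(t)\ge(1+\delta)t$ can be derived from them. In $L_2(0,1)$ one has $\phi(t)=\sqrt t\ge(1+\delta)t$ for small $t$. Yet $y=\mathbf 1$ gives $p=0$, $b=\mathbf 1$ and $\|\mathbf 1+b\|=2$. So the ``strict loss $\eta(\delta)>0$'' you plan to extract from Lemmas \ref{xEtauleqphiEtau} and \ref{fullyxp1+p_2prec} does not exist.

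The fallback has two problems. It keeps the same same-sign slice data. Moreover, you cannot apply \cite[Theorem 4.8]{KMMW13} to $E(\mathcal A)$ for a maximal abelian subalgebra $\mathcal A$, because you have no argument that $E(\mathcal A)$ inherits the Daugavet property. The conditional expectation only gives $\|\mathbb{E}(x+y)\|_{\cE}\le\|x+y\|_{\cE}$, which is the wrong direction. At most you could rerun that proof for a specific witness $y$.

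\textbf{The missing idea is to use opposite-sign data}, namely $x_0=\mathbf 1$ and $x_0^*(z)=\tau(-z)$, as the paper does following \cite{KMMW13}. The witness $y_\varepsilon\in S_{\cE}$ then has $\tau(-y_\varepsilon)>1-\varepsilon$, so almost all of its $L_1$-mass is negative. At the same time $\|\mathbf 1+y_\varepsilon\|_{\cE}>2-\varepsilon$. Together these force the negative mass onto a projection of small trace on which $y_\varepsilon<-2$.

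The paper quantifies this as follows.
\begin{enumerate}
\item It uses the spectral projections $A_{\varepsilon,1}=e^{y_\varepsilon}[-2,0)$, $A_{\varepsilon,2}=e^{y_\varepsilon}(-\infty,-2)$, $B_{\varepsilon,1}=e^{y_\varepsilon}[0,2]$ and $B_{\varepsilon,2}=e^{y_\varepsilon}(2,\infty)$.
\item It bounds $|\mathbf 1+y_\varepsilon|\prec\prec P_\varepsilon+|y_\varepsilon|Q_\varepsilon+y_\varepsilon B_{\varepsilon,1}+B_{\varepsilon,2}$.
\item It disposes of the last two terms by Lemma \ref{xEtauleqphiEtau} and by $\tau(B_{\varepsilon,2})\le\varepsilon/2$.
\item It introduces $g_\varepsilon=|y_\varepsilon|+\bigl(1-\tau(|y_\varepsilon|P_\varepsilon)/\tau(P_\varepsilon)\bigr)P_\varepsilon$ and, with Lemma \ref{fullyxp1+p_2prec}, obtains $\tau(|y_\varepsilon|P_\varepsilon)\to0$, hence $\tau(|y_\varepsilon|A_{\varepsilon,2})\to1$.
\item Lemma \ref{EsubseteqL1normineq} and the monotonicity of $\phi_{\cE}(t)/t$ then give $\tau(A_{\varepsilon,2})\to0$ and $\phi_{\cE}(\tau(A_{\varepsilon,2}))/\tau(A_{\varepsilon,2})\to1$. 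This is exactly the required liminf.
\end{enumerate}

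Two of your side remarks are worth noting. Your observation that all pieces commute with $y$, so the ordinary triangle inequality can replace Proposition \ref{selfproperties}(i), applies equally to this argument. Your separate case $\phi_{\cE}(0+)>0$ never arises, since order continuity already forces $\phi_{\cE}(0+)=0$.
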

	\begin{proof}
		Since 	$\norm{\cdot}_{\cE}$ is an order continuous norm on \( \cE \), it follows 
		that	$\lim\limits_{t\to 0} \phi_{\cE }(t)=0$ \cite[Lemma 5.6.18]{DPS}. 
		Since $\cM$ is    finite and atomless, it follows that 
		${\bf 1}\in \cE$ \cite[Theorem 4.4.6]{DPS}. 
		Without loss of generality, we assume that  $\tau({\bf 1})=1$ and 
		$\phi_{\cE}(1)=1$.  	
		By   Lemma \ref{Lemma2.2}(i), it suffices  to prove 	$\liminf\limits_{t\to 0} \frac{\phi_{\cE}(t)}{t}=1$.

		Take the element $x_0:={\bf 1} \in S_{\cE}$.   Define   a  normal linear  functional $$x_0^*(z):=\tau(-z), \quad  \forall z\in \cE. $$ 
	In particular, $\norm{x_0^*}_{\cE^*}=\norm{-{\bf 1}}_{\cE^{\times}}
		\stackrel{\eqref{defEtimes}}{= }
		\frac{1}{\phi_{\cE}(1)}= 1$ \cite[page 265]{DPS}, i.e.,  $x_0^*$
		is in $ S_{\cE^*}$.

		Since \( \cE \) has  the Daugavet property, 
		it follows from  Lemma \ref{DPequivalent}(iii) that 
		for each \(0<\varepsilon <1\),  
		there exists a self-adjoint operator \( y_\varepsilon \in \cE \) such that  
		\begin{enumerate}
			\item \(\|y_\varepsilon\|_{\cE} = 1\),
			\item  \(x_0^*(y_\varepsilon)=\tau(-y_\varepsilon) > 1 - \varepsilon\),
			\item \(\|y_\varepsilon + x_0\|_{\cE} =\|y_\varepsilon + \mathbf{1}\|_{\cE} > 2 - \varepsilon\).
		\end{enumerate}
		Condition (1) implies that 
		\begin{align}\label{yL1normleq1}
			\|y_\varepsilon\|_{L_1(\cM, \tau)} 	\stackrel{\eqref{L1tau1phi1}	}{\leq} \|y_\varepsilon\|_{\cE} = 1.
		\end{align}

		The spectral projections of $y_\varepsilon$ are denoted as follows:    
		\begin{align*}%\label{4-1}
			A_{\varepsilon} := e^{y_\varepsilon}	(-\infty, 0)
			,    & \notag
			\quad B_{\varepsilon} := e^{y_\varepsilon}	[0, \infty),  
			\\ 	A_{\varepsilon, 1}:=e^{y_\varepsilon}	[-2, 0),  &\notag \quad 
			A_{\varepsilon, 2} :=
			e^{y_\varepsilon}	(-\infty,-2),  
			\\ 
			B_{\varepsilon, 1}:= 
			e^{y_\varepsilon}	[0, 2], 
			& \quad
			B_{\varepsilon, 2}:=
			e^{y_\varepsilon}	(2, \infty) 
			. 
		\end{align*}

		By condition (2),  we have that
		\begin{align}\label{1 - varepsilonx0*yvarepsilon1} 
			1 - \varepsilon & \notag		< x_{0}^{*}(y_\varepsilon) 
			=\tau(-y_\varepsilon)
			=\tau(-y_\varepsilon A_{\varepsilon} )-
			\tau(y_\varepsilon B_{\varepsilon} ) 		 \leq 	\tau(-y_\varepsilon A_{\varepsilon} ) 
			\\ & = \tau(|y_\varepsilon | A_{\varepsilon} ) 
			\leq \tau(|y_\varepsilon | ) = \|y_{\varepsilon}\|_{L_1(\cM, \tau)} \stackrel{\eqref{yL1normleq1}
			}{\leq} 1.
		\end{align}
		By the facts  that    \( 0\leq y_{\varepsilon} B_{\varepsilon, 1}= |y_{\varepsilon}| B_{\varepsilon, 1}\leq 2 B_{\varepsilon, 1}\) and $2 B_{\varepsilon, 2}\leq y_{\varepsilon} B_{\varepsilon, 2}= |y_{\varepsilon}| B_{\varepsilon, 2}$, we obtain 
		\begin{align*}
			\tau(y_\varepsilon  B_{\varepsilon} )  =	\tau(|y_\varepsilon | B_{\varepsilon} ) 
			= \tau(|y_\varepsilon|)- 	\tau(|y_\varepsilon | A_{\varepsilon} )   
			\stackrel{\eqref{1 - varepsilonx0*yvarepsilon1}}{\leq}
			\varepsilon,
		\end{align*}
		which implies that
		\begin{align}\label{yvarepsilonBvarepsilon1norm1}
			0\leq 	\tau(y_\varepsilon  B_{\varepsilon, 1})	 \leq 
			\tau(y_\varepsilon   B_{\varepsilon} ) 
			\leq \varepsilon
			\quad \text{and hence,}
			\quad
			0\leq  \tau(y_\varepsilon B_{\varepsilon, 2})	 \leq 
			\varepsilon.
		\end{align}
		In particular, 
		\begin{align}\label{yvarepsilon Bvarepsilon2} 
			2 \tau( B_{\varepsilon, 2}) 
			\leq 
			\tau(y_\varepsilon B_{\varepsilon, 2})= \tau(|y_\varepsilon| B_{\varepsilon, 2})
			\stackrel{\eqref{yvarepsilonBvarepsilon1norm1}}{\leq}
			\varepsilon,
		\end{align}
		and thus, 
		\begin{align}\label{muBvarepsilon2leqvarepsilon2}
			\tau(B_{\varepsilon, 2}) \leq \frac{\varepsilon}{2}. 
		\end{align}	
		Since  $ \phi_{\cE}(0+)=0$, 	$\sup\limits_{0<\varepsilon<1} \norm{y_{\varepsilon} B_{\varepsilon, 1}}_{\infty}\leq 2$ and  $\lim\limits_{\varepsilon\to0}	\norm{y_\varepsilon   B_{\varepsilon, 1}}_{L_1(\cM, \tau)} \stackrel{\eqref{yvarepsilonBvarepsilon1norm1}}{=}0$, it follows from 
		Lemma \ref{xEtauleqphiEtau} 
		that 
		\begin{align}\label{limvarepsilonto0normyvarepsilon BvarepsilonE(tau)=0}
			\lim\limits_{\varepsilon\to0}	\norm{y_\varepsilon   B_{\varepsilon, 1}}_{\cE}
			=0. 
		\end{align}

		Define 
		\begin{align}
			\label{defP}	P_{\varepsilon} &:= 
			A_{\varepsilon, 1}
			+B_{\varepsilon, 1}   =
			e^{y_\varepsilon}	[-2, 0)
			+ 	 e^{y_\varepsilon}	[0, 2] 
		\end{align} 
		and 
		\begin{align}\label{defQ}
			Q_{\varepsilon} &:=  A_{\varepsilon, 2} +
			B_{\varepsilon, 2}
			=
			e^{y_\varepsilon}	(-\infty,-2)
			+
			e^{y_\varepsilon}	(2, \infty).
		\end{align}

		Since 
		\( - |y_{\varepsilon}| \leq y_{\varepsilon}
		\leq |y_{\varepsilon}| \), 
		it follows that
		\( - |y_{\varepsilon}|A_{\varepsilon, 2  }  
		\leq y_{\varepsilon} A_{\varepsilon, 2  } \leq 
		-2 A_{\varepsilon, 2  } \), 
		and so, 
		\begin{align}\label{yvarepsilonAvarepsilon2}
			|y_{\varepsilon}| A_{\varepsilon, 2  } \geq 
			2 A_{\varepsilon, 2  } .
		\end{align}
		Therefore, 
		\begin{align*}
			2 \tau(Q_{\varepsilon})
			&
			\stackrel{\eqref{defQ}}{	=}
			2\tau(B_{\varepsilon, 2})+
			2\tau(A_{\varepsilon, 2} )
			\stackrel{\eqref{yvarepsilon Bvarepsilon2}, \eqref{yvarepsilonAvarepsilon2}}{	\leq}
			\tau(|y_\varepsilon| B_{\varepsilon, 2}) 
			+
			\tau(|y_\varepsilon| A_{\varepsilon, 2} )
			\\ &  \,\, =
			\tau(|y_\varepsilon| Q_{\varepsilon}) 
			\leq
			\| y_\varepsilon \|_{L_1(\cM, \tau)}  
			\stackrel{\eqref{yL1normleq1}
			}{\leq}   1. 
		\end{align*}
		This shows that
		\begin{align}\label{tauQvarepsilonleq12}
			\tau(Q_{\varepsilon}) \leq \frac{1}{2}, \quad  \text{and so,} \quad \tau(P_{\varepsilon}) \geq \frac{1}{2}. 
		\end{align}
		
		By  the fact that \(- A_{\varepsilon, 1}\leq y_{\varepsilon}A_{\varepsilon, 1}+ A_{\varepsilon, 1}\leq  A_{\varepsilon, 1}\), 
		we obtain 
		\begin{align}\label{yA1prec}
			|y_{\varepsilon}A_{\varepsilon, 1}+ 
			A_{\varepsilon, 1}|
			\stackrel{\text{Prop.\ref{selfproperties}(ii)}}{\prec\prec}
			A_{\varepsilon, 1},  
		\end{align} 
		where we used the fact that $\mu(z)=\mu(|z|)$ for each $z\in S(\cM, \tau)$ \cite[page 129]{DPS}. 
		Due 
		to 
		\(y_{\varepsilon}A_{\varepsilon, 2  }+ 
		A_{\varepsilon, 2  } \leq -A_{\varepsilon, 2  } \), 
		it follows that \begin{align}\label{|yA2|leq} 
			| y_{\varepsilon}A_{\varepsilon, 2  }+ 
			A_{\varepsilon, 2  }| 
			=
			- y_{\varepsilon}A_{\varepsilon, 2  }- 
			A_{\varepsilon, 2} 
			= |y_{\varepsilon}| A_{\varepsilon, 2  }- 
			A_{\varepsilon, 2} 
			\leq 	|y_{\varepsilon}| A_{\varepsilon, 2  }
			,	 
		\end{align}
		and so, \begin{eqnarray}\label{|1+y|leq}
			 &  &
			|	{\bf 1} + y_{\varepsilon}|  \\  & 
			= & \left| \left( A_{\varepsilon, 1} + y_{\varepsilon} A_{\varepsilon, 1} \right) +
			\left( A_{\varepsilon, 2  } + y_{\varepsilon} A_{\varepsilon, 2  } \right) +
			\left( B_{\varepsilon} + y_{\varepsilon} B_{\varepsilon} \right) 
			\right| \notag 
			\\
			 \notag			&\stackrel{\text{Prop.\ref{selfproperties}}(i)}{\prec\prec}&   |A_{\varepsilon, 1}+ y_{\varepsilon}A_{\varepsilon, 1}
			| 
			+ |A_{\varepsilon, 2  }+ y_{\varepsilon}A_{\varepsilon, 2} 
			| 
			+  	
			B_{\varepsilon}  + y_{\varepsilon} B_{\varepsilon} 
			\\
			 \notag	&\stackrel{\text{\cite[Lem.2.3]{CDS97}}, \eqref{yA1prec}, \eqref{|yA2|leq}}{\prec\prec}&
			A_{\varepsilon, 1}+ |y_{\varepsilon}| A_{\varepsilon, 2  }
			+  B_{\varepsilon} + y_{\varepsilon} B_{\varepsilon}   
			\\
			 \notag
			&=&
			A_{\varepsilon, 1}+ |y_{\varepsilon}| A_{\varepsilon, 2  }
			+  B_{\varepsilon, 1}  + y_{\varepsilon} B_{\varepsilon, 1}   
			+   B_{\varepsilon, 2}  + y_{\varepsilon} B_{\varepsilon, 2} 
			\\
			    &\stackrel{\eqref{defP}, \eqref{defQ}}{=} &
			P_{\varepsilon} + |y_{\varepsilon}| Q_{\varepsilon}
			+ y_{\varepsilon} B_{\varepsilon, 1}  
			+ B_{\varepsilon, 2}. 
		\end{eqnarray}
		The monotonicity of  $\norm{\cdot}_\cE$ with respect to $\prec \prec $ (symmetric spaces with order continuous norm are strongly symmetric) and  condition (3)  yield that 
		\begin{align}
			2 - \varepsilon &\,\,<  \| {\bf 1} + y_{\varepsilon} \|_{\cE} =  \| |{\bf 1} + y_{\varepsilon} | \|_{\cE} \notag 
			\\
			&\stackrel{\eqref{|1+y|leq}}{\leq}  
			\| P_{\varepsilon} + |y_{\varepsilon}| Q_{\varepsilon}
			+ 
			y_{\varepsilon} B_{\varepsilon, 1}  
			+ 
			B_{\varepsilon, 2} \|_{\cE} \notag
			\\
			&
			\stackrel{\eqref{muBvarepsilon2leqvarepsilon2}}{\leq}  
			\| P_{\varepsilon} + |y_{\varepsilon}| Q_{\varepsilon}  \|_{\cE} 
			+ 
			\| y_{\varepsilon} B_{\varepsilon, 1} \|_{\cE}  +\phi_{\cE}\Big(\frac{\varepsilon}{2}\Big)   
			\label{P+Qnorm+yB+vare2} 
			\\ 
			& \,\, \leq \phi_{\cE}(\tau(P_{\varepsilon})) 
			+ 
			\| |y_{\varepsilon}| Q_{\varepsilon} \|_{\cE} 
			+\| y_{\varepsilon} B_{\varepsilon, 1} \|_{\cE}  +\phi_{\cE}\Big(\frac{\varepsilon}{2}\Big) 
			\notag \\
			&\,\, \leq		
			\phi_{\cE}(1) 
			+ 
			\| y_{\varepsilon} \|_{\cE} 
			+\| y_{\varepsilon} B_{\varepsilon, 1} \|_{\cE}  +\phi_{\cE}\Big(\frac{\varepsilon}{2}\Big) 	
			\notag  \\
			& \notag \,\, = 2 		+
			\| y_{\varepsilon} B_{\varepsilon, 1}  \|_{\cE}+ \phi_{\cE}
			\Big(\frac{\varepsilon}{2}\Big)  \stackrel{\eqref{limvarepsilonto0normyvarepsilon BvarepsilonE(tau)=0}}{\to} 2 	
            %\label{2-varepsilon<2+vare}
		\end{align} 
                   as $\varepsilon\to 0$. 
		By the facts that  $0<\phi_{\cE}(\tau(P_{\varepsilon})) \leq 1 $, $0 \leq  \| |y_{\varepsilon}| Q_{\varepsilon} \|_{\cE} \leq 1$, 
        %{\color{blue}(why $Q_\varepsilon$ cannot be zero? so the norm is also zero? we can simply write $0\le \||y_\varepsilon|Q_\varepsilon\|$. But later we still prove that $Q_\varepsilon$ is nonzero anyway)}
       %{\color{red} Agree, and  the case of $\lim_{\varepsilon\to 0}\||y_\varepsilon|Q_\varepsilon\|=0$ can  not occur. }  
        we have
% By the facts that
% $$0<\phi_{\cE}(\tau(P_\varepsilon))\le1 \quad\text{and}\quad 0\le \||y_\varepsilon|Q_\varepsilon\|_{\cE}\le1,$$
% and since
% $$\|y_\varepsilon B_{\varepsilon,1}\|_{\cE}+\phi_{\cE}\left(\frac{\varepsilon}{2}\right)\to0,$$
% we obtain
% $$\lim_{\varepsilon\to0}\phi_{\cE}(\tau(P_\varepsilon))=
% \lim_{\varepsilon\to0}\||y_\varepsilon|Q_\varepsilon\|_{\cE}=1.$$
		\begin{align}\label{limphiEtautauPvarepsilon=1}
			\lim_{\varepsilon \to 0} \phi_{\cE}(\tau(P_{\varepsilon})) 
			=
			\lim_{\varepsilon \to 0} \| |y_{\varepsilon}| Q_{\varepsilon} \|_{\cE} 
			= 1.
		\end{align}

		An application of Lemma \ref{EsubseteqL1normineq} yields \begin{align}\label{tau|y|PleqphitauP}
			\frac{\phi_{\cE}(\tau(P_{\varepsilon}))}
			{\tau(P_{\varepsilon})} 
			\norm{|y_{\varepsilon}| P_{\varepsilon}
			}_{L_1(\cM, \tau)}
			\leq 
			\norm{|y_{\varepsilon}| P_{\varepsilon}
			}_{\cE}\leq  1, 
		\end{align}	
		from which it follows that 
		\begin{align*}
			\frac{\tau( |y_\varepsilon|  P_\varepsilon )}
			{\tau(P_{\varepsilon})}	 
			%=		\frac{1}{\tau(P_{\varepsilon})}	\norm{|y_{\varepsilon}| P_{\varepsilon}		}_{L_1(\cM, \tau)}
			\stackrel{\eqref{tau|y|PleqphitauP}}
			{\leq } \frac{1}{\phi_{\cE}(\tau(P_{\varepsilon}))}.
		\end{align*}
		Thus, 
		\begin{align}\label{limsuptyPleq1}
			\limsup_{\varepsilon \to 0}\frac{\tau( |y_\varepsilon|  P_\varepsilon )}
			{\tau(P_{\varepsilon})}	 \stackrel{\eqref{limphiEtautauPvarepsilon=1}}
			{\leq } 
			1. 
		\end{align}

		For any $\varepsilon > 0$,  set	
		\begin{align}\label{def	gvarepsilon}
			g_\varepsilon := |y_\varepsilon| + \left(1 - \frac{\tau( |y_\varepsilon|  P_\varepsilon )}{\tau(P_\varepsilon)} 
			\right) P_\varepsilon \in \cE \subseteq L_1(\cM, \tau)_h. 
		\end{align}
		It is immediate that    	
		$
		0\leq   |g_\varepsilon| Q_\varepsilon = g_\varepsilon Q_\varepsilon \stackrel{\eqref{def	gvarepsilon}}{=} |y_\varepsilon| Q_\varepsilon.    
		$ 
		%the first equality is obtained by the facts that  $Q_\varepsilon g_\varepsilon=g_\varepsilon Q_\varepsilon$ and the uniqueness of the positive square root of a positive  operator \cite[page 61]{DPS}.
        %(i.e., $ (|g_\varepsilon| Q_\varepsilon)^2 =|g_\varepsilon| Q_\varepsilon |g_\varepsilon| Q_\varepsilon =Q_\varepsilon |g_\varepsilon|^2 Q_\varepsilon=Q_\varepsilon (g_\varepsilon)^*g_\varepsilon Q_\varepsilon=g_\varepsilon Q_\varepsilon g_\varepsilon Q_\varepsilon =(g_\varepsilon Q_\varepsilon)^2$). 
		Since	
		\[
		\frac{\tau( 		g_\varepsilon  P_\varepsilon )}{\tau(P_\varepsilon)} 
		= \frac{\tau( |y_\varepsilon|  P_\varepsilon )}{\tau(P_\varepsilon)} 
		+ 
		\Big( 
		1
		- \frac{\tau( |y_\varepsilon|  P_\varepsilon )}{\tau(P_\varepsilon)} \Big) 
		\cdot
		\frac{\tau( P_\varepsilon )}{\tau(P_\varepsilon)} 
		= 1,
		\]
		it follows that 
		\begin{align}\label{QgQ+P=yQ+P}
			|g_\varepsilon| Q_\varepsilon +  \frac{\tau(
				g_\varepsilon  P_\varepsilon )}{\tau(P_\varepsilon)}   P_\varepsilon = |y_\varepsilon| Q_\varepsilon + P_\varepsilon.
		\end{align} 
		On the other hand, we have 
		\begin{align}\label{|g|Q+|tauP|P}
			|g_\varepsilon| Q_\varepsilon +  \frac{ \tau(
				g_\varepsilon  P_\varepsilon )  }{\tau(P_\varepsilon)}   P_\varepsilon
			=
			g_\varepsilon  Q_\varepsilon +  \frac{ \tau(
				g_\varepsilon  P_\varepsilon ) }{\tau(P_\varepsilon)}   P_\varepsilon 
			\stackrel{ \text{Lem.}\ref{fullyxp1+p_2prec}
			}	
			{\prec\prec }		g_\varepsilon  Q_\varepsilon +  	g_\varepsilon  P_\varepsilon  = g_\varepsilon.  
		\end{align}

		The monotonicity of  $\norm{\cdot}_\cE$ with respect to $\prec \prec $  yields that 
		\begin{eqnarray*}%\label{1-frac}
			2 - \varepsilon   &  <&
			\| \mathbf{1} + y_{\varepsilon} \|_{\cE}
			\notag\\ &  \stackrel{\eqref{P+Qnorm+yB+vare2}}{\leq} &
			\| P_{\varepsilon} + |y_{\varepsilon}| Q_{\varepsilon} \|_{\cE}
			+ \phi_{\cE}\Big(\frac{\varepsilon}{2}\Big)  + \| y_{\varepsilon} B_{\varepsilon, 1} \|_{\cE} \notag\\
			&		 \stackrel{\eqref{QgQ+P=yQ+P}}{=}& 
			\left\|
			|g_{\varepsilon}| Q_{\varepsilon} 
			+  
			\frac{\tau( 
				g_\varepsilon P_\varepsilon )}{\tau(P_{\varepsilon})} 
			P_{\varepsilon} \right\|_{\cE} 
			+ \phi_{\cE}\Big(\frac{\varepsilon}{2}\Big)  
			+ \| y_{\varepsilon} B_{\varepsilon, 1} \|_{\cE} \notag\\
			&
			 	\stackrel{\eqref{|g|Q+|tauP|P}}{\leq} &
			\| g_{\varepsilon} \|_{\cE} 
			+
			\phi_{\cE}
			\Big(\frac{\varepsilon}{2}\Big) 
			+ 
			\| y_{\varepsilon} B_{\varepsilon, 1} \|_{\cE} 
			\notag	\\
			\notag 	&\stackrel{\eqref{def	gvarepsilon}}{\leq} &
			\| y_{\varepsilon} \|_{\cE}
			+ 
			\left| 1 - \frac{\tau( |y_\varepsilon|  P_\varepsilon )
			}{\tau(P_\varepsilon)} 		\right| 
			\| P_{\varepsilon} \|_{\cE}
			+ \phi_{\cE}
			\Big(\frac{\varepsilon}{2}\Big)  + 
			\| y_{\varepsilon} B_{\varepsilon, 1} \|_{\cE} \\
			 \notag  & =& 1 + \phi_{\cE}(\tau(P_{\varepsilon})) 
			\left| 1 - \frac{\tau( |y_\varepsilon|  P_\varepsilon )}{\tau(P_\varepsilon)} 
			\right| 
			+ \phi_{\cE}
			\Big(\frac{\varepsilon}{2}\Big)  + \| y_{\varepsilon} B_{\varepsilon, 1} \|_{\cE} \\ 
			 \quad& \leq &
			1 + \left| 1 - \frac{\tau( |y_\varepsilon|  P_\varepsilon ) }{\tau(P_{\varepsilon})} 		
			\right|
			+ 
			\phi_{\cE}
			\Big(\frac{\varepsilon}{2}\Big)  
			+ 
			\| y_{\varepsilon} B_{\varepsilon, 1} \|_{\cE} .   
		\end{eqnarray*}
		By the facts that 
		$\phi_{\cE}(0+)=0$,   \eqref{limvarepsilonto0normyvarepsilon BvarepsilonE(tau)=0} and    \eqref{limsuptyPleq1}, we deduce that  
		\begin{align*}%\label{limvarepsilonto0phiEtautauPvarepsilon1}
			\lim_{\varepsilon \to 0} \frac{1}{\tau(P_{\varepsilon})} 
			\tau( |y_\varepsilon|  P_\varepsilon ) = 0. 
		\end{align*}
		Since \(0 < \tau(P_{\varepsilon}) \leq 1\),  it follows that 	\begin{align*}
			\lim_{\varepsilon \to 0} \tau( |y_\varepsilon|  P_\varepsilon )  = 0, 
		\end{align*}
		and hence,  \(\lim\limits_{\varepsilon \to 0} \tau(|y_{\varepsilon}| A_{\varepsilon, 1  } ) = 0\). In view of \eqref{1 - varepsilonx0*yvarepsilon1},  we know that \(\lim\limits_{\varepsilon \to 0} \tau (|y_{\varepsilon}| A_{\varepsilon} )= 1\). Therefore, 
		\begin{align}\label{limvarepsilonto0normyvarepsilonAvarepsilon1L1tau1}
			\lim_{\varepsilon \to 0} \tau (|y_{\varepsilon}| A_{\varepsilon, 2 }) = 1
			. 
		\end{align} 
		This means that  \(\tau(A_{\varepsilon, 2 }) > 0\) for \(\varepsilon > 0\) small enough, and thus,  by Lemma \ref{EsubseteqL1normineq}, we obtain  
		\begin{align*}
			\frac{\phi_{\cE}(\tau(A_{\varepsilon, 2 }))}{\tau(A_{\varepsilon, 2 })} 
			\tau (|y_{\varepsilon}| A_{\varepsilon, 2 }) 
			%=		\frac{\phi_{\cE}(\tau(A_{\varepsilon, 2 }))}{\tau(A_{\varepsilon, 2 })} \| |y_{\varepsilon}|  A_{\varepsilon, 2 }\|_{L_1(\cM, \tau)} 
			\leq 
			\| |y_{\varepsilon}|  A_{\varepsilon, 2 }\|_{\cE}
			\leq 1. 
		\end{align*}
		In view of \eqref{limvarepsilonto0normyvarepsilonAvarepsilon1L1tau1},  we deduce that
		\begin{align}\label{limsupphiEtautauA2varepsilontauA2varepsilonleq1}
			\limsup_{\varepsilon \to 0} \frac{\phi_{\cE}(\tau(A_{\varepsilon, 2 }))}{\tau(A_{\varepsilon, 2 })} \leq 1. 
		\end{align}

		Since the function \(0<t \mapsto \varphi(t) := \frac{\phi_{\cE}(t)}{t}\) is non-increasing (see Section \ref{section:symmetric}), \(\varphi(1) = 1\) and 
		\begin{align*}%\label{A2leqQleqP}
			\tau(A_{\varepsilon, 2 })\stackrel{\eqref{defQ}}{	\leq}
			\tau(Q_\varepsilon) \stackrel{\eqref{tauQvarepsilonleq12}}{\leq} \tau(P_\varepsilon)\leq \tau({\bf 1})=1,
		\end{align*} 
		it follows that 
		\begin{align*}%\label{1leqlimsupleq1}
			1=\varphi(1)
			\leq 
			\limsup_{\varepsilon \to 0} 
			\frac{\phi_{\cE}(\tau(P_{\varepsilon}))}{\tau(P_{\varepsilon})} 
			\leq  
			\limsup_{\varepsilon \to 0} 
			\frac{\phi_{\cE}(\tau(A_{\varepsilon, 2 }))}{\tau(A_{\varepsilon, 2 })} 
			\stackrel{\eqref{limsupphiEtautauA2varepsilontauA2varepsilonleq1}}{\leq} 
			1, 
		\end{align*}
		which shows that
		\begin{align*}
			\limsup_{\varepsilon \to 0} 
			\frac{\phi_{\cE}(\tau(P_{\varepsilon}))}{\tau(P_{\varepsilon})}=1 .
		\end{align*}
		By \eqref{limphiEtautauPvarepsilon=1}, we have 
		$
		\limsup\limits_{\varepsilon \to 0} \frac{1}{\tau(P_{\varepsilon})} = 1.
		$
		Equivalently, 
		\begin{align*}
			\lim_{\varepsilon \to 0} \tau(P_{\varepsilon}) = 1.
		\end{align*}
		As a consequence,
		\[
		\lim_{\varepsilon \to 0} \tau(A_{\varepsilon, 2 }) \leq \lim_{\varepsilon \to 0} \tau(Q_{\varepsilon}) = 1 - \lim_{\varepsilon \to 0} \tau(P_{\varepsilon}) = 0.
		\]
		By ~\eqref{limsupphiEtautauA2varepsilontauA2varepsilonleq1} and the monotonicity  of 
		\(\varphi(t)= \frac{\phi_{\cE}(t)}{t} \), we obtain  $\liminf\limits_{t\to 0} \frac{\phi_{\cE}(t)}{t}=1$. 	
	\end{proof}

	We now turn to the following generalization of  \cite[Theorem 3.4]{AKM15}.

	\begin{theorem}\label{finiteLinfty}
		Let $ \cM $ be a   finite atomless von Neumann algebra equipped with a finite faithul normal trace $\tau$
		and let \( \cE\subseteq S(\cM, \tau)_h  \)  be a  
		symmetric 
		operator  space.  
		If  $\norm{\cdot}_{\cE}$ is a Fatou norm on  \( \cE \),  \( \cE \)  has  the Daugavet property and 
		\( \phi_{\cE^{\times}}(0+) = 0 \), then  \( \cE  \)  is isometrically isomorphic to   \( \cM_h \). 	More precisely, \( \cE = \cM_h  \) and  $\norm{\cdot}_{\cE}=\lambda \norm{\cdot}_{\infty}$ for some 
		positive number $\lambda >0$.
		In particular,
		if \( \phi_\cE(1) = 1 \), then \( \cE = \cM_h   \) with equality of norms. %Moreover, the isometry is given by an appropriate multiple of the formal identity between both spaces. 
	\end{theorem}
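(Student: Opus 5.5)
Normalize $\tau(\mathbf 1)=1$ and $\phi_{\cE}(1)=1$. We want $\phi_{\cE}(t)=1$ for all $0<t\le 1$, i.e.\ $\phi_{\cE^\times}(t)=t$. Then the dual of the statement of Lemma \ref{Lemma2.2}(ii) gives $\cE^\times=L_1(\cM,\tau)_h$ with equal norms. Since $\norm{\cdot}_{\cE}$ is a Fatou norm, $\cE$ embeds isometrically in $\cE^{\times\times}$ (\cite[Theorem 5.1.10]{DPS} type results). Hence $\norm{x}_{\cE}=\norm{x}_\infty$ on $\cE$. Because $\cM_h\subseteq\cE$ (finite trace), we get $\cE=\cM_h$ isometrically. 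It therefore suffices to show $\lim_{t\to0}\phi_{\cE}(t)=1$: since $\phi_{\cE}$ is non-decreasing with $\phi_{\cE}(1)=1$, this forces $\phi_{\cE}\equiv1$ on $(0,1]$. This mirrors \cite[Theorem 3.4]{AKM15}, and I would dualize the argument of Theorem \ref{finiteordercontinuousL1}.

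\textbf{Step 1.} Use Lemma \ref{DPequivalent}(ii), the dual slice condition. Take $x_0=\mathbf 1\in S_{\cE}$ and $y^*=-x_0^*$, where $x_0^*(z)=\tau(z)$ has norm $\norm{\mathbf 1}_{\cE^\times}=1/\phi_{\cE}(1)=1$. The functional $x^*$ produced by the lemma need not be normal. To handle this, use the Fatou norm: the normal functionals $\cE^\times$ norm $\cE$, so $\cE\subseteq(\cE^\times)^*$ isometrically. Work instead in the real space $\cE^\times$, which has order-continuous elements because $\phi_{\cE^\times}(0+)=0$. Via Theorem \ref{inheriteDP} applied to $\cE^\times$, or directly via Lemma \ref{DPequivalent}(iii) used in the bidual sense, I expect to extract self-adjoint $y_\varepsilon\in\cE^\times$ with
\[
\norm{y_\varepsilon}_{\cE^\times}\le1,\qquad \tau(y_\varepsilon)>1-\varepsilon,\qquad \norm{y_\varepsilon-\mathbf 1}_{\cE^\times}>2-\varepsilon .
\]
The cleanest route would be to approximate a norming element $x^*$ from the slice in the weak$^*$ sense by normal functionals. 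This uses Goldstine and the fact that the slice $\{x^*:x^*(\mathbf 1)>1-\varepsilon\}$ is weak$^*$ open, together with lower semicontinuity of the norm of $x^*+y^*$ to keep $\norm{x^*-x_0^*}$ large. I expect this to be the main obstacle, since Lemma \ref{DPequivalent}(ii) lives in $\cE^*$ rather than $\cE^\times$. Handling it is where the Fatou assumption enters; one realizes $\norm{\cdot}_{\cE^*}$ restricted to normal functionals as $\norm{\cdot}_{\cE^\times}$ via \eqref{normalKotheduality}.

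\textbf{Step 2.} Repeat the spectral decomposition of Theorem \ref{finiteordercontinuousL1} for $y_\varepsilon$, with the roles swapped. Write $A_\varepsilon=e^{y_\varepsilon}(-\infty,0)$ and $B_\varepsilon=e^{y_\varepsilon}[0,\infty)$, and split further at levels $0$ and $2$, i.e.\ at $1\pm1$ relative to $\mathbf 1$. Since $\norm{\mathbf 1}_{\cE^\times}=1$, the bound $\norm{y_\varepsilon}_{\cE^\times}\le 1$ gives $\tau(|y_\varepsilon|)\le 1$ via \eqref{L1tau1phi1}. Combined with $\tau(y_\varepsilon)>1-\varepsilon$, this shows $\tau(|y_\varepsilon|A_\varepsilon)\le\varepsilon/2$. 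Consequently the negative part is small in $L_1$, and its truncated bounded part is small in $\cE^\times$ by Lemma \ref{xEtauleqphiEtau}, which needs $\phi_{\cE^\times}(0+)=0$. The large part $y_\varepsilon e^{y_\varepsilon}(2,\infty)$ has small trace of support.

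\textbf{Step 3.} Use Proposition \ref{selfproperties} to bound $|y_\varepsilon-\mathbf 1|$ in submajorization by $P_\varepsilon+|y_\varepsilon|Q_\varepsilon$ plus small terms, as in \eqref{|1+y|leq}. Then use the averaging Lemma \ref{fullyxp1+p_2prec}, replacing $|y_\varepsilon|$ by $g_\varepsilon$ with $\tau(g_\varepsilon P_\varepsilon)=\tau(P_\varepsilon)$ as in \eqref{QgQ+P=yQ+P}. From $\norm{y_\varepsilon-\mathbf 1}_{\cE^\times}>2-\varepsilon$ this yields
\[
\lim_{\varepsilon\to0}\phi_{\cE^\times}(\tau(P_\varepsilon))=1,\qquad \tau(|y_\varepsilon|P_\varepsilon)\to0 .
\]
Hence $|y_\varepsilon|$ concentrates on $Q_\varepsilon$, with $\tau(Q_\varepsilon)\to0$ and $\tau(|y_\varepsilon|Q_\varepsilon)\to1$. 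Lemma \ref{EsubseteqL1normineq} then gives $\phi_{\cE^\times}(\tau(Q_\varepsilon))/\tau(Q_\varepsilon)\le 1+o(1)$. By monotonicity of $\phi/t$, this gives $\liminf_{t\to0}\phi_{\cE^\times}(t)/t=1$. Since $\phi_{\cE^\times}(t)/t$ is non-increasing and equals $1$ at $t=1$, we get $\phi_{\cE^\times}(t)=t$, i.e.\ $\phi_{\cE}\equiv1$, and the reduction in the first paragraph finishes the proof.
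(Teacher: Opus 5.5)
Your proposal follows the paper's proof. The Fatou norm makes $\cE^{\times}$ a $1$-norming subspace of $\cE^*$, and your weak$^*$-density and weak$^*$-lower-semicontinuity argument is what \cite[Lemma 3.1]{AKM15} supplies: a self-adjoint $y_\varepsilon\in\cE^\times$ with $\norm{y_\varepsilon}_{\cE^\times}\le 1$, $\tau(y_\varepsilon)>1-\varepsilon$ and $\norm{\mathbf 1-y_\varepsilon}_{\cE^\times}>2-\varepsilon$, which is the paper's $y_\varepsilon$ up to sign. The argument of Theorem~\ref{finiteordercontinuousL1} is then rerun inside $\cE^\times$, with $\phi_{\cE^\times}(0+)=0$ in place of order continuity, to get $\phi_{\cE^\times}(t)=t$; hence $\cE^\times=L_1(\cM,\tau)_h$ and $\cM_h\subseteq\cE\subseteq\cE^{\times\times}=\cM_h$ isometrically.

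A few slips need fixing:
\begin{itemize}
\item The alternative route via Theorem~\ref{inheriteDP} applied to $\cE^\times$ is not available, since nothing says $\cE^\times$ has the Daugavet property.
\item After your sign change, the projection with a priori small trace is $e^{y_\varepsilon}(-\infty,-2)$, not $e^{y_\varepsilon}(2,\infty)$.
\item $\tau(Q_\varepsilon)\to0$ does not follow directly from $\phi_{\cE^\times}(\tau(P_\varepsilon))\to1$. As in Theorem~\ref{finiteordercontinuousL1}, it comes from $\tau(Q_\varepsilon)\le\frac12\le\tau(P_\varepsilon)$, the bound $\phi_{\cE^\times}(\tau(Q_\varepsilon))/\tau(Q_\varepsilon)\le 1+o(1)$, and the monotonicity of $\phi_{\cE^\times}(t)/t$.
\end{itemize}
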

	\begin{proof}
		Without loss of generality, we assume that  $\tau({\bf 1})=1$ and 
		$\phi_{\cE}(1)=1$.  		The element $x_0:=-{\bf 1}$  satisfies $x_0\in S_{\cE}$, and  the normal linear  functional defined by $$x_0^*(z):=\tau(z), \quad \forall z \in \cE,$$  
		is in $ S_{\cE^*}$.

		Since \( \norm{\cdot}_{ \cE} \) is a Fatou norm, the fully symmetric operator space  \(\cE^{\times} \) is a 1-norming (see \cite[Definition 3.2.2]{AK06} or \cite[page 54, Exercise 10]{Dis84}) subspace of the Banach  dual space \( \cE^* \)  \cite[page 268 and Theorem 4.5.6(iii)]{DPS}, i.e., 
		\begin{align} \label{Fatounormequal}	\norm{z}_{\cE}=
			\| z \|_{\cE^{\times\times}}
			=\sup \{\tau (| yz |) : y \in \cE^{\times}, \| y \|_{\cE^{\times}} \leq 1 \}, \quad z\in \cE.  
		\end{align}

		Since \( \cE \) has  the Daugavet property, 
		it follows from   \cite[Lemma 3.1]{AKM15}    that 
		for each \(0<\varepsilon <1\),  there exists a Hermitian (see e.g. \cite[page 253]{DPS}),  normal linear functional  \( x_\varepsilon^* \in \cE^* \) such that  
		\begin{itemize}
			\item[(1)] \(\|x_\varepsilon^*\|_{\cE^*} =1\),
			\item[(2)]  \(x_\varepsilon^*(x_0)=x_\varepsilon^*(-{\bf 1})> 1 - \varepsilon\),
			\item[(3)] \(
			\|x_0^* + x_\varepsilon^*\|_{\cE^*}  > 2 - \varepsilon\).
		\end{itemize}	
		In particular,   by \cite[Theorem 5.2.9]{DPS},  $x_\varepsilon^*$ can be represented as 
		\begin{align*}
			%\label{normalKotheduality}
			x_\varepsilon^*(z) = \tau(zy_\varepsilon), \quad z \in \cE,
		\end{align*}
		for the unique  self-adjoint operator $y_\varepsilon \in \cE^\times$ 
		and 
		$	\norm{y_{\varepsilon}
		}_{\cE^{\times}} =
		\norm{x_\varepsilon^*}_{\cE^{*}} 
		= 1$.

		Since 
		\begin{align*}
			|(x_0^* + x_\varepsilon^*)(z) | =
			| \tau(z+zy_\varepsilon )|
			\leq \norm{{\bf 1}+ y_\varepsilon}_{\cE^{\times}}
			\norm{z}_{\cE}, \quad \forall z \in \cE,
		\end{align*}   it follows from condition (3) that
		\begin{align*}%\label{1+yvare>2-vare}
			2 - \varepsilon
			< 
			\|x_0^* + x_\varepsilon^*\|_{\cE^{*}}  \leq 
			\norm{{\bf 1}+ y_\varepsilon}_{\cE^{\times}}. 
		\end{align*}

		By the same spectral projection decomposition of $y_{\varepsilon}$ and  similar arguments in the proofs of Theorem \ref{finiteordercontinuousL1}   and \cite[Theorem 3.4]{AKM15}, we obtain that 	 	$\phi_{\cE^{\times}}(t)=t$ for arbitrary $t\in (0, 1)$. 
		This implies that 
		\( \cE^{\times} = L_1(\cM, \tau)_h \) endowed with its canonical norm (see Lemma \ref{Lemma2.2}(ii)).     
		Furthermore, 
		we have 
		\begin{align*}
			\cM_h\subseteq 	\cE\subseteq \cE^{\times\times}=(L_1(\cM, \tau)_h)^{\times}
			\stackrel{\text{ \cite[p.253 and Thm.5.2.9]{DPS}}}{=}
			\left(
			L_1(\cM, \tau)^{\times}
			\right)_h
			=\cM_h
		\end{align*}
		and 
		$
		\norm{z}_{\cE}
		\stackrel{\eqref{Fatounormequal}}{=}	\norm{z}_{\cE^{\times\times}}
		= 	\norm{z}_{\infty}
		$ for all $z\in 	\cE=\cM_h$. 	
	\end{proof}

	It should be noted that 
	a complete characterization of the Dunford--Pettis property for   noncommutative symmetric operator 
	spaces affiliated with various types of  von Neumann algebra    is given in  \cite{HPS22}. 
	In particular,
	if the type $I_{\infty}$-summand or the type $II$-summand of   a semi-finite von Neumann algebra $(\cM, \tau)$ is nontrivial, then any symmetric operator space $E(\cM, \tau)$ fails  the Dunford--Pettis property \cite[Theorem 19]{HPS22}.
We establish the following noncommutative analogue of \cite[Corollary 3.5]{AKM15}.

	\begin{corollary}
		Let $(\cM, \tau)$ be an    atomless von Neumann algebra  with 
		$\tau({\bf 1})=1$. 
		If a   
		symmetric 
		operator  space  \(\cE \subseteq S(\cM, \tau)_h\)   has  the Daugavet property, which  is  isomorphic to  \( C(K) \), then \( \cE \) is isometrically isomorphic to \( \cM_h \).	 
Moreover, $\cM= \mathop{\oplus}\limits_{1\le n\le m} \cA_n \otimes \mathbb{M}_n $, where $m<\infty $, $\cA_n$'s are atomless commutative algebras and $\mathbb{M}_n$ is the space of all $n\times n$ matrices.
	\end{corollary}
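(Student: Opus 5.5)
The plan is to reduce to Theorem \ref{finiteLinfty}. To do so I need two facts about $\cE$: that $\phi_{\cE^{\times}}(0+)=0$, and that $\norm{\cdot}_{\cE}$ is a Fatou norm. The structural assertion about $\cM$ will then be obtained separately, from the Dunford--Pettis property.

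\emph{Step 1: $\cE$ is not order continuous.} Suppose it were. Since $\cE$ has the Daugavet property, Theorem \ref{finiteordercontinuousL1} gives $\cE=L_1(\cM,\tau)_h$ with a proportional norm. Now $L_1(\cM,\tau)_h$ is weakly sequentially complete, being the self-adjoint part of a von Neumann algebra predual. An infinite-dimensional $C(K)$ is not weakly sequentially complete, since it contains $c_0$. So $\cE$ cannot be isomorphic to $C(K)$, a contradiction.

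\emph{Step 2: $\phi_{\cE^{\times}}(0+)=0$.} The Banach dual $\cE^*$ is isomorphic to $C(K)^*$, which is an $L$-space and hence weakly sequentially complete. Via trace duality, $\cE^{\times}$ is a closed subspace of $\cE^*$, so it is weakly sequentially complete as well. In particular $\cE^{\times}$ contains no copy of $c_0$. Since $\cE^{\times}$ has the Fatou property, it is then a KB-space and so has order continuous norm. Order continuity gives $\phi_{\cE^{\times}}(0+)=0$ by \cite[Lemma 5.6.18]{DPS}.

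\emph{Step 3: Fatou norm, and conclusion.} I expect this to be the main obstacle. My first attempt is to compare $\cE$ with $\cE^{\times\times}=((\cE^{\times})_{oc})^*$, using Step 2 and the identifications in \eqref{dualidentities}. Because $\tau({\bf 1})<\infty$, we have $\cM_h\subseteq\cE$, and the norms $\norm{\cdot}_{\cE}$ and $\norm{\cdot}_{\cE^{\times\times}}$ are both controlled by $\norm{\cdot}_\infty$ on $\cM_h$. I would then argue as follows.
\begin{enumerate}[\rm(a)]
\item Run the slice argument of Theorem \ref{finiteLinfty} using only normal functionals. Lemma \ref{DPequivalent}(ii) can be applied with normal functionals: Step 2 and order continuity of $\cE^{\times}$ should let one approximate the functionals by normal ones.
\item From this, deduce $\phi_{\cE^{\times}}(t)=t$ for all $t$.
\item Conclude $\cE^{\times}=L_1(\cM,\tau)_h$ by Lemma \ref{Lemma2.2}(ii), and hence $\cE\subseteq\cE^{\times\times}=\cM_h$.
\end{enumerate}
Since also $\cM_h\subseteq\cE$, we get $\cE=\cM_h$ as sets. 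Proportionality of the norms then follows from the Daugavet property, by the same normalization $\phi_{\cE}(1)=1$ as in Theorem \ref{finiteLinfty}. If this direct route fails, the fallback is to show that an isomorph of $C(K)$ inside $S(\cM,\tau)_h$ automatically has a Fatou norm, for example by a renorming argument that preserves the Daugavet property. That would let me quote Theorem \ref{finiteLinfty} directly.

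\emph{Step 4: structure of $\cM$.} Since $\cE\cong C(K)$ has the Dunford--Pettis property, so does $\cE$. By \cite[Theorem 19]{HPS22}, $\cM$ then has trivial type $I_\infty$ and type $II$ summands. Hence $\cM$ is of type $I$ with finite summands: $\cM=\oplus_n \cA_n\otimes\mathbb{M}_n$ with $\cA_n$ abelian, and each $\cA_n$ is atomless because $\cM$ is atomless. It remains to show that only finitely many summands occur, i.e.\ $m<\infty$. I would get this from the finer classification in \cite{HPS22}: with unbounded matrix sizes $n$, the space $\cM_h=\cE$ contains uniformly complemented copies of $(\mathbb{M}_n)_h$ with $n\to\infty$. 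This is incompatible with the Dunford--Pettis property, since the matrix algebras $\mathbb{M}_n$ with the operator norm contain uniformly complemented copies of $\ell_2^n$.
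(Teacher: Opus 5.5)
The genuine gap is Step 3(a), which is the point you flag yourself. In the proof of Theorem \ref{finiteLinfty}, the Fatou norm is used precisely to obtain \emph{normal} functionals in the slice characterization. For $x_0=-\mathbf{1}$ and $x_0^*=\tau(\cdot)$ one needs $y_\varepsilon\in\cE^\times$ with $\|y_\varepsilon\|_{\cE^\times}\le 1$, $\tau(-y_\varepsilon)>1-\varepsilon$ and $\|\mathbf{1}+y_\varepsilon\|_{\cE^\times}>2-\varepsilon$. Lemma \ref{DPequivalent}(ii) only gives some $x^*\in S_{\cE^*}$. The set of admissible $x^*$ is relatively weak$^*$-open in $B_{\cE^*}$. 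What moves it into $B_{\cE^\times}$ is weak$^*$-density of $B_{\cE^\times}$ in $B_{\cE^*}$, i.e.\ $\cE^\times$ being $1$-norming, i.e.\ \eqref{Fatounormequal}. That is the Fatou-norm condition itself. Order continuity of $\cE^\times$ (your Step 2) says nothing about the norming constant of $\cE^\times$ inside $\cE^*$. So it cannot supply this approximation, and (b), (c) are left without support. (Incidentally, the Fatou norm is not needed for your final norm identification. Once $\cE^\times=L_1(\cM,\tau)_h$ isometrically and $\phi_{\cE}(1)=1$, you have $\|x\|_\infty=\|x\|_{\cE^{\times\times}}\le\|x\|_\cE\le\|x\|_\infty$ on $\cM_h$.) The fallback cannot work either: the conclusion is isometric, and the Daugavet property is not preserved by renorming.

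The missing idea, which is how the paper argues, is to pin down $\cE$ as a set \emph{before} any slice argument, using the Dunford--Pettis property rather than the Daugavet property. Weak sequential completeness shows directly that $\cE\not\cong L_1(\cM,\tau)_h$. This is the useful content of your Step 1, and Theorem \ref{finiteordercontinuousL1} is not needed for it. The classification in \cite[Table 1]{HPS22} then gives at once $\cM=\oplus_{n\le m}\cA_n\otimes\mathbb{M}_n$ with $m<\infty$, and $\cE=\cM_h$ up to equivalent norms. On $\cM_h$ the norm of $\cE$ is a Fatou norm by \cite[Theorem 4.5.8(ii)]{DPS}. Also $(\cM_h)^\times=L_1(\cM,\tau)_h$ gives $\phi_{\cE^\times}(0+)=0$ (your Step 2 is a valid alternative route to this). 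So Theorem \ref{finiteLinfty} applies directly.

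Finally, your argument for $m<\infty$ in Step 4 is wrong as stated. Containing uniformly complemented copies of $\ell_2^n$ is compatible with the Dunford--Pettis property. For example, $(\oplus_n\ell_2^n)_{\ell_1}$ has the Schur property, and $(\oplus_n\ell_2^n)_{c_0}$ has the DPP because its dual is Schur. The obstruction for unbounded matrix sizes comes from the $\ell_\infty$-sum structure: $(\oplus_n\ell_2^n)_{\ell_\infty}$ fails the DPP. In any case, this should simply be taken from \cite{HPS22}.
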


	\begin{proof}
Since %$\cM$ is atomless and 
\(\cE \)   has  the Daugavet property, it follows that $\cE$ is infinite-dimensional\cite[page 66]{KMZW25}, and therefore, $C(K)$ is also infinite-dimensional. 
Recall that $L_1(\cM,\tau)_h$ is weakly sequentially complete\cite[Corollary 5.2.14]{DPS} while $C(K)$ is not. This implies that $\cE$ is not isomorphic to $L_1(\cM,\tau)_h$.
		Since \( C(K) \) has the Dunford--Pettis property, it follows from \cite[Table 1]{HPS22} (or \cite{HNPS24Tran}) that
        $\cM$ is of the form $\mathop{\oplus}\limits_{1\le n\le m} \cA_n \otimes \mathbb{M}_n $, where $m<\infty $, $\cA_n$'s are atomless commutative algebras and $\mathbb{M}_n$ is the space of all $n\times n$ matrices,  and 
        \( \cE  \) equals \( \cM_h \) up to equivalence of norms\footnote{Even though the results in \cite{HPS22} were stated for complex Banach spaces, the results still hold for real symmetric spaces of self-adjoint $\tau$-measurable operators.}.
        By \cite[Theorem 4.5.8(ii)]{DPS}, the norm $\norm{\cdot}_\cE$ on $\cM_h$ is a Fatou norm. 
	Moreover, 	since 
		\( (\cM_h)^{\times}=L_1(\cM, \tau)_h \) \cite[page 253]{DPS}, 
		it follows that 
		\( \phi_{\cE^{\times}}(0+) = 0 \).
		An
		application of Theorem~\ref{finiteLinfty} completes the proof.
	\end{proof}

	We now proceed to prove the main result of this section.
	
	\begin{proof}[Proof of Theorem \ref{mainreal}]
		
		If \(c_{\cE_{oc}}= {\bf 1}\), then by Theorem \ref{inheriteDP}, \( \cE_{oc} \) has the Daugavet property. 
		Since \( \cE_{oc}\) has an  order continuous norm, it follows from  Theorem \ref{finiteordercontinuousL1}   that \( \cE_{oc} = L_1(\cM, \tau)_h \) isometrically.  	%  In particular, there exists a positive number $\lambda >0$ such that  $\norm{x}_{\cE}=\lambda \norm{x}_{L_1(\cM, \tau)}$  for all $x\in \cE_{oc}$.  
		Furthermore, \( L_1(\cM, \tau)_h=\cE_{oc} \subseteq \cE \subseteq L_1(\cM, \tau)_h \) implies that  \(\cE = L_1(\cM, \tau)_h=\cE_{oc} \) up to isometries.

		If \(\cE_{oc}= \{0\}\), then we have \( \phi_{\cE}(0+) > 0 \). 
		This implies that the symmetric function space corresponding to $\cE$ is $L_\infty (0,\tau({\bf 1}))$ (as sets) (see e.g. \cite[page 118]{LT2}).  
		% the   argument of  \cite[Proposition 2.4]{AKM12}.
		Hence,   
		$\cE = \cM_h$,  and 
		therefore,  \( \cE^{\times} = L_1(\cM, \tau)_h \) up to equivalence of norms. 
		Then,  the condition \( \phi_{\cE^{\times}}(0+) = 0 \), together with Theorem~\ref{finiteLinfty}, implies that \( \cE = \cM_h \) isometrically.  
	\end{proof}

	\section{ The Daugavet property on (real) symmetric operator spaces of self-adjoint operators affiliated with an   infinite   atomless von Neumann algebra  }\label{sec6}
	
	In this section,  we investigate the Daugavet property on  (real) symmetric operator spaces of self-adjoint $\tau$-measurable operators affiliated with an   infinite   atomless von Neumann algebra $(\cM, \tau)$.  
	Recall that 
	$L_1^{\mathbb{R}}$ 
	is the only real  symmetric function  space over an infinite atomless measure space with uniformly monotone norm that has the Daugavet property \cite[Theorem 4.4]{AKM15}.  
	The main result of this section states that   the only symmetric operator  space \(\cE\subseteq S(\cM, \tau)_h \)    with  uniformly monotone norm  that
	has the Daugavet property is \( L_1(\cM, \tau)_h   \), see Theorem \ref{infiniteselfum} below. 
		In particular, the real symmetric function space associated with $\cE$ is precisely $L_1^{\mathbb{R}}(0, \tau({\bf 1}))$.

	Inspired by ideas  from \cite[Theorem 4.1]{AKM15},  we start the proof for Theorem \ref{infiniteselfum} by proving the following auxiliary result. 
	
	\begin{theorem} \label{UM}	Let $(\cM, \tau)$ be an  infinite  semi-finite      atomless von Neumann algebra and let \(\cE\subseteq S(\cM, \tau)_h \) be a  strongly
		symmetric 
		operator   space.  
		Assume  that \( \cE \) has the following properties:
		\begin{enumerate}[\rm(a)]
			\item For any \( p\in\cP(\cM) \) with \( \tau(p) < \infty \), the space \( \cE_p \) is order continuous.
			\item Let \( p\in\cP(\cM)  \) with \(  \tau(p) < \infty \). 
			If \( x \in S_\cE \) and for every \( \varepsilon > 0 \), \( y_\varepsilon \in S_\cE \), then 
			$$
			\lim_{\varepsilon \to 0} \|x + p y_\varepsilon p\|_\cE = 2 $$
			whenever $  	\mathop{\lim}\limits_{\varepsilon \to 0} \|p y_\varepsilon p\|_\cE = 1$  and  
			$	\mathop{\lim}\limits_{\varepsilon \to 0} \|x + y_\varepsilon\|_\cE = 2.
			$ 
		\end{enumerate}
		If \( \cE \) has the Daugavet property, then it is isometrically isomorphic to \( L_1(\cM, \tau)_h \). In particular,
		if \( \phi_\cE(1) = 1 \), then \( \cE = L_1(\cM, \tau)_h   \) with equality of norms.
	\end{theorem}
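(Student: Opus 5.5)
The plan is to reduce to the finite case, Theorem \ref{finiteordercontinuousL1}, on every $\tau$-finite corner, and then glue the corners together using the fundamental function. Normalize so that $\phi_\cE(1)=1$; this is allowed because passing to a proportional norm preserves the Daugavet property and hypotheses (a), (b). Fix $p\in\cP(\cM)$ with $0<\tau(p)<\infty$. The key step is to show that $\cE_p=p\cE p$ inherits the Daugavet property. This is where hypothesis (b) replaces the uniform monotonicity used in Theorem \ref{EpDPinherit}.

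\emph{Corner step.} I would use the slice characterization, Lemma \ref{DPequivalent}(iii). Take $x\in S_{\cE_p}$, $x^*\in S_{(\cE_p)^*}$ and $\varepsilon>0$. Extend $x^*$ to $\cE$ by $\tilde x^*(z):=x^*(pzp)$; this still has norm one, since $z\mapsto pzp$ is a norm-one projection. The Daugavet property of $\cE$ then gives $y_\varepsilon\in S_\cE$ with
\[
\tilde x^*(y_\varepsilon)>1-\varepsilon \quad\text{and}\quad \|x+y_\varepsilon\|_\cE>2-\varepsilon .
\]
From the first inequality, $1-\varepsilon<x^*(py_\varepsilon p)\le\|py_\varepsilon p\|_\cE\le1$, so $\|py_\varepsilon p\|_\cE\to1$. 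Hypothesis (b) then yields $\|x+py_\varepsilon p\|_\cE\to2$. Setting $z_\varepsilon:=py_\varepsilon p/\|py_\varepsilon p\|_\cE\in S_{\cE_p}$, we get $x^*(z_\varepsilon)>1-\varepsilon$ and $\|x+z_\varepsilon\|\to2$ (the normalization changes things only by $o(1)$). So the slice condition holds in $\cE_p$, and $\cE_p$ has the Daugavet property.

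\emph{Identifying the corners.} The space $\cE_p$ is a symmetric space of self-adjoint operators affiliated with the finite atomless algebra $(\cM_p,\tau_p)$, and it is order continuous by (a). Theorem \ref{finiteordercontinuousL1} therefore gives $\|\cdot\|_{\cE_p}=\lambda_p\|\cdot\|_{L_1}$, with
\[
\lambda_p=\phi_\cE(\tau(p))/\tau(p).
\]
Hence $\phi_\cE(t)=\lambda_p t$ for all $0<t\le\tau(p)$, because subprojections of $p$ realize every such trace value. Letting $\tau(p)$ range over all finite values (the algebra is infinite and atomless, so projections of arbitrarily large finite trace exist), these linear pieces must agree on overlaps. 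Thus $\phi_\cE(t)=ct$ on $(0,\infty)$, and the normalization gives $c=1$.

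\emph{Global step.} Lemma \ref{Lemma2.2}(ii) now gives that the associated function space is $L_1$ with equal norms. Alternatively, I can argue directly. For $x\in\cE$, take $\tau$-finite spectral projections $p_n\uparrow s(x)$ of $|x|$. Then $\|p_nxp_n\|_\cE=\|p_nxp_n\|_{L_1}$. Strong symmetry, together with order continuity on corners, lets me pass to the limit, which gives $\|x\|_\cE=\|x\|_{L_1}$ and $\cE\subseteq L_1(\cM,\tau)_h$. The reverse inclusion follows the same way: $L_1$ elements are approximated by their corners, and these corners form a Cauchy sequence in $\cE$.

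The main obstacle I expect is this last limiting argument. Without a Fatou assumption, I have to use strong symmetry carefully: $p_nxp_n\prec\prec x$ gives one inequality, and $\mu(p_nxp_n)\uparrow\mu(x)$ with monotone convergence gives the other. The membership $L_1\subseteq\cE$ may require arguing through $\phi_\cE(t)=t$ and the minimality of $L_1$ among spaces with that fundamental function. The corner step itself should be routine once (b) is applied.
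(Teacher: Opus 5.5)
Your proof is correct, but it is organized differently from the paper's.

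\emph{The paper's route.} It never shows that $\cE_p$ has the Daugavet property. For each $0<a<\tau(\mathbf{1})$ it fixes one projection $p$ with $\tau(p)=a$, the single point $x=p/\phi_\cE(a)$, and the single functional $z\mapsto-\frac{\phi_\cE(a)}{a}\tau(pz)$. It applies the slice condition in $\cE$ and hypothesis (b) to get $\|x+py_\varepsilon p\|_\cE\to 2$. It then re-runs, inside the corner, the spectral decomposition of $py_\varepsilon p$ from the proof of Theorem \ref{finiteordercontinuousL1}; hypothesis (a) is used there to make the contributions of the nonnegative spectral part of $py_\varepsilon p$ vanish in $\cE$-norm. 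This gives $\lim_{s\to0}\phi_\cE(s)/s=\phi_\cE(a)/a$ for every $a$, hence $\phi_\cE(t)=t$, and Lemma \ref{Lemma2.2}(ii) finishes.

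\emph{Your route.} You run the same ``slice plus (b)'' step for an \emph{arbitrary} point and functional of $\cE_p$. This gives a clean inheritance result: the analogue of Theorem \ref{EpDPinherit} for $\tau$-finite $p$, with (b) replacing uniform monotonicity. You then use Theorem \ref{finiteordercontinuousL1} as a black box on each corner $(\cM_p,\tau_p)$.

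The details hold up:
\begin{itemize}
\item $\tilde x^*$ has norm one because $\|pzp\|_\cE\le\|z\|_\cE$.
\item Dividing by $\|py_\varepsilon p\|_\cE\le 1$ can only increase $x^*$.
\item Hypothesis (b) is invariant under rescaling the norm.
\item $\cE_p$ is an order continuous symmetric space of self-adjoint operators on a finite atomless algebra, so Theorem \ref{finiteordercontinuousL1} applies.
\end{itemize}

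\emph{Comparison.} Your approach is shorter, avoids duplicating the spectral estimates, and yields corner inheritance of the Daugavet property as a by-product. The paper's approach is self-contained and invokes (b) only at the special points $x=cp$. It also gets linearity of $\phi_\cE$ directly, rather than through Lemma \ref{Lemma2.2}(i) on each corner.

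The ``main obstacle'' you anticipate at the end does not arise. Lemma \ref{Lemma2.2}(ii) is stated for arbitrary symmetric function spaces, with no Fatou or order continuity hypothesis, and it is exactly how the paper itself concludes. Your alternative limiting argument can simply be dropped.
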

	\begin{proof}
		Without loss of generality,  we may  assume  that \( \phi_{\cE}(1) = 1 \).	
		Applying Lemma ~\ref{Lemma2.2}(ii),  it then suffices to prove that $\phi_{E}(t)= \phi_{\cE }(t)=t$ for all $0<t< \tau({\bf 1})$.   
% {\color{blue}(Let $E$ be a symmetric function space on atomless $(0,\tau(\mathbf{1}))$ associated with $\cE$, i.e. $\cE=E(\mathcal{M},\tau)$ and $\|x\|_{\mathcal{E}}=\|\mu(x)\|_{E(0,\tau(\mathbf{1}))}$ for $x\in \mathcal{E}.$ Note $\phi_E=\phi_\mathcal{E}.$ We show that $\phi_{\cE }(t)=t$ for all $0<t< \tau({\bf 1})$, equivalently $\phi_{E}(t)=t$ for all $0<t< \tau({\bf 1})$. By Lemma \ref{Lemma2.2}(2), we have $E=L_1^\mathbb{R}$ with equality of norms. Therefore, $\cE=L_1(\mathcal{M},\tau)_h$ with equality of norms.)}
		
		Let \( \varepsilon > 0 \) and \( 0 < a < \tau(\mathbf{1}) \). Since \( (\cM, \tau) \) is atomless, there exists a projection \( p \in \cP(\cM) \) such that \( \tau(p) = a \). 	
		Define 
		\begin{align}\label{defcb}   c := \frac{1}{\phi_{\cE}(a)} ,  \quad  b := \frac{\phi_{\cE}(a)}{a}, \quad  
			x := c p%= \frac{1}{\phi_{\cE}(a)} p 
			\quad \text{   and   }  \quad  y := -b p.
		\end{align} 
		Then,  \(\norm{x}_{\cE}=\frac{1}{\phi_{\cE}(a)} \cdot \phi_{\cE}(a)=1\)  and 
		\(  \norm{y}_{\cE^{\times}}=\frac{\phi_{\cE}(a)}{a}  \cdot  \phi_{\cE^{\times}}(a)
		\stackrel{\eqref{defEtimes}}{= }   
		\frac{\phi_{\cE}(a)}{a}  \cdot  \frac{a}{\phi_{\cE}(a)}
		=1  \), i.e., 
		\( x \in S_{\cE} \) and \( y \in S_{\cE^{\times}} \). Consider the normal linear functional  \( F \in \cE^{\times} \) given by 
		\[
		F(z) = \tau(z y) \stackrel{\eqref{defcb}}{= } \tau(-bp z) 
		\quad \text{for } z \in \cE.
		\]
		Then, 
		$\norm{F}_{\cE^{*}}=\norm{y}_{\cE^{\times}}=1$	\cite[page 265]{DPS}.

		Since \( \cE \) has  the Daugavet property, 
		it follows from    Lemma \ref{DPequivalent} (iii) that 
		for each \(0<\varepsilon <1\),  there exists a  self-adjoint operator  \( y_\varepsilon \in S_\cE \) such that
		\begin{align}\label{DPequivalent2vare}F(y_\varepsilon) >  1 - \varepsilon \quad \text{and} \quad \|x + y_\varepsilon\|_\cE > 2 - \varepsilon.
		\end{align}
		It follows that	
		\begin{align}
			1 - \varepsilon &  \label{F(yvare)>1-vare}
			\stackrel{\eqref{DPequivalent2vare}}{<}  	F(y_\varepsilon) 
			=
			-b \tau(py_{\varepsilon} p)
			\leq \tau(|bpy_{\varepsilon} p| ) 
			\\ & \label{3-5}	\,\, \leq \|p y_\varepsilon p\|_{\cE} \|b p\|_{\cE^{\times}}=\|p y_\varepsilon p\|_{\cE} \|y\|_{\cE^{\times}}
			= \|p y_\varepsilon p\|_{\cE} 
			\leq 1,
		\end{align}
		which implies that 
		\( 1- \varepsilon  < 
		\|p y_\varepsilon p\|_{\cE} \leq 1 \). 
		By  \eqref{DPequivalent2vare} and assumption (b),  without loss of generality,  we  suppose that for any \(\varepsilon > 0\),	
		\begin{align}\label{2-vare<normE}
			2 - \varepsilon < \|x +  p y_\varepsilon p\|_{\cE}.
		\end{align}
		
		Consider the following spectral projections:  \begin{align}
		    A_{\varepsilon} &\nonumber  := e^{ p y_\varepsilon p}(-\infty, 0)
		:=A_{\varepsilon, 1  }+A_{\varepsilon, 2  } \leq p, \\  B_{\varepsilon} & := e^{ p y_\varepsilon p}[0, \infty):=
		B_{\varepsilon, 1}+B_{\varepsilon, 2} \leq p,\end{align} 
        where 
		\begin{align} \label{pypspectralprojs} A_{\varepsilon, 1} \notag  := e^{ p y_\varepsilon p}\left[-\frac{2}{\phi_{\cE}(a)}, 0\right)&, 
			\quad 
			A_{\varepsilon, 2  }  := e^{ p y_\varepsilon p}\left(-\infty, -\frac{2}{\phi_{\cE}(a)}\right),\\ 
			B_{\varepsilon, 1} := e^{ p y_\varepsilon p}\left[0, \frac{2}{\phi_{\cE}(a)}\right] &, \quad   B_{\varepsilon, 2} := e^{ p y_\varepsilon p}\left( \frac{2}{\phi_{\cE}(a)}, \infty\right).
		\end{align}	
		
		Combining  \begin{align*}
			b	\tau( y_{\varepsilon} A_{\varepsilon} ) 
			+ b
			\tau(y_{\varepsilon} B_{\varepsilon} ) \notag   =		b\tau(y_{\varepsilon} p) \stackrel{\eqref{F(yvare)>1-vare}}{< }   -1 + \varepsilon 
		\end{align*} and 
		\begin{align*}
			-
			b	\tau(y_{\varepsilon}   A_{\varepsilon} )+	 b	\tau(y_{\varepsilon}   B_{\varepsilon} )
			&=
			-
			b	\tau(py_{\varepsilon} p  A_{\varepsilon} )+	 b	\tau(py_{\varepsilon} p  B_{\varepsilon} )
			\\ &  =		  b\tau(| py_{\varepsilon} p|A_{\varepsilon} )
			+
			b\tau(| py_{\varepsilon} p|B_{\varepsilon} )
			=	b\tau(| py_{\varepsilon} p|)	\stackrel{\eqref{3-5}}{\leq } 1, 
		\end{align*}
		we deduce  that  
		\begin{align}\label{yBleqvare}
			0\leq b
			\tau( y_{\varepsilon} B_{\varepsilon} )  \leq \varepsilon/2.
		\end{align}
		Since  \( 0\leq \tau( y_{\varepsilon} B_{ \varepsilon, i}) \leq \tau( y_{\varepsilon} B_{\varepsilon} )  \), 	it follows that \begin{align}\label{yvareBvarei}
			\tau( B_{\varepsilon, i } y_{\varepsilon} B_{\varepsilon, i }) =		\tau( y_{\varepsilon} B_{\varepsilon, i })  \to 0 	\end{align}  as \(\varepsilon \to 0\) for each $i=1, 2$, and so,  \( B_{\varepsilon, 1} y_\varepsilon B_{\varepsilon, 1} \stackrel{\cT_m}{\to} 0 \) as \(\varepsilon \to 0\)  \cite[Proposition 3.4.11]{DPS}.  In particular,  \( B_{\varepsilon, 1} y_\varepsilon B_{\varepsilon, 1} \to 0 \)  with respect to
		$\sigma (\cM, L_1 (\cM, \tau))$.
		Since  \( 0\leq   B_{\varepsilon, 1} y_\varepsilon B_{\varepsilon, 1}  \in \cM \), 
		%(since \( B_{\varepsilon, 1} y_\varepsilon B_{\varepsilon, 1}  z \stackrel{\cT_m}{\to} 0 \)  for arbitrary $z\in L_1(\cM, \tau)$  	and   	\(\mu(t;  B_{\varepsilon, 1} y_\varepsilon B_{\varepsilon, 1} z) \leq \mu(t/2;  B_{\varepsilon, 1} y_\varepsilon B_{\varepsilon, 1})\mu(t/2;  z) \leq \frac{2}{\phi_{\cE}(a)}  \mu(t/2;  B_{\varepsilon, 1})\mu(t/2;  z)\leq \frac{2}{\phi_{\cE}(a)}  \mu(t/2;  z) \in L_1 \), 
		%	then   by the Dominated Convergence \cite[Theorem 3.4.21]{DPS}, $\tau(B_{\varepsilon, 1} y_\varepsilon B_{\varepsilon, 1}z)\to 0$ 
		%	as \(\varepsilon \to 0\)). 
		\( \cE_p \) is order continuous and $p\in \cE_p$,  it follows from \cite[Theorem 5.5.15]{DPS}  that  
		\begin{align}\label{yvarepsilonBvarepsilon1to0}
			\|B_{\varepsilon, 1} y_\varepsilon B_{\varepsilon, 1}\|_{\cE} =	\|(B_{\varepsilon, 1} y_\varepsilon B_{\varepsilon, 1}) p\|_{\cE} \to 0, \quad \text{as  }  \varepsilon \to 0.	
		\end{align}
		
		In view of 
		\begin{align}\label{4avarepsilon}
			\frac{2}{a} \tau(B_{\varepsilon, 2})=
			\frac{\phi_{\cE}(a)}{a} 
			\cdot 		\frac{2}{\phi_{\cE}(a)}
			\tau(   B_{\varepsilon, 2}) 	\stackrel{\eqref{defcb}, \eqref{pypspectralprojs} }{\leq }  b
			\tau( y_{\varepsilon} B_{\varepsilon, 2})  \stackrel{\eqref{yBleqvare}}{\leq } 
			\frac{\varepsilon}{2} 
		\end{align} 
		and  the  order continuity of \( \cE_p \) (in particular, $\phi_{\cE}(\varepsilon)\to 0$ as  $\varepsilon\to 0$), 
		we obtain   
		\begin{align}\label{cBvarepsilon2to0}
			0\leq  \left\| c  B_{\varepsilon, 2} \right\|_{\cE} \stackrel{\eqref{defcb}}{= }		\left\| \frac{1}{\phi_{\cE}(a)} B_{\varepsilon, 2} \right\|_{\cE} = \frac{\phi_{\cE}(\tau(B_{\varepsilon, 2}))}{\phi_{\cE}(a)} 
			\stackrel{\eqref{4avarepsilon}}{\leq }  \frac{\phi_{\cE}(\frac{a \varepsilon}{4})}
			{\phi_{\cE}(a)} \to 0  \quad \text{as } \varepsilon \to 0.  
		\end{align}

		Define
		\begin{align}\label{defP3P}P_{\varepsilon} := A_{\varepsilon, 1} + B_{\varepsilon, 1 }=e^{ p y_\varepsilon p}\left[-\frac{2}{\phi_{\cE}(a)}, 0\right) + e^{ p y_\varepsilon p}\left[0, \frac{2}{\phi_{\cE}(a)}\right] 
		\end{align}
		and
		\begin{align}\label{defQ3Q} Q_{\varepsilon} := A_{\varepsilon, 2}+ B_{\varepsilon, 2} =e^{ p y_\varepsilon p}\left(-\infty, -\frac{2}{\phi_{\cE}(a)}\right)+
			e^{ p y_\varepsilon p}\left( \frac{2}{\phi_{\cE}(a)}, \infty\right).
		\end{align}	
		
		Since 	\[
		\frac{2}{a} \tau(Q_{\varepsilon}) = \frac{\phi_{\cE}(a)}{a} \cdot   \frac{2}{\phi_{\cE}(a)} \tau(Q_{\varepsilon}) 	\stackrel{\eqref{defcb}, \eqref{defQ3Q}}{\leq }   
		b \tau( |p  y_{\varepsilon}  p| Q_{\varepsilon})   \leq b\tau(|py_{\varepsilon} p| ) \stackrel{\eqref{3-5}}{\leq }   1,
		\]
		it follows that 
		\begin{align}\label{Qvarepsilona2}
			\tau(Q_{\varepsilon}) \leq \frac{a}{2}, \quad  \text{and so,} \quad \frac{a}{2} \leq \tau(P_{\varepsilon}) \leq \tau(p)=a. 
		\end{align}
		
		Since \begin{align*}%\label{+-cA1}
			- c A_{\varepsilon, 1}\stackrel{\eqref{pypspectralprojs} }{\leq } A_{\varepsilon, 1} (p y_{\varepsilon} p)A_{\varepsilon, 1}
			+ c A_{\varepsilon, 1}	=A_{\varepsilon, 1} y_{\varepsilon}A_{\varepsilon, 1}+ c A_{\varepsilon, 1}\stackrel{\eqref{pypspectralprojs} }{\leq }  c A_{\varepsilon, 1}\end{align*}
		it follows from Proposition \ref{selfproperties} (ii) that 
		\begin{align}\label{yA1prec3} 
			|A_{\varepsilon, 1}		y_{\varepsilon}A_{\varepsilon, 1}+ c 
			A_{\varepsilon, 1}
			| 	\prec \prec 
			c A_{\varepsilon, 1} . 
		\end{align}
		By virtue of 
		\begin{align}\label{a2ya2absolute}
			|A_{\varepsilon, 2  }y_{\varepsilon}A_{\varepsilon, 2  }+ 
			c 	A_{\varepsilon, 2  }| &\notag  =-A_{\varepsilon, 2  } y_{\varepsilon}A_{\varepsilon, 2  }-
			c 	A_{\varepsilon, 2  }=-(p y_{\varepsilon} p) A_{\varepsilon, 2  }-
			c 	A_{\varepsilon, 2  }
			\\ & =
			|p y_{\varepsilon} p | A_{\varepsilon, 2  }-
			c 	A_{\varepsilon, 2  }
			\leq  
			|p y_\varepsilon p|  A_{\varepsilon, 2  },
		\end{align} 
		we infer that 
		\begin{eqnarray}\label{absoluteprec}
			|x + p y_\varepsilon p |   \notag &\stackrel{\eqref{defcb}}{=}  &
			|c p+p y_\varepsilon p |
			\\  \notag	&\stackrel{\text{Prop.\ref{selfproperties} (i)}}{\prec\prec} &  
			|cA_{\varepsilon, 1} + A_{\varepsilon, 1}   y_{\varepsilon} A_{\varepsilon, 1} | + 
			| c A_{\varepsilon, 2  } +  A_{\varepsilon, 2  } y_{\varepsilon} A_{\varepsilon, 2  }| \\ & &\notag  +
			c B_{\varepsilon} + B_{\varepsilon}  y_{\varepsilon} B_{\varepsilon} 
			\\
			\notag &\stackrel{\text{\cite[Lem.2.3]{CDS97}}, \eqref{a2ya2absolute},\eqref{yA1prec3}}{\prec\prec} &
			c A_{\varepsilon, 1} + |p y_\varepsilon p| A_{\varepsilon, 2  }
			+  
			c B_{\varepsilon} + B_{\varepsilon} ( p y_{\varepsilon} p)B_{\varepsilon} 
             \\
			\notag & 
           = &  c A_{\varepsilon, 1} + |p y_\varepsilon p| A_{\varepsilon, 2  }+
			c B_{\varepsilon, 1}  + c B_{\varepsilon, 2} \\ 
            &\, &\notag  +   B_{\varepsilon, 1} (p y_{\varepsilon}  p)B_{\varepsilon, 1}   +   |p y_{\varepsilon} p|  B_{\varepsilon, 2}     
			\\
			&		 
			\stackrel{\eqref{defP3P}, \eqref{defQ3Q}}{= }  &   c P_{\varepsilon} + |p y_{\varepsilon} p| Q_{\varepsilon}
			+ B_{\varepsilon, 1}  y_{\varepsilon} B_{\varepsilon, 1}  
			+ c B_{\varepsilon, 2}.  
		\end{eqnarray}
		By the monotonicity of  $\norm{\cdot}_\cE$ with respect to $\prec \prec $, we obtain    
		\begin{align}
			2 - \varepsilon  & \notag \stackrel{\eqref{2-vare<normE}}{<} 
			\|x + p y_\varepsilon p\|_{\cE}  =	\| | c p + p y_\varepsilon p| \|_{\cE}    \\  &  \label{3-1} \stackrel{\eqref{absoluteprec}}{\leq} \|c P_{\varepsilon} + |p y_{\varepsilon} p| Q_{\varepsilon} \|_{\cE} + \norm{B_{\varepsilon, 1}   y_{\varepsilon} B_{\varepsilon, 1}  }_{\cE} + \norm{c B_{\varepsilon, 2}  }_{\cE}.   
		\end{align}

		Define
		\begin{align}\label{defgvare}g_{\varepsilon} := |p y_{\varepsilon} p| + \left( c - \frac{\tau(|p y_{\varepsilon} p|P_{\varepsilon} ) }{\tau(P_{\varepsilon})} \right) P_{\varepsilon}\in \cE. 
		\end{align}
		Then, 
		\begin{align*}
			\frac{\tau(g_{\varepsilon}P_{\varepsilon}) }{\tau(P_{\varepsilon})} \stackrel{\eqref{defgvare}}{=}
			\frac{\tau(|p y_{\varepsilon} p|P_{\varepsilon}) }{\tau(P_{\varepsilon})} 
			+ 	
			\Big(c
			- 
			\frac{\tau(|p y_{\varepsilon} p|P_{\varepsilon} ) }{\tau(P_{\varepsilon})} 
			\Big) \cdot 
			\frac{\tau(P_{\varepsilon}) }{\tau(P_{\varepsilon})}
			=c,   
		\end{align*}
		which shows that 
		\begin{align}\label{|g|Q=|pyp|Q}
			| g_{\varepsilon}| Q_{\varepsilon} +  \frac{\tau(g_{\varepsilon}P_{\varepsilon}) }{\tau(P_{\varepsilon})}  P_{\varepsilon} = |p y_{\varepsilon} p| Q_{\varepsilon}+ c P_{\varepsilon} .
		\end{align}	
		In particular, we have 
		\begin{align*}%\label{|g|Q+|tauP|2}
			|g_\varepsilon| Q_\varepsilon +  \frac{ \tau(
				g_\varepsilon  P_\varepsilon )  }{\tau(P_\varepsilon)}   P_\varepsilon
			& \notag =
			g_\varepsilon  Q_\varepsilon +  \frac{ \tau(
				g_\varepsilon  P_\varepsilon ) }{\tau(P_\varepsilon)}   P_\varepsilon 
			\stackrel{ \text{Lem.}\ref{fullyxp1+p_2prec}
			}	
			{\prec\prec }		g_\varepsilon  Q_\varepsilon +  	g_\varepsilon  P_\varepsilon
			=
			g_\varepsilon,   
		\end{align*}
		where we used the fact that  
		$
		0\leq   |g_\varepsilon| Q_\varepsilon \stackrel{\eqref{|g|Q=|pyp|Q}}{=} |p y_\varepsilon p| Q_\varepsilon \stackrel{\eqref{defgvare}}{=} g_\varepsilon Q_\varepsilon .   
		$  
		By the monotonicity of  $\norm{\cdot}_\cE$ with respect to $\prec \prec $, we obtain
		\[
		\left\|  |g_{\varepsilon}| Q_{\varepsilon} + \frac{\tau(|g_{\varepsilon}|P_{\varepsilon}) }{\tau(P_{\varepsilon})}  P_{\varepsilon} \right\|_\cE \leq \|g_{\varepsilon}\|_\cE,
		\]
		and thus,  		\begin{eqnarray}\label{3-31}
			& & 2 - \varepsilon  \notag \\ &\stackrel{\eqref{3-1}, \eqref{|g|Q=|pyp|Q}}{<}&  \|g_{\varepsilon}\|_\cE
			+
			\norm{B_{\varepsilon, 1}  y_{\varepsilon} B_{\varepsilon, 1}  }_{\cE} + \norm{c B_{\varepsilon, 2}  }_{\cE}\\ \notag 
		 &\stackrel{\eqref{defgvare}}{\leq}	&	\|p y_{\varepsilon} p\|_\cE
			+
			\left|  c - \frac{\tau(|p y_{\varepsilon} p|P_{\varepsilon} ) }{\tau(P_{\varepsilon})} \right|  \norm{ P_{\varepsilon}
			}_{\cE}
			+
			\norm{B_{\varepsilon, 1} y_{\varepsilon} B_{\varepsilon, 1}  }_{\cE} + \norm{c B_{\varepsilon, 2}  }_{\cE}
			\\ &\leq&		\| y_{\varepsilon} \|_\cE		+			\Big|  c 	- \frac{\tau(|p y_{\varepsilon} p|P_{\varepsilon} )}{\tau(P_{\varepsilon})}   	\Big| 
			\|P_{\varepsilon} \|_\cE
			+\norm{B_{\varepsilon, 1} y_{\varepsilon} B_{\varepsilon, 1}  }_{\cE} + \norm{c B_{\varepsilon, 2}  }_{\cE}.
		\end{eqnarray}

		Now, we claim that  \( c >  \frac{\tau(|p y_{\varepsilon} p|P_{\varepsilon} )}
		{\tau(P_{\varepsilon})}   
		\) for all numbers \( \varepsilon \) which are  small enough.  Assume by  contradiction that  there exists a sequence $\varepsilon_n\downarrow0$ such that
$$c\le\frac{\tau(|p y_{\varepsilon_n}p|P_{\varepsilon_n})}{\tau(P_{\varepsilon_n})}
\qquad n\ge1.$$
		Observe that,
        \begin{align*} \left( \frac{\tau(|p y_{\varepsilon_n} p|P_{\varepsilon_n} )}{\tau(P_{\varepsilon_n})} 
			- c \right) \|P_{\varepsilon_n} \|_\cE
			&\,\,\,\,\stackrel{\eqref{defcb}}{=} \left\|   \frac{\tau(|p y_{\varepsilon_n} p|P_{\varepsilon_n} )}{\tau(P_{\varepsilon_n})}  P_{\varepsilon_n} \right\|_{E} 
			- \frac{\phi_{\mathcal{E}}(\tau(P_{\varepsilon_n}))}{\phi_{\mathcal{E}}(a)}  
			\\ & \,\,\,\,\stackrel{\eqref{Qvarepsilona2}}{\leq}
			\left\|   \frac{\tau(|p y_{\varepsilon_n} p|P_{\varepsilon_n} )}{\tau(P_{\varepsilon_n})}  P_{\varepsilon_n} \right\|_{E} - \frac{\phi_{\mathcal{E}}(a/2)}{\phi_{\mathcal{E}}(a)}
			\\ & 
			\stackrel{\text{Lem.\ref{fullyxp1+p_2prec}}}{\leq}
			\|py_{\varepsilon_n} p\|_{E} - \frac{\phi_{\mathcal{E}}(a/2)}{\phi_{\mathcal{E}}(a)} \\ & \,\,\,\, \,\, \leq  1 - \frac{\phi_{\mathcal{E}}(a/2)}{\phi_{\mathcal{E}}(a)}.
		\end{align*}
		Then,   we have   
		\begin{align*}
			2 -{\varepsilon_n}  &\stackrel{\eqref{3-31}}{\leq}1 + 1 - \frac{\phi_{\mathcal{E}}(a/2)}{\phi_{\mathcal{E}}(a)}  + \norm{B_{{\varepsilon_n}, 1} y_{{\varepsilon_n}} B_{{\varepsilon_n}, 1}  }_{\cE} + \norm{c B_{{\varepsilon_n}, 2}  }_{\cE}. 
		\end{align*}
	However, by \eqref{yvarepsilonBvarepsilon1to0} and \eqref{cBvarepsilon2to0}, the above inequality fails when ${\varepsilon_n}  \to 0$. 
	Hence, the claim is proved. 
%     Thus,   there exists a constant  \( \varepsilon \) such that 
% 		\( c >  \frac{\tau(|p y_{\varepsilon} p|P_{\varepsilon} )}
% 		{\tau(P_{\varepsilon})}   
% 		\). 
% % {\color{blue}
% We claim that, for all sufficiently small $\varepsilon>0$,
% $$c>\frac{\tau(|p y_{\varepsilon}p|P_{\varepsilon})}{\tau(P_{\varepsilon})}.$$
% Indeed, otherwise there exists a sequence $\varepsilon_n\downarrow0$ such that
% $$c\le\frac{\tau(|p y_{\varepsilon_n}p|P_{\varepsilon_n})}{\tau(P_{\varepsilon_n})}
% \qquad n\ge1.$$
% Repeating the following argument along this sequence gives a contradiction.
% }
		Consequently, 
		\begin{align}\label{3-2}
			2 - \varepsilon &\notag \stackrel{\eqref{defcb}, \eqref{3-31}}{\leq}  	
			1 +
			\frac{\phi_{\cE}(\tau(P_\varepsilon)) 
			}{\phi_{\cE}(a)}
			- \frac{\phi_{\cE}(\tau(P_\varepsilon)) 
			}
			{\tau(P_{\varepsilon})} \tau(|p y_{\varepsilon} p|P_{\varepsilon} )  	\\ & \quad \,\,  \qquad 	+
			\norm{B_{\varepsilon, 1} y_{\varepsilon} B_{\varepsilon, 1}  }_{\cE} + \norm{c B_{\varepsilon, 2}  }_{\cE}
			.   
		\end{align}

		Since \( t \mapsto \phi_{\cE}(t)/t \) is decreasing on \( (0, \tau(\mathbf{1})) \) and \( 0<a/2 \stackrel{\eqref{Qvarepsilona2}}{\leq}  \tau(P_{\varepsilon}) \leq a \), we have
		\begin{align}\label{infphi}
			0 < %\frac{\phi_{\cE}(a/2)}{a}\leq
			\frac{\phi_{\cE}(a)}{a}  \leq \frac{\phi_{\cE}(\tau(P_{\varepsilon}))}{\tau(P_{\varepsilon})} \leq 2 \frac{\phi_{\cE}(a/2)}{a} 
			\leq 2 \frac{\phi_{\cE}(a)}{a} < \infty.
		\end{align}
		Passing to the limit 
		as $\varepsilon\to 0$ in \eqref{3-2}, it follows from \eqref{yvarepsilonBvarepsilon1to0}  and \eqref{cBvarepsilon2to0} that 
		\begin{align}\label{phitauP=1ctaupypP=0}		\lim_{\varepsilon \to 0} \phi_{\cE}(\tau(P_{\varepsilon})) = \phi_{\cE}(a) \quad \text{and} \quad 	\lim_{\varepsilon \to 0} \tau(|p y_{\varepsilon} p | P_{\varepsilon}) = 0.
		\end{align}	
		From  \eqref{F(yvare)>1-vare}, \eqref{yvareBvarei}  and \eqref{phitauP=1ctaupypP=0},  we have 
		\begin{align}\label{taupyvarepsilonpAvarepsilon2}
			\lim_{\varepsilon \to 0} \tau(|p y_{\varepsilon} p | A_{\varepsilon, 2  }) =  \frac{1}{b}
			\stackrel{\eqref{defcb}}{=} 
			\frac{a}{\phi_{\cE}(a)} . 
		\end{align}
		Consequently, \( \tau(A_{\varepsilon, 2  }) > 0 \) for small \( \varepsilon \). Since \begin{align}\label{3-4} 0 < \tau(A_{\varepsilon, 2  }) \stackrel{\eqref{defQ3Q}}{\leq} \tau(Q_{\varepsilon}) \stackrel{\eqref{Qvarepsilona2}}{\leq}   a/2 \stackrel{\eqref{Qvarepsilona2}}{\leq}  \tau(P_{\varepsilon}) ,
		\end{align}
		it follows that  \begin{eqnarray}\label{applicationp1p2normE}
			\frac{\phi_{\cE}(\tau(P_{\varepsilon}))}{\tau(P_{\varepsilon})} \tau(|p y_{\varepsilon} p | A_{\varepsilon, 2  })
			 \notag &\stackrel{\eqref{3-4}}{\leq}&  \frac{\phi_{\cE}(\tau(A_{\varepsilon, 2  }))}{\tau(A_{\varepsilon, 2  })} \tau(|p y_{\varepsilon} p | A_{\varepsilon, 2  }) \\
			 \notag  & =& \left\|  \frac{\tau(|p y_{\varepsilon} p | A_{\varepsilon, 2  } )}{\tau(A_{\varepsilon, 2  })}   A_{\varepsilon, 2  } \right\|_\cE
			\\
			&\stackrel{\text{Lem.\ref{fullyxp1+p_2prec} }}{\leq}&
			\| A_{\varepsilon, 2  } |p y_{\varepsilon} p|A_{\varepsilon, 2  } \|_\cE
			\leq \| y_{\varepsilon} \|_\cE \leq  1. 
		\end{eqnarray}
		Thus,  we have 
		\begin{align}\label{tauPvarea}
			\phi_{\cE}(\tau(P_{\varepsilon})) \tau(|p y_{\varepsilon} p | A_{\varepsilon, 2  })		\stackrel{\eqref{applicationp1p2normE}}{\leq}  \tau(P_{\varepsilon}) \le \tau(p)= a .
		\end{align} By \eqref{phitauP=1ctaupypP=0} and \eqref{taupyvarepsilonpAvarepsilon2},
		\[
		\lim_{\varepsilon \to 0} \phi_{\cE}(\tau(P_{\varepsilon})) \tau(|p y_{\varepsilon} p | A_{\varepsilon, 2  }) = \frac{\phi_{\cE}(a)}{b} 
		\stackrel{\eqref{defcb}}{=}  	 a.
		\]
		Combining the above equality with \eqref{tauPvarea}, 
		we obtain 	\begin{align}\label{tauPvarepsilontoa}
			\lim_{\varepsilon \to 0} \tau(P_{\varepsilon}) = a,
		\end{align}
		and hence (recall that  $Q_\varepsilon +P_\varepsilon=p$ and $\tau(p)=a$), 
        Hence, $\tau(Q_\varepsilon)=a-\tau(P_\varepsilon) \to 0$ as $\varepsilon \to 0.$ Since $A_{\varepsilon,2} \le Q_\varepsilon,$ it follows that
		\begin{align}\label{Avare2to0}
			\lim_{\varepsilon \to 0} \tau(A_{\varepsilon, 2  }) = 0.
		\end{align}	
		Again using \eqref{phitauP=1ctaupypP=0},   \eqref{taupyvarepsilonpAvarepsilon2} and \eqref{tauPvarepsilontoa}, we have
		\[
		\lim_{\varepsilon \to 0} \frac{\phi_{\cE}(\tau(P_{\varepsilon}))}{\tau(P_{\varepsilon})} \tau(|p y_{\varepsilon} p | A_{\varepsilon, 2  }) = 1.
		\]
		Thus,  $$\mathop{\lim}\limits_{\varepsilon  \to 0} \frac{\phi_{\cE}(\tau(A_{\varepsilon, 2  }))}{\tau(A_{\varepsilon, 2  })}\frac{1}{b} \stackrel{\eqref{taupyvarepsilonpAvarepsilon2}}{=}
		\mathop{\lim}\limits_{\varepsilon  \to 0}
		\frac{\phi_{\cE}(\tau(A_{\varepsilon, 2  }))}{\tau(A_{\varepsilon, 2  })} \tau(|p y_{\varepsilon} p | A_{\varepsilon, 2  }) \stackrel{\eqref{applicationp1p2normE}}{=}  1, $$ and therefore, 
		\[
		\lim_{\varepsilon \to 0} \frac{\phi_{\cE}(\tau(A_{\varepsilon, 2  }))}{\tau(A_{\varepsilon, 2  })} =b\stackrel{\eqref{defcb}}{=}  \frac{\phi_{\cE}(a)}{a}.
		\]	
		Therefore, by \eqref{infphi} and  \eqref{Avare2to0}, 
		we have  \( \mathop{\lim}\limits_{s \to 0} \frac{\phi_{\cE}(s)}{s} = \frac{\phi_{\cE}(a)}{a} \). 
		Taking \( a = 1 \) gives \( \mathop{\lim}\limits_{s \to 0} \frac{\phi_{\cE}(s)}{s} = 1 \). Hence,  for every \( 0 < a < \tau(\mathbf{1}) \),  \( \frac{\phi_{\cE}(a) }{ a} = \mathop{\lim}\limits_{s \to 0} \frac{\phi_{\cE}(s)}{s} =1 \). 
        By Lemma ~\ref{Lemma2.2}(ii), 
        \( \cE = L_1(\cM, \tau)_h   \) with equality of norms. 
		The proof is complete.  
	\end{proof}

	The next theorem is the main result of this section, which extends \cite[Theorem 4.4]{AKM15} to the context of uniformly monotone real  symmetric operator  spaces.

	\begin{theorem}\label{infiniteselfum}
		Let $(\cM, \tau)$ be an   infinite  semi-finite atomless von Neumann algebra and let
		\( (\cE, \norm{\cdot}_{\cE}) \subseteq S(\cM, \tau)_h \) be 	a uniformly monotone symmetric operator space.  
		If 	$\cE$ 
		has the Daugavet property, then it  is isometrically isomorphic to \( L_1(\cM, \tau)_h \).
	\end{theorem}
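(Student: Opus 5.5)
The plan is to reduce Theorem \ref{infiniteselfum} to Theorem \ref{UM} by checking its two hypotheses (a) and (b) for a uniformly monotone $\cE$. First, uniform monotonicity makes $\cE$ order continuous (see the discussion after Definition \ref{def:uniformly_monotone}, via \cite[Lemma 2.5]{C12}). Hence $\cE$ is strongly symmetric, which is the standing assumption of Theorem \ref{UM}. For (a), the reduced space $\cE_p$ carries the restricted norm. A decreasing sequence $x_n\downarrow 0$ in $\cE_p$ is also decreasing to $0$ in $\cE$, so $\|x_n\|_{\cE}\downarrow 0$. Thus $\cE_p$ is order continuous for every $p$, and in particular when $\tau(p)<\infty$.

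The core is (b). Fix a $\tau$-finite projection $p$, $x\in S_\cE$, and $y_\varepsilon\in S_\cE$ with $\|py_\varepsilon p\|_\cE\to1$ and $\|x+y_\varepsilon\|_\cE\to 2$. I would reuse the argument in the proof of Theorem \ref{EpDPinherit}, but apply it to the self-adjoint $y_\varepsilon$ with $\|y_\varepsilon\|_\cE=1$. Lemma \ref{Lem2.4} gives
\[
\|py_\varepsilon p\|_\cE^2\le \|p|y_\varepsilon|p\|_\cE\le 1 ,
\]
so $\||y_\varepsilon|^{1/2}p|y_\varepsilon|^{1/2}\|_\cE\to 1$. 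Since $0\le |y_\varepsilon|^{1/2}p|y_\varepsilon|^{1/2}\le |y_\varepsilon|$ and $\||y_\varepsilon|\|_\cE=1$, the sequential form of uniform monotonicity yields $\||y_\varepsilon|^{1/2}(\mathbf 1-p)|y_\varepsilon|^{1/2}\|_\cE\to 0$. Lemma \ref{Lem2.4} then controls the three off-diagonal corners $(\mathbf 1-p)y_\varepsilon(\mathbf 1-p)$, $py_\varepsilon(\mathbf 1-p)$ and $(\mathbf 1-p)y_\varepsilon p$. Because $y_\varepsilon=y_\varepsilon^*$, no separate estimate for $|y_\varepsilon^*|$ is needed. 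This gives $\|y_\varepsilon-py_\varepsilon p\|_\cE\to0$. The triangle inequality then gives $\|x+py_\varepsilon p\|_\cE\ge \|x+y_\varepsilon\|_\cE-\|y_\varepsilon-py_\varepsilon p\|_\cE\to 2$, while trivially $\|x+py_\varepsilon p\|_\cE\le 2$. This proves (b).

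One subtlety is that (b) is phrased for a family indexed by $\varepsilon\to0$. The uniform monotonicity equivalence is stated for sequences, so I would argue by contradiction along a sequence $\varepsilon_n\to0$ on which $\|x+py_{\varepsilon_n}p\|_\cE$ stays bounded away from $2$. Another point is that the paper's definition of $\cE_p$ as $\{pxp\}$ is consistent with Theorem \ref{UM}; no use of Theorem \ref{EpDPinherit} itself is needed.

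With (a) and (b) verified, Theorem \ref{UM} applies directly, since $(\cM,\tau)$ is infinite, semi-finite and atomless and $\cE$ has the Daugavet property. It yields that $\cE$ is isometrically isomorphic to $L_1(\cM,\tau)_h$, namely $\cE=L_1(\cM,\tau)_h$ with $\|\cdot\|_\cE=\lambda\|\cdot\|_{L_1}$ after normalizing $\phi_\cE(1)=1$. The main obstacle I expect is the step $\|y_\varepsilon-py_\varepsilon p\|_\cE\to0$, since the noncommutative setting lacks lattice disjointness. The plan there is to rely entirely on Lemma \ref{Lem2.4} combined with the uniform monotonicity squeeze, exactly as in Theorem \ref{EpDPinherit}.
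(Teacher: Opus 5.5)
Your proposal is correct and follows essentially the same route as the paper: it verifies hypotheses (a) and (b) of Theorem \ref{UM}, using order continuity from \cite[Lemma 2.5]{C12}, and for (b) the Lemma \ref{Lem2.4} plus uniform-monotonicity argument from Theorem \ref{EpDPinherit} to get $\|y_\varepsilon-py_\varepsilon p\|_\cE\to0$, followed by the triangle inequality. Your extra remarks — that self-adjointness lets the adjoint handle the corner $(\mathbf 1-p)y_\varepsilon p$, and that the family indexed by $\varepsilon$ can be reduced to sequences — are valid refinements of points the paper leaves implicit.
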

	
	\begin{proof}
		It suffices to show that $\cE$ satisfies the assumptions   of Theorem \ref{UM}.  
		Since  \( \cE \) is order continuous \cite[Lemma 2.5]{C12}, it follows that \( \cE \) is strongly symmetric \cite[Corollary 5.3.4]{DPS} and  \( \cE_p \) is order continuous for any \( p \in \cP(\cM) \) with $\tau(p)<\infty$. 
		Now let \( p \in \cP(\cM) \) with \( \tau(p) < \infty \), \( x, y_\varepsilon \in S_{\cE} \) for \( \varepsilon > 0 \), \( \|p y_\varepsilon p\|_{\cE}  \to 1 \) and \( \|x + y_\varepsilon \|_{\cE} \to 2 \) as \( \varepsilon \to 0 \).  
		By   Lemma \ref{Lem2.4}, 
		we obtain 
		\[ 	\|py_\varepsilon p\|_{\cE}^2 \leq \|y_\varepsilon\|_{\cE}   \|p |y_\varepsilon| p\|_{\cE}
		=\|p |y_\varepsilon| p\|_{\cE} 
		\leq  \| y_\varepsilon\|_{\cE}=1, 
		\]
		which implies  that 
		\begin{align*}
			\| |y_\varepsilon|^{\frac{1}{2}} p |y_\varepsilon|^{\frac{1}{2}} \|_{\cE}
			=\|p |y_\varepsilon| p\|_{\cE}  \to 1, \quad  \varepsilon \to 0.  
		\end{align*}

		%Since $|x+y_\varepsilon|\prec\prec |x|+|y_\varepsilon|$, it follows that \begin{align*}\norm{x+y_\varepsilon}_{\cE} \leq \norm{|x|+|y_\varepsilon|}_{\cE} \leq 	\norm{x}_{\cE}+\norm{y_\varepsilon}_{\cE}=2, \end{align*} and so, \begin{align*}\norm{|x|+|y_\varepsilon|}_{\cE}\to 2, \quad  \varepsilon \to 0.   \end{align*}

		Since 	\(\norm{\cdot}_{\cE}\)  is uniformly monotone and  by the similar argument of \eqref{xn-pxnpnormto0} in Theorem ~\ref{EpDPinherit}, we obtain 	\begin{align*}
			\lim\limits_{\varepsilon \to 0}\norm{ y_\varepsilon-
				p y_\varepsilon p 	}_{\cE}=0. 
		\end{align*}
		It follows from 
		\begin{align*}
			2 & \geq \|x + p y_\varepsilon p\|_{\cE}
			= \|x +y_\varepsilon-y_\varepsilon+ p y_\varepsilon p\|_{\cE} \geq 
			\|x +y_\varepsilon\|_{\cE}-
			\|y_\varepsilon- p y_\varepsilon p\|_{\cE}.
		\end{align*} 
		that
		\begin{align*}
			\mathop{\lim}\limits_{\varepsilon \to 0} \|x + p y_\varepsilon p\|_{\cE}=  2 , 
		\end{align*} 
		which completes the proof.  \end{proof}

	It is worth noting that property (b) in Theorem \ref{UM} is not equivalent
	to the uniform monotonicity of $\cE$. The latter one  is stronger than (b)    \cite[Remark 4.5]{AKM15}.

\section{Operator Daugavet property for the predual of an atomless von Neumann algebra}\label{sec7}
In this section, we establish an operator version of the Daugavet property as defined in \cite[Section 4]{RTV} for the predual of an atomless von Neumann algebra, which is a noncommutative analogue of \cite[Proposition 4.4]{RTV}. We will not require that a von Neumann algebra is equipped with a trace $\tau.$ It should be noted that the operator Daugavet property implies the Daugavet property \cite[Remark 4.2]{RTV}.

Throughout this section, $\mathcal{M}$ is a von Neumann algebra. We regard its predual $\mathcal{M}_*$ as the canonical isometric subspace of $\mathcal{M}^*$ consisting of normal linear functionals on $\mathcal{M}$. Thus, for $\omega\in\mathcal{M}_*$ and $a\in\mathcal{M}$, the notation $\omega(a)$ means the usual evaluation of the normal functional $\omega$ at $a$. Equivalently, under the canonical duality
$$\langle \omega,a\rangle=\omega(a),\qquad \omega\in\mathcal{M}_*,\ a\in\mathcal{M},$$
one has $(\mathcal{M}_*)^*=\mathcal{M}$ isometrically \cite[Theorem 1.11.5]{DPS}. We write $\norm{\cdot}$ for the norm of $\mathcal{M}_*$, $\norm{\cdot}_{\mathcal{M}}$ for the operator norm.  
For $\omega\in\mathcal{M}_*$, we write $|\omega|$ for the positive normal functional appearing in the polar decomposition of $\omega$ \cite[Theorem III.4.2]{Takesakibook}. Thus,  there is a partial isometry $v\in\mathcal{M}$ such that
$$\omega=v|\omega|,\qquad\text{that is,}\qquad\omega(x)=|\omega|(xv) \qquad |\omega|(x)=\omega(xv^*),\quad x\in\mathcal{M}.$$
The functional $|\omega|\in\mathcal{M}_*^+$ is called the modulus of $\omega$ and is uniquely determined by $\omega$; moreover $\|\omega\|=|\omega|({\bf 1})$. Hence, for a projection $e\in\mathcal{M}$, the quantity $|\omega|(e)$ means the value of this positive normal functional at $e$.  In particular,  $|\omega|(e)\geq0$.

For $\omega\in\mathcal{M}_*$ and $y\in\mathcal{M}$, we write $\omega y$ and $y\omega$ for the normal functionals \cite[Section 1.12]{DPS}
$$(\omega y)(x):=\omega(yx),\qquad (y\omega)(x):=\omega(xy),\qquad x\in\mathcal{M}.$$
Thus,  $\omega y,y\omega\in\mathcal{M}_*$ and
$$\|\omega y\|\leq\|\omega\|\|y\|_{\mathcal{M}},\qquad \|y\omega\|\leq\|y\|_{\mathcal{M}}\|\omega\|.$$
More generally, we make $\mathcal{M}_*$ into an $\mathcal{M}$-bimodule by
$$(a\omega b)(x):=\omega(bxa),\qquad a,b,x\in\mathcal{M},\ \omega\in\mathcal{M}_*,$$
so that
$$\|a\omega b\|\leq\|a\|_{\mathcal{M}}\|\omega\|\|b\|_{\mathcal{M}}.$$

For projections $f,g\in\mathcal{M}$, the rectangular corner $g\mathcal{M}_*f$ is the predual of the rectangular corner $f\mathcal{M}g$, with the duality
$$\langle g\omega f,m\rangle=\omega(m),\qquad m=fmg\in f\mathcal{M}g.$$
Indeed, this follows from the canonical predual identification $(\mathcal{M}_*)^*=\mathcal{M}$ and the usual reduction of von Neumann algebras by projections \cite[Theorem 1.11.5 and Theorem 1.11.11]{DPS}. In particular, for every $a\in\mathcal{M}$,
$$\|fag\|_{\mathcal{M}}=\sup\{|\omega(a)|:\omega\in g\mathcal{M}_*f,\ \|\omega\|\leq1\}.$$	
	
Recall that a slice $S$ of $B_{\mathcal{M}_*}$ is a set of the form
$$S=S(a,\alpha)=\{\omega\in B_{\mathcal{M}_*}:\mathrm{Re}\,  \omega(a)>1-\alpha\},\quad a\in S_{\mathcal{M}}, \ \alpha>0.$$

\begin{definition}
 \upshape
A complex Banach space $X$ has the \emph{operator Daugavet property} if for every $n\in\mathbb{N}$, all $x_1,\ldots,x_n\in S_X$, every slice $S$ of $B_X$, and every $\varepsilon>0$, there exists $x\in S$ such that for every $x_0\in B_X$ there is a bounded linear operator $T:X\to X$ with
$$\|T\|\leq1+\varepsilon,\qquad Tx=x_0,\qquad \|Tx_i-x_i\|<\varepsilon,\quad i=1,\ldots,n.$$
\end{definition}

We will need the following auxiliary lemmas.
\begin{lemma}\label{lem:domination}
Let $\omega\in\mathcal{M}_*$ and let $e\in\mathcal{M}$ be a projection. Then, 
$$\|\omega e\|^2\leq\|\omega\| |\omega|(e),\qquad \|e\omega\|^2\leq\|\omega\| |\omega^*|(e),$$
where $\omega^*(x):=\overline{\omega(x^*)}$.
\end{lemma}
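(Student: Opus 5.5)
The plan is to reduce both inequalities to the Cauchy--Schwarz inequality for the positive normal functional $|\omega|$, using the polar decomposition $\omega=v|\omega|$, and then to derive the second inequality from the first by passing to adjoint functionals.

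\emph{First inequality.} Set $\varphi:=|\omega|\in\mathcal{M}_*^+$ and let $v$ be the partial isometry with $\omega(x)=\varphi(xv)$ for $x\in\mathcal{M}$. For $x\in\mathcal{M}$ with $\|x\|_{\mathcal{M}}\leq1$ we have
$$(\omega e)(x)=\omega(ex)=\varphi(e\cdot xv).$$
We apply the Cauchy--Schwarz inequality $|\varphi(b^*a)|^2\leq\varphi(b^*b)\,\varphi(a^*a)$ with $b=e$ and $a=xv$. Since $e=e^*=e^2$ and $v^*x^*xv\leq\|x\|_{\mathcal{M}}^2v^*v\leq\mathbf{1}$, positivity of $\varphi$ gives
$$|(\omega e)(x)|^2\leq\varphi(e)\,\varphi(v^*x^*xv)\leq\varphi(e)\,\varphi(\mathbf{1})=|\omega|(e)\,\|\omega\|.$$
Here we use $\|\omega\|=|\omega|(\mathbf{1})$. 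Taking the supremum over the unit ball of $\mathcal{M}$ yields $\|\omega e\|^2\leq\|\omega\|\,|\omega|(e)$.

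\emph{Second inequality.} Note that $\psi\mapsto\psi^*$ is a conjugate-linear isometry of $\mathcal{M}_*$, because the involution is isometric on $\mathcal{M}$. We then compute
$$(e\omega)^*(x)=\overline{(e\omega)(x^*)}=\overline{\omega(x^*e)}=\omega^*(ex)=(\omega^*e)(x).$$
Hence $\|e\omega\|=\|(e\omega)^*\|=\|\omega^*e\|$. Applying the first inequality to $\omega^*$, and using $\|\omega^*\|=\|\omega\|$, gives
$$\|e\omega\|^2\leq\|\omega\|\,|\omega^*|(e).$$

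\emph{Expected obstacle.} The only delicate point is bookkeeping of the bimodule conventions, namely $(\omega y)(x)=\omega(yx)$ versus $\omega(x)=|\omega|(xv)$. One must group the product as $e\cdot(xv)$ so that the projection $e$ sits where the Cauchy--Schwarz inequality produces $\varphi(e)$, while the factor $xv$ is controlled by $\varphi(\mathbf{1})$. Everything else is routine.
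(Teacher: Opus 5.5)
Your proof is correct. The first inequality is argued exactly as in the paper: Cauchy--Schwarz for $|\omega|$ applied to $e\cdot xv$. Your bound $|\omega|(v^*x^*xv)\le|\omega|(\mathbf{1})$ via $v^*v\le\mathbf{1}$ is marginally more direct than the paper's passage through $v^*v=s(|\omega|)$.

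For the second inequality you take a different route. The paper runs Cauchy--Schwarz a second time on $(e\omega)(x)=|\omega|(x\cdot ev)$, obtaining $\|e\omega\|^2\le\|\omega\|\,|\omega|(v^*ev)$. It then invokes the identity $|\omega^*|=v|\omega|v^*$ to rewrite $|\omega|(v^*ev)$ as $|\omega^*|(e)$. You instead observe that $(e\omega)^*=\omega^*e$ and that $\psi\mapsto\psi^*$ is an isometry of $\mathcal{M}_*$, so the second inequality is simply the first one applied to $\omega^*$.

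Your reduction buys independence from the polar-decomposition identity for $\omega^*$. The paper states that identity without justification. It genuinely requires an argument: one checks that $\omega^*=v^*(v|\omega|v^*)$ with $vv^*=s(v|\omega|v^*)$, and then appeals to uniqueness of the polar decomposition.

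The paper's route yields an intermediate estimate expressed through the single polar decomposition of $\omega$. However, Lemma \ref{lem:corner} only uses the final form involving $|\omega^*|$, so nothing is lost with your approach.
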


\begin{proof}
Let $\omega=v|\omega|$ be the polar decomposition of $\omega$ in $\mathcal{M}_*$, where $v\in\mathcal{M}$ is a partial isometry with $v^*v=s(|\omega|)$ and $\omega(x)=|\omega|(xv)$ for $x\in\mathcal{M}$. Let $\|x\|_{\mathcal{M}}\leq1$.

We have $(\omega e)(x)=\omega(ex)=|\omega|(exv)$, and by the Cauchy--Schwarz inequality for the positive normal functional $|\omega|$,
$$|(\omega e)(x)|^2=\bigl||\omega|(e\cdot xv)\bigr|^2\leq|\omega|(e)\,|\omega|(v^*x^*xv)\leq|\omega|(e)|\omega|(v^*v)=\|\omega\| |\omega|(e),$$
using $x^*x\leq{\bf 1}$ and $|\omega|(v^*v)=|\omega|(s(|\omega|))=|\omega|({\bf 1})=\||\omega|\|=\|\omega\|$ \cite[Proposition III.4.6]{Takesakibook}. Taking the supremum over $x$ gives the first inequality.

On the other hand, $(e\omega)(x)=\omega(xe)=|\omega|(xev)$, and
$$|(e\omega)(x)|^2=\bigl||\omega|(x\cdot ev)\bigr|^2\leq|\omega|(xx^*)| \omega|(v^*ev)\leq\|\omega\| |\omega|(v^*ev).$$
Since $|\omega^*|=v|\omega|v^*$, one has $|\omega|(v^*ev)=(v|\omega|v^*)(e)=|\omega^*|(e)$, and the second inequality follows.
\end{proof}

\begin{lemma}\label{lem:corner}
Let $\mathcal{M}$ be atomless,  let $0\neq p\in\mathcal{M}$ be a projection, let $u\in\mathcal{M}$ be a partial isometry with $p\leq u^*u$, and let $\omega_1,\ldots,\omega_n\in\mathcal{M}_*$. Then for every $\delta>0$ there is a non-zero projection $d\leq p$ such that, putting $r:=udu^*$,
$$\|d\omega_i\|<\delta,\qquad \|\omega_i r\|<\delta,\qquad i=1,\ldots,n.$$
\end{lemma}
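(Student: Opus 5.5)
The plan is to reduce both smallness conditions to the smallness of a single positive normal functional evaluated at $d$, using Lemma~\ref{lem:domination}, and then use the atomlessness of $\mathcal{M}$ to find a nonzero $d\le p$ where that functional is small.

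First, note that $r=udu^*$ is a projection whenever $d\le p\le u^*u$ is a projection. Indeed, $u^*u\,d=d$, so
$$r^2=ud(u^*u)du^*=udu^*=r,$$
and $r$ is clearly self-adjoint. Applying Lemma~\ref{lem:domination} with $e=d$ and with $e=r$ gives, for each $i$,
$$\|d\omega_i\|^2\le\|\omega_i\|\,|\omega_i^*|(d),\qquad \|\omega_i r\|^2\le \|\omega_i\|\,|\omega_i|(udu^*).$$
The map $x\mapsto |\omega_i|(uxu^*)$ is a positive normal functional on $\mathcal{M}$; it is the functional $u^*|\omega_i|u$ in the bimodule notation. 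We therefore set
$$\psi:=\sum_{i=1}^n\bigl(|\omega_i^*|+|\omega_i|(u\,\cdot\,u^*)\bigr)\in\mathcal{M}_*^+ .$$
It then suffices to find a nonzero projection $d\le p$ with
$$\psi(d)<\eta:=\frac{\delta^2}{1+\max_i\|\omega_i\|}.$$
Since each summand of $\psi$ is positive, this single bound controls every term on the right-hand sides above, giving both required estimates.

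To produce such a $d$, we use that $\mathcal{M}$ is atomless. A nonzero projection is not minimal, so $p=e_1+f_1$ with $e_1,f_1$ nonzero orthogonal projections. Splitting $f_1=e_2+f_2$ in the same way, and continuing inductively, yields an infinite sequence $\{e_k\}_{k\ge1}$ of mutually orthogonal nonzero projections with $e_k\le p$. Positivity and normality of $\psi$ give
$$\sum_k\psi(e_k)=\psi\Bigl(\sum_k e_k\Bigr)\le\psi(p)<\infty .$$
Hence $\psi(e_k)\to0$, and we choose $d:=e_k$ for some $k$ with $\psi(e_k)<\eta$.

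The only step needing care is the second domination estimate. One must check that the module convention $(\omega r)(x)=\omega(rx)$ matches the first inequality in Lemma~\ref{lem:domination}, and that evaluating $|\omega_i|$ at $udu^*$ is the same as evaluating the positive functional $|\omega_i|(u\,\cdot\,u^*)$ at $d$. Both are immediate from the definitions, so I expect no genuine obstacle; the argument is a pigeonhole over orthogonal subprojections.
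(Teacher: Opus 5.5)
Your proof is correct and follows essentially the same route as the paper: both estimates are reduced, via Lemma~\ref{lem:domination}, to making a single positive normal functional small at $d$, and atomlessness then gives a pigeonhole argument over orthogonal nonzero subprojections of $p$. The only cosmetic difference is how the subprojections are produced. The paper splits $p$ into finitely many pieces $p_1,\ldots,p_N$ with $N\gamma>\Phi(p)$, working with the functionals on $p\mathcal{M}p$. You instead build an infinite orthogonal sequence and use $\sum_k\psi(e_k)\le\psi(p)<\infty$, which needs only positivity of $\psi$, not normality.
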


\begin{proof}
By Lemma \ref{lem:domination}, for each $i$ and every projection $h\leq p$,
$$\|h\omega_i\|^2\leq\|\omega_i\|\,|\omega_i^*|(h),\qquad \|\omega_i\,uhu^*\|^2\leq\|\omega_i\|\,|\omega_i|(uhu^*).$$
Define positive normal functionals $\sigma_i$ and $ \rho_i$ on the corner $p\mathcal{M}p$ by
$$\sigma_i(f):=|\omega_i^*|(f),\qquad \rho_i(f):=|\omega_i|(ufu^*),\qquad f\in p\mathcal{M}p,$$
and set $\Phi:=\sum_{i=1}^n(\sigma_i+\rho_i)\in(p\mathcal{M}p)_*^+$. Choose $\gamma>0$ with $\|\omega_i\|\gamma<\delta^2$ for all $i$.

Since $\mathcal{M}$ is atomless, the corner $p\mathcal{M}p$ is diffuse, hence has no minimal projections, so for every $N\in\mathbb{N}$ the projection $p$ splits as a sum $p=p_1+\cdots+p_N$ of non-zero pairwise orthogonal projections in $p\mathcal{M}p$. Fix $N$ with $N \cdot \gamma>\Phi(p)$. If $\Phi(p_k)\geq\gamma$ for every $k$, summing would give $\Phi(p)\geq N \cdot  \gamma>\Phi(p)$, a contradiction; so there is $k$ with $\Phi(p_k)<\gamma$. Put $d:=p_k$. Then $d\leq p\leq u^*u$, so $r:=udu^*$ is a projection, and
$$\|d\omega_i\|^2\leq\|\omega_i\| \sigma_i(d)\leq\|\omega_i\| \Phi(d)<\|\omega_i\|\gamma<\delta^2,$$
$$\|\omega_i r\|^2=\|\omega_i udu^*\|^2\leq\|\omega_i\| \rho_i(d)\leq\|\omega_i\| \Phi(d)<\delta^2.$$
This completes the proof.
\end{proof}

We quote 
the following fact for later reference, see e.g. \cite[Lemma 2.1]{P02}. 
\begin{lemma}\label{lem:orthogonal}
Let $d,r\in\mathcal{M}$ be projections and put $q_d:={\bf 1}-d$, $q_r:={\bf 1}-r$. If
$$\theta_1\in q_d\mathcal{M}_*q_r,\qquad \theta_2\in d\mathcal{M}_*r,$$
then $\|\theta_1+\theta_2\|=\|\theta_1\|+\|\theta_2\|$.
\end{lemma}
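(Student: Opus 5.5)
The plan is to prove the two inequalities separately; the reverse one, $\|\theta_1+\theta_2\|\leq\|\theta_1\|+\|\theta_2\|$, is just the triangle inequality. For $\geq$ we use the duality $(\mathcal{M}_*)^*=\mathcal{M}$ together with the corner duality recalled above. Since $\theta_1=q_d\theta_1 q_r$, we have $\theta_1(m)=\theta_1(q_r m q_d)$ for all $m\in\mathcal{M}$, so $\theta_1$ is determined by its restriction to the corner $q_r\mathcal{M}q_d$. Likewise $\theta_2(m)=\theta_2(rmd)$ is determined by its restriction to $r\mathcal{M}d$. By the norming formula for rectangular corners, for $\eta>0$ we can choose $a_1=q_r a_1 q_d$ and $a_2=r a_2 d$ with $\|a_j\|_{\mathcal{M}}\leq 1$ such that $\theta_1(a_1)>\|\theta_1\|-\eta$ and $\theta_2(a_2)>\|\theta_2\|-\eta$. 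Both values can be taken real and positive after multiplying each $a_j$ by a unimodular scalar.

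Next put $a:=a_1+a_2$. The key point is that $a_1$ and $a_2$ have orthogonal left and right supports: the left supports satisfy $l(a_1)\leq q_r$ and $l(a_2)\leq r$, and the right supports satisfy $r(a_1)\leq q_d$ and $r(a_2)\leq d$. Then
\[
a^*a=a_1^*a_1+a_2^*a_2,
\]
because the cross terms vanish, for instance $a_1^*a_2=a_1^*q_r r a_2=0$. The two summands live under the orthogonal projections $q_d$ and $d$, so they are orthogonal positive operators. Hence
\[
\|a\|_{\mathcal{M}}^2=\max\{\|a_1\|_{\mathcal{M}}^2,\|a_2\|_{\mathcal{M}}^2\}\leq 1.
\]

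Finally we evaluate. The mixed terms vanish: $\theta_1(a_2)=\theta_1(q_r a_2 q_d)=\theta_1(q_r r a_2 d q_d)=0$, and similarly $\theta_2(a_1)=0$. Therefore
\[
\|\theta_1+\theta_2\|\geq|(\theta_1+\theta_2)(a)|=\theta_1(a_1)+\theta_2(a_2)>\|\theta_1\|+\|\theta_2\|-2\eta.
\]
Letting $\eta\to0$ gives the claim. The only step needing care is the norm estimate for $a$, namely that orthogonal left and right supports give a max rather than a sum. It is handled by the $a^*a$ computation above, checking in addition that $a_1^*a_1\leq\|a_1\|^2 q_d$ and $a_2^*a_2\leq\|a_2\|^2 d$ with $q_d\perp d$.
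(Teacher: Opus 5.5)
Your proof is correct and takes essentially the same approach as the paper. The paper quotes this from \cite[Lemma 2.1]{P02}, and the argument in its source is yours: norm $\theta_1,\theta_2$ by contractions $a_1\in q_r\mathcal{M}q_d$, $a_2\in r\mathcal{M}d$; use $a^*a=a_1^*a_1+a_2^*a_2$ over the orthogonal corners $q_d\mathcal{M}q_d$, $d\mathcal{M}d$ to get $\|a_1+a_2\|_{\mathcal{M}}\leq 1$; and observe that the cross terms $\theta_1(a_2)$, $\theta_2(a_1)$ vanish.
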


% \begin{proof}
% The inequality $\|\theta_1+\theta_2\|\leq\|\theta_1\|+\|\theta_2\|$ is the triangle inequality. For the reverse, let $\varepsilon>0$ and choose
% $$b_1\in q_r\mathcal{M}q_d,\qquad b_2\in r\mathcal{M}d,\qquad \|b_1\|_{\mathcal{M}}\leq1,\ \|b_2\|_{\mathcal{M}}\leq1,$$
% which, after multiplying by suitable unimodular scalars, satisfy
% $$\mathrm{Re} \theta_1(b_1)>\|\theta_1\|-\varepsilon,\qquad \mathrm{Re}\theta_2(b_2)>\|\theta_2\|-\varepsilon.$$
% Put $b:=b_1+b_2$. Since $b_1^*b_2=0=b_2^*b_1$, we have $b^*b=b_1^*b_1+b_2^*b_2$, and the positive operators $b_1^*b_1\in q_d\mathcal{M}q_d$ and $b_2^*b_2\in d\mathcal{M}d$ live in orthogonal corners, so
% $$\|b\|_{\mathcal{M}}^2=\|b^*b\|_{\mathcal{M}}=\max\{\|b_1^*b_1\|_{\mathcal{M}},\|b_2^*b_2\|_{\mathcal{M}}\}\leq1.$$
% Moreover $\theta_1(b_2)=0$ (since $q_r b_2=0$) and $\theta_2(b_1)=0$ (since $r b_1=0$). Therefore,
% $$\|\theta_1+\theta_2\|\geq\mathrm{Re}(\theta_1+\theta_2)(b)=\mathrm{Re}\theta_1(b_1)+\mathrm{Re}\theta_2(b_2)>\|\theta_1\|+\|\theta_2\|-2\varepsilon.$$
% Letting $\varepsilon\to0$ completes the proof.
% \end{proof}

Now we are ready to prove the main result of this section.
\begin{theorem}\label{thm:ODP}
Let $\mathcal{M}$ be an atomless von Neumann algebra. Then its predual $\mathcal{M}_*$, as a (complex) Banach space, has the operator Daugavet property. %In particular, this holds when $\mathcal{M}$ is of type III.
\end{theorem}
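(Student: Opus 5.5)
The plan is to imitate the $L_1(0,1)$ argument of \cite[Proposition 4.4]{RTV}. The functional $x\in S$ will be chosen concentrated on a small rectangular corner $d\mathcal{M}_*r$ on which all the given $x_1,\ldots,x_n$ are negligible, and $T$ will be a contractive block pinching plus a rank-one correction. Fix $x_1,\ldots,x_n\in S_{\mathcal{M}_*}$, a slice $S(a,\alpha)$ with $a\in S_{\mathcal{M}}$, and $\varepsilon>0$. Let $a=u|a|$ be the polar decomposition in $\mathcal{M}$. Choose $0<\eta<\alpha$ and put $p:=e^{|a|}[1-\eta,1]$; this is non-zero because $\||a|\|_{\mathcal{M}}=1$, and $p\leq s(|a|)=u^*u$. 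Apply Lemma \ref{lem:corner} to $p$, $u$, $\omega_i:=x_i$ and $\delta:=\varepsilon/3$. This gives a non-zero projection $d\leq p$ such that, with $r:=udu^*$,
$$\|dx_i\|<\delta,\qquad \|x_ir\|<\delta,\qquad i=1,\ldots,n.$$

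Next I define the point of the slice. Take a normal state $\varphi$ on $d\mathcal{M}d$, which exists since $d\neq0$, and set $x(m):=\varphi(du^*md)$ for $m\in\mathcal{M}$. Then $x$ is normal, $\|x\|\leq1$, and $x$ lives in the corner $d\mathcal{M}_*r$, that is, $x=dxr$ with the bimodule convention of this section. The orientation of this convention has to be checked carefully, but it is only bookkeeping. Since $|a|d\geq(1-\eta)d$, we get $\mathrm{Re}\,x(a)=\varphi(d|a|d)\geq1-\eta>1-\alpha$, so $x\in S(a,\alpha)$. Note also that $x(ud)=\varphi(du^*ud)=\varphi(d)=1$.

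Given $x_0\in B_{\mathcal{M}_*}$, define
$$T\theta:=({\bf 1}-d)\theta({\bf 1}-r)+\theta(rud)\,x_0,\qquad \theta\in\mathcal{M}_*.$$
First, $Tx=x_0$, because $({\bf 1}-d)x=0$ and $x(rud)=x(ud)=1$. Second, $|\theta(rud)|=|(d\theta r)(ud)|\leq\|d\theta r\|$, so by Lemma \ref{lem:orthogonal}
$$\|T\theta\|\leq\|({\bf 1}-d)\theta({\bf 1}-r)\|+\|d\theta r\|=\|({\bf 1}-d)\theta({\bf 1}-r)+d\theta r\|\leq\|\theta\|.$$
The last inequality holds because the pinching $\theta\mapsto({\bf 1}-d)\theta({\bf 1}-r)+d\theta r$ is the preadjoint of the block-diagonal map $m\mapsto({\bf 1}-r)m({\bf 1}-d)+rmd$ on $\mathcal{M}$. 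That map has norm at most $1$, since its value is a sum of two pieces with orthogonal left supports and orthogonal right supports. Hence $\|T\|\leq1\leq1+\varepsilon$. Third, for each $i$,
$$Tx_i-x_i=-dx_i-({\bf 1}-d)x_ir+x_i(rud)\,x_0 .$$
Each of the three terms has norm less than $\delta$: the first directly, the second because it is at most $\|x_ir\|$, and the third because $|x_i(rud)|\leq\|dx_ir\|\leq\|dx_i\|$. So $\|Tx_i-x_i\|<3\delta=\varepsilon$. Finally, $x$ was fixed before $x_0$ was given, which is exactly the order of quantifiers required in the definition of the operator Daugavet property.

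The analytic input is already contained in Lemmas \ref{lem:domination}, \ref{lem:corner} and \ref{lem:orthogonal}. I expect the main obstacle to be the consistent handling of the left/right corner conventions for $\mathcal{M}_*$. Specifically, one must verify that $x$ really lies in $d\mathcal{M}_*r$, that $\theta(rud)$ depends only on $d\theta r$, and that the pinching is contractive, checking this last point via the dual block-diagonal map on $\mathcal{M}$. If one of these orientations fails, I will swap the roles of $d$ and $r$ (using $u^*$ in place of $u$) and rerun the same argument.
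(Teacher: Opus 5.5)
Your proposal is correct and is essentially the paper's proof. It uses the same corner $d\leq p$ with $r=udu^*$ from Lemma~\ref{lem:corner}, the same operator $T\theta=(\mathbf{1}-d)\theta(\mathbf{1}-r)+\psi(d\theta r)x_0$, and the same contractivity argument via Lemma~\ref{lem:orthogonal} and the dual block-diagonal map. The one harmless difference is that you write down the slice point $x=\varphi(du^*\,\cdot\,d)$ explicitly and take $\psi$ to be evaluation at $ud\in r\mathcal{M}d$; the paper instead obtains $z\in d\mathcal{M}_*r$ by duality and $\psi$ by Hahn--Banach.

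One small fix is needed. Choose $0<\eta<\min\{\alpha,1\}$ rather than just $\eta<\alpha$. If $\eta\geq 1$, the projection $e^{|a|}[1-\eta,1]$ dominates $\mathbf{1}-u^*u$, so the inclusion $p\leq u^*u$ required by Lemma~\ref{lem:corner} can fail.
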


\begin{proof}
Use $(\mathcal{M}_*)^*=\mathcal{M}$. Let $\omega_1,\ldots,\omega_n\in S_{\mathcal{M}_*}$, let
$$S=\{\omega\in B_{\mathcal{M}_*}:\mathrm{Re}\,   \omega(a)>1-\alpha\}$$
be a slice of $B_{\mathcal{M}_*}$, where $a\in S_{\mathcal{M}}$ and $\alpha>0$. 
Assume that $\varepsilon>0$. Choose
\begin{align}\label{etachoose}
0<\eta<\min\{\alpha,\varepsilon/3,1\}.
\end{align}
Let $a=u|a|$ be the polar decomposition of $a$ in $\mathcal{M}$ \cite[Theorem 1.7.3]{DPS}. Since $\|a\|_{\mathcal{M}}=1$, the spectral projection
$$p=e^{|a|}(1-\eta/2,1]$$
is non-zero, and $p\leq s(|a|)=u^*u$.

By Lemma \ref{lem:corner} (with $\delta=\eta$),
there is a non-zero projection $d\leq p$ such that, with $r:=udu^*$,
\begin{align}\label{normdomegai}
    \|d\omega_i\|<\eta,\qquad \|\omega_i r\|<\eta,\qquad i=1,\ldots,n.
\end{align}

Next, we construct $z\in S.$ Since $d\leq p\leq u^*u$, it follows that 
$$rad=udu^*\,u|a|\,d=ud|a|d,\qquad (rad)^*(rad)=d|a|d\,u^*u\,d|a|d=(d|a|d)^2,$$
so $\|rad\|_{\mathcal{M}}=\|d|a|d\|_{\mathcal{M}}$. As $d\leq p=e^{|a|}(1-\eta/2,1]$,
we have $d|a|d\geq(1-\eta/2)d$, and $d\neq0$, hence
$$\|rad\|_{\mathcal{M}}\geq1-\eta/2>1-\eta.$$
Since $d\mathcal{M}_*r$ is the predual of $r\mathcal{M}d$ and $rad\in r\mathcal{M}d$, we have
$$\|rad\|_{\mathcal{M}}=\sup\{|z(rad)|:z\in d\mathcal{M}_*r,\ \|z\|=1\}.$$
As $\|rad\|_{\mathcal{M}}>1-\eta$, there is $z\in d\mathcal{M}_*r$ with $\|z\|=1$ and
$$|z(rad)|>1-\eta.$$
Multiplying $z$ by a suitable unimodular scalar, we may assume
$$\mathrm{Re} \,  z(rad)>1-\eta.$$
%One may rotate the complex number $z_0(rad)$ onto the positive real axis. (Since $|z_0(rad)|>1-\eta$, put $c:=z_0(rad)$. Then $c\neq0$. Replacing $z_0$ by
%$$\lambda z_0,\qquad \lambda:=\frac{\overline{c}}{|c|},$$
%does not change its norm and gives
%$$\Re(\lambda z_0)(rad)=|c|>1-\eta.$$
%Thus, after multiplying $z_0$ by a suitable unimodular scalar, we may assume
%$$\Re z_0(rad)>1-\eta.$$)
%If necessary replacing $z_0$ by $z:=z_0/\|z_0\|$, we obtain $z\in d\mathcal{M}_*r$ such that
%$$\|z\|=1,\qquad \mathrm{Re} z(rad)>1-\eta.$$
Since $z=dzr$, we have $z(a)=z(rad)$, so
$$\mathrm{Re}\, z(a)>1-\eta>1-\alpha,\qquad\text{i.e.}\qquad z\in S.$$

Now we are ready to define $T.$ Fix an arbitrary $x_0\in B_{\mathcal{M}_*}$. By the complex Hahn--Banach theorem,  there is a (complex-linear) functional $\psi\in(d\mathcal{M}_*r)^*$ with
$$\|\psi\|=1,\qquad \psi(z)=1.$$
Put $q_d:={\bf 1}-d$, $q_r:={\bf 1}-r$, and define the complex-linear operator $T:\mathcal{M}_*\to\mathcal{M}_*$ by
$$T\omega=q_d\omega q_r+\psi(d\omega r)x_0,\qquad \omega\in\mathcal{M}_*.$$

Since $z=dzr$ we have $q_d z q_r=0$, and $\psi(dzr)=\psi(z)=1$, hence $Tz=x_0$.

We show $\|T\|\leq1$. Define $P:\mathcal{M}_*\to\mathcal{M}_*$ by $P\omega=q_d\omega q_r+d\omega r$. 
Its adjoint is $P^*:\mathcal{M}\to\mathcal{M}$, $P^*b=q_rbq_d+rbd$.
%Under the identification $(\mathcal{M}_*)^*=\mathcal{M}$, its adjoint is determined by
%$$(P\omega)(b)=\omega(P^*b),\qquad \omega\in\mathcal{M}_*,\ b\in\mathcal{M}.$$
%Using the bimodule convention, we have
%$$(q_d\omega q_r)(b)=\omega(q_rbq_d),\qquad (d\omega r)(b)=\omega(rbd).$$
%Thus
%$$(P\omega)(b)=\omega(q_rbq_d+rbd),\qquad \omega\in\mathcal{M}_*,\ b\in\mathcal{M},$$
%and therefore
%$$P^*b=q_rbq_d+rbd,\qquad b\in\mathcal{M}.$$
Setting the self-adjoint unitaries $s_d:=q_d-d$ and $s_r:=q_r-r$, one has
$$P^*b=\tfrac12(b+s_r b s_d),\qquad b\in\mathcal{M}.$$

Since $b\mapsto s_rbs_d$ is an isometry of $\mathcal{M}$, $\|P^*b\|_{\mathcal{M}}\leq\|b\|_{\mathcal{M}}$, so $\|P\|\leq1$ (see \cite[Theorem 4.10]{Rudin}).
%$\|s_rbs_d\|_{\mathcal{M}}\le \|b\|_{\mathcal{M}},$ and since
%$b=s_r^*(s_rbs_d)s_d^*$ we have
%$$\|b\|_{\mathcal{M}}\leq\|s_r^*\|_{\mathcal{M}}\|s_rbs_d\|_{\mathcal{M}}\|s_d^*\|_{\mathcal{M}}=\|s_rbs_d\|_{\mathcal{M}}.$$
For $\omega\in\mathcal{M}_*$, the two functionals $q_d\omega q_r$ and $d\omega r$ lie in the orthogonal rectangular corners $q_d\mathcal{M}_*q_r$ and $d\mathcal{M}_*r$; and so by Lemma \ref{lem:orthogonal},
$$\|q_d\omega q_r\|+\|d\omega r\|=\|q_d\omega q_r+d\omega r\|=\|P\omega\|\leq\|\omega\|.$$
Therefore, using $\|\psi\|=1$ and $\|x_0\|\leq1$, we have 
$$\|T\omega\|\leq\|q_d\omega q_r\|+|\psi(d\omega r)|\,\|x_0\|\leq\|q_d\omega q_r\|+\|d\omega r\|\leq\|\omega\|,$$
so $\|T\|\leq1$.

Finally, we show that $\|T\omega_i-\omega_i\|<\varepsilon$ for all $i$. For each $i$,
$$T\omega_i-\omega_i=(q_d\omega_i q_r-\omega_i)+\psi(d\omega_i r)x_0.$$
Now $\omega_i-q_d\omega_i q_r=d\omega_i+q_d\omega_i r$, so
$$\|q_d\omega_i q_r-\omega_i\|\leq\|d\omega_i\|+\|q_d\omega_i r\|\leq\|d\omega_i\|+\|\omega_i r\|.$$
Together with $|\psi(d\omega_i r)|\leq\|d\omega_i r\|\leq\|\omega_i r\|$ this gives
$$\|T\omega_i-\omega_i\|\leq\|d\omega_i\|+2\|\omega_i r\|
\stackrel{\eqref{normdomegai}}{<}
3\eta
\stackrel{\eqref{etachoose}}{<}\varepsilon.$$

We have constructed $z\in S$ such that for every $x_0\in B_{\mathcal{M}_*}$ there is a bounded (complex-linear) operator $T$ with $\|T\|\leq1$, $Tz=x_0$ and $\|T\omega_i-\omega_i\|<\varepsilon$ for all $i$. 
Hence, $\mathcal{M}_*$ has the operator Daugavet property.
\end{proof}

	\appendix
	
	\section{ Basic sequences and $\ell_1$-copies  of  Banach spaces }\label{AppendixA}

	We follow the standard terminology and notation from  \cite{
		Dis84, LT1, LT2}. 
	The definitions and results presented herein hold for both   real and complex scalars. For convenience, we include some facts on basic sequences and asymptotically isometric copies of $\ell_1$ used above.
	
	Let  \( (X, \norm{\cdot}) \) be a  Banach   space. 
	A sequence \(\{x_n\}_{n=1}^\infty \subseteq X\)  is called a \emph{Schauder basis} (or \emph{basis}) for \(X\) if for  each \(x \in X\),  there exists a unique sequence \(\{a_n\}_{n=1}^\infty\) of scalars such that
	\[
	x = \lim_{n} \mathop{\sum}\limits_{k=1}^{n} a_k x_k.
	\]
	A \emph{basic sequence} in  \(X\) is a sequence \(\{x_n\}_{n=1}^\infty\) that is a basis for its norm closed linear span \cite[page 32]{Dis84}. 
	Let \(\{x_n\}_{n=1}^\infty \subseteq X \) be a basis, \(\{r_n\}_{n=1}^\infty\) and \(\{s_n\}_{n=1}^\infty\) be intertwining sequences of positive integers (i.e., \(r_1 < s_1 < r_2 < s_2 < \cdots\)), and \(y_n = \mathop{\sum}\limits_{i=r_n}^{s_n} a_i x_i\) be nontrivial linear combinations of the \(x_i\); we call the sequence \(\{y_n\}_{n=1}^\infty\) a \emph{block basic sequence taken with respect to} \(\{x_n\}_{n=1}^\infty\), or simply a \emph{block basic sequence} \cite[page 46]{Dis84}. 
	
	%\cite[Theorem 1 of chapter V]{Dis84} shows that \(\{y_n\}_{n=1}^\infty\) is basic \cite[page 46]{Dis84}.   
	
	% We say that two sequences \(\{x_n\}_{n=1}^\infty \) and \(\{y_n\}_{n=1}^\infty \) in a  Banach space $(X, \norm{\cdot})$ are 	{\it congruent}	 if there is an invertible operator \(T : X \to X\) such that \(T(x_n) = y_n\) for all \(n \in \mathbb{N}\) \cite[Definition 1.3.8]{AK06}. A classical  result dating back to 1940 \cite{KMR40}. It says, roughly speaking, that if $\{x_n\}_{n=1}^\infty\subseteq  X$ is a basic sequence and $\{y_n\}_{n=1}^\infty$ is another sequence in $X$ so that $\|x_n - y_n\| \to 0$ fast enough, then $\{y_n\}_{n=1}^\infty$ and $\{x_n\}_{n=1}^\infty$ are congruent.

	Two   basic sequences \(\{x_n\}_{n=1}^\infty\) and \(\{y_n\}_{n=1}^\infty\) in the respective Banach spaces \((X, \norm{\cdot}_{X})\) and \((Y, \norm{\cdot}_{Y})\) are {\it isometrically equivalent} \cite[Corollary 1.3.3]{AK06},
	if and only if 
	for all finitely non-zero sequences of scalars \(\{a_{n}\}_{n=1}^{\infty}\),  we have	
	\[
	\left\| \mathop{\sum}\limits_{n=1}^{\infty} a_{n} y_{n} \right\|_Y 
	=
	\left\| \mathop{\sum}\limits_{n=1}^{\infty} a_{n} x_{n} \right\|_X . 
	\]	 
	%\end{cor}
	%\begin{theorem}\cite[Lemma  2.1]{J64}\label{isomorphiccopy}If a Banach space  \( (X, \norm{\cdot}) \) 	 contains an isomorphic copy of \( \ell^1 \) and if \( \varepsilon > 0 \), then there exists a sequence \( \{ x_n\}_{n=1}^{\infty} \) in \( X \) so that  	\[	(1 - \varepsilon) \mathop{\sum}\limits_{n=1}^\infty |a_n| \leq \left\| \mathop{\sum}\limits_{n=1}^\infty a_n x_n \right\| \leq \mathop{\sum}\limits_{n=1}^\infty |a_n|,	\]	for all \( \{a_n\}_{n=1}^{\infty} \in \ell^1 \).\end{theorem}
	%Let \(\{x_n\}_{n=1}^{\infty} \) be a basis for \(X\) and \(\{y_n\}_{n=1}^{\infty} \) be a basis for \(Y\). 
	%Moreover, \(\{x_n\}_{n=1}^{\infty} \sim \{y_n\}_{n=1}^{\infty}\)  if and only if there is an isomorphism between \(X\) and \(Y\) that carries each \(x_n\) to \(y_n\) \cite[Theorem 5, page 43]{Dis84}.

	Let $L$ be any  sequence space  or function space. 
	Let $\widetilde{X}$ be a  Banach space. 
	If \( X \) is a closed subspace of $\widetilde{X}$, then \( X \) is said to be a {\it copy of \( L \)} if and only if there exists an (Banach space) isomorphism \( T \) from \( L \) onto \( X \). In other words,  
	we say that   
	%the Banach space 
	$\widetilde{X}$ contains a {\it copy
		of}  $L$. 
	If  the linear operator  \( T : L \to X \) is  an (Banach space) isometric, then we say that 
	\( \widetilde{X} \) contains an {\it isometric copy of \( L \)}.  
	If the isomorphism \( T: L \to X \) satisfies \( T(z) \geq 0 \) whenever \( 0 \leq z \in L \), then \( T \) will be called a {\it positive isomorphism} and the range \( X \) will be termed a {\it positive copy of \( L \)}.
	If, in addition, 
	the positive isomorphism 
	satisfies \( z \geq 0 \) whenever \(  T(z) \geq  0 \), then \( T \) will be called an {\it order isomorphism} and the range \( X \) will be termed an {\it order isomorphism copy of \( L \)}, see e.g. \cite[page 381]{DPS}.

	A bounded sequence \( \{x_n\}_{n=1}^{\infty} \) in a Banach space \( (X, \norm{\cdot}) \) is {\it   equivalent to the usual \( \ell_1 \)-basis} if there is a \( \varepsilon > 0 \) so that for all \( n \) and choices of scalars \( a_1, \ldots, a_n \),
	\begin{align*}%\label{copyl1def}
		\varepsilon \mathop{\sum}\limits_{k=1}^n |a_k| \leq \left\| \mathop{\sum}\limits_{k=1}^n a_k x_k \right\|.
	\end{align*}
	In this case, 
	the norm closed subspace spanned by \( \{x_n\}_{n=1}^{\infty} \) 
	is a copy of $\ell_1$, see e.g.  \cite{R74, H76} and \cite[page 201]{Dis84}.  
	
	The study of asymptotically isometric copies of \(\ell_1\) was initiated by Hagler \cite[page  14]{HaglerThesis}.  
	%\begin{definition}\cite[Definition 1]{DR00}	A Banach space \( (X, \norm{\cdot}) \) is said to contain an asymptotically isometric copy of \(\ell^1\) if there exists a null sequence \(\{\varepsilon_n\}_{n=1}^{\infty} \) in \((0,1)\) and a sequence \(\{x_n\}_{n=1}^{\infty}\) in \(X\) such that \[\mathop{\sum}\limits_{n=1}^{\infty} (1 - \varepsilon_n) |a_n| \leq \Bigl\|\mathop{\sum}\limits_{n=1}^{\infty} a_n x_n\Bigr\| \leq \mathop{\sum}\limits_{n=1}^{\infty} |a_n|,	\]	for every \(\{a_n\}_{n=1}^{\infty} \in \ell^1\).	The closed linear span of $\{x_n\}_{n=1}^{\infty}$ is called an asymptotically isometric copy of $\ell^1$.\end{definition}
	The following lemma, drawn from \cite[Lemma 1]{DGH00}, \cite[Definition 1 and Proposition 1]{DR00}  and \cite[Corollary 4]{DR00two}, gives equivalent conditions for a Banach space to contain asymptotically isometric copies of 
	\(\ell_1\).

	\begin{lemma}%\cite[Lemma 1]{DGH00} \cite{DR00} \cite[Corollary 4]{DR00two}
		\label{equivalentconditionsail1}
		For  a  Banach space \((X, \norm{\cdot})\), the following  are equivalent:	
		\begin{enumerate}[\rm(i)]
			\item There exist a null sequence \(\{ \varepsilon_n\}_{n=1}^{\infty}\) of positive numbers in \((0,1)\)  and a sequence \(\{x_n\}_{n=1}^{\infty}\) in \(X\) such that
			\[
			\mathop{\sum}\limits_{n=1}^{m}(1-\varepsilon_n)|a_n|\leq\left\|\mathop{\sum}\limits_{n=1}^{m}a_nx_n\right\|\leq\mathop{\sum}\limits_{n=1}^{m}|a_n|
			\]
			for all  \(\{a_n\}_{n=1}^{m}\in \ell_1\), where 
			$m\in \mathbb{N}\cup \{\infty\}$.
			
			\item There exist a null sequence \(\{\varepsilon_n\}_{n=1}^{\infty}\) of positive numbers in \((0,1)\)  and a sequence \(\{x_n\}_{n=1}^{\infty}\) in \(X\) such that
			\[
			\mathop{\sum}\limits_{n=1}^{m}|a_n|\leq\left\|\mathop{\sum}\limits_{n=1}^{m}a_nx_n\right\|\leq\mathop{\sum}\limits_{n=1}^{m}(1+\varepsilon_n)|a_n|
			\]
			for all  \(\{a_n\}_{n=1}^{m}\in \ell_1\), where 
			$m\in \mathbb{N}\cup \{\infty\}$.
			
			\item There exist a null sequence \(\{\varepsilon_n\}_{n=1}^{\infty}\) of positive numbers in \((0,1)\) and a sequence \(\{x_n\}_{n=1}^{\infty}\) in \(X\) such that
			\[
			\mathop{\sum}\limits_{n=k}^{m}|a_n|\leq\left\|\mathop{\sum}\limits_{n=k}^{m}a_nx_n\right\|\leq(1+\varepsilon_k)\mathop{\sum}\limits_{n=k}^{m}|a_n|
			\]
			for all  \(\{a_n\}_{n=k}^{m}\in \ell_1\),  where  \(k\in\mathbb{N}\) and 
			$m\in \mathbb{N}\cup \{\infty\}$.
			
		\end{enumerate}
	\end{lemma}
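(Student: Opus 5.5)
The plan is to prove the cycle (i)$\Rightarrow$(ii)$\Rightarrow$(iii)$\Rightarrow$(i), working throughout with a fixed null sequence and passing between them by elementary manipulations of the lower and upper estimates.

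\textbf{(i)$\Rightarrow$(ii).} Given $\{x_n\}$ and $\{\varepsilon_n\}$ as in (i), put $x_n':=(1-\varepsilon_n)^{-1}x_n$ and $\varepsilon_n':=(1-\varepsilon_n)^{-1}-1=\varepsilon_n/(1-\varepsilon_n)$. The lower estimate in (i) becomes $\sum|a_n|\le\|\sum a_nx_n'\|$. The upper estimate becomes $\|\sum a_nx_n'\|\le\sum(1+\varepsilon_n')|a_n|$. Since $\varepsilon_n'\to0$, after dropping finitely many terms we may assume $\varepsilon_n'\in(0,1)$, and dropping terms does not affect the two estimates.

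\textbf{(ii)$\Rightarrow$(iii).} First replace $\varepsilon_n$ by $\tilde\varepsilon_k:=\sup_{n\ge k}\varepsilon_n$. This sequence is decreasing and still null, and (ii) remains true with it. For sums starting at index $k$ we then get
\[
\Big\|\sum_{n=k}^m a_nx_n\Big\|\le\sum_{n=k}^m(1+\tilde\varepsilon_n)|a_n|\le(1+\tilde\varepsilon_k)\sum_{n=k}^m|a_n|,
\]
and the lower bound is inherited from (ii) by taking $a_1=\dots=a_{k-1}=0$.

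\textbf{(iii)$\Rightarrow$(i).} This is the only step that is not purely formal. From (iii), write $\delta_n:=\|x_n\|-1\in[0,\varepsilon_n]$ and normalize $y_n:=x_n/\|x_n\|$. The upper estimate $\|\sum a_ny_n\|\le\sum|a_n|$ then follows from the triangle inequality. For the lower estimate, apply (iii) with $k=1$ to the coefficients $b_n=a_n/\|x_n\|$:
\[
\Big\|\sum a_ny_n\Big\|\ge\sum\frac{|a_n|}{\|x_n\|}\ge\sum\frac{|a_n|}{1+\varepsilon_n}\ge\sum(1-\varepsilon_n)|a_n|.
\]
So (i) holds with the same null sequence. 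The only work in this step is choosing the normalization correctly.

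Finally, the case $m=\infty$ follows in each implication by letting $m\to\infty$. This is legitimate because both sides converge absolutely for $\{a_n\}\in\ell_1$ once the upper bound is in hand.
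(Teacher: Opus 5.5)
Your proof is correct, and it supplies something the paper does not. The paper gives no argument for this lemma; it simply quotes it from \cite[Lemma 1]{DGH00}, \cite[Proposition 1]{DR00} and \cite[Corollary 4]{DR00two}. Your cycle (i)$\Rightarrow$(ii)$\Rightarrow$(iii)$\Rightarrow$(i) is a self-contained elementary replacement, and each step checks out:
\begin{itemize}
\item the rescaling $x_n'=(1-\varepsilon_n)^{-1}x_n$, after discarding the finitely many indices with $\varepsilon_n\ge 1/2$;
\item the passage to the decreasing majorant $\tilde\varepsilon_k=\sup_{n\ge k}\varepsilon_n$, which stays in $(0,1)$ because the supremum of a positive null sequence over a tail is attained;
\item the normalization $y_n=x_n/\|x_n\|$, using $1\le\|x_n\|\le 1+\varepsilon_n$ (from (iii) with $k=m=n$) together with $(1+\varepsilon)^{-1}\ge 1-\varepsilon$.
\end{itemize}
Your limiting argument for $m=\infty$ is also fine, since in every case the vectors involved are uniformly bounded.

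The citation buys brevity. Beyond being self-contained, your argument makes explicit that the three conditions are equivalent only as existence statements about $X$. The witnessing sequence genuinely changes: (i) forces $\|x_n\|\le 1$ while (iii) forces $\|x_n\|\ge 1$. So a single sequence satisfies both only if it is isometrically equivalent to the unit vector basis of $\ell_1$. That is worth keeping in mind wherever the lemma is applied to a fixed sequence rather than used to produce one.
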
  
	%Note that each condition is equivalent to the variant obtained by replacing ‘There exist a’ by ‘For each’ and ‘and’ by ‘there exists.’ The norm closed linear span of $\{x_n\}_{n=1}^{\infty}$ is called an  {\it asymptotically isometric copy of $\ell_1$}. 
	For simplicity, we  call
	a sequence \(\{x_n\}_{n=1}^{\infty}\)  satisfying one of the conditions in the lemma 
	an  \emph{asymptotically isometric copy} of $\ell_1$. Note that every asymptotically isometric copy of $\ell_1$ is also a basic sequence \cite[Proposition 1.1.9]{AK06}.

	For completeness, we provide full proofs of the following results, which are probably well-known to experts. 
	\begin{lemma}\label{blockbasisasyl1}
		If a  basic sequence $\{
		x_k
		\}_{k=1}^\infty$  in a  %real/complex  
		Banach space \((X, \norm{\cdot})\) is an asymptotically isometric  copy of
		\( \ell_1 \), then a normalized  block basic sequence  
		$\{
		y_n
		\}_{n=1}^\infty$ taken with respect             to $\{x_k\}_{k=1}^\infty$  is also an asymptotically isometric  copy of
		\( \ell_1 \). 
	\end{lemma}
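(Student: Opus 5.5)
We will work with characterization (iii) of Lemma \ref{equivalentconditionsail1}. We may therefore assume that there is a null sequence $\{\varepsilon_k\}$ in $(0,1)$, which we take to be decreasing after passing to $\sup_{j\ge k}\varepsilon_j$, such that
\[
\sum_{k=j}^{m}|a_k|\le\Big\|\sum_{k=j}^{m}a_kx_k\Big\|\le(1+\varepsilon_j)\sum_{k=j}^{m}|a_k|
\]
for all $j$ and all scalars. Let $y_n=\sum_{k=r_n}^{s_n}a_kx_k$ with $\|y_n\|=1$ and $r_1<s_1<r_2<s_2<\cdots$. Put $t_n:=\sum_{k=r_n}^{s_n}|a_k|$. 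Applying the displayed inequality with $j=r_n$ gives
\[
t_n\le 1\le(1+\varepsilon_{r_n})t_n,\qquad\text{so}\qquad \frac{1}{1+\varepsilon_{r_n}}\le t_n\le 1 .
\]

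Next we estimate an arbitrary combination $\sum_{n=N}^{M}b_ny_n$. Because the blocks are consecutive and their coefficients concatenate into a single sequence starting at index $r_N$, condition (iii) applies directly and gives
\[
\sum_{n=N}^{M}|b_n|t_n\le\Big\|\sum_{n=N}^{M}b_ny_n\Big\|\le(1+\varepsilon_{r_N})\sum_{n=N}^{M}|b_n|t_n .
\]
The upper bound is at most $(1+\varepsilon_{r_N})\sum|b_n|$, since $t_n\le1$. The lower bound is at least $\sum_{n=N}^M (1+\varepsilon_{r_n})^{-1}|b_n|$. This is not yet of the form in (iii), because the lower constant is not $1$.

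To fix this, we pass to form (i) in Lemma \ref{equivalentconditionsail1}, which allows individual weights $(1-\delta_n)$ on the lower side and constant $1$ on the upper side. Taking $N=1$, we get
\[
\sum_{n}(1-\delta_n)|b_n|\le\Big\|\sum_n b_ny_n\Big\|\le\sum_n|b_n|,
\qquad \delta_n:=1-\frac{1}{1+\varepsilon_{r_n}}=\frac{\varepsilon_{r_n}}{1+\varepsilon_{r_n}} .
\]
The upper bound here uses the triangle inequality together with $\|y_n\|=1$. The numbers $\delta_n$ lie in $(0,1)$ and tend to $0$, because $r_n\to\infty$; they are decreasing as well, since $\{\varepsilon_k\}$ is decreasing. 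For $m=\infty$ the estimates follow by passing to the limit, since the partial sums converge.

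This verifies condition (i) for $\{y_n\}$, so $\{y_n\}$ is an asymptotically isometric copy of $\ell_1$. The only point needing care is invoking the equivalence in Lemma \ref{equivalentconditionsail1} so that (iii) holds for the given sequence $\{x_k\}$ itself, rather than only for some other sequence. If the equivalence is only at the level of spaces, we instead start directly from form (i) for $\{x_k\}$. Then
\[
\Big\|\sum_n b_ny_n\Big\|\ge\sum_n|b_n|\sum_{k=r_n}^{s_n}(1-\varepsilon_k)|a_k|\ge\sum_n(1-\varepsilon_{r_n})|b_n|\,t_n .
\]
We also have $1=\|y_n\|\le t_n$, while $1\ge(1-\varepsilon_{r_n})t_n$ gives $t_n\le(1-\varepsilon_{r_n})^{-1}$. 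Combining, the lower weight is at least $1-\varepsilon_{r_n}$, and the same conclusion holds with $\delta_n:=\varepsilon_{r_n}$.
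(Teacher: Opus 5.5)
Your proof is correct and is essentially the paper's argument: from (iii) you get $\frac{1}{1+\varepsilon_{r_n}}\le t_n\le 1$, and the lower bound of (iii) applied to the concatenated coefficients, together with the triangle inequality and $\|y_n\|=1$, gives form (i) with $\delta_n=\frac{\varepsilon_{r_n}}{1+\varepsilon_{r_n}}$, exactly as in the paper. Your fallback starting from form (i) is also correct (it uses $t_n\ge 1$ to get weights $1-\varepsilon_{r_n}$), and it handles a point the paper passes over silently: the given sequence $\{x_k\}$ need only satisfy (i), not necessarily (iii).
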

	\begin{proof}
		Suppose that 
		\(\{x_{k}\}_{k=1}^\infty\) satisfies Lemma ~\ref{equivalentconditionsail1} (iii) and  
		$y_n:= \mathop{\sum}\limits_{k=p_{n}}^{q_n} b_k x_k$. %where $b_k$ is a  scalar,  \(\{p_n\}_{n=1}^\infty\) and \(\{q_n\}_{n=1}^\infty\) are  intertwining sequences of finite positive integers.  
		There exists a null sequence \(\{\varepsilon_{k}\}_{k=1}^\infty \) of positive numbers  such that 
		\begin{align}\label{znnorm1}
			\mathop{\sum}\limits_{k=p_{n}}^{q_n}|b_{k}|\leq 
			\norm{y_n}
			=
			\left\|
			\mathop{\sum}\limits_{k=p_{n}}^{q_n} b_k x_k
			\right\|=1
			\leq
			(1+\varepsilon_{p_{n}})
			\mathop{\sum}\limits_{k=p_{n}}^{q_n}
			|b_{k}|, \quad \forall n\geq 1. 
		\end{align} 
		%which implies that    \begin{align}\label{sumckineq}	\frac{1}{	1+\varepsilon_{p_{n}}}	\leq\mathop{\sum}\limits_{k=p_{n}}^{q_n}|b_{k}| 	\leq  1.\end{align} 
		For any  sequence  of scalars \(\{a_{n}\}_{n=1}^{m}\in \ell_1\),    we have 
		\begin{align*}
			&\nonumber 
			\mathop{\sum}\limits_{n=1}^{m} \Big(1- \frac{\varepsilon_{p_n}} {	1+\varepsilon_{p_n}}
			\Big)  |a_n| 
			=
			\mathop{\sum}\limits_{n=1}^{m} 	\frac{|a_n| } {	1+\varepsilon_{p_n}} 	
			\stackrel{\eqref{znnorm1}}{\leq} 	
			\mathop{\sum}\limits_{n=1}^{m}|a_n|   	\mathop{\sum}\limits_{k=p_n}^{q_n}|b_{k}|  = \mathop{\sum}\limits_{n=1}^{m}  	\mathop{\sum}\limits_{k=p_n}^{q_n} |a_n|  |b_{k}|
			\\ & \nonumber =
			\mathop{\sum}\limits_{n=1}^{m}  	\mathop{\sum}\limits_{k=p_n}^{q_n} |a_n  b_{k}|
			\stackrel{\text{Lem.\ref{equivalentconditionsail1} (iii)}}{\leq}
			\norm{
				\mathop{\sum}\limits_{n=1}^{m}a_{n}
				\left(
				\mathop{\sum}\limits_{k=p_{n}}^{q_n} b_k x_k
				\right)
			}= 
			\norm{ \mathop{\sum}\limits_{n=1}^{m}a_{n}y_{n}
			}
			\\ & \nonumber
			\,	\leq 
			\mathop{\sum}\limits_{n=1}^{m}|a_{n}| 
			\norm{
				y_n
			}
			\stackrel{\eqref{znnorm1}}{=}	\mathop{\sum}\limits_{n=1}^{m}|a_{n}|.   
		\end{align*}	
		Since $\left\{ \frac{\varepsilon_{p_n}} {	1+\varepsilon_{p_n}}
		\right\}_{n=1}^\infty
		$ is a null sequence of positive numbers, 
		it follows    
		that 
		$\{
		y_n
		\}_{n=1}^\infty$  is an asymptotically isometric  copy of
		\( \ell_1 \) which satisfies Lemma ~ \ref{equivalentconditionsail1} ~(i).  	
	\end{proof}

	Asymptotically isometric $\ell_1$-copies are stable with respect to perturbations by norm null
sequences   
        \cite[Lemma 3.1]{P02}. % A complete proof is provided below. 

	\begin{lemma}\label{perturbationnormlized} 
		Suppose that   $\{
		x_n
		\}_{n=1}^\infty$ and $\{y_n\}_{n=1}^\infty$  are  basic sequences in a %real/complex   
		Banach space $(X, \norm{\cdot})$. 
		If $\{
		x_n
		\}_{n=1}^\infty$  is an  asymptotically isometric  copy of
		\( \ell_1 \) and  $\lim\limits_{n\to\infty} \norm{ x_n-y_n }=0$,  then there exists a  subsequence of $\{y_n\}_{n=1}^\infty$ (still denoted by $\{y_n\}_{n=1}^\infty$) such that 
		the  basic sequence 
		$\{z_n\}_{n=1}^\infty:=\Big\{
		\frac{\norm{x_n}}{\norm{y_n}} y_n
		\Big\}_{n=1}^\infty $  is  an   asymptotically isometric  copy of
		\( \ell_1 \).
		In particular, if 	 $\{
		x_n
		\}_{n=1}^\infty$  is  normalized, then 
		the normalized basic sequence 
		$\{z_n\}_{n=1}^\infty:=\Big\{
		\frac{y_n}{\norm{y_n}} 
		\Big\}_{n=1}^\infty $  is  an   asymptotically isometric  copy of
		\( \ell_1 \).
	\end{lemma}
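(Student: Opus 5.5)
We must prove Lemma \ref{perturbationnormlized}: a perturbation of an asymptotically isometric copy of $\ell_1$, rescaled to have the same norms, is again an asymptotically isometric copy, after passing to a subsequence.

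The plan is to work with condition (i) of Lemma \ref{equivalentconditionsail1} for $\{x_n\}$. Fix the null sequence $\{\varepsilon_n\}\subseteq(0,1)$ with
$$\sum_n(1-\varepsilon_n)|a_n|\le\Big\|\sum_n a_nx_n\Big\|\le\sum_n|a_n|.$$
The upper bound with $a$ a unit vector gives $\|x_n\|\le1$. The lower bound gives $\|x_n\|\ge 1-\varepsilon_n$, so $\|x_n\|\to1$. Since $\|x_n-y_n\|\to0$, also $\|y_n\|\to1$, and we may discard finitely many terms so that $y_n\neq0$.

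Put $z_n:=\frac{\|x_n\|}{\|y_n\|}y_n$, so $\|z_n\|=\|x_n\|\le1$. Then
$$\|z_n-x_n\|\le\|z_n-y_n\|+\|y_n-x_n\| =\big|\|x_n\|-\|y_n\|\big|+\|y_n-x_n\|\le 2\|x_n-y_n\|=:\delta_n\to0.$$
Using $\delta_n\to 0$, pass to a subsequence (relabelling both $x_n$ and $y_n$ along the same indices) such that $\delta_n\le 2^{-n}$ and $\delta_n\to0$ monotonically. A subsequence of an asymptotically isometric copy of $\ell_1$ is again one, with the subsequence of the $\varepsilon$'s.

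Now check the two inequalities. The upper bound is immediate:
$$\Big\|\sum a_nz_n\Big\|\le\sum|a_n|\,\|z_n\|\le\sum|a_n|.$$
For the lower bound,
$$\Big\|\sum a_nz_n\Big\|\ge\Big\|\sum a_nx_n\Big\|-\sum|a_n|\delta_n\ge\sum(1-\varepsilon_n-\delta_n)|a_n|.$$
Set $\varepsilon_n':=\varepsilon_n+\delta_n$. This is a null sequence. To keep it in $(0,1)$, discard finitely many more initial terms so that $\varepsilon_n+\delta_n<1$; the remaining $\varepsilon'_n$ are positive. If a decreasing sequence is required, as in the paper's definition, replace $\varepsilon'_n$ by $\sup_{k\ge n}\varepsilon'_k$, which is still null, lies in $(0,1)$ after dropping initial terms, and only weakens the lower bound. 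The normalized special case follows since $\|x_n\|=1$ gives $z_n=y_n/\|y_n\|$. Each $z_n$ is a nonzero scalar multiple of $y_n$, so $\{z_n\}$ is basic; this is also immediate from the $\ell_1$ estimate.

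The only subtle step is bookkeeping: the subsequence must be taken simultaneously for $x_n$ and $y_n$, and $\varepsilon'_n$ must be adjusted to lie in $(0,1)$ and be decreasing. The case $m=\infty$ follows from the finite case by letting $m\to\infty$, since both sides converge absolutely for $\{a_n\}\in\ell_1$.
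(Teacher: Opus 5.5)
Your proof is correct. The paper gives no argument of its own for this lemma; it simply attributes it to \cite[Lemma 3.1]{P02}, i.e.\ stability of asymptotically isometric $\ell_1$-copies under norm-null perturbations.

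Your proof supplies the direct triangle-inequality argument behind that citation. The rescaling gives $\|z_n\|=\|x_n\|\le 1$, so the upper estimate is free. It also gives $\|z_n-x_n\|\le 2\|x_n-y_n\|$, so the lower estimate survives with $\varepsilon_n$ replaced by $\varepsilon_n+\delta_n$, made monotone via $\sup_{k\ge n}$.

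The citation buys brevity. Your route buys transparency, and it shows several things the citation hides:
\begin{itemize}
\item The subsequence is needed only to discard finitely many initial terms, so that $y_n\neq 0$ and $\varepsilon_n+\delta_n<1$.
\item The hypothesis that $\{x_n\}$ and $\{y_n\}$ are basic is never used; basicness of $\{z_n\}$ follows from the $\ell_1$ estimate.
\item Relabelling $\{x_n\}$ along the same indices is harmless.
\end{itemize}

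One small cleanup: your extra passage to a subsequence with $\delta_n\le 2^{-n}$ and $\delta_n$ monotone is never used and can be deleted. The sequence $\varepsilon_n+\delta_n$ is already null, and the $\sup$-regularization already handles monotonicity.
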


	\begin{lemma}\label{blocksum=1asyl1}
		Let $(X, \norm{\cdot})$ be a %real/complex  
		Banach space.   
		Suppose that  $\{
		x_k
		\}_{k=1}^\infty \subseteq X $  is an asymptotically isometric  copy of
		\( \ell_1 \) and  
		$\{u_n\}_{n=1}^{\infty}:=\left\{
		\mathop{\sum}\limits_{k= r_n}^{s_n} a_k x_k
		\right\}_{n=1}^{\infty}$ is  a block sequence. 
		If  $\{a_k\}_{k= 1}^{\infty} $ satisfies  $\lim\limits_{n\to 
			\infty}
		\mathop{\sum}\limits_{k=r_n}^{s_n} |a_k|=1$,  
		then  there exists a sequence 	$ \{ 
		\lambda_n \}_{n=1}^\infty $   of  
		positive scalars  with  $\lim\limits_{n\to \infty} 	\lambda_n=1$  such that 
		$\{
		\lambda_n u_n\}_{n=1}^\infty $   is an asymptotically
		isometric copy of $\ell_1$. 
		In particular, 	if  $\{a_k\}_{k= 1}^{\infty} $ satisfies  $\mathop{\sum}\limits_{k=r_n}^{s_n} |a_k|=1$ for all $n\geq 1$,  
		then  
		$\{
		u_n
		\}_{n=1}^\infty $  is  an asymptotically isometric  copy of
		\( \ell_1 \).
		
	\end{lemma}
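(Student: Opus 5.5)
The plan is to use Lemma~\ref{equivalentconditionsail1}(iii) for $\{x_k\}$ and prove the estimates for $\{\lambda_n u_n\}$ directly. This is the same strategy as in Lemma~\ref{blockbasisasyl1}.

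First, set $t_n:=\sum_{k=r_n}^{s_n}|a_k|$. Discarding finitely many terms if necessary, we may assume $t_n>0$, and we define $\lambda_n:=1/t_n$, so that $\lambda_n\to1$. Then $\lambda_n u_n=\sum_{k=r_n}^{s_n}b_kx_k$ with $b_k:=\lambda_n a_k$ for $r_n\le k\le s_n$, and $\sum_{k=r_n}^{s_n}|b_k|=1$. Thus it suffices to prove the particular statement: if $\sum_{k=r_n}^{s_n}|a_k|=1$ for all $n$, then $\{u_n\}$ is an asymptotically isometric copy of $\ell_1$. Dropping the initial finitely many terms does not affect the conclusion, since a tail of the sequence is all that is needed; alternatively, put $\lambda_n:=1$ for those terms and absorb them by choosing the early $\varepsilon_n$ close to $1$.

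For the particular statement, Lemma~\ref{equivalentconditionsail1}(iii) gives a null sequence $\{\varepsilon_k\}$ with
\[
\sum_{k\ge j}|c_k|\le\Big\|\sum_{k\ge j}c_kx_k\Big\|\le(1+\varepsilon_j)\sum_{k\ge j}|c_k|.
\]
Fix $N$ and a finitely supported (or $\ell_1$) scalar sequence $\{\beta_n\}_{n\ge N}$. Since the blocks are disjoint and ordered,
\[
\sum_{n\ge N}\beta_nu_n=\sum_{n\ge N}\sum_{k=r_n}^{s_n}\beta_na_kx_k .
\]
Applying (iii) starting at index $r_N$ gives
\[
\sum_{n\ge N}|\beta_n|\sum_{k=r_n}^{s_n}|a_k|=\sum_{n\ge N}|\beta_n|\le\Big\|\sum_{n\ge N}\beta_nu_n\Big\|\le(1+\varepsilon_{r_N})\sum_{n\ge N}|\beta_n|.
\]
Since $r_N\to\infty$, the sequence $\delta_N:=\varepsilon_{r_N}$ is null. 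Replacing it by $\min\{\delta_N,1/2\}$ or $\sup_{m\ge N}\delta_m$ if we want it decreasing and in $(0,1)$ does not break the inequalities. Hence $\{u_n\}$ satisfies Lemma~\ref{equivalentconditionsail1}(iii) and is an asymptotically isometric copy of $\ell_1$. For $m=\infty$, absolute convergence is guaranteed by the upper bound.

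In the general case, the same computation with $b_k$ in place of $a_k$ applies to $\lambda_nu_n$. The only point requiring care is the bookkeeping of indices, which is routine. The main obstacle, if any, is ensuring that the constants fit the form required by (iii) with a single null sequence indexed by $N$. Using $\varepsilon_{r_N}$ handles this, because the blocks start at increasing indices.
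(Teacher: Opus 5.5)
Your proof is correct and is essentially the paper's argument: set $1/\lambda_n:=\sum_{k=r_n}^{s_n}|a_k|$ and apply Lemma~\ref{equivalentconditionsail1}(iii) to $\{x_k\}$. The only cosmetic difference is that you verify condition (iii) for $\{\lambda_n u_n\}$ by starting the estimate at index $r_N$, whereas the paper bounds each block separately via the triangle inequality and verifies condition (ii) with the null sequence $\{\varepsilon_{r_n}\}$.

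Two of your side remarks are unnecessary, and one is wrong as stated. Blocks are nontrivial combinations, so $t_n>0$ automatically and no terms need discarding. Replacing $\delta_N$ by $\min\{\delta_N,1/2\}$ could break the upper estimate. Replacing it by $\sup_{m\ge N}\delta_m$ is harmless, and in any case $\delta_N=\varepsilon_{r_N}\in(0,1)$ already.
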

	\begin{proof}
		There exists a null sequence \(\{\varepsilon_k\}_{k=1}^\infty\) of positive numbers such that  for arbitrary sequence \(\{b_n\}_{n=1}^{m}\in \ell_1\) of  scalars, 
		\begin{align}\label{4-2} 
			&\notag  \quad  \mathop{\sum}\limits_{n=1}^{m}  |b_n|
			\mathop{\sum}\limits_{k= r_n}^{s_n} |  a_k|
			=
			\mathop{\sum}\limits_{n=1}^{m} 
			\mathop{\sum}\limits_{k= r_n}^{s_n} |  b_n a_k|
			\stackrel{\text{Lem.\ref{equivalentconditionsail1}(iii)}}{\leq}  \norm{\mathop{\sum}\limits_{n=1}^{m}  b_n u_n
			}
			=	\norm{\sum_{n=1}^{m}  	 	\sum_{k= r_n}^{s_n} b_n  a_k x_k		} 		\\	&  		\leq 
			\mathop{\sum}\limits_{n=1}^{m}  |b_n|   \norm{ 	u_n	}
			=
			\mathop{\sum}\limits_{n=1}^{m}  |b_n|   \norm{ 	\mathop{\sum}\limits_{k= r_n}^{s_n} a_k x_k}
			\stackrel{\text{Lem.\ref{equivalentconditionsail1}(iii)}}{\leq} 	\mathop{\sum}\limits_{n=1}^{m}  	 (1 +\varepsilon_{r_n})   |b_n| 	\mathop{\sum}\limits_{k= r_n}^{s_n} |  a_k|. 
		\end{align}
		Define $ \frac{1	}{\lambda_n }  :=
		\mathop{\sum}\limits_{k=r_n}^{s_n} |a_k|$ for $n\geq 1$. Then, we have 
		\begin{align*}
			\mathop{\sum}\limits_{n=1}^{m}  |b_n| & \,\, =\mathop{\sum}\limits_{n=1}^{m} 	 |b_n  \lambda_n | 
			\mathop{\sum}\limits_{k= r_n}^{s_n} |  a_k|
			\stackrel{\eqref{4-2}}{\leq}	
			\norm{\mathop{\sum}\limits_{n=1}^{m}  b_n  \lambda_n  u_n
			}	\\ & 
			\stackrel{\eqref{4-2}}{\leq}	
			\mathop{\sum}\limits_{n=1}^{m}  	 (1 +\varepsilon_{r_n})   |b_n  \lambda_n  | 	\mathop{\sum}\limits_{k= r_n}^{s_n} |  a_k|
			=		\mathop{\sum}\limits_{n=1}^{m}  	 (1 +\varepsilon_{r_n})   |b_n|.  
		\end{align*} 	
		It follows that 
		$\{		\lambda_n u_n \}_{n=1}^\infty$ is an asymptotically isometric
		copy of $\ell_1$ which satisfies Lemma ~ \ref{equivalentconditionsail1} (ii).  
	\end{proof}

    % {\color{red}(Comment from YN to YN: This tells us that we can omit arbitrary many of $x_k$'s in the definition of $u_n$'s (by setting $r_1$ to be big enough), but still produce an asymptotically isometric copy of $\ell_1$. This sounds counterintuitive initially, but it is correct.)}

	%\section{An infinite version of Abramovich--Zaidenberg theorem}

	\bibliographystyle{amsalpha}

\begin{thebibliography}{99}
		
		%\bibitem{A91} Y.~A. Abramovich, New classes of spaces on which compact operators satisfy the Daugavet equation, J. Operator Theory {\bf 25} (1991), no.~2, 331--345.
		
		\bibitem{AAB1991}
		Y. Abramovich, C. Aliprantis,  O. Burkinshaw, 
		{\it 	The Daugavet equation in uniformly convex Banach spaces}, 
		J. Funct. Anal. 97(1) (1991), 215--230.
		%\bibitem{AK08}M.D. Acosta, A. Kami\'{n}ska, {\it Weak neighborhoods and the Daugavet property of the interpolation spaces $L^1 + L^{\infty} $ and $L^1\cap L^{\infty}$}, Indiana Univ. Math. J.  57 (2008), 77--96.
		
		
		\bibitem{AKM12}
		M. Acosta, A. Kami\'{n}ska, M. Masty\l o, {\it The Daugavet property and weak neighborhoods in Banach lattices}, J. Convex Anal. %Journal of Convex Analysis, 
		19(3) (2012), 875--912.
		
		
		\bibitem{AKM15}
		M. Acosta, A. Kami\'{n}ska, M. Masty\l o, {\it The Daugavet property in rearrangement invariant spaces}, 
		Trans. Amer. Math. Soc.  367 (2015), 4061--4078.
		
		\bibitem{AK06}
		F. Albiac, N. Kalton, {\it Topics in Banach space theory}, Springer,  New York,  2006. 
		
		\bibitem{AB06} C. Aliprantis, O. Burkinshaw, {\it  	Positive  operators},  Springer, Dordrecht, 2006. 
        	%\bibitem{AL75} J. Arazy, J. Lindenstrauss, {\it Some linear topological properties of the spaces $C_p$ of operators on Hilbert space}, Compos. Math. 30 (1975), 81--111.
		%\bibitem{AHS21}S. Astashkin, J. Huang, F. Sukochev,{\it 	Lack of isomorphic embeddings of symmetric function spaces into operator ideals},J.Funct. Anal.280(5) 	(2021),	108895. 
        
            
            
         %\bibitem{BM05}     J. Becerra-Guerrero, M. Mart´ın,    {\it The Daugavet property of $C^*$-algebras, $JB^∗$-triples, and of their isometric preduals}, J. Funct. Anal. 224 (2005), 316--337.
%\bibitem{B83}R.D. Bourgin, {\it Geometric aspects of convex sets with the Radon-Nikodym Property}, Springer-Verlag, Berlin, 1983.
		\bibitem{BS88}
		C. Bennett, R. Sharpley, {\it Interpolation of operators}, Academic Press, Inc., Boston, MA, 1988.
		\bibitem{B67}
		G. Birkhoff, {\it Lattice theory}, 
		American Mathematical Society, Providence, RI, 1967. 

\bibitem{CPS00} A. 
 Carey, J. Phillips,  F. 
 Sukochev, {\it On unbounded $p$-summable Fredholm modules}, Adv. Math.  151(2) (2000), 140--163. 
 
		\bibitem{CS94}
		V. Chilin, F. Sukochev, {\it Weak convergence in non-commutative  symmetric spaces},  J.  Operator Theory 31(1)  (1994), 35--65. 
		\bibitem{CDS97}
		V. Chilin, P. Dodds,  F. Sukochev, 
		{\it The Kadec--Klee property in symmetric
			spaces of measurable operators}, Israel J. Math. 97 (1997), 203--219. 

		%	\bibitem{C90} J. B. Conway, {\it 	A course in functional analysis}, 	Springer-Verlag, New York, 1990. 	
		\bibitem{C12}
		M. Czerw\'{i}nska, 
		{\it Complex uniform rotundity in symmetric spaces of measurable operators}, 
		J. Math. Anal. Appl. 395(2) (2012),  501--508.
		
		
		
		
		\bibitem{CK17}
		M. Czerw\'{i}nska,  A.  Kam\'{i}nska,  {\it 
			Geometric properties of noncommutative symmetric spaces of measurable operators and unitary matrix ideals}, Comment. Math. 57(1) (2017),
		45--122.
		
		
		%\bibitem{Dan25}S. Dantas, M. Mart\'{i}n, Y. Perreau, 	{\it The Daugavet property is equivalent to the polynomial Daugavet property},	arXiv: 2507.06143.  
		
		\bibitem{D63}
		I. Daugavet,  
		{\it A property of completely continuous operators in the space C},   
		Uspehi Mat. Nauk   18(5)  (1963), 157--158 (in Russian).
		\bibitem{DisU77}
		J. Diestel,   J. Uhl,  Jr., 
		{\it   Vector measures}, 
		American Mathematical Society, Providence, RI, 1977. 
		
		\bibitem{Dis84}
		J. Diestel, {\it  
			Sequences and series in Banach spaces}, 
		Springer-Verlag, New York, 1984. 
		\bibitem{DGH00}
		S. Dilworth, M. Girardi, J. Hagler, {\it  Dual Banach spaces which contain an isometric
			copy of $L_1$}, Bull. Pol. Acad. Sci. Math.  48 (2000), 1--12.
		
		%\bibitem{DDP891} P.G. Dodds, T. Dodds,  B. de Pagter, {\it Noncommutative Banach function spaces}, Math. Z. 201(4) (1989), 583--597.
		%\bibitem{DDP892} P.G. Dodds, T. Dodds,  B. de Pagter, {\it A general Markus inequality},Miniconference on Operators in Analysis (Sydney, 1989), Proc. Centre Math. Anal. Austral.Nat. Univ. 24, Austral. Nat. Univ. Canberra, 1990, 47--57.
		
		%\bibitem{DDP93} P.G. Dodds, T. Dodds, B. de Pagter, {\it Noncommutative K\"{o}the duality}, Trans. Amer. Math. Soc.  339 	(1993), 717--750.
		
        
        
        %\bibitem{DDP932}	P. Dodds, T. Dodds, {\it  Some aspects of the theory of symmetric operator spaces}, Quaest. Math. 18(1-3) (1995) 47--89. First International Conference in Abstract Algebra, Kruger Park, 1993.
		\bibitem{DDS97}
		P. Dodds, T. Dodds, F. Sukochev,
		{\it Lifting of Kadec--Klee properties to symmetric spaces of measurable operators}, Proc. Amer. Math. Soc. 125 (1997), 
		1457--1467.
		\bibitem{DDST05}
		P. Dodds,  T. Dodds, F. Sukochev, S. Tikhonov, {\it A non-commutative Yosida--Hewitt theorem and convex
			sets of measurable operators closed locally in measure}, Positivity   9 (2005), 457--484. 
		
		\bibitem{DD12} P. Dodds,  B. de Pagter, {\it The non-commutative Yosida--Hewitt decomposition revisited},
		Trans. Amer. Math. Soc.  364 (2012),  6425--6457. 
		
		\bibitem{DD14}
		P. Dodds, B. de Pagter, {\it  Normed K\"{o}the spaces: a non-commutative viewpoint}, Indag. Math.   25 (2014), 206--249.
		
		
		
		
		\bibitem{DPS}
		P. Dodds, B. de Pagter,
        F.  Sukochev, 
		{\it Noncommutative integration and operator theory},  Birkh\"{a}user/Springer, Cham, 2023.
		
		\bibitem{DPS16}
		P. Dodds, B. de Pagter,  F.  Sukochev, {\it Sets of uniformly absolutely continuous norm
			in symmetric spaces of measurable operators,} Trans. Amer. Math. Soc.  368(6) (2016), 
		4315--4355. 
		\bibitem{DR00}
		P. Dowling, N. Randrianantoanina, 
		{\it Asymptotically isometric copies of $\ell_{\infty}$ in Banach spaces and a theorem of Bessaga and Pe\l czy\'{n}ski},  Proc. Amer. Math. Soc.   128(11)  (2000), 3391--3397. 
		\bibitem{DR00two}
		P. Dowling, C. Lennard,  B. Turett, {\it  
			Some fixed point results in $l^1$ and $c_0$}, 
		Nonlinear Anal. 39(7) (2000),  
		929--936.
		
		
		\bibitem{CMSBook11}
		M. Fabian, P. Habala, P.  H\'{a}jek, V. Montesinos,  %Santaluc\'{i}a, 
		V. Zizler, 
		{\it Banach space theory.  
			The basis for linear and nonlinear analysis},  Springer, New York, 2011.  

\bibitem{FS65} C. Foias,  I. Singer, 
{\it Points of diffusion of linear operators and almost diffuse operators in spaces of continuous functions}, Math. Z.  87 (1965), 434--450.

  %\bibitem{G97}   T.A.   Gillespie, {\it Boundedness criteria for Boolean algebras of projections}, J. Funct. Anal. 148(1) (1997),  70--85.

		
   %\bibitem{GL74}     Y.    Gordon,  D.R. Lewis,   {\it Absolutely summing operators and local unconditional structures}, Acta Math. 133 (1974), 27--48.

\bibitem{HaglerThesis}
		J. Hagler, {\it  Embeddings of $L_1$ into conjugate Banach spaces}, Ph.D. Thesis, University of California, Berkeley, Calif.,  1972.
		
		\bibitem{H76}
		R. Haydon,
		{\it  Some more characterizations of Banach spaces containing $\ell_1$}, 
		Math. Proc. Cambridge Philos. Soc.  80(2) (1976), 269--276.
		
		
		\bibitem{HPS22}
		J. Huang, M. Pliev, F. Sukochev,
		{\it 
			(Non-)Dunford--Pettis operators on noncommutative symmetric spaces},
		J. Funct. Anal.  282(11) (2022), 29 pp.
		\bibitem{HNPS24Tran}
		J. Huang,  Y. Nessipbayev,   M. Pliev, F.  Sukochev, 
		{\it The Gelfand--Phillips and Dunford--Pettis type properties in bimodules of measurable operators}, 
		Trans. Amer. Math. Soc.  377(9) (2024), 6097--6149.
		
		
		
		%	\bibitem{HKM00}	H. Hudzik,  A. Kami\'{n}ska, M. Masty\l o, {\it		Monotonicity and rotundity properties in Banach lattices}, 	Rocky Mountain J. Math. 30(3) (2000), 933--950.
		
		
		
		
		
		
		
		%\bibitem{HSSZ25}J. Huang, O.  Sadovskaya, F. Sukochev, D. Zanin,{\it Lack of isomorphic embeddings of sequence space $\ell_{p,q}$ into $L_{p,q}(\cM,\tau)$ over a noncommutative probability space},Adv. Math. 482, Part A,(2025), 110571.
		
		\bibitem{KP62} 
		M. Kadec,  A. Pe\l czy\'{n}ski, 
		{\it  Bases, lacunary sequences and complemented subspaces
			in the spaces $L_p$}, Studia Math. 21 (1962), 161--176.
		
		\bibitem{KSSW00}
		V.  Kadets,   R. Shvidkoy,  G. Sirotkin, D. Werner, 
		{\it  Banach spaces with the Daugavet property}, 
		Trans. Amer. Math. Soc.  352(2) (2000), 855--873.
		
		%\bibitem{KMM07}V. Kadets,  M. Mart\'{i}n, J. Mer\'{i}, {\it  Norm equalities for operators on Banach spaces},  Indiana Univ. Math. J.  56(5) (2007),  2385–2411. 
		
		\bibitem{KMMW13}  V. Kadets, M. Mart\'{i}n, J. Mer\'{i},  D. Werner, 
		{\it  Lushness, numerical index
			1 and the Daugavet property in rearrangement invariant spaces}, Canad. J. Math.  65(2) (2013),
		331--348.
		
		\bibitem{KMZW25}  V. Kadets, M. Mart\'{i}n, A. Rueda Zoca,  D. Werner, 
		{\it Banach spaces
			with the Daugavet
			property}, %preprint,
		unpublished book, preliminary version, uploaded at DIGIBUG (official repository of the University of Granada). 
		
		
		
		\bibitem{K79}
		N.  Kalton, {\it  Embedding $L_1$ in a Banach lattice}, %Isr. J. Math.
		Israel J. Math.  32 (1979),  209--220.
		%\bibitem{Kalton80}N. Kalton, {\it Linear operators on $L_p$ for $0 < p < 1$}, Trans. Amer. Math. Soc.  259 (1980), 319--355.
		
		\bibitem{Kalton_S}
		N. Kalton, F.  Sukochev, 
		{\it Symmetric norms and spaces of operators}, 
		J. Reine Angew. Math. 621 (2008), 81--121.
		
		
		%\bibitem{KL04}	A. Kami\'{n}ska, H.J. Lee, {\it 	M-ideal properties in Marcinkiewicz spaces}, Comment. Math.  2004, Tomus specialis in Honorem Juliani Musielak, 123--144.	
		\bibitem{KPS}
		S. Kre\u{\i}n, Y.  Petunin, E. Sem\"{e}nov,
		{\it Interpolation of linear operators},
		American Mathematical Society, Providence, RI, 1982.
		%	\bibitem{L25}	H.J.  Lee,  {\it  Monotonicity and complex convexity in Banach lattices,} 	J. Math. Anal. Appl.	307(1) (2005),  86--101.
		%\bibitem{LT24}H.J.  Lee,   H. Tag, {\it	Remark on the Daugavet property for complex Banach spaces}, 	Demonstr. Math. 57(1) (2024),  21 pp.

	%\bibitem{Lewis75}  D.R.   Lewis, {\it An isomorphic characterization of the Schmidt class}, Compositio Math. 30(3)  (1975),  293--297.
		\bibitem{LT1} 
		J. Lindenstrauss,  L. Tzafriri, {\it 
			Classical Banach spaces. I.
			Sequence spaces},  
		Springer-Verlag, Berlin-New York, 1977.
		\bibitem{LT2} 
		J. Lindenstrauss, L. Tzafriri, 
		{\it Classical Banach spaces. II. Function spaces,} Springer-Verlag, Berlin-New York, 1979.
		%\bibitem{LRZ2021}G. L\'{o}opez-P\'{e}rez, A. Rueda Zoca, {\it  L-orthogonality, octahedrality and Daugavet property in Banach spaces}, Adv. Math.  383 (2021), 107719.
		
		%\bibitem{M98}R. E. Megginson, {\it An introduction to Banach space theory}, Springer-Verlag, New York, 1998. 
		%\bibitem{N55}H. Nakano, {\it Semi-ordered linear spaces}, Japan society for the promotion of science, Tokyo, 1955. 
		
		
		\bibitem{LSZ}
		S. Lord, F. Sukochev, D. Zanin,
		{\it Singular traces: theory and applications,} De Gruyter, Berlin, 2013.
		\bibitem{Lo66}G.J. Lozanovski\u{\i}, {\it   On almost integral operators in KB-spaces},  Vestnik Leningrad Univ. Mat. Mekh. Astr.  21(7) (1966), 35--44 (in Russian).
		%\bibitem{Lu63}W.A.J. Luxemburg, A.C.  Zaanen, {\it  Notes on Banach function spaces. II.},  Indag. Math. 25 (1963), 148--153.
		%\bibitem{LZ71}W.A.J. Luxemburg, A.C. Zaanen, {\it Riesz spaces. Vol. I}, North-Holland Publishing Co.,   Amsterdam-London;  American Elsevier Publishing Co. Inc.,  New York, 1971.
		
			%\bibitem{M67}Ch. McCarthy, {\it $c_p$}, Israel J. Math. 5 (1967), 249--271.
	\bibitem{MN91}
		P. Meyer-Nieberg, {\it 
			Banach lattices}, 
		Springer-Verlag, Berlin, 1991.
		
		
		
		\bibitem{N74}
		E. Nelson, {\it Notes on non-commutative integration}, J. Funct. Anal. 15 (1974), 103--116.
		
		
		\bibitem{O02}
		T. Oikhberg,  {\it  The Daugavet property of C$^*$-algebras and non-commutative $L_p$-spaces},
		Positivity  6(1) (2002),  59--73.

      
		\bibitem{P02}
		H. Pfitzner, {\it 
Perturbation of $l^1$-copies and measure convergence in preduals of von Neumann algebras},  
J. Operator Theory  47(1) (2002),  145--167.
    
		\bibitem{Ran03}
		N. Randrianantoanina, 
		{\it Embeddings of $\ell_p$ into non-commutative spaces}, 
		J. Aust. Math. Soc. 74(3) (2003),  331--350.
		
		%\bibitem{Ro73}H.P. Rosenthal,{\it  On subspaces of $L_p$}, Ann. Math. 97 (1973), 344--373.
		\bibitem{Ro74}
		H. Rosenthal, 
		{\it The heredity problem for weakly compactly generated Banach spaces},
		Compositio Math. 28 (1974), 83--111. 
       % \bibitem{R94} H.  Rosenthal, {\it  A characterization of Banach spaces containing $c_0$}, J. Amer. Math. Soc. 7(3) (1994),  707--748.
		\bibitem{R74}
		H. Rosenthal, {\it 
			A characterization of Banach spaces containing $\ell^1$}, 
		Proc. Nat. Acad. Sci. U.S.A. 71 (1974), 2411--2413.

\bibitem{Rudin}
W. Rudin, 
{\it Functional analysis,} second edition, McGraw-Hill, Inc. Singapore, 1991. 

\bibitem{RTV} A.~Rueda Zoca, P.~Tradacete,  I.~Villanueva, \emph{Daugavet property in tensor product spaces}, J. Inst. Math. Jussieu  20(4) (2021), 1409--1428.

\bibitem{SS14} E. Semenov,  F. Sukochev, \emph{ Sums and intersections of symmetric operator spaces}, J. Math. Anal. Appl.  414(2) (2014),  742--755.

		%\bibitem{Sh00}R.V. Shvydkoy,  {\it Geometric aspects of the Daugavet property}, 	J. Funct. Anal. 176(2) (2000),  198--212.

\bibitem{S71} S.B. Stechkin, {\it  The approximation of continuous periodic functions by Favard sums}, Trudy Mat. Inst. Steklov. 109 (1971), 26--34.

		%\bibitem{SC91} F.A. Sukochev,  V.I.  Chilin, {\it  Symmetric spaces over semi-finite von Neumann algebras}, Dokl. Akad. Nauk SSSR, 313(4) (1990),  811--815; 	English transl. Soviet Math.	Dokl. 42(1) (1991),  97--101.
     	\bibitem{S95}
		F.  Sukochev,  {\it    
On the uniform Kadec--Klee property with respect to convergence in measure}, 
J. Austral. Math. Soc. Ser. A    59(3) (1995), 343--352.

\bibitem{SC88}
      F.  Sukochev,  V. Chilin, 
      {\it The triangle inequality for operators that are measurable with respect to Hardy--Littlewood order},  Izv. Akad. Nauk UzSSR Ser. Fiz.-Mat. Nauk
     (4)   (1988),  44--50  (in Russian).

		\bibitem{S96}
		F.  Sukochev,  {\it 
			Non-isomorphism of $L_p$-spaces associated with finite and infinite von Neumann algebras}, 
		Proc. Amer. Math. Soc.  124(5) (1996),  1517--1527.
		\bibitem{S00}
		F. Sukochev, {\it Linear-topological classification of separable $L_p$-spaces associated with von
			Neumann algebras of type I}, Israel J. Math.  115 (2000), 137--156. 
		\bibitem{Sukochev16}F.  Sukochev, {\it H\"{o}lder inequality for symmetric operator spaces and trace property of K-cycles,} Bull. London Math. Soc. 48(4) (2016), 637--647.
	%	\bibitem{Veeorg23} T. Veeorg,     {\it Characterizations of Daugavet points and delta-points in Lipschitz-free spaces}, Studia Math.   268 (2023), 213--233.
		
		\bibitem{Takesakibook}	M. Takesaki, {\it Theory of operator algebras. I},	Springer-Verlag, Berlin, 2002.
        \bibitem{Werner01} D. Werner,
        {\it Recent progress on the Daugavet property}, Irish Math. Soc. Bull. (46)  (2001), 77--97.
		\bibitem{W99}
		W. Wnuk, 
		{\it  Banach lattices with order continuous norms}, Advanced Topics in Mathematics,
		Adam Mickiewicz Univ. Poznan, Polish Scientific Publishers PWN, 1999.
		\bibitem{W92}  P. Wojtaszczyk, {\it Some remarks on the Daugavet equation}, Proc. Amer. Math. Soc.  115(4) (1992), 1047--1052.
		\bibitem{W03}
		M. W\'{o}jtowicz,  {\it Contractive projections onto isometric copies of $L_1(\nu)$ in strictly
			monotone Banach lattices}, Bull. Pol. Acad. Sci. Math. 51(1) (2003),  1--12.
		
		
		
		
		
		
		
	\end{thebibliography}

\end{document}